\definecolor{darkgreen}{rgb}{0,0.5,0}
\definecolor{darkred}{rgb}{0.5,0,0}
\definecolor{darkblue}{rgb}{0.004,0.396,0.741}
\definecolor{gcolor}{rgb}{0.004,0.396,0.741}	% Pantone 300 C in RGB
\definecolor{warning}{rgb}{1.0,0.6,0.0}	% MATLAB WarningD(\sigma_1,\ldots,\sigma_n)
\definecolor{gray}{rgb}{0.6,0.6,0.6}
\definecolor{mygreen}{rgb}{0,0.6,0}
\definecolor{mygray}{rgb}{0.5,0.5,0.5}
\definecolor{lightgray}{rgb}{0.925,0.925,0.925}
\definecolor{mymauve}{rgb}{0.58,0,0.82}
\definecolor{gcolor}{rgb}{0.004,0.396,0.741}
\tiny\color{mygray}, 
\theoremstyle{plain}
\newtheorem{conjecture}{Conjecture}[section]
\newtheorem{theorem}{Theorem}[section]
\newtheorem{lemma}{Lemma}[section]
\newtheorem{corollary}{Corollary}[section]
\theoremstyle{definition}
\newtheorem{definition}{Definition}[section]
\theoremstyle{remark}
\newtheorem{remark}{Remark}[section]
\newtheorem{example}{Example}[section]
\begin{document}

\title[Asymptotic Expansions Relating to Longest Monotone Subsequences]{Asymptotic Expansions Relating to the Lengths of Longest Monotone Subsequences of Involutions}
\author{Folkmar Bornemann}
\address{Department of Mathematics, TU München, Germany}
\email{bornemann@tum.de}

%\date{\today}

\begin{abstract}
We study the distribution of the length of longest monotone subsequences in random (fixed-point free) involutions of $n$ integers as $n$ grows large, establishing asymptotic expansions in powers of $n^{-1/6}$ in the general case and in powers of $n^{-1/3}$ in the fixed-point free cases. Whilst the limit laws were shown by Baik and Rains to be one of the Tracy--Widom distributions $F_\beta$ for $\beta=1$ or $\beta=4$, we find explicit analytic expressions of the first few expansions terms as linear combinations of higher order derivatives of $F_\beta$ with rational polynomial coefficients. Our derivation is based on a concept of generalized analytic de-Poissonization and is subject to the validity of certain hypotheses for which we provide compelling (computational) evidence. In a preparatory step expansions of the hard-to-soft edge transition laws of L$\beta$E are studied, which are lifted into expansions of the generalized Poissonized length distributions for large intensities. (This paper continues our work \cite{arxiv:2301.02022}, which established similar results in the case of general permutations and $\beta=2$.)
\end{abstract}
\keywords{random involutions, random matrices, asymptotics, analytic de-Poissonization}
\subjclass[2010]{05A16, 60B20, 30D15, 30E15, 33C10}

\maketitle

\section{Introduction}

We denote by $S_n$ the symmetric group of permutations $\sigma$ on the set $\{1,2,\ldots,n\}$. The subgroup of involutions is given by
\[
S^\boxdot_n := \{\sigma \in S_n : \sigma = \sigma ^{-1} \};
\]
and, since fixed-point free involutions exist only for $n$ even, we write their subset in the form
\[
S_{n}^\boxtimes := \{\sigma \in S^\boxdot_{2n} : \sigma(x) \neq x \text{ for all $x=1,2,\ldots,2n$}\}.
\]
We study the length $L_n^\boxdot(\sigma)$ of longest increasing subsequences\footnote{Defined as the maximum of all $k$ for which there are
$1\leq i_1 < i_2 < \cdots < i_k \leq n$ with $\sigma_{i_1} < \sigma_{i_2} < \cdots < \sigma_{i_k}$.}  of $\sigma \in S^\boxdot_n$
as well as the lengths $L_n^\boxslash(\sigma)$, $L_n^\boxbslash(\sigma)$ of longest increasing and of longest decreasing subsequences of $\sigma \in S^\boxtimes_n$.\footnote{The set of involutions $S^\boxdot_n$ is invariant under reversal of
permutations, so that longest increasing and longest decreasing subsequences have the same enumerative combinatorics. In contrast, this symmetry is broken for the set of fixed-point free involutions  $S^\boxtimes_n$ and we have to consider both cases separately.}

Drawing the involutions $\sigma$ uniformly from their respective sets, the functions $L_n^\varoast$, where we write $\varoast\in\{\boxslash,\boxbslash,\boxdot\}$, become discrete random variables. The present paper studies asymptotic expansions of their distributions as $n$ grows large. It continues our work \cite{arxiv:2301.02022} on the general permutation case, where we established asymptotic expansions of the distribution of the length $L_n(\sigma)$ of longest increasing subsequences of $\sigma \in S_n$. 

\subsection*{Constructive Combinatorics} Using the Robinson–Schensted correspondence \cite{MR121305} and a result of Schützenberger \cite[p.~127]{MR190017}, which characterizes the fixed-point free involutions as corresponding to Young diagrams  with all columns having even length, we get the following formulae (see, e.g., \cite[§5]{MR2334203}):\footnote{In the general permutation case the corresponding formula is $\big|\{L_n = l\}\big|= \sum_{\lambda \,\vdash n\,:\, l(\lambda)=l} d_\lambda^2 = \sum_{\lambda \,\vdash n\,:\, \lambda_1=l} d_\lambda^2$.}
\begin{equation}\label{eq:SYT}
\begin{gathered}
\big|\{L_n^\boxdot = l\}\big| = \sum_{\lambda \,\vdash n\,:\, l(\lambda)=l} d_\lambda = \sum_{\lambda \,\vdash n\,:\, \lambda_1=l} d_\lambda \\*[1mm]
\big|\{L_n^\boxslash = l\}\big| = \sum_{\lambda \,\vdash n\,:\, l(\lambda)=l} d_{2\lambda}, \qquad \big|\{L_n^\boxbslash = 2l\}\big| = \sum_{\lambda \,\vdash n\,:\, \lambda_1=l} d_{2\lambda}.
\end{gathered}
\end{equation}
Here $\lambda \,\vdash n$ denotes a partition $\lambda_1\geq \lambda_2 \geq \cdots \geq \lambda_{l(\lambda)} > 0$ of the integer $n = \sum_{j=1}^{l(\lambda)} \lambda_j$ and $d_\lambda$ is the number of standard Young tableaux of shape $\lambda$.
In particular, we see that the random variables $L_n^\varoast$ take their values in the following ways:
\begin{itemize}\itemsep=6pt
\item $L_n^\varoast$ ($\varoast=\boxslash,\boxdot$) in the set $\{1,2,\ldots,n\}$;
\item $L_n^\boxbslash$ in the set $\{2,4,\ldots,2n\}$ of even integers.
\end{itemize}
Therefore, we subsume the discrete probabilities and their distributions into a single notation by writing, for the discrete probabilities which are all vanishing for $l > n$,
\begin{equation}\label{eq:prob_dens}
p^*_\boxslash(n;l):=\prob(L_{n}^\boxslash = l), \quad p^*_\boxbslash(n;l):=\prob(L_{n}^\boxbslash = 2l), \quad p^*_\boxdot(n;l):=\prob(L_{n}^\boxdot = l),
\end{equation}
and, for the cumulative probabilities which are all unity for $l\geq n$,
\begin{equation}\label{eq:prob_dist}
p_\boxslash(n;l):=\prob(L_{n}^\boxslash \leq l), \quad p_\boxbslash(n;l):=\prob(L_{n}^\boxbslash \leq 2l), \quad p_\boxdot(n;l):=\prob(L_{n}^\boxdot \leq l).
\end{equation}

\subsection*{Generalized Poissonization} Asymptotic methods in enumerative combinatorics and in probability theory profit from an explicit identification of certain generating functions. One such approach, which turned out to be key in the general permutation case (for a review of that case see \cite[§1]{arxiv:2301.02022}), is the use of Poisson generating functions, or Poissonization: the index $n$ is replaced by an independent random 
variable $N_r \in \N_0:=\{0,1,2,\ldots \}$ with Poisson distribution of intensity $r\geq 0$, that is,
\[
\prob(N_r = n) = e^{-r} \cdot \frac{r^n}{n!}.
\]
In the fixed-point free cases, by the work of Rains \cite[Thm.~3.4]{Rains98}, the distribution of the combined random variables $L_{N_r}^\boxslash, L_{N_r}^\boxbslash$, briefly written as
\begin{subequations}\label{eq:gen_poisson}
\begin{align}
\prob(L_{N_r}^\boxslash \leq l) &= e^{-r} \sum_{n=0}^\infty p_\boxslash(n;l) \frac{r^n}{n!} =: P_\boxslash(r;l),\\*[1mm]
\prob(L_{N_r}^\boxbslash \leq 2l) &= e^{-r} \sum_{n=0}^\infty p_\boxbslash(n;l) \frac{r^n}{n!}=:P_\boxbslash(r;l),
\end{align}
can be expressed in terms of certain group integrals (see Sect.~\ref{sect:groupintegrals}). Up to an exponential factor these two Poisson generating functions are basically the exponential generating functions of the corresponding enumerative problem (see Sect.~\ref{sect:comb_gen}).

In the general involution case $\varoast=\boxdot$ no useful explicit formula for the distribution of $L_{N_r}^\boxdot$ has been found so far. 
Therefore, in their work on establishing limit laws for the involution cases, Baik and Rains \cite[§4]{MR1845180} suggested to introduce a second independent Poisson random variable taking the role of the number of fixed-points. The limit law is then obtained, after a multivariate de-Poissonization, only indirectly by identifying the dominant range of fixed-points \cite[§8]{MR1845180}. 

Instead, our suggestion is to stick with the combinatorial exponential generating function and to generalize the concept of Poissonization itself. To this end we denote the number of involutions by $I_n:=|S^\boxdot_n|$ and recall  (see, e.g., \cite[Example~II.13]{MR2483235})
 that the exponential generating function of this sequence is given by
 \[
e^{z+z^2/2} = \sum_{n=0}^\infty I_n \frac{z^n}{n!}.
\]
Thus, for $r\geq 0$, there is an independent random variable $N_r^* \in \N_0$ with distribution 
\[
\prob(N_r^* = n) = e^{-r-\frac{r^2}{2}} \cdot \frac{I_n r^n}{n!},
\]
which is an example of the broader concept of a {\em generalized Poisson distribution} of in\-ten\-sity~$r$ induced by an entire function $f$ which we discuss in Sect.~\ref{sect:gen_de_poisson}. Now, the distribution of the thus combined random variable $L_{N_r^*}^\boxdot$, briefly written as
\begin{equation}
\prob(L_{N_r^*}^\boxdot \leq l) = e^{-r-\frac{r^2}{2}} \sum_{n=0}^\infty p_\boxdot(n;l) \frac{I_n r^n}{n!} =: P_\boxdot(r;l),
\end{equation}
\end{subequations}
can also be expressed in terms of a group integral (see Sect.~\ref{sect:groupintegrals}). All three (generalized) Poisson generating functions can be continued as entire functions $P_\varoast(z;l)$ to $z\in\C$. 

In the fixed-point free cases $\varoast=\boxslash,\boxbslash$, Baik and Rains obtained limit laws for the Poissonized distributions $P_\varoast(r;l)$ when $r \to\infty$ while $l$ is kept near the mode of the distribution. To this end they 
represented the group integrals in terms of Hankel and Toeplitz determinants of modified Bessel functions \cite[Thm.~2.5]{MR1844203},\footnote{By a different method, such a representation was first obtained by Gessel \cite{Gessel90}
for the combinatorial exponential generating function corresponding to $P_\boxdot(r;l)$ and by 
Goulden \cite{MR1158782} for the one corresponding to $P_\boxbslash(r;l)$.} subsequently analyzing the double-scaling limits by the machinery of Riemann--Hilbert problems \cite{MR1845180}. 

Borodin and Forrester \cite{MR1986402} identified
the Poissonized distributions of the fixed-point free cases as yet another well studied probability distribution in random matrix theory:\footnote{In fact, by using the results of \cite{MR2066104}, Borodin and Forrester were restricted to state $P_\boxslash(r;l)=E_4^{\text{hard}}(8r;l)$ for $l$ even only. That this formula is valid independent of the parity of $l$ was recently established by Forrester and Mays \cite[p.~15]{arxiv.2205.05257}; a different proof is given in Sect.~\ref{sect:hard-edge}.}
\begin{subequations}\label{eq:Poisson}
\begin{equation}
P_\boxslash(r;l) = E_4^{\text{hard}}(8r;l), \qquad
P_\boxbslash(r;l) = E_1^{\text{hard}}(8r;l).
\end{equation}
Here\footnote{Throughout the paper, we will use $l$ as an integer $l\geq 1$ and $\nu$ as a corresponding real variable that is used whenever an expression of $l$ generalizes to non-integer arguments.} $E_\beta^\text{hard}(s;\nu)$ denotes the probability that, in the hard-edge scaling limit, the scaled smallest eigenvalue of the Laguerre $\beta$ ensemble\footnote{We choose scalings of the weights of the Gaussian and Laguerre ensembles as in \cite{MR1842786} and \cite[§§2.1/2.4]{MR2895091}, which differs for $\beta=4$ from the choices made at other places in the literature: e.g., the original $\beta=4$ Tracy–Widom distribution, as introduced in \cite{MR1385083}, is the function $F_4(\sqrt{2} s)$ here (cf., e.g., \cite[Eq.~(2.10)]{MR2895091} and \cite[Eq.~(35)]{MR2165698}).}  (L$\beta$E) with real parameter $\nu >-1$ is bounded below by $s\geq 0$. For the general involution case $\varoast=\boxdot$, we will add in Sect.~\ref{sect:gen} (see Eqs.~\eqref{eq:groupintegrals_fdot} and \eqref{eq:E1_group}) a similar representation of the generalized Poissonized distribution to the picture:
\begin{equation}
P_\boxdot(r;l) = E_1^{\text{hard}}\left(4r^2;\frac{l-1}{2}\right).
\end{equation}
\end{subequations}
Next, Borodin and Forrester reclaimed the Poissonized Baik--Rains limit laws for $\varoast = \boxslash,\boxbslash$ 
by establishing the hard-to-soft edge transition of L$\beta$E ($\beta=1,2,4$), as $\nu\to\infty$, in form of the 
limit law \cite[Thm.~1/Cor.~2]{MR1986402}
\begin{subequations}\label{eq:BF2003}
\begin{equation}
\lim_{\nu\to\infty} E_\beta^{\text{hard}}\left(\big(\nu-t (\nu/2)^{1/3}\big)^2;\nu_\beta\right) = F_\beta(t),
\end{equation}
where we write briefly
\begin{equation}
\nu_\beta := 
\begin{cases}
(\nu-1)/2, & \beta = 1,\\*[0.5mm]
\nu, &\beta = 2,\\*[0.5mm]
\nu+1, &\beta = 4.
\end{cases}
\end{equation}
\end{subequations}
Here $F_\beta$ is the Tracy--Widom distribution of the Gaussian $\beta$ ensemble (G$\beta$E), i.e., the probability that in the soft-edge scaling limit the scaled largest eigenvalue is bounded from above by $-\infty < t < \infty$.

\subsection*{Expansions of Poissonized Distributions}

In Sect.~\ref{sect:hard-to-soft} we prove asymptotic expansions for the $\beta=1$ and $\beta=4$ hard-to-soft edge transition limits \eqref{eq:BF2003} (the case $\beta=2$ was already dealt with in \cite[§3]{arxiv:2301.02022}), which then lift to expansions of the (generalized) Poissonized distributions in the form (see Cor.~\ref{cor:gen_Poisson_expan})
\begin{equation}\label{eq:PvaroastIntro}
P_\varoast(r;l) = F_\beta(t) + \sum_{j=1}^m F_{\beta,j}(t) \cdot r_\varoast^{-j/3} + O\big(r_\varoast^{-(m+1)/3}\big)\bigg|_{t=t_{l^\varoast}(r_\varoast),\; \beta = \beta(\varoast)},
\end{equation}
uniformly valid for bounded $t$. Here the scaling is
\begin{equation}\label{eq:tnu_vanilla}
t_\nu(r):= \frac{\nu-2\sqrt{r}}{r^{1/6}} \qquad (r>0)
\end{equation}
and we use the abbreviations (note that $l = (l^\varoast)_{\beta(\varoast)}$ for $\varoast = \boxslash,\boxbslash$)
\begin{subequations}\label{eq:lr_varoast}
\begin{equation}
l^\varoast := 
\begin{cases}
l-1, & \varoast = \boxslash,\\*[0.5mm]
2l+1, & \varoast = \boxbslash,\\*[0.5mm]
l, & \varoast = \boxdot,
\end{cases}
\qquad
r_\varoast := 
\begin{cases}
2r, & \varoast = \boxslash,\boxbslash,\\*[1mm]
r^2, & \varoast = \boxdot,
\end{cases}
\end{equation}
and
\begin{equation}
\beta(\boxslash) := 4,\quad \beta(\boxbslash) := \beta(\boxdot) := 1.
\end{equation}
\end{subequations}

The leading order term in \eqref{eq:PvaroastIntro} (i.e.,  $m=0$) reclaims in the fixed-point free cases $\varoast=\boxslash,\boxbslash$ the Poissonized Baik--Rains limit laws \cite[Prop.~7.3]{MR1845180}. For $\varoast=\boxdot$ we get the generalized Poissonized limit law
\[
\lim_{r\to\infty} \prob\left(\frac{L_{N_r^*}^\boxdot - 2 r}{r^{1/3}}\leq t \right) = F_1(t),
\]
which appears to be new.

The first order terms in \eqref{eq:PvaroastIntro} (i.e., $m=1$) were studied, for some of the problem cases, prior to our current work. 
In the case $\varoast=\boxbslash$ such a term was obtained by Baik and Jenkins \cite[Thm.~1.2]{MR3161478}, who framed the combinatorial problem in terms of the Poissonized distribution of maximal nesting in random matchings. Using the machinery of Riemann--Hilbert problems and the Painlevé~II representation of the Tracy--Widom distribution $F_1$, they proved that
\begin{subequations}\label{eq:BaikJenkins}
\begin{equation}\label{eq:BaikJenkinsBeta1}
F_{1,1}(t) = -\frac{t^2}{60} F_1'(t) - \frac{1}{5} F_1''(t),
\end{equation}
with a suboptimal error of $O(r^{-1/2})$, though. In their recent study of finite-size effects, Forrester and Mays \cite[§3]{arxiv.2205.05257} found,\footnote{See, however,  Fn.~\ref{fn:FM23} below for a discussion of the error terms that are stated in their results.} for the cases $\varoast=\boxslash,\boxbslash$, expressions for $F_{\beta,1}$ in terms of operator traces and determinants and, alternatively, in terms of Painlevé~III$'$.

In Sect.~\ref{sect:generalform} we show that the expansion terms $F_{\beta,j}(t)$ can generally be expressed in terms of operator traces and determinants with kernels such as \eqref{eq:K}, a functional form that is rather unwieldy to handle. Combining the Tracy--Widom theory relating $F_\beta$ to Painlevé~II with factorizations of the $\beta=2$ hard- and soft-edge distributions, we prove in Sect.~\ref{sect:functionalform} that the Baik--Jenkins result \eqref{eq:BaikJenkinsBeta1} implies\footnote{This settles a question suggested by Forrester and Mays \cite[Rem.~3.7]{arxiv.2205.05257}.}
\begin{equation}\label{eq:BaikJenkinsBeta4}
F_{4,1}(t) = -\frac{t^2}{60} F_4'(t) - \frac{1}{5} F_4''(t).
\end{equation}
\end{subequations}
Now, based on (\ref{eq:BaikJenkins}a/b) and the compelling numerical evidence detailed in Appendix~\ref{app:evidence}, we hypothesize (see the ``linear form hypothesis'' in Sect.~\ref{sect:higherorder}) that the $F_{\beta,j}$ share,  in general and not just for $j=1$,  exactly the same structure as their $\beta=2$ counterparts in \cite[§4]{arxiv:2301.02022}: 

\smallskip

\begin{quote}
{\em The $F_{\beta,j}$ are always linear combinations of higher order derivatives of the limit distribution $F_\beta$ (up to order $2j$) with certain rational polynomial coefficients.}
\end{quote}

\smallskip

\noindent
Subject to that hypothesis, based on the technique that led us to the proof of \eqref{eq:BaikJenkinsBeta4}, we can explicitly calculate the polynomial coefficients. We did so for up to  $j=10$; the formulae are displayed in \eqref{eq:FbetaP} and in the supplementary material mentioned in Fn.~\ref{fn:suppl}. Note that in all cases inspected the polynomial coefficients share their sparsity pattern (that is, the pattern of non-zero rational coefficients) with the case $\beta=2$. Such a far-reaching similarity of functional form points to some common underlying structure that is, with current methodology, only poorly understood.

\subsection*{Generalized de-Poissonization}

Extracting the asymptotics of the distributions $p_\varoast(n;l)$ for large $n$ from their (generalized) Poissonizations $P_\varoast(r;l)$ for large intensities $r$ requires Tauberian-type (generalized) de-Poissonization techniques. 

First, by generalizing Johansson's de-Poissonization lemma \cite{MR1618351} in Sect.~\ref{sect:johansson}, based on monotonicity and sandwiching, we are able to prove from \eqref{eq:PvaroastIntro} in Thm.~\ref{thm:leading_de_poisson} that, in the fixed-point free cases,
\[
p_\varoast(n;l) = F_{\beta(\varoast)}\big(t_{l^\varoast}(2 n)\big) + O\big(n^{-1/6}\sqrt{\log n}\,\big) \qquad (\varoast =\boxslash,\boxbslash), 
\]
and, in the general involution case,
\[
p_\boxdot(n;l) = F_1\big(t_{l}(n)\big) + O\big(n^{-1/6}\sqrt{\log n}\,\big),
\]
uniformly for bounded $t$. These are the Baik--Rains limit laws \cite[Thms.~3.1/3.4]{MR1845180} with an error term added. 

Second, by establishing asymptotic expansions we see that the $O\big(n^{-1/6}\sqrt{\log n}\,\big)$ error estimates of the first approach are suboptimal. To this end, we generalize in Sect.~\ref{sect:jasz} the analytic de-Poissonization and associated Jasz expansions of Jacquet–Szpankowski \cite{MR1625392}. In the fixed-point free cases, which are subject to standard analytic de-Poissonization, we get expansions in powers of $n^{-1/3}$,
\[
p_\varoast(n;l) = F_{\beta(\varoast)}(t) + \sum_{j=1}^m F_{\varoast,j}(t) \cdot (2n)^{-j/3} + O\big(n^{-(m+1)/3}\big)\bigg|_{t=t_{l^\varoast}(2n)} \qquad (\varoast =\boxslash,\boxbslash),
\]
which are uniformly valid for bounded $t$. For technical reasons which we were not able to resolve so far, as for the general permutation case studied in \cite[§5]{arxiv:2301.02022} the proof has to assume a certain ``tameness hypothesis'' regarding the $l$-dependent families of the finitely many zeros of $P_\varoast(z;l)$ in certain sectors of the complex plane. In the general involution case $\varoast=\boxdot$, leaving the detailed analytical estimates to future work, we proceed in a purely formal fashion which leads us to an expansion in powers of $n^{-1/6}$ of the form
\[
p_\boxdot(n;l) = F_1(t) + \sum_{j=1}^m F_{\boxdot,j}(t) \cdot n^{-j/6} + O\big(n^{-(m+1)/6}\big)\bigg|_{t=t_{l+1}(n)}.
\]
Here, because of $F_{\boxdot,1}=0$, the sum starts effectively with the order $n^{-1/3}$ term. 

Subject to the linear form hypothesis, the $F_{\varoast,j}$ can be explicitly calculated (see Eqs.~\eqref{eq:FbetaD} and \eqref{eq:Fdot}) as linear combinations of higher-order derivatives of $F_{\beta(\varoast)}$ with rational polynomial coefficients: in the fixed-point free cases we give formulae for $j=1,2,3$, and in the general involution case for $1\leq j \leq 7$. In Figs.~\ref{fig:FbetaD} and \ref{fig:Fdot} those functional forms are checked against  tables  of exact values of $p_\varoast(n;l)$ for up to $n=1000$ as compiled in Sect.~\ref{sect:exact}.

Interestingly, in all three cases the expansions start with structurally the same first order term (see Cor.~\ref{cor:limit_law_finite_size}): in the fixed-point free cases they start with ($\beta=\beta(\varoast)$, $N=2n$)
\[
p_\varoast(n;l) = F_{\beta}(t) - \Big(\frac{t^2}{60} F_\beta'(t) + \frac{6}{5} F_\beta''(t) \Big)N^{-1/3} + O\big(N^{-2/3}\big)\bigg|_{t = t_{l^\varoast}(N)} \quad (\varoast=\boxslash,\boxbslash),
\]
and in the general involution case with
\[
p_\boxdot(n;l) = F_{1}(t) - \Big(\frac{t^2}{60} F_1'(t) + \frac{6}{5} F_1''(t) \Big)n^{-1/3} + O\big(n^{-1/2}\big)\bigg|_{t = t_{l+1}(n)}.
\]

\subsection*{Organization of the paper} In Sect.~\ref{sect:gen} we discuss the combinatorial exponential generating functions and their relation to the (generalized) Poissonized length distributions. We recall Rains' representations in terms of group integrals. By relating them to certain $\tau$-functions we prove the representations \eqref{eq:Poisson} in terms of the probabilities $E^\text{hard}_\beta(s;\nu)$ and get simple expressions in terms of a Chazy I equation along the way, which we have used to compile tables of the exact length  distributions for up to $n=1000$. Finally, we establish the $H$-admissibility (a new criterion is introduced in Appendix~\ref{app:hayman}) of the generating functions and discuss Stirling- and Regev-type formulae.

In Sect.~\ref{sect:hard-to-soft} we study the asymptotic expansion of the Borodin–Forrester hard-to-soft edge transition law \eqref{eq:BF2003}. Here we lay the foundational work for the concrete functional form of all the expansion terms in this paper. Whereas the general structure of the expansions, which gives their uniformity and differentiability, is found by expanding operator determinants, the appealing functional form of the expansion terms, as displayed in \eqref{eq:F_beta_j}, is established by combining the Tracy--Widom theory with factorizations of the $\beta=2$ 
hard- and soft-edge distributions. By going beyond the first order terms, the proof of the functional form is subject to the linear form hypothesis, the evidence of which is detailed in Appendix~\ref{app:evidence}. 

In Sect.~\ref{sect:poissondist} we apply the results of Sect.~\ref{sect:hard-to-soft} to the (generalized) Poissonized length distributions, generalizing the $m=1$ results of Baik and Jenkins \cite{MR3161478} and of Forrester and Mays \cite{arxiv.2205.05257}. 

In Sect.~\ref{sect:gen_de_poisson} we study the generalized de-Poissonization of a broad class of generalized Poisson generating functions. First, Johansson's de-Poissonization lemma \cite{MR1618351},  based on monotonicity and sandwiching, is generalized. Second, we formally derive a generalized Jasz expansion and calculate concrete expansion coefficients for the choice of the generalized Poisson distribution which underlies the general involution case. 

In Sect.~\ref{sect:depoisson} we establish the main results of the paper: rigorous error estimates for the Baik–Rains limit laws of the length distributions $p_\varoast(n;l)$ and, subject to some hypotheses, their asymptotic expansions. Additionally we discuss the modifications that apply to the discrete densities $p_\varoast^*(n;l)$.

Finally, in Sect.~\ref{sect:mean} we study the asymptotic expansions of the expected value and variance. Assuming some tail bounds, we are able to get many more concrete terms of such expansions than previously put forward in the literature. These terms from our theory are checked against model fits to data sets obtained from the combinatorial tables compiled in Sect.~\ref{sect:exact}.

\section{The Generating Functions}\label{sect:gen}

\subsection{The combinatorial exponential generating functions}\label{sect:comb_gen}

The cardinalities of the sets of (fixed-point free) involutions are
\[
 |S_{n}^\boxtimes | = (2n-1)!! = 1 \cdot 3\cdot 5 \, \cdots \, (2n-1)
\]
and, by definition, $|S^\boxdot_n| = I_n$.
Thus, by observing that $(2n)! = (2n-1)!! \cdot 2^n n!$, the exponential generating functions of the enumerations underlying the length distributions, that is,\footnote{With the understanding that the $n=0$ term is always $1$.}
\begin{equation}\label{eq:expon_gen}
\begin{gathered}
f^{\boxslash}_l(z) := \sum_{n=0}^\infty  \big|\{L_{n}^\boxslash\leq l\}\big| \, \frac{z^{2n}}{(2n)!}, \qquad
f^{\boxbslash}_{l}(z) := \sum_{n=0}^\infty  \big|\{L_{n}^\boxbslash\leq 2l\}\big| \, \frac{z^{2n}}{(2n)!},\\*[1mm]
f^{\boxdot}_{l}(z) := \sum_{n=0}^\infty  \big|\{L_{n}^\boxdot\leq l\}\big| \, \frac{z^n}{n!},
\end{gathered}
\end{equation}
are related to the (generalized) Poissonized distributions introduced in \eqref{eq:gen_poisson} by
\begin{equation}\label{eq:Poisson_gen}
P_\boxslash(z;l) = e^{-z} f_l^\boxslash(\sqrt{2z}),\quad
P_\boxbslash(z;l)  = e^{-z} f_{l}^\boxbslash(\sqrt{2z}), \quad
P_\boxdot(z;l) = e^{-z-z^2/2} f_{l}^\boxdot(z).
\end{equation}
For the fixed-point free cases we also consider the (entire) exponential generating functions of the discrete probability distributions themselves, which can be expressed in the form
\begin{equation}\label{eq:expon_gen_prob}
f^{\varoast}_l(\sqrt{2z}) = \sum_{n=0}^\infty p_\varoast(n;l) \frac{z^n}{n!} \qquad (\varoast = \boxslash,\boxbslash).
\end{equation}
\subsection{Group integrals}\label{sect:groupintegrals}

Using representation theory of the symmetric group and of the classical matrix groups, Rains \cite[Thm.~3.4]{Rains98} (cf. also \cite[Eqs.~(1.34/35)]{MR1844203} and \cite[Prop.~1.2]{MR1794352}) related the first two of the generating functions \eqref{eq:expon_gen} to group integrals, namely
\begin{subequations}\label{eq:groupintegrals_fpf}
\begin{align}
f_l^\boxslash(z) &= \E_{U\in O(l)} e^{z \tr U} = \frac{1}{2}\left(\E_{U\in O^+(l)} e^{z \tr U} + \E_{U\in O^-(l)} e^{z \tr U}\right),\\*[2.5mm] 
f_{l}^\boxbslash(z) &= \E_{U \in O^{-}(2l+2)} e^{z \tr U},
\end{align}
where the expectation is taken with respect to the Haar measure on (components of) the classical groups. Note that we have identified the group $\text{Sp}(2l)$ with $O^-(2l+2)$, since both share the same eigenvalue distribution except for the two additional eigenvalues $\pm 1$ in the latter, which cancel in the trace. For the third generating function, using Schur function identities, Baik and Rains \cite[Eqs.~(4.6/12)]{MR1844203} (cf. also \cite[Prop.~1.2]{MR1794352}) obtained
\begin{equation}\label{eq:groupintegrals_fdot}
f^{\boxdot}_{l}(z) = e^z \, \E_{U \in O^{-}(l+1)} e^{z \tr U}.
\end{equation}
\end{subequations}
\begin{remark} We thus get $f^\boxbslash_{l}(z) = e^{-z} f_{2l+1}^\boxdot(z)$,
or equivalently, by comparing coefficients, the combinatorial formula%
\[
\big|\{L_{n}^\boxbslash\leq 2l\}\big| = 1+ \sum_{k=1}^{2n} (-1)^k \binom{2n}{k}\, \big|\{L_{k}^\boxdot\leq 2l+1\}\big|, 
\]
which appears to be simple enough to suggest a purely combinatorial (``bijective'') proof.
\end{remark}

\subsection{Chazy I equation}\label{sect:ChazyI}
In terms of the  group integrals 
\begin{equation}\label{eq:g_pm}
g^{\pm}_l(z) := \E_{U\in O^\pm(l+1)} e^{z \tr U}
\end{equation}
we can rewrite \eqref{eq:groupintegrals_fpf} in the form 
\begin{equation}\label{eq:gen_from_g}
f_{l+1}^\boxslash(z) = (g^-_l(z) + g^+_l(z))/2, \quad f_{l}^\boxbslash(z) = g^-_{2l+1}(z), \quad f_l^\boxdot(z) = e^z g^-_l(z),
\end{equation}
supplemented (since $l$ is meant to be a positive integer) by the obvious case (cf. \eqref{eq:E4cosh} below)
\[
f_1^\boxslash(z) = \cosh(z).
\]
By relating $g_l^\pm(s)$
to the $\tau$-functions of the Toda lattice, Adler and van Moerbeke \cite[Prop.~3.4]{MR1794352} proved a representation in terms of solutions of a particular Chazy I equation: upon writing
\[
g^{\pm}_l(s) = \exp\left(\int_0^s v_l^\pm(x) \frac{dx}{x}\right),
\]
the function $v=v_l^\pm$ is the {\em unique} (analytic) solution of the third order differential equation\footnote{In fact, this equation corresponds to the particular choice $c_1=c_2=c_4=c_6=c_9=0$, $c_3=1$, $c_5=-4$, $c_7=-l^2/4$, $c_8=l^2-1$ of parameters in the full Chazy I equation as provided in \cite[Eq.~(A3)]{MR1752309}.}%
\begin{subequations}\label{eq:ChazyI}
\begin{equation}\label{eq:ChazyIeq}
v''' + \frac{1}{x} v'' + \frac{6}{x} v'^2 - \frac{4}{x^2} v v' - \frac{16x^2+l^2}{x^2} v' + \frac{16}{x}v + \frac{2(l^2-1)}{x} = 0,
\end{equation}
subject to the initial condition, as $x\to 0^+$,
\begin{equation}\label{eq:ChazyIini}
v_l^\pm(x) = x^2 \pm \frac{x^{l+1}}{l!}  + O(x^{l+2}).
\end{equation}
\end{subequations}
In fact, uniqueness follows from substituting the power series expansion $v_l(x) = \sum_{n=2}^\infty a_n x^n$
into the Chazy I equation \eqref{eq:ChazyIeq}, yielding the recursion \cite[Eq.~(3.13)]{MR1794352}, for $n=2,3,\ldots$\, ,
\begin{equation}\label{eq:ChazyIrec}
(n+1) (n^2-l^2) a_{n+1} - 16(n-2) a_{n-1} + 2\sum_{m=2}^{n-1}  m a_m \cdot (3(n-m)+1) a_{n+1-m} = 0,
\end{equation}
with the starting value $a_2$ taken from \eqref{eq:ChazyIini}. Conforming with the full form of the initial condition \eqref{eq:ChazyIini}, we note that if $l\geq 3$, the recursion implies $a_3=\cdots =a_l=0$ and, if $l\geq 2$, the coefficient $a_{l+1}$ becomes a free parameter of the recursion, to be taken from \eqref{eq:ChazyIini}.

\subsection{Compiling tables of exact values}\label{sect:exact}
Using the recursion \eqref{eq:ChazyIrec}, it is a simple exercise in computing with truncated power series expansions in a modern computer algebra system to expand the functions $g^\pm_l(s)$ and thus the generating functions as in \eqref{eq:gen_from_g}. In this fashion we compiled tables\footnote{The tables are available for download at \url{https://box-m3.ma.tum.de/f/c7e9a3c608554fc19be5/}. All three cases $\varoast=\boxslash,\boxbslash,\boxdot$ were checked against the combinatorial formulae \eqref{eq:SYT} for $n=80$ (choosing larger $n$ quickly becomes infeasible) and the case $\boxdot$ additionally, for $l=2,\ldots,5$, against recurrences in terms of Catalan numbers given in \cite[p.~559]{MR2334203}, as well as, for $n-l=0,\ldots,30$, against an explicit formula by Goulden \cite[Cor.~3.4(a)]{MR1080995}---note the restriction on $l$ for it to hold true:
\[
\big|\{L_n^\boxdot=l\}\big| =  (-1)^{n-l}\sum_{i,j \geq 0, 2i+j\leq n-l} \frac{(-1)^{i+j}n! \cdot I_j}{i!j! (n-i-j)!} \qquad (l \geq (n-1)/2).
\]} 
of the exact integer values ($l=1,\ldots,n$ and $n=1,\ldots, 1000$)
\[
|\{L_{n}^\boxslash = l\}|, \quad |\{L_{n}^\boxbslash = 2l\}|, \quad |\{L_{n}^\boxdot = l\}|.
\]
\begin{remark} Forrester and Mays \cite[§4.5]{arxiv.2205.05257} report having compiled tables for the fixed-point free cases up to $n=200$, based on truncated power series computations with representations in terms of the Okamoto $\sigma$-form of Painlevé III. However, as noted in \cite[§3.2]{arxiv.2206.09411} for similar computations relating to the general permutation case, the use of a Chazy I representation is much more cost efficient: whereas the $\sigma$-form of Painlevé III is \emph{quadratic} in the highest order derivative and \emph{cubic} in the lower orders, a Chazy I equation of the form given in \eqref{eq:ChazyIeq} is \emph{linear} in the highest order and \emph{quadratic} in the lower orders,  Hence, switching from Painlevé to Chazy reduces the complexity of evaluating the corresponding recursion significantly (by a factor of $n_{\max}$ if the table is compiled up to $n=n_{\max}$).
\end{remark}

\begin{remark}
To compare  asymptotic expansions  with numerical data of  probabilities for values of $n$ which are substantially larger than the $n\leq 1000$  covered by the tables, we  have to resort to Monte Carlo simulations. To draw  uniformly from the involutions $S_n^\boxdot$, an algorithm of optimal complexity $O(n)$  was developed by Arndt \cite[§9.3]{Arndt10}, cf. \cite[§2.5.6]{Arndt11} --  switching off the randomized branching between fixed points and $2$-cycles allows us to draw uniformly from the fixed-point free involutions $S_n^\boxtimes$, instead. The length of a longest increasing, or decreasing, subsequence  can be  computed in optimal $O(n \log n)$ comparisons by determining  just the first row of the  Young tableau in the Robinson–Schensted correspondence (or, equivalently, the top entries of the piles in patience sorting \cite{MR1694204}), see the algorithm discussed in \cite[§5.1.4, p.~52]{Knuth3}, or \cite[§2]{MR0354386} where optimality is established.\footnote{An efficient Julia implementation of those algorithms comes with the source files at  \url{https://arxiv.org/abs/2306.03798}. For example, with $n=10^5$ on a modern core, it allows us to draw elements from $S_n^\boxdot$ or $S_n^\boxtimes$ in less than half a millisecond and to  calculate the length of a longest increasing, or decreasing, subsequence in just about  one millisecond. Using $48$  cores in parallel, we can easily go for $10^8$ samples in less than an hour computing time. Using a  C++ implementation, Forrester and Mays \cite[p.~30 and Fig.~12]{arxiv.2205.05257} report simulation data in the fixed-point free cases for $n=10^4$ and $n=5\times 10^4$, with $5\times 10^6$ samples; cf. also Fig.~\ref{fig:FbetaD} below.} 
\end{remark}

\subsection{Hard-edge scaling limits}\label{sect:hard-edge}

To establish the connection between the group integrals and the hard-edge scaling limits, the following theorem generalizes a  result of Forrester and Witte \cite[Eqs.~(5.44/51)]{MR2066104} from the groups $O(2m+2)$ and $\text{Sp}(2m)=O^{-}(2m+2)$ to the general cases $O(m)$ and $O^-(m)$. If combined with the formulae in \eqref{eq:Poisson_gen} and \eqref{eq:groupintegrals_fpf}, this provides us with the starting point of our study, namely the representations stated in \eqref{eq:Poisson}.

\begin{theorem}\label{thm:E_group} For $m \in \N:=\{1,2,3,\ldots\}$ and $s\geq 0$ there holds
\begin{subequations}\label{eq:E_group}
\begin{align}
E_1^{\text{\rm hard}}\left(4s^2; \frac{m-2}{2}\right) &= e^{-s^2/2} \,\E_{U \in O^-(m)} e^{s \tr U},\label{eq:E1_group}\\*[2mm]
E_4^{\text{\rm hard}}\left(4s^2; m\right) &= e^{-s^2/2} \,\E_{U \in O(m)} e^{s \tr U}.\label{eq:E4_group}
\end{align}
\end{subequations}
Note that for $m$ odd the analytic continuation of the first function is not even. 
\end{theorem}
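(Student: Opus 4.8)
The plan is to reduce \eqref{eq:E1_group} to a matching of solutions of the Chazy~I equation \eqref{eq:ChazyIeq}, and to deduce \eqref{eq:E4_group} from it by splitting $O(m)$ into its two components; in both cases the Forrester--Witte identities for even $m$ reappear as the special case, and the new input is the behaviour at odd matrix size. The group side is immediate: by the Adler--van Moerbeke result recalled in Sect.~\ref{sect:ChazyI}, for $l=m-1$ the function $v^\pm_{m-1}$ with $g^\pm_{m-1}(s)=\exp\!\big(\int_0^s v^\pm_{m-1}(x)\,dx/x\big)$ is the unique analytic solution of \eqref{eq:ChazyIeq} subject to \eqref{eq:ChazyIini}, whence $e^{-s^2/2}g^\pm_{m-1}(s)=\exp\!\big(\int_0^s (v^\pm_{m-1}(x)-x)\,dx/x\big)=1\pm s^m/m!+O(s^{m+1})$ near $s=0$. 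For the full group one has $e^{-s^2/2}\E_{U\in O(m)}e^{s\tr U}=\tfrac12\big(e^{-s^2/2}g^+_{m-1}(s)+e^{-s^2/2}g^-_{m-1}(s)\big)$, in which the odd power $s^m$ cancels, and since $U\mapsto-U$ interchanges $O^+(m)$ and $O^-(m)$ when $m$ is odd, $g^+_{m-1}(s)=g^-_{m-1}(-s)$, so for odd $m$ this average is the even part of $e^{-s^2/2}g^-_{m-1}$.

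For the left-hand sides I would invoke --- or, if necessary, establish from the Bessel-kernel Fredholm Pfaffian --- the $\sigma$-form of Painlevé~III$'$ description of the hard-edge gap probabilities $E_1^{\text{hard}}(\cdot;\nu)$ and $E_4^{\text{hard}}(\cdot;\nu)$ valid for \emph{all} real $\nu>-1$, in the form that underlies the Forrester--Witte computation. Writing $E_\beta^{\text{hard}}(t;\nu)$ as an explicit elementary prefactor times $\exp\!\big(\int_0^t\sigma_\beta(\xi;\nu)\,d\xi/\xi\big)$ and substituting $t=4s^2$, the $\sigma$-equation for $\beta=1$ turns into an instance of \eqref{eq:ChazyIeq}; one checks that the value $\nu=(m-2)/2$ produces the Chazy parameter $l=2\nu+1=m-1$ and that the prefactor is exactly $e^{s^2/2}$, so that $e^{s^2/2}E_1^{\text{hard}}(4s^2;(m-2)/2)$ solves \eqref{eq:ChazyIeq} with $l=m-1$ and has small-$s$ expansion $1-s^m/m!+O(s^{m+1})$, matching \eqref{eq:ChazyIini} with the ``$-$'' sign.

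By uniqueness of the analytic Chazy~I solution, the previous two paragraphs yield \eqref{eq:E1_group}. For \eqref{eq:E4_group} with $m$ even this is precisely the Forrester--Witte identification of $E_4^{\text{hard}}(4s^2;m)$ with the $O(m)$-average $\tfrac12(g^+_{m-1}+g^-_{m-1})$; for $m$ odd it follows from \eqref{eq:E1_group} together with the reflection $g^+_{m-1}(s)=g^-_{m-1}(-s)$, which makes $e^{-s^2/2}\E_{U\in O(m)}e^{s\tr U}$ the even part of $E_1^{\text{hard}}(4s^2;(m-2)/2)$, which is $E_4^{\text{hard}}(4s^2;m)$ by a hard-edge interrelation of Forrester--Rains type mirroring $O(m)=O^+(m)\cup O^-(m)$ (alternatively, directly via the $\sigma$-form of Painlevé~III satisfied by the $O(m)$-average). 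The parity assertion is then immediate: for odd $m$ the surviving term $\mp s^m/m!$ in \eqref{eq:ChazyIini} makes $e^{-s^2/2}g^-_{m-1}$ --- hence $E_1^{\text{hard}}(4s^2;(m-2)/2)$ in its natural continuation through $\sqrt t=2s$, the parameter $(m-2)/2$ now being a half-integer --- not even in $s$.

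The crux is the hard-edge input of the second paragraph: carrying the $\sigma$-form description of $E_1^{\text{hard}}$ and $E_4^{\text{hard}}$ to arbitrary $\nu>-1$. The Pfaffian/determinant formulas usually quoted collapse to finite size only for nonnegative integer $\nu$ --- exactly the even-$m$ range Forrester--Witte treated --- so for odd $m$ the $\beta=1$ quantity is a genuine Fredholm Pfaffian that must nevertheless be matched against the finite Bessel-function expression $e^{-s^2/2}g^-_{m-1}(s)$; one must also pin down the elementary prefactor, the sign in \eqref{eq:ChazyIini}, and the branch of $\sqrt t$ responsible for the non-evenness. A workable alternative, bypassing a fresh Painlevé derivation, is to compare the two sides directly --- the group averages as Toeplitz$\,\pm\,$Hankel determinants of modified Bessel functions (Baik--Rains), the hard-edge probabilities through their Bessel Pfaffians --- using Bessel recurrences, or to reduce the cases $\beta=1,4$ to the classical $\beta=2$ case via the Forrester--Rains interrelations.
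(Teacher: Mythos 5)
Your overall architecture coincides with the paper's: represent the group integrals via the Adler--van Moerbeke Chazy~I characterization \eqref{eq:ChazyIeq}--\eqref{eq:ChazyIini}, represent the hard-edge probabilities via a Painlev\'e $\sigma$-form, reduce the latter to the same Chazy~I equation with the same initial data, and conclude by uniqueness (with the parity remark read off from the $\pm s^m/m!$ term). The difference is that you leave precisely the nontrivial input open. The whole point of the theorem is the extension beyond the even-dimensional groups treated in \cite{MR2066104}, i.e.\ the half-integer Bessel parameter $(m-2)/2$ in \eqref{eq:E1_group} and the odd-$m$ case of \eqref{eq:E4_group}; your second paragraph says you would ``invoke --- or, if necessary, establish'' a $\sigma$-form description of $E_1^{\text{hard}},E_4^{\text{hard}}$ valid for all real $\nu>-1$, and your final paragraph concedes this is the crux and only lists possible workarounds. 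The paper closes exactly this gap by citing a different Forrester--Witte source, \cite[Eqs.~(2.75--79, 3.18/19)]{MR1901115}: there $E_1^{\text{hard}}(4s^2;\tfrac{l-1}{2})$ and $E_4^{\text{hard}}(4s^2;l+1)$ are expressed, for \emph{every} integer $l\geq 1$ (hence both parities of $m=l+1$), through solutions $\sigma_l^\pm$ of the Okamoto $\sigma$-form of Painlev\'e~V \eqref{eq:Okamoto} with explicit small-$t$ data containing the sign-distinguishing term $\pm\frac{1}{l!}(t/4)^{l+1}$. After the substitution $t=4x$ and one differentiation this $\sigma$-equation factors as $\tfrac18 x^2 w''\cdot[\text{Chazy I}]=0$, so one still has to observe from \eqref{eq:w_ini} that $x^2w''\neq 0$ near $0$ before discarding the spurious factor --- a small step your sketch omits. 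Without a concrete citable (or proved) representation at half-integer $\nu$, your proof of \eqref{eq:E1_group} for odd $m$ is not complete.

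Two further points. First, your fallback for \eqref{eq:E4_group} at odd $m$ --- that the even part in $s$ of $E_1^{\text{hard}}(4s^2;(m-2)/2)$ equals $E_4^{\text{hard}}(4s^2;m)$ ``by a hard-edge interrelation of Forrester--Rains type'' --- is not an available identity; the known interrelations are of the type \eqref{eq:E4cosh} (an $E_1$--$E_2$--$E_4$ relation at \emph{fixed} parameters), whereas the statement you want is essentially the content of the theorem itself, so this route is circular unless you first establish the $\sigma$-form for the $O(m)$-average, which is again the missing input; the paper instead reads off the $\beta=4$ case directly from the Forrester--Witte representation of $E_4^{\text{hard}}(4s^2;l+1)$ for all $l\geq 1$. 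Second, $m=1$ needs separate treatment: the Adler--van Moerbeke result \cite[Prop.~3.4]{MR1794352} and the initial condition \eqref{eq:ChazyIini} are set up for $l\geq 1$, so your matching argument does not apply at $l=0$; the paper handles $m=1$ by inspection, using the explicit evaluations $E_1^{\text{hard}}(4s^2;-1/2)=e^{-s-s^2/2}$, $E_2^{\text{hard}}(4s^2;0)=e^{-s^2}$ together with \cite[Prop.~5.6]{MR1842786} and $O^-(1)=\{-1\}$, $O(1)=\{-1,1\}$.
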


\begin{proof} The case $m=1$ is proved by inspection. In fact, \cite[Cor.~3.1]{MR964668} (see also \cite[p.~613]{MR2641363}, adjusting scaling)
and \cite[p.~294]{MR1266485} yield the particular evaluations
\begin{subequations}
\begin{equation}
E_1^{\text{\rm hard}}\left(4s^2; -1/2\right) = e^{-s-s^2/2}, \qquad E_2^{\text{\rm hard}}\left(4s^2; 0\right) = e^{-s^2},
\end{equation}
so that by \cite[Prop. 5.6]{MR1842786}
\begin{equation}\label{eq:E4cosh}
E_4^{\text{\rm hard}}\left(4s^2;1\right) = \frac{1}{2}\left( E_1^{\text{\rm hard}}\left(4s^2; -1/2\right) + \frac{E_2^{\text{\rm hard}}\left(4s^2; 0\right)}{E_1^{\text{\rm hard}}\left(4s^2; -1/2\right)}\right) = e^{-s^2/2} \cosh(s).
\end{equation}
\end{subequations}
Since $O^-(1) = \{-1\}$ and $O(1) = \{-1,1\}$, the group integrals evaluate to the same results.

For $l = m-1 \geq 1$ we proceed by comparing $\tau$-function representations of $E_\beta^{\text{\rm hard}}$ with those of the group integrals. Forrester and Witte \cite[Eqs.~(2.75--79, 3.18/19)]{MR1901115} obtained that
\begin{align*}
E_1^{\text{\rm hard}}\left(4s^2; \frac{l-1}{2}\right) &= e^{-s^2/2} \exp\left(\int_0^s w_l^-(x)\frac{dx}{x}\right),\\*[2mm]
E_4^{\text{\rm hard}}\left(4s^2; l+1 \right) &= \frac{e^{-s^2/2}}{2} \left(
\exp\left(\int_0^s w_l^-(x)\frac{dx}{x}\right) + \exp\left(\int_0^s w_l^+(x)\frac{dx}{x}\right)\right),
\end{align*}
where 
\[
w_l^\pm(x) = \sigma_{l}^\pm(4x) + (l-1)x - \frac{l(l-1)}{4}
\]
such that $\sigma=\sigma^\pm_l$ solves the Okamoto $\sigma$-form of Painlevé V \cite[Eq.~(2.9)]{MR1901115}, 
\begin{equation}\label{eq:Okamoto}
(t\ddot\sigma)^2 - \left(\sigma-t \dot\sigma + 2\dot\sigma^2 + (\nu_1+\nu_2+\nu_3)\dot\sigma \right)^2 + 4\dot \sigma(\nu_1+ \dot\sigma)(\nu_2+ \dot\sigma)(\nu_3+ \dot\sigma)=0,
\end{equation}
denoting derivatives in $t$ by a dot,
with parameters $\nu_1 = -1/2$, $\nu_2 = l/2$, $\nu_3=(l-1)/2$, subject to the initial condition
\[
\sigma_l^\pm(t) = \frac{l(l-1)}{4}- \frac{l-1}{4}t + \frac{t^2}{16} \pm \frac{1}{l!} \left(\frac{t}{4}\right)^{l+1} + O(t^{l+2})
\quad (t \to 0^+).
\]
In terms of $w_{l}^\pm$ we thus get, on the one hand, the initial conditions
\begin{equation}\label{eq:w_ini}
w_l^\pm(x) = x^2 \pm \frac{x^{l+1}}{l!} + O(x^{l+2}) \quad (x\to 0^+),
\end{equation}
and, on the other hand, after inserting $\sigma = \sigma_l^\pm$ in terms of $w=w_l^\pm$ into \eqref{eq:Okamoto}, writing $t=4x$ and differentiating in $x$, the differential equation 
\[
\frac{1}{8} x^2 w''(x) \left(w''' + \frac{1}{x} w'' + \frac{6}{x} w'^2 - \frac{4}{x^2} w w' - \frac{16x^2+l^2}{x^2} w' + \frac{16}{x}w + \frac{2(l^2-1)}{x}\right) = 0.
\]
Here, we recognize the term in the brackets as the Chazy I equation \eqref{eq:ChazyIeq}. Since the initial condition \eqref{eq:w_ini} implies that $x^2 w_l^{\pm\,\prime\prime}(x) \neq 0$ for small $x$, we see by continuation that $w_l^\pm$ is, in fact, a solution of this Chazy I equation. Furthermore, the initial conditions of $w_l^\pm$ and $v_l^\pm$, as given in \eqref{eq:w_ini} and \eqref{eq:ChazyIini}, are the same, so that by uniqueness of the induced solution (cf. the discussion of the result of Adler and van Moerbeke  \cite[Prop.~3.4]{MR1794352} in Sect.~\ref{sect:ChazyI}) 
\[
w_l^\pm(x) = v_l^\pm(x) = x \frac{d}{dx} \log \E_{U \in O^\pm(l+1)} e^{s \tr U},
\]
which finishes the proof.
\end{proof}
\begin{remark} 
In \cite[§3.1]{arxiv.2205.05257} it is noted that \eqref{eq:E4_group} can also be established independent of the parity of $m$ by expressing both, the expectation over $U \in O(m)$ and $E_4^{\text{\rm hard}}(4s^2;m)$, in terms of a generalized hypergeometric function of $m$ variables based on zonal polynomials.
\end{remark}

\subsection{\boldmath$H$\unboldmath-admissibility\protect\footnote{For a review of $H$-admissibility see Appendix~\ref{app:hayman}.}}

We start with an asymptotics of the integrals over $O^\pm(m)$ as $z\to\infty$ in the complex plane, 
following the proof of a similar result \cite[Thm.~2.2]{arxiv.2206.09411} for the unitary group: first, we write the integrals as multidimensional integrals over the joint eigenvalue density of $O^\pm(m)$ as established by Weyl \cite[Eqs.~(7.9.7/15)]{Weyl46} (see also \cite[Eqs.~(2.62–66)]{MR2641363}), next we apply the multidimensional Laplace method \cite[Cor.~A.1]{arxiv.2206.09411} and, finally, we evaluate a remaining multidimensional integral in terms of a variant of the Selberg integral \cite[Eq.~(2.5.10)]{MR2760897}. 

We thus get, skipping the details, for any $m\in \N$ and any fixed $0< \delta\leq \pi/2$,%
\begin{subequations}\label{eq:gen_large_z}
\begin{align}
\E_{U \in O^-(m)} e^{z \tr U}  &=  \frac{c_{m-1}\cdot e^{(m-2)z}}{\pi^{(m-1)/2}z^{(m-1)(m-2)/4}}  \big(1+ O(z^{-1})\big), \\*[2mm]
\E_{U \in O(m)} e^{z \tr U} &= \frac{c_m \cdot e^{mz}}{2^m \pi^{m/2}z^{m(m-1)/4} } \big(1+ O(z^{-1})\big),
\end{align}
\end{subequations}
uniformly as $z\to\infty$ while $|\arg z\,| \leq \frac{\pi}{2} -\delta$.\footnote{By Thm.~\ref{thm:E_group}, this shows the asymptotics of $E_\beta^{\text{\rm hard}}(4s^2;a)$ ($\beta=1,4$) given \cite[Eq.~(13.52)]{MR2641363} for $a=0,1,2,\ldots$ (scaling adjusted for $\beta=4$) to not only hold for real arguments $s$ but to continue uniformly into sectors of the complex plane.} Here, we briefly write $c_j := \prod_{k=1}^j \Gamma(k/2)$.

It follows from (\ref{eq:groupintegrals_fpf}a/b) and \eqref{eq:g_pm} that the entire functions $f^\boxslash_l(\sqrt{2z})$, $f^\boxbslash_l(\sqrt{2z})$ and $g_l^\pm(\sqrt{z})$, written as $f(z)$, 
enjoy asymptotic expansions of the form
\begin{subequations}
\begin{equation}
f(z^2) = c z^{\nu} e^{\tau z} \big(1+O(z^{-1})\big)
\end{equation}
with certain parameters $\nu$ and $\tau>0$.
Hence, Thm.~\ref{thm:born} applies and gives that the expansions of the associated auxiliary functions are, as $r\to\infty$,
\begin{equation}\label{eq:aux_gen}
a(r) = \frac{\tau}{2}\sqrt{r} + \frac{\nu}{2} + O(r^{-1/2}),\qquad b(r) = \frac{\tau}{4}\sqrt{r} + O(r^{-1/2}),
\end{equation}
\end{subequations}
combined with the following result.

\begin{theorem}\label{thm:gen_fun_zeros} The functions $f^\boxslash_l(\sqrt{2z})$, $f^\boxbslash_l(\sqrt{2z})$ and $g_l^\pm(\sqrt{z})$ are entire functions of genus zero and have, for any fixed $\epsilon< \pi/2$ at most finitely many zeros in the sector $|\arg z\,|\leq \pi/2+\epsilon$. In particular, they are $H$-admissible.
\end{theorem}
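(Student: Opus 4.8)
The plan is to establish the three claims---genus zero, finiteness of the zero set in the sector $|\arg z|\leq \pi/2+\epsilon$, and $H$-admissibility---in that order, taking entireness as given (it is plain from \eqref{eq:expon_gen}/\eqref{eq:expon_gen_prob} for the first two functions, where only integer powers of $z$ occur, and was recorded above for $g^\pm_l(\sqrt z)$). For \emph{genus zero}, I would first observe that each function has order at most $1/2$: every $U\in O^\pm(m)$ has all eigenvalues on the unit circle, so $|\tr U|\leq m$, whence the Taylor coefficients of $g^\pm_l(w)=\E_{U\in O^\pm(l+1)}e^{w\tr U}$ are bounded in modulus by $(l+1)^n/n!$ and $|g^\pm_l(w)|\leq e^{(l+1)|w|}$; thus $g^\pm_l(\sqrt z)$ is entire of order $\leq 1/2$, and the same bound applied to $f^\boxslash_l(w)=\E_{U\in O(l)}e^{w\tr U}$ and $f^\boxbslash_l(w)=\E_{U\in O^-(2l+2)}e^{w\tr U}$ (using $|\tr U|\leq l$ resp.\ $|\tr U|\leq 2l+2$) shows $f^\boxslash_l(\sqrt{2z})$ and $f^\boxbslash_l(\sqrt{2z})$ are of order $\leq 1/2$ as well. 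Since the order is strictly below $1$, Hadamard's factorization theorem forces genus zero, and in particular the absolute summability of the reciprocals of the zeros.

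For \emph{finitely many zeros in the sector}, write $f$ for any one of the three functions. As already used to derive \eqref{eq:aux_gen}, the asymptotics \eqref{eq:gen_large_z} together with \eqref{eq:groupintegrals_fpf}, \eqref{eq:g_pm} and \eqref{eq:gen_from_g} give, for every fixed $\delta\in(0,\pi/2]$,
\[
f(z^2)=c\,z^{\nu}e^{\tau z}\big(1+O(z^{-1})\big)\qquad\text{uniformly as }z\to\infty,\ |\arg z|\leq\tfrac{\pi}{2}-\delta,
\]
with $\tau>0$. On this sector $\operatorname{Re}(\tau z)\geq\tau\sin(\delta)\,|z|$, so $|f(z^2)|\to\infty$, and hence there is $R_\delta$ with $f(w)\neq0$ whenever $|w|\geq R_\delta$ and $|\arg w|\leq\pi-2\delta$. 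Given $\epsilon<\pi/2$, choose $\delta$ small enough that $\pi-2\delta>\tfrac{\pi}{2}+\epsilon$; then every zero of $f$ in $|\arg z|\leq\tfrac{\pi}{2}+\epsilon$ lies in the bounded set $\{|z|\leq R_\delta\}\cap\{|\arg z|\leq\tfrac{\pi}{2}+\epsilon\}$, and since the zeros of a nontrivial entire function are isolated, only finitely many lie there.

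For \emph{$H$-admissibility}, with genus zero and the sectorial zero count established, it remains to verify that the hypotheses of the $H$-admissibility criterion of Appendix~\ref{app:hayman} (Thm.~\ref{thm:born}) are met---the sectorial expansion $f(z^2)=c\,z^{\nu}e^{\tau z}(1+O(z^{-1}))$ with $\tau>0$, together with genus zero and finitely many zeros in each sector $|\arg z|\leq\pi/2+\epsilon$---and then to read off $H$-admissibility (and the auxiliary-function expansions \eqref{eq:aux_gen}) from its conclusion.

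The heavy lifting sits not in this proof but in the sectorial asymptotics \eqref{eq:gen_large_z}, which are already available: to exclude zeros in a full neighbourhood of the imaginary axis---and not merely on the positive real axis---one needs the leading behaviour $c\,z^{\nu}e^{\tau z}$ captured \emph{uniformly} on sectors $|\arg z|\leq\pi/2-\delta$, which is exactly the regime in which the multidimensional Laplace analysis of Weyl's $O^\pm(m)$ eigenvalue density is most delicate, the effective large parameter of the saddle-point expansion degenerating like $|z|\sin\delta$ as $\arg z\to\pm\pi/2$. Within the present argument itself the only point requiring care is the precise matching of these structural facts to the hypotheses of the Appendix~\ref{app:hayman} criterion; no fresh estimate is involved.
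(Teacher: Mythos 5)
Your argument is correct in substance and is, at bottom, the paper's own route: the paper proves this theorem by feeding the sectorial asymptotics \eqref{eq:gen_large_z} (equivalently, the form $f(z^2)=c\,z^\nu e^{\tau z}(1+O(z^{-1}))$ with $\tau>0$, valid uniformly on $|\arg z|\leq \pi/2-\delta$ for every $\delta$) directly into the criterion of Thm.~\ref{thm:born}, which yields genus zero, the finiteness of the zeros in $|\arg z\,|\leq \pi-2\delta$, and $H$-admissibility all in one stroke. Your standalone proofs of the first two conclusions (order $\leq 1/2$ via $|\tr U|\leq l+1$ plus Hadamard, and the absence of large zeros in the sector because $\operatorname{Re}(\tau z)\geq \tau\sin(\delta)|z|$ forces $|f(z^2)|\to\infty$) are correct, but they reprove what Thm.~\ref{thm:born} already delivers; they essentially replicate the interior of that criterion rather than constituting a different method.

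One point to fix in the last step: you list ``genus zero and finitely many zeros in each sector'' among the hypotheses of Thm.~\ref{thm:born}, but these are its \emph{conclusions} (you may be thinking of Hayman's Thm.~XI, which does hypothesize zero distributions). The hypotheses actually to be checked are that $f$ is entire of exponential type with \emph{positive Maclaurin coefficients}, together with the expansion $f(z^2)=c\,z^\nu e^{\tau z}(1+O(z^{-1}))$, $\tau>0$, for each $\delta$. Exponential type follows from your own bound $|g_l^\pm(w)|\leq e^{(l+1)|w|}$, and the sectorial expansion is \eqref{eq:gen_large_z} combined with \eqref{eq:groupintegrals_fpf} and \eqref{eq:g_pm}; but the positivity of the coefficients is the one hypothesis you never address. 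It is immediate for $f_l^\boxslash(\sqrt{2z})$ and $f_l^\boxbslash(\sqrt{2z})$ from \eqref{eq:expon_gen_prob} (coefficients $p_\varoast(n;l)/n!$), and for $g_l^\pm(\sqrt z)$ the coefficients are the even trace moments $\E[(\tr U)^{2n}]/(2n)!$, which are positive except in degenerate cases. With that remark added, your proposal matches the paper's proof.
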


Therefore, by \eqref{eq:aux_gen}, applying Thm.~\ref{thm:criterion} to  $f_l^\boxdot(z)=e^z f(z^2)$ with $f(z)=g_l^-(\sqrt{z})$ gives:

\begin{corollary}\label{cor:f_boxdot_Hadmiss}
The entire function $f^\boxdot_l(z)$ is $H$-admissible.
\end{corollary}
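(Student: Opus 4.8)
The plan is to recognize $f^\boxdot_l$ in the product shape governed by the $H$-admissibility criterion Thm.~\ref{thm:criterion} and then to check that criterion's two hypotheses from facts already established. By \eqref{eq:gen_from_g}, $f^\boxdot_l(z)=e^z g^-_l(z)$; with $f(z):=g^-_l(\sqrt z)$, the entire function singled out in Thm.~\ref{thm:gen_fun_zeros}, this reads $f^\boxdot_l(z)=e^z f(z^2)$, exactly the form to which Thm.~\ref{thm:criterion} applies. (For $l=1$ one simply has $f^\boxdot_1(z)=e^z$, which is $H$-admissible by inspection, so we may assume $l\geq 2$.)

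Two things must then be verified. The structural and zero-distribution hypothesis is delivered verbatim by Thm.~\ref{thm:gen_fun_zeros}: $f=g^-_l(\sqrt z)$ is entire of genus zero and, for every fixed $\epsilon<\pi/2$, has at most finitely many zeros in the sector $|\arg z\,|\leq \pi/2+\epsilon$, so in particular $f$ is $H$-admissible. The growth hypothesis comes from the Weyl-integration and Laplace-method asymptotics \eqref{eq:gen_large_z}, which give $f(z^2)=g^-_l(z)=c\,z^\nu e^{\tau z}\big(1+O(z^{-1})\big)$ uniformly in $|\arg z\,|\leq \pi/2-\delta$ with positive exponential rate $\tau=l-1$; by Thm.~\ref{thm:born} this translates into the auxiliary-function asymptotics $a(r)=\frac{\tau}{2}\sqrt r+\frac{\nu}{2}+O(r^{-1/2})$, $b(r)=\frac{\tau}{4}\sqrt r+O(r^{-1/2})$ recorded in \eqref{eq:aux_gen}. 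Feeding these two inputs into Thm.~\ref{thm:criterion} then gives that $f^\boxdot_l(z)=e^z f(z^2)$ is $H$-admissible, which is the assertion.

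It is worth saying where the substance sits, and hence why both the prefactor $e^z$ and the criterion Thm.~\ref{thm:criterion} are genuinely needed. The function $f(z^2)=g^-_l(z)$ is \emph{not} $H$-admissible on its own: since $f$ has nonnegative Taylor coefficients, $|f(z^2)|$ attains its maximum on a circle $|z|=r$ at $z=-r$ exactly as it does at $z=r$, so Hayman's minor-arc condition fails outright. Multiplying by $e^z$ removes this two-peak degeneracy: on $|z|=r$ one has $|e^z f(z^2)|\leq e^{r\cos\theta}f(r^2)$ whereas $f^\boxdot_l(r)=e^r f(r^2)$, so away from the positive saddle $z=r$ the factor $e^{r(\cos\theta-1)}$ supplies uniform exponential damping and in particular kills the would-be peak at $z=-r$. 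The work packaged in Thm.~\ref{thm:criterion} is then the bookkeeping that (i) confirms the Hayman variance function of $e^z f(z^2)$ tends to infinity, (ii) upgrades the near-saddle asymptotics $f(z^2)\sim c\,z^\nu e^{\tau z}$ — equivalently the auxiliary-function bounds \eqref{eq:aux_gen} — to the Gaussian approximation on the major arc, and (iii) uses the finitely-many-zeros property from Thm.~\ref{thm:gen_fun_zeros} to keep $f(z^2)$ zero-free in the pertinent sector. The one real obstacle is therefore not in this corollary but in proving Thm.~\ref{thm:criterion}; granting that, the present argument is just the routine hypothesis check above.
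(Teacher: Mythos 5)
Your argument is exactly the paper's proof of this corollary: write $f_l^\boxdot(z)=e^z f(z^2)$ with $f(z)=g_l^-(\sqrt{z})$, take the $H$-admissibility of $f$ from Thm.~\ref{thm:gen_fun_zeros}, read off the power-law capture condition $b(r)=\tfrac{\tau}{4}\sqrt{r}+O(r^{-1/2})$ from \eqref{eq:aux_gen}, and invoke Thm.~\ref{thm:criterion}. Your separate disposal of $l=1$ (where $\tau=l-1=0$, so Thm.~\ref{thm:born} would not apply and $g_1^-\equiv 1$ is not $H$-admissible, but $f_1^\boxdot(z)=e^z$ is $H$-admissible directly) is a small extra precaution the paper leaves implicit; otherwise the two arguments coincide, with your remaining paragraphs being commentary on why the factor $e^z$ and the criterion are needed rather than new proof content.
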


\subsection{Stirling- and Regev-type formulae} As an immediate application of the $H$-ad\-mis\-si\-bi\-li\-ty of the generating functions $f^\boxslash_l(\sqrt{2z})$, $f^\boxbslash_l(\sqrt{2z})$ and $f_l^\boxdot(z)$, following the steps in our previous work \cite{arxiv.2206.09411} on the general permutation case we get Stirling-type formulae \eqref{eq:stirling} for the discrete probability distributions $p_\varoast(n;l)$.
Accurate numerical evaluations of such formulae are obtained from rewriting the generating functions in terms of the hard-edge scaling limits:
\[
\begin{aligned}
f_l^\boxslash(\sqrt{2r}) &= e^{r} E_4^\text{hard}(8r;l),\\*[1mm]
f_l^\boxbslash(\sqrt{2r}) &= e^{r} E_1^\text{hard}(8r;l),\\*[1mm]
f_l^\boxdot(r) &= e^{r+r^2/2} E_1(4r^2;(l-1)/2).
\end{aligned}
\]
We skip the details since the Stirling-type formulae are now superseded, in terms of ease of practical use and accuracy, by the asymptotic expansions of Sect.~\ref{sect:depoisson}. 

We content ourselves with stating, analogously to the discussion in \cite[§2.3]{arxiv.2206.09411}, derived from the normal approximation \eqref{eq:CLT} by using the leading terms \eqref{eq:aux_gen} of the asymptotic expansions of the auxiliary functions, the following Regev-type formulae as $n\to \infty$ for {\em fixed}~$l$:%
\begin{subequations}\label{eq:regev}
\begin{align}
\big|\{L_{n}^\boxslash \leq l\}\big| &= 2 \cdot \frac{\Gamma(1/2)\cdot \Gamma(2/2) \cdots \Gamma(l/2) \cdot l^{2n+l(l-1)/4}}{\pi^{l/2} 2^{l} (2n)^{l(l-1)/4}} (1+ o(1)),\\*[2mm]
\big|\{L_{n}^\boxbslash \leq 2l\}\big| &= 2\cdot\frac{1!\cdot 3! \cdots (2l-1)! \cdot (2l)^{2n+l^2+l/2}}{\pi^{l/2} 2^{l^2} (2n)^{l^2+l/2}} (1+ o(1)),\\*[2mm]
\big|\{L_{n}^\boxdot \leq l\}\big| &= \frac{\Gamma(1/2)\cdot \Gamma(2/2) \cdots \Gamma(l/2) \cdot l^{n+l(l-1)/4}}{\pi^{l/2} n^{l(l-1)/4}} (1+ o(1)).
\end{align}
\end{subequations}
For the same reasons as in \cite[§2.3]{arxiv.2206.09411}, these formulae are accurate for $l \ll n^{1/4}$ only,
a range which belongs well to the left tail of the discrete distributions $p_\varoast(n;l)$. Whereas the first two  appear to be new, the third one was previously obtained by Regev \cite[Eq.~(F.4.5.1)]{MR625890}, cf. also~\cite[Fn.~21, Rem.~2.4]{arxiv.2206.09411}.

\section{Expansion of the Hard-to-Soft Edge Transition}\label{sect:hard-to-soft}

In this section we prove expansions of the hard-to-soft edge transition law \eqref{eq:BF2003} in the cases $\beta=1$ and $\beta=4$. As in \cite{arxiv.2205.05257}, we start with a particularly convenient representation of the hard- and soft-edge probabilities in terms of Fredholm determinants,\footnote{To establish the hard-to-soft edge transition limit  \eqref{eq:BF2003}, Borodin and Forrester \cite[Prop.~5]{MR1986402} had used a different kernel for the $\beta=1$ case and dealt with $\beta=4$ by using formulae similar to \eqref{eq:E4cosh}.} namely \cite[Eqs.~(3.11/13) and Cor.~1]{MR2229797}
\begin{equation}\label{eq:Ebeta}
\begin{gathered}
E_\beta^{\text{hard}}(s;\nu_\beta)\big|_{\beta=1} = \det(I - V_\nu)\big|_{L^2(0,\sqrt{s})}, \quad V_\nu(x,y) = \frac12 J_\nu(\sqrt{xy}),\\*[2mm]
E_ \beta^{\text{hard}}(s;\nu_\beta)\big|_{\beta=4} = \frac{1}{2}\left(\det(I - V_\nu)\big|_{L^2(0,\sqrt{s})} + \det(I + V_\nu)\big|_{L^2(0,\sqrt{s})} \right),
\end{gathered}
\end{equation}
and \cite[Eqs.~(33/35)]{MR2165698}
\begin{equation}\label{eq:Fbeta}
\begin{gathered}
F_1(s) = \det(I - V_{\Ai})\big|_{L^2(s,\infty)}, \quad V_{\Ai}(x,y) = \frac12 \Ai\left(\frac{x+y}{2}\right),\\*[2mm]
F_4(s) = \frac{1}{2}\left(\det(I - V_{\Ai})\big|_{L^2(s,\infty)} +  \det(I + V_{\Ai})\big|_{L^2(s,\infty)}\right).
\end{gathered}
\end{equation}
Following the discussion of the case $\beta=2$ in \cite[§3]{arxiv:2301.02022}, we introduce the quantity
\begin{equation}\label{eq:hnu}
h_\nu := 2^{-1/3} \nu^{-2/3}
\end{equation}
and study expansions in powers of $h_\nu$ as $h_\nu \to 0^+$. The transform $s=\phi_\nu(t)$ used in the transition limit can briefly be written as
\begin{equation}\label{eq:phitrans}
\phi_\nu(t) = \omega_\nu(t)^2, \quad \omega_\nu(t) = \nu(1-h_\nu t).
\end{equation}

\begin{theorem}\label{thm:hard2soft} Let be $\beta = 1$ or $\beta = 4$. There holds the expansion
\begin{equation}\label{eq:hard2soft}
E_\beta^{\text{\rm hard}}(\phi_\nu(t);\nu_\beta) = F_\beta(t) + \sum_{j=1}^m E_{\beta,j}(t) h_\nu^j + h_\nu^{m+1}\cdot O(e^{-3t/4})
\end{equation}
which is uniformly valid when $t_0\leq t < h_\nu^{-1}$ as $h_\nu\to0^+$, $m$ being any fixed non-negative integer and $t_0$ any fixed real number. Preserving uniformity, the expansion can be repeatedly differentiated w.r.t. the variable $t$.
The $E_{\beta,j}$ are certain smooth functions; the first three are
\begin{subequations}\label{eq:F_beta_j}
\begin{align}
E_{\beta,1}(t) &= \frac{3t^2}{10} F_\beta'(t) - \frac{2}{5} F_\beta''(t)\label{eq:F1}\\
\intertext{and, subject to the linear form hypothesis below,}
E_{\beta,2}(t) &= \Big(\frac{9}{175} + \frac{32t^3}{175}\Big) F_\beta'(t) + \Big(-\frac{32t}{175} + \frac{9t^4}{200}\Big) F_\beta''(t) - \frac{3t^2}{25} F_\beta'''(t) + \frac{2}{25} F_\beta^{(4)}(t),\label{eq:F2}\\*[2mm]
E_{\beta,3}(t) &=  \Big(\frac{268 t}{7875} + \frac{1037 t^4}{7875}\Big)  F_\beta'(t)
+\Big( -\frac{33t^2}{350}  + \frac{48t^5}{875} \Big) F_\beta''(t)  \\*[1mm]
&\quad +\Big(-\frac{578}{7875} -\frac{16 t^3}{125} + \frac{9 t^6}{2000}\Big) F_\beta'''(t)
 +\Big(\frac{64t}{875} -\frac{9 t^4}{500} \Big)F_\beta^{(4)}(t)\notag\\*[1mm]
&\quad +\frac{3 t^2 }{125}F_\beta^{(5)}(t)
-\frac{4}{375} F_\beta^{(6)}(t).\notag
\end{align}
\end{subequations}
\end{theorem}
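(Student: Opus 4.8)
The plan is to exploit the Fredholm determinant representations \eqref{eq:Ebeta} and \eqref{eq:Fbeta}, which express both sides of \eqref{eq:hard2soft} through determinants of the operators $V_\nu$ and $V_{\Ai}$ --- the same analytic situation that was handled for $\beta=2$ in \cite[§3]{arxiv:2301.02022}. Since $E_1^{\text{\rm hard}}$ and $F_1$ are the single minus-determinants while $E_4^{\text{\rm hard}}$ and $F_4$ are their symmetric combinations with both signs, it suffices to prove, for each choice of sign, an asymptotic expansion of $\det(I\mp V_\nu)\big|_{L^2(0,\sqrt{\phi_\nu(t)})}$ around $\det(I\mp V_{\Ai})\big|_{L^2(t,\infty)}$; the cases $\beta=1$ and $\beta=4$ then follow by linearity, the relabelling between the Bessel index $\nu$ and the parameter $\nu_\beta$ being already absorbed into \eqref{eq:Ebeta}.

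The analytic engine is the classical uniform Airy-type asymptotics of $J_\nu(\nu z)$ near its turning point $z=1$: a full asymptotic expansion with an Airy function as leading term, its corrections organized in powers of $\nu^{-2/3}$ and valid uniformly for $z\in(0,1]$. First I would substitute $x=\omega_\nu(\xi)=\nu(1-h_\nu\xi)$, which sends the interval $0<x<\sqrt{\phi_\nu(t)}=\omega_\nu(t)$ to $t<\xi<h_\nu^{-1}$ and turns $\det(I\mp V_\nu)$ into the determinant of a conjugated operator $\widetilde V_\nu$ on $L^2(t,h_\nu^{-1})$. Feeding in the Bessel expansion, expanding $\sqrt{(1-h_\nu\xi)(1-h_\nu\eta)}$ and the associated Olver variable $\zeta=\zeta(z)$, and collecting the Jacobian factors $\propto\nu^{1/3}$ (which exactly compensate the $\nu^{-1/3}$ size of the Bessel function at the turning point), one obtains, in trace-class norm on $L^2(t_0,\infty)$ --- the further restriction to $L^2(t,\infty)$ for $t\ge t_0$ being harmless ---
\[
\widetilde V_\nu \;=\; V_{\Ai}+\sum_{j=1}^m h_\nu^{\,j}A_j+h_\nu^{\,m+1}R_{\nu,m},
\]
where the correction kernels $A_j$ are explicit polynomials in $t$ times derivatives of $\Ai$ and both $A_j$ and $R_{\nu,m}$ decay super-exponentially in $\xi+\eta$; this decay is the source of the $h_\nu^{m+1}O(e^{-3t/4})$ control and of the uniformity up to $t<h_\nu^{-1}$.

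Lifting the kernel expansion to the determinant is routine trace-class perturbation theory: combining the Lipschitz estimate $|\det(I+A)-\det(I+B)|\le\|A-B\|_1\exp(1+\|A\|_1+\|B\|_1)$ with the identity $\frac{d}{d\varepsilon}\det(I+A(\varepsilon))=\det(I+A(\varepsilon))\,\tr\big((I+A(\varepsilon))^{-1}A'(\varepsilon)\big)$ and Taylor-expanding at $\varepsilon=h_\nu=0$ shows that the coefficient of $h_\nu^j$ in $\det(I\mp\widetilde V_\nu)$ is a finite sum of terms $\det(I\mp V_{\Ai})\cdot\tr(\cdots)$, the dots being products of the resolvent $(I\mp V_{\Ai})^{-1}$ with the $A_1,\dots,A_j$. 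These are smooth in $t$ and inherit the $O(e^{-3t/4})$ decay, which yields \eqref{eq:hard2soft} with smooth coefficients $E_{\beta,j}$ (initially in operator-trace form), uniformly on $t_0\le t<h_\nu^{-1}$. Repeated differentiability in $t$ with preserved uniformity follows the same way, since $t$ enters the conjugated kernel and the integration domain smoothly, so one may differentiate under the determinant --- or, equivalently, use the boundary-derivative formula for Fredholm determinants and expand its right-hand side.

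What remains is to put the $E_{\beta,j}$ into the explicit shape \eqref{eq:F_beta_j}. Here I would import the known $\beta=2$ result of \cite[§3]{arxiv:2301.02022}, that the corresponding coefficients are rational-polynomial combinations of derivatives of $F_2$, and transport it to $\beta=1,4$ using the Tracy--Widom representation of $F_1,F_4$ via the Hastings--McLeod solution of Painlevé~II together with the factorization identities tying $\det(I\pm V_\nu)$ and $\det(I\pm V_{\Ai})$ to the $\beta=2$ hard- and soft-edge distributions. Matching the expansion already established against those identities, order by order, pins down $E_{\beta,1}$ unconditionally as in \eqref{eq:F1} --- in particular its independence of $\beta$ --- and, for $j\ge2$, under the linear form hypothesis that $E_{\beta,j}$ is an $\R[t]$-linear combination of $F_\beta',\dots,F_\beta^{(2j)}$, determines the coefficients uniquely and produces \eqref{eq:F2} and the stated $E_{\beta,3}$. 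I expect this last step to be the genuine obstacle: absent the linear form hypothesis, the operator-trace expressions for $E_{\beta,j}$ with $j\ge2$ show no evident collapse to the clean derivative form, and the hypothesis itself is backed only by the numerical evidence of Appendix~\ref{app:evidence}.
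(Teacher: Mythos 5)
Your overall route is the paper's own: the change of variables $s=\omega_\nu(\xi)$ plus the Olver-type uniform Bessel-to-Airy expansion yields exactly the kernel expansion of Lemma~\ref{lem:Vkernexpan} (with the $\Psi$-expansion of Lemma~\ref{lemma:Psi} handling the square root), Theorem~\ref{thm:detexpan} lifts it to the determinants on $L^2(t,h_\nu^{-1})$ (the truncation at $h_\nu^{-1}$ contributing an extra $e^{-h_\nu^{-1}/2}O(e^{-t/2})$ term that is absorbed into $h_\nu^{m+1}O(e^{-3t/4})$, and a uniform resolvent bound for $(I\mp V_{\Ai})^{-1}$ being needed along the way), and the explicit forms for $j\geq 2$ are obtained, conditionally on the linear form hypothesis, from the factorizations \eqref{eq:detfactor} together with the $\Q[t][q,q']$-algebra of Tracy--Widom theory, solving the $\Q[t]$-linear systems such as \eqref{eq:exampleSystem}; note that obtaining a single formula valid for both $\beta=1$ and $\beta=4$ also uses the (computationally observed) coincidence $p_{+,jk}=p_{-,jk}$, without which $E_{4,j}=\tfrac12(E_{+,j}+E_{-,j})$ would not collapse to a combination of derivatives of $F_4$ alone.

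The genuine gap is your claim that matching order by order against the factorization identities ``pins down $E_{\beta,1}$ unconditionally.'' At order $h_\nu$ the factorization gives only the single relation \eqref{eq:EpmRec} with $m=1$, namely $E_{+,1}/F_+ + E_{-,1}/F_- = E_{2,1}/F_2$: one equation in the two unknown functions $E_{+,1}$, $E_{-,1}$, which cannot by itself separate the two sign choices and hence cannot deliver $E_{1,1}=E_{+,1}$ and $E_{4,1}=\tfrac12(E_{+,1}+E_{-,1})$. If you instead impose the linear-form ansatz already at $j=1$ (which does make the system \eqref{eq:exampleSystem} uniquely solvable), then the first-order formula becomes conditional, contrary to the unconditional status of \eqref{eq:F1} asserted in the theorem. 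The paper closes exactly this hole by importing an external rigorous input: the Baik--Jenkins Riemann--Hilbert result \eqref{eq:BaikJenkinsBeta1} for $F_{1,1}$, converted through \eqref{eq:Ebeta1Fbeta1} and \eqref{eq:FfromG} into \eqref{eq:EplusOne} for $E_{+,1}$; only then does the factorization, combined with the algebraic independence of $q,q'$ over $\Q[t]$, determine $E_{-,1}$ (Eq.~\eqref{eq:EminusOne}) and hence $E_{4,1}$. To repair your argument you must either cite that result, or supply an independent unconditional derivation of $E_{+,1}$ (e.g.\ by simplifying the trace expression $E_{+,1}=-F_+\,\tr\big((I-V_{\Ai})^{-1}\mathbf K_1\big)$ within Tracy--Widom theory), or downgrade \eqref{eq:F1} to a statement conditional on the hypothesis.
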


\begin{remark}
A similar (but unconditional) result \cite[Thm.~3.1]{arxiv:2301.02022} holds for $\beta = 2$. It turns out that the polynomial coefficients of the cases $\beta=1,4$ and of $\beta=2$, though generally different, share a common sparsity pattern. Algebraic relations between them are given in \eqref{eq:hyposol}.
\end{remark}

\begin{remark}\label{rem:thinning}  Thm.~\ref{thm:hard2soft} and its $\beta=2$ counterpart \cite[Thm.~3.1]{arxiv:2301.02022} admit some  generalizations:
\begin{itemize}\itemsep=5pt
\item  
In a forthcoming study, we will address the hard-to-soft edge transition with a thinning rate as in \cite[§3.3]{MR3647807}. In fact, the expansion terms keep being linear combinations of higher order derivatives of the respective limit laws with rational polynomial coefficients -- which are, in addition to it, {\em independent} of the thinning rate. For $\beta=1,4$, the rationale of such an independence result is sketched in Appendix~\ref{app:thinning}, with supporting numerical evidence provided in Appendix \ref{app:num_evidence}.

\item In \cite{arXiv:2403.07628}, we have established a similar structure for the large-matrix limits of the $\beta=1,2,4$ Gaussian and Laguerre (Wishart) ensembles at the soft edge.
\end{itemize}
\end{remark}

\begin{figure}[tbp]
\includegraphics[width=0.325\textwidth]{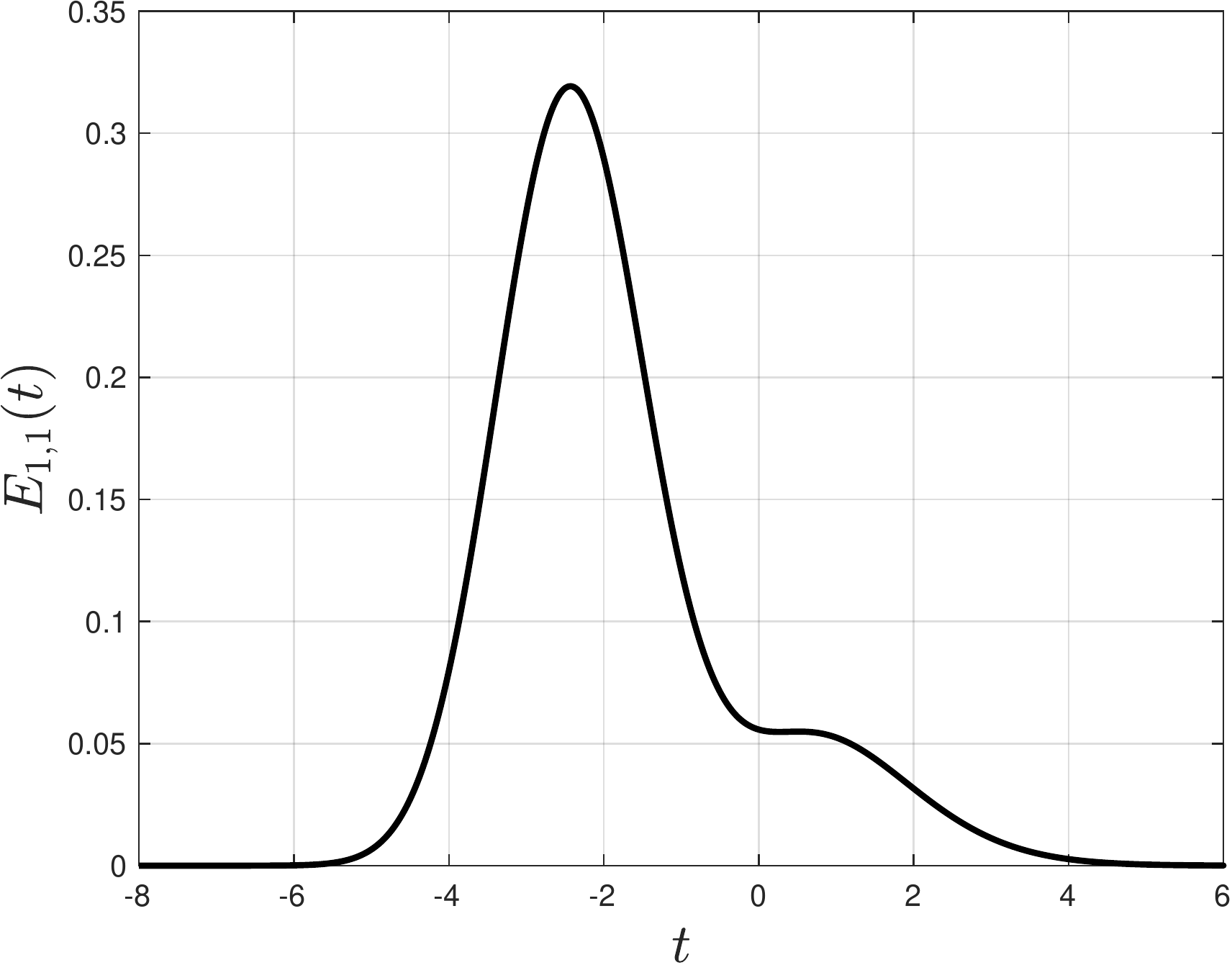}\hfil\,
\includegraphics[width=0.325\textwidth]{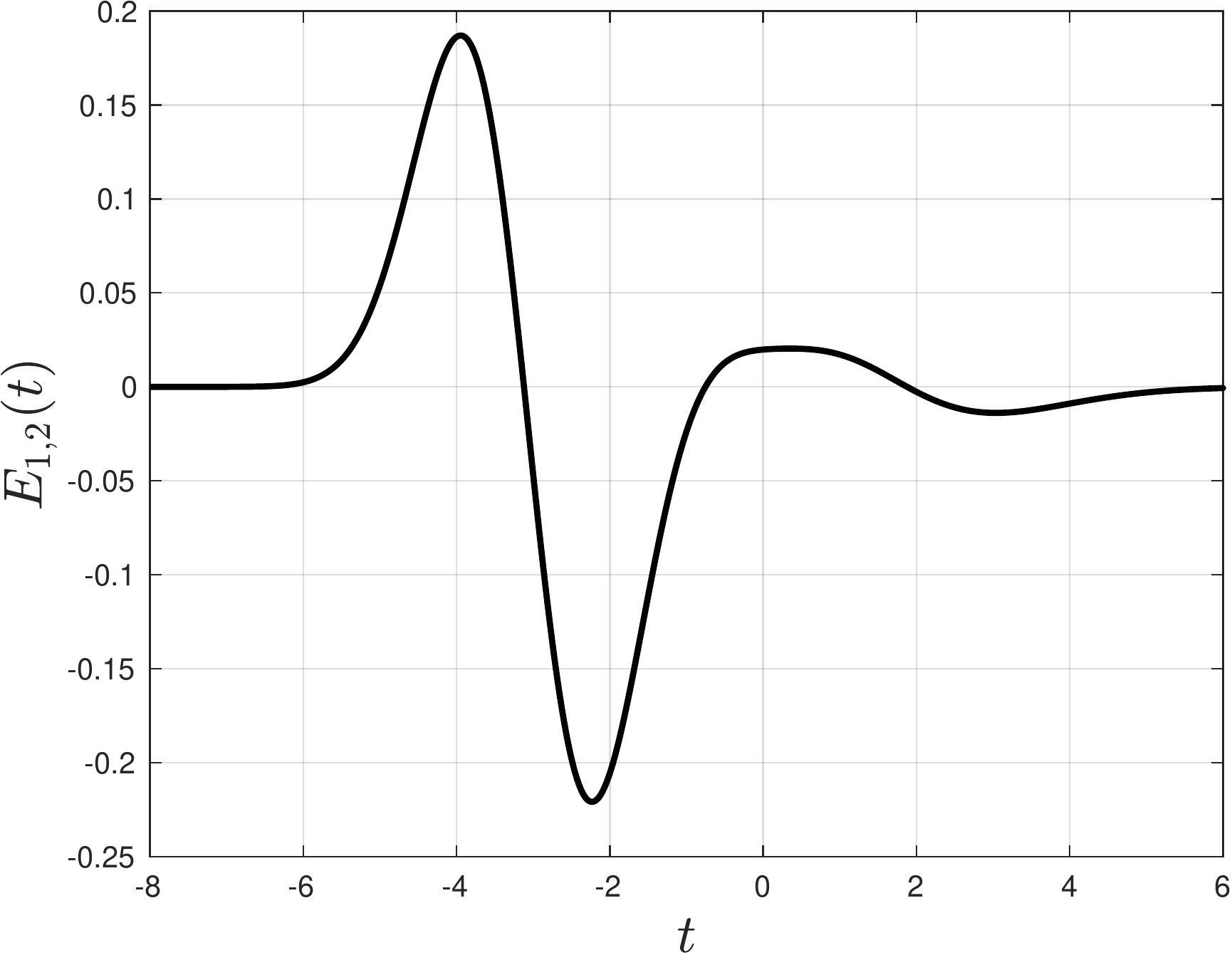}\hfil\,
\includegraphics[width=0.325\textwidth]{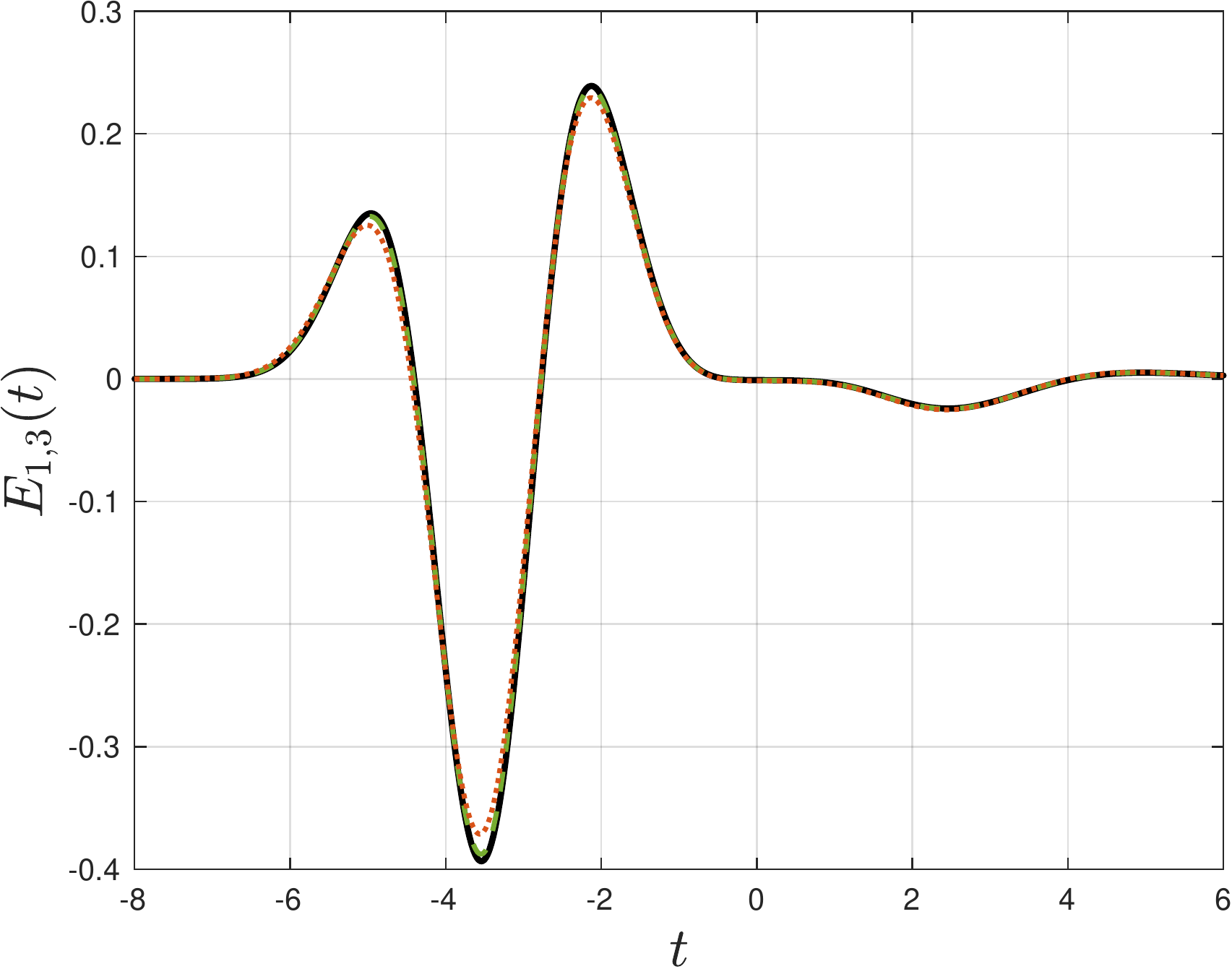}\\
\includegraphics[width=0.325\textwidth]{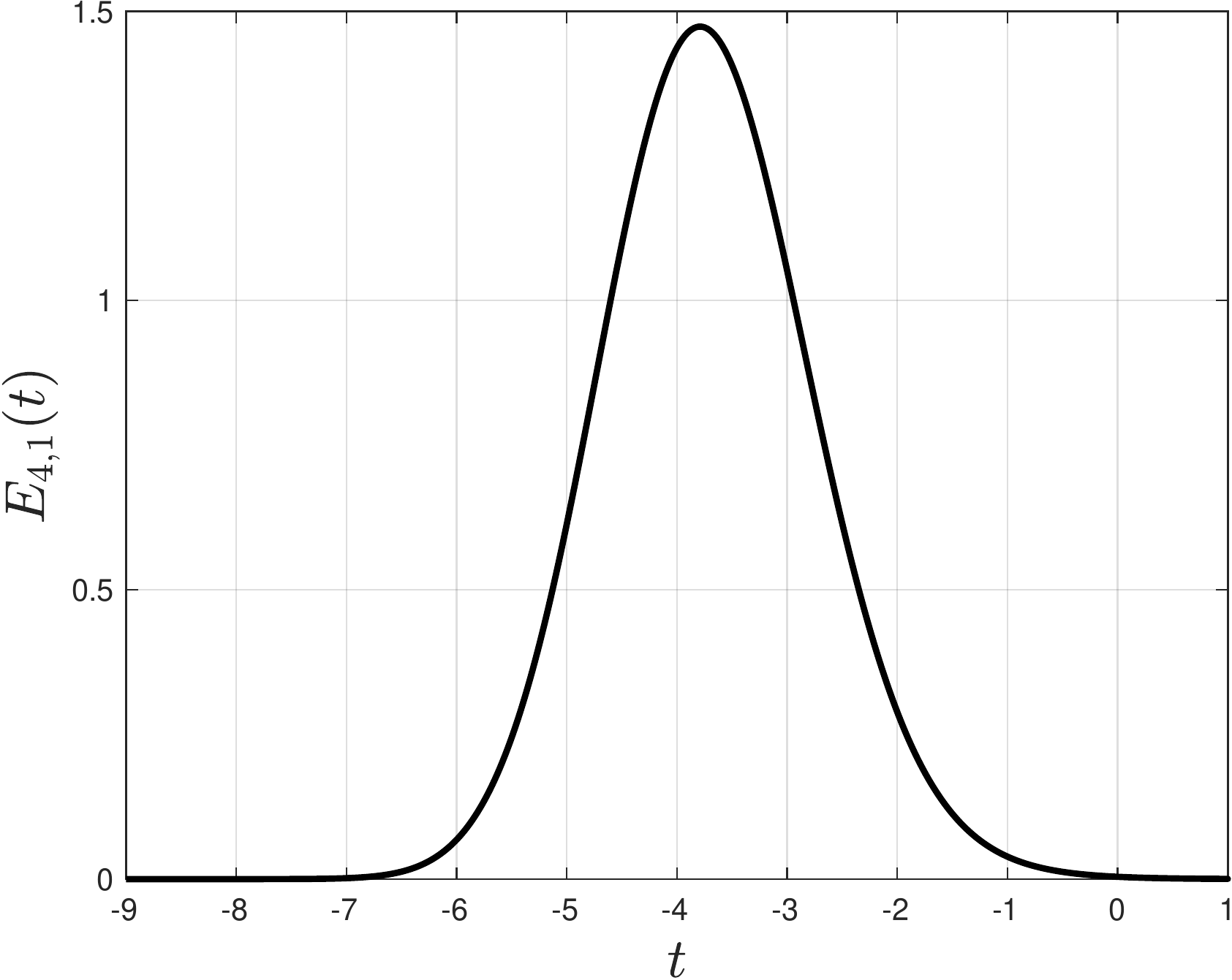}\hfil\,
\includegraphics[width=0.325\textwidth]{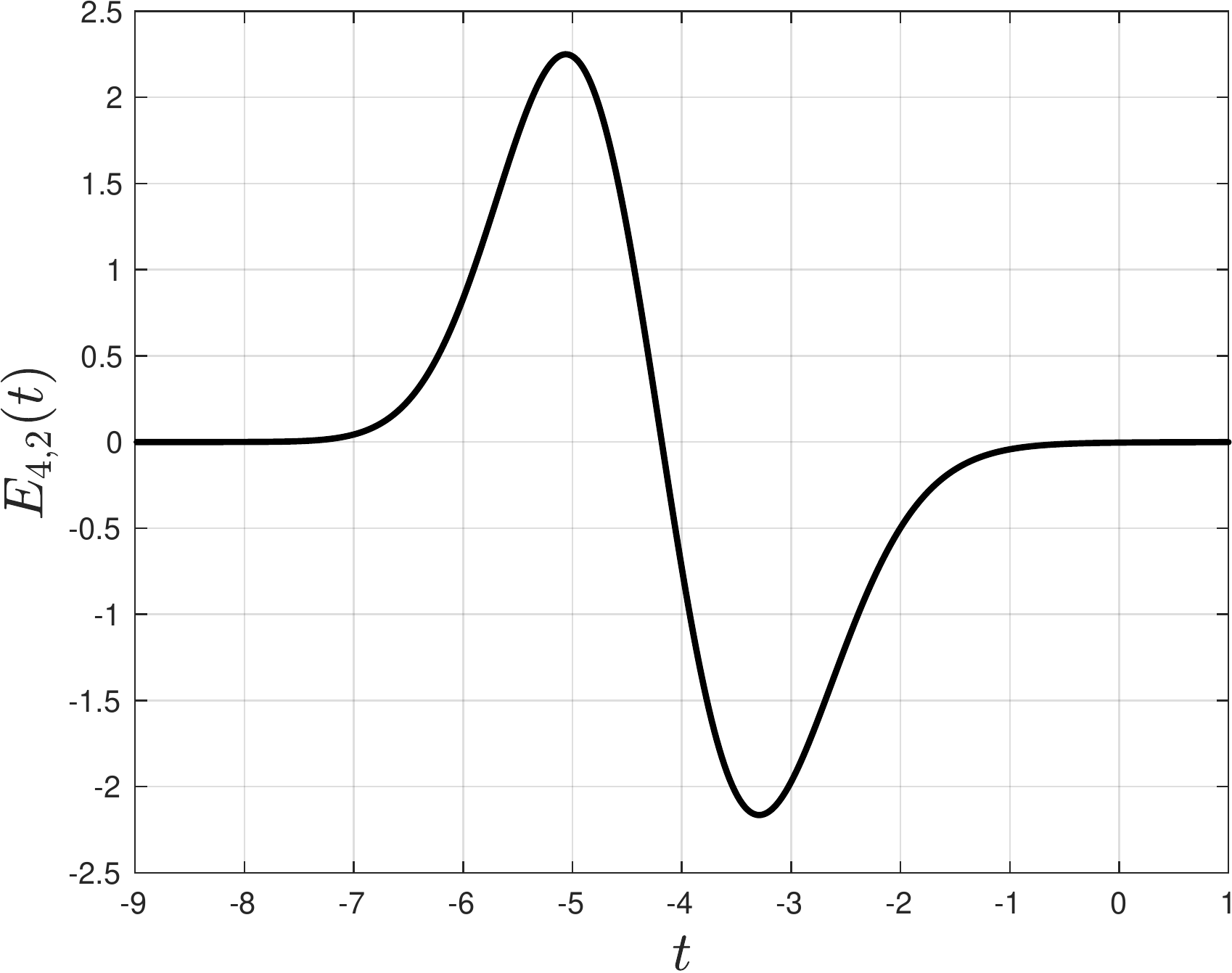}\hfil\,
\includegraphics[width=0.325\textwidth]{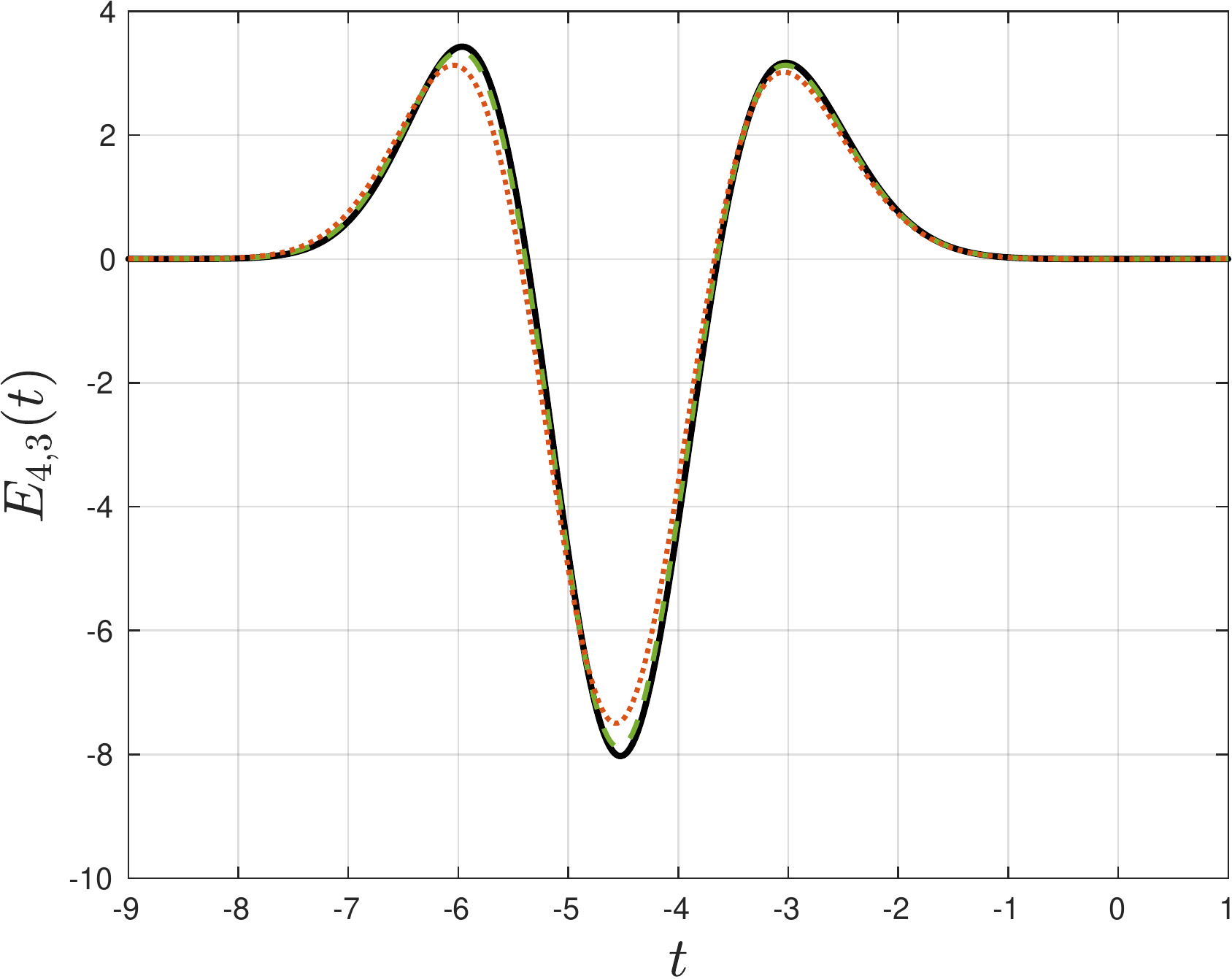}
\caption{{\footnotesize Top row $\beta=1$; bottom row $\beta=4$. Plots of $E_{\beta,1}(t)$ (left panels) and 
$E_{\beta,2}(t)$ (middle panels) as in \eqref{eq:F_beta_j}.
The right panels show $E_{\beta,3}(t)$ as in \eqref{eq:F_beta_j} (black solid line) with the approximation \eqref{eq:Fbeta3} for $\nu=100$ (red dotted line) and $\nu=800$ (green dashed line): the close agreement validates the functional forms given in \eqref{eq:F_beta_j}. Details about the numerical method can be found in \cite{MR2895091, MR2600548,arxiv.2206.09411,MR3647807}.}}
\label{fig:hard2soft}
\end{figure}

The intricate formulae in \eqref{eq:F_beta_j} (based on the linear form hypothesis) can be validated numerically: Fig.~\ref{fig:hard2soft} plots the functions $E_{\beta,j}(t)$ ($j=1,2,3$) next to the approximation
\begin{equation}\label{eq:Fbeta3}
E_{\beta,3}(t)\approx h_\nu^{-3}\cdot \left(E_\beta^\text{hard}(\phi_\nu(t);\nu_\beta) - F_\beta(t) - E_{\beta,1}(t)h_\nu- E_{\beta,2}(t)h_\nu^2 \right)
\end{equation}
for $\nu=100$ and $\nu=800$; the close matching is a strong testament of the correctness of \eqref{eq:F_beta_j}.

\medskip

The proof of Thm.~\ref{thm:hard2soft} is split into several steps and will be concluded in Sects.~\ref{sect:generalform} and \ref{sect:functionalform}.

\subsection{Kernel expansions} We start with an auxiliary result.

\begin{lemma}\label{lemma:Psi} Define for $h>0$ and $x,y < h^{-1}$ the function
\begin{equation}\label{eq:Psi}
\Psi(x,y;h) := h^{-1} \left(1- \sqrt{(1-h x)(1-h y)}\,\right).
\end{equation}
It satisfies the bound
\begin{equation}\label{eq:PsiBound}
\frac{x+y}{2} \leq \Psi(x,y;h) < h^{-1}
\end{equation}
and has a convergent power series expansion of the form
\begin{subequations}\label{eq:PsiExpan}
\begin{equation}
\Psi(x,y;h) = \frac{x+y}{2} + (x-y)^2 \sum_{k=1}^\infty r_k(x,y) h^k,
\end{equation}
the $r_k(x,y)$ being homogeneous symmetric rational polynomials of degree $k-1$, starting with
\begin{equation}
r_1(x,y) = \frac{1}{8},\quad r_2(x,y) = \frac{1}{16}(x+y), \quad r_3(x,y) = \frac{1}{128}\big(5(x^2+y^2)+6xy\big).
\end{equation}
\end{subequations}
The series converges uniformly for $x,y<(1-\delta)h^{-1}$, $\delta$ being any fixed real positive number.
\end{lemma}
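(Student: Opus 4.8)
The plan is to reduce the whole lemma to a single closed-form identity. Setting $a:=\sqrt{1-hx}$ and $b:=\sqrt{1-hy}$ (both positive, since $x,y<h^{-1}$), one has $\Psi(x,y;h)=h^{-1}(1-ab)$ while $\tfrac{x+y}{2}=h^{-1}\bigl(1-\tfrac{a^2+b^2}{2}\bigr)$, so that
\[
\Psi(x,y;h)-\frac{x+y}{2}=\frac{1}{2h}\,(a-b)^2=\frac{h\,(x-y)^2}{2\bigl(\sqrt{1-hx}+\sqrt{1-hy}\,\bigr)^2},
\]
the last step by rationalizing $a-b=\dfrac{(1-hx)-(1-hy)}{a+b}=\dfrac{h(y-x)}{a+b}$. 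The bound \eqref{eq:PsiBound} is then immediate: the right-hand side is $\geq0$ (this is just AM--GM for $1-hx,1-hy$), which gives the lower bound, and $\Psi<h^{-1}$ follows from $ab>0$.

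Next I would read off \eqref{eq:PsiExpan} from this identity by expanding $G(x,y;h):=\bigl(\sqrt{1-hx}+\sqrt{1-hy}\,\bigr)^{-2}$ in powers of $h$. Since $\sqrt{1-hu}=\sum_{i\geq0}\binom{1/2}{i}(-u)^i h^i$ has coefficients in $\Q[u]$, the sum $\sqrt{1-hx}+\sqrt{1-hy}$ has coefficients in $\Q[x,y]$ with constant term $2$, so forming the reciprocal and squaring yields $G(x,y;h)=\sum_{m\geq0}e_m(x,y)h^m$ with $e_0=\tfrac14$ and all $e_m\in\Q[x,y]$; hence $\Psi-\tfrac{x+y}{2}=(x-y)^2\sum_{k\geq1}\tfrac12 e_{k-1}(x,y)h^k$, so $r_k=\tfrac12 e_{k-1}$ has rational coefficients and is symmetric (as $G$ is). Its homogeneity of degree $k-1$ drops out of the scaling invariance $G(\lambda x,\lambda y;h/\lambda)=G(x,y;h)$, which forces $e_m$ homogeneous of degree $m$; alternatively, the factor $(x-y)^2$ can be seen from the fact that $\Psi-\tfrac{x+y}{2}$ is symmetric in $x,y$ and vanishes on the diagonal (indeed $\Psi(x,x;h)=x$). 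The explicit values $r_1,r_2,r_3$ then come from computing $e_0,e_1,e_2$ with the first few binomial coefficients $\sqrt{1-hu}=1-\tfrac{hu}{2}-\tfrac{h^2u^2}{8}-\cdots$, which gives $G=\tfrac14+\tfrac18(x+y)h+\tfrac1{64}\bigl(5(x^2+y^2)+6xy\bigr)h^2+\cdots$.

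For the convergence I would argue complex-analytically. When $|hx|\leq1-\delta$ and $|hy|\leq1-\delta$, the arguments $1-hx,1-hy$ lie in the disk $\{|z-1|\leq1-\delta\}\subset\{\Re z>0\}$, so the principal square roots are analytic with positive real part and $\sqrt{1-hx}+\sqrt{1-hy}$ is analytic and nonvanishing in $h$ on the disk $|h|\leq(1-\tfrac\delta2)/\max(|x|,|y|)$; a Cauchy estimate on a circle of that radius bounds $|e_m(x,y)|\leq C(\delta)\,\max(|x|,|y|)^m(1-\tfrac\delta2)^{-m}$, whence $|e_m(x,y)h^m|\leq C(\delta)\bigl((1-\delta)/(1-\tfrac\delta2)\bigr)^m$, a summable geometric bound independent of $(x,y)$. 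This gives absolute and uniform convergence on $\{|hx|\leq1-\delta,\ |hy|\leq1-\delta\}$, which in the regime of interest ($h\to0^+$, $x,y$ bounded below) is exactly the stated range $x,y<(1-\delta)h^{-1}$. The step that really needs care is this last one, i.e. identifying the correct region and getting a uniform bound on the tail: a more naive expansion in the combined variable $w=h(x+y)-h^2xy$ has radius of convergence only $1$ in $w$ and would fail for very negative $x$ or $y$, whereas the closed-form identity above—isolating the genuinely small quantity $h(x-y)^2$—is what makes the uniform estimate transparent.
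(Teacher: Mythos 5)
Your proof is correct, and it takes a genuinely different route from the paper's. The paper settles the lemma in a few lines by factoring $\Psi(x,y;h)=h^{-1}\bigl(1-\Phi(x,y;h)\,(1-h\tfrac{x+y}{2})\bigr)$ and importing from Lemma~3.2 of \cite{arxiv:2301.02022} the bound $0<\Phi\le 1$ together with the expansion $\Phi=1-(x-y)^2\sum_{k\ge 2}\hat r_k(x,y)h^k$; the stated coefficients then follow from the purely algebraic relation $r_k=\hat r_{k+1}-\tfrac12(x+y)\hat r_k$. You instead prove everything from scratch via the closed-form identity $\Psi-\tfrac{x+y}{2}=\tfrac{h(x-y)^2}{2}\bigl(\sqrt{1-hx}+\sqrt{1-hy}\,\bigr)^{-2}$, which yields both bounds at once and reduces \eqref{eq:PsiExpan} to the $h$-expansion of $G=(a+b)^{-2}$, with symmetry and rationality read off from the binomial series, homogeneity from the scaling invariance $G(\lambda x,\lambda y;h/\lambda)=G(x,y;h)$, and uniform convergence from a Cauchy estimate; your computed $e_0,e_1,e_2$ do reproduce $r_1,r_2,r_3$. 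What your argument buys is self-containedness and transparency — the genuinely small quantity is isolated explicitly — at the cost of redoing work the paper can simply cite; one routine detail worth recording is that the constant in your Cauchy estimate is uniform in $(x,y)$ because $\Re\bigl(\sqrt{1-hx}+\sqrt{1-hy}\,\bigr)\ge c(\delta)>0$ on the chosen $h$-circle. Your reading of the convergence claim as the two-sided condition $|hx|,|hy|\le 1-\delta$ (rather than the literal one-sided $x,y<(1-\delta)h^{-1}$) is the right one — for $x\le -h^{-1}$ the series in $h$ genuinely diverges, the nearest branch point of $\sqrt{1-hx}$ lying at $|h|=1/|x|$ — and it covers exactly the regime in which the lemma is invoked in the proof of Lemma~\ref{lem:Vkernexpan}, namely $t_0\le x,y\le\tfrac34 h_\nu^{-1}$ with $h_\nu\to 0^+$.
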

\begin{proof} We write
\[
\Psi(x,y;h) = h^{-1} \left(1- \Phi(x,y;h) \left(1- h \frac{x+y}{2}\right)\right),
\]
where, by \cite[Lemma~3.2]{arxiv:2301.02022}, the term $\Phi(x,y;h)$ satisfies the bound $0 < \Phi(x,y;h) \leq 1$ and has a convergent power series expansion of the form
\[
\Phi(x,y;h) = 1 - (x-y)^2 \sum_{k=2}^\infty \hat r_k(x,y) h^k,
\]
the $\hat r_k(x,y)$ being homogeneous symmetric rational polynomials of degree $k-2$ starting with
\[
\hat r_2(x,y) = \frac{1}{8},\quad \hat r_3(x,y) = \frac{1}{8}(x+y), \quad \hat r_4(x,y) = \frac{1}{128}\big(13(x^2+y^2)+22xy\big).
\]
The lemma follows from observing $r_k(x,y) = \hat r_{k+1}(x,y) - \frac{1}{2}(x+y) \hat r_k(x,y)$.
\end{proof}

\begin{lemma}\label{lem:Vkernexpan} The change of variables $s=\omega_\nu(t)$, mapping $t< h_\nu^{-1}$ monotonically decreasing to $s> 0$, induces the symmetrically transformed kernel  
\begin{subequations}\label{eq:Vkernexpan}
\begin{align}
\hat V_\nu(x,y) &:=\sqrt{\omega_\nu'(x) \omega_\nu'(y)} \, V_\nu(\omega_\nu(x), \omega_\nu(y)). \\
\intertext{There holds the kernel expansion}
\hat V_\nu(x,y) &= V_{\Ai}(x,y) + \sum_{j=1}^m K_j(x,y) h_\nu^j + h_\nu^{m+1}\cdot O\big(e^{-(x+y)/2}\big),
\end{align}
\end{subequations}
which is uniformly valid when $t_0 \leq x,y < h_\nu^{-1}$ as $h_\nu\to 0^+$, $m$ being any fixed non-negative integer and $t_0$ any fixed real number. Here the $K_j$, $j=1,2,\ldots$, are kernels of the form
\begin{equation}\label{eq:KjForm}
K_j(x,y) = p_j(x,y) \Ai\Big(\frac{x+y}{2}\Big)+q_j(x,y) \Ai'\Big(\frac{x+y}{2}\Big)
\end{equation}
where $p_{j}(x,y)$ and $q_{j}(x,y)$ are certain symmetric rational polynomials. If written in terms of the (scaled) elementary symmetric polynomials
\[
u = \frac{x+y}{2},\qquad v = xy,
\] 
the first three kernels are
\begin{subequations}\label{eq:K}
\begin{align}
K_1(x,y) &= \frac{u}{10}\Ai(u) + \left(\frac{2u^2}{5}-\frac{v}{4}\right)\Ai'(u),\label{eq:K1}\\*[1mm]
K_2(x,y) &= \left( \frac{4u^5}{25}-\frac{u^3v}{5}+\frac{u v^2}{16}+\frac{13u^2}{140}-\frac{v}{20}\right) \Ai(u) 
   +\left(\frac{4u^3}{7}-\frac{9uv}{20}+\frac{1}{70}\right) \Ai'(u),\label{eq:K2}\\*[1mm]
K_3(x,y) &= \left(\frac{1228 u^6}{2625}-\frac{24 u^4
   v}{35}+\frac{803 u^3}{6300}+\frac{21 u^2
   v^2}{80}-\frac{v^3}{96}-\frac{u v}{10}-\frac{1}{225}\right)\Ai(u)\label{eq:K3}\\*[1mm]
&    +\left(\frac{16 u^7}{375}-\frac{2 u^5
   v}{25}+\frac{u^3v^2}{20}+\frac{197 u^4}{225}-\frac{uv^3}{96}-\frac{253 u^2 v}{280}+\frac{v^2}{8}+\frac{37 u}{3150}\right) \Ai'(u).\notag
\end{align}
\end{subequations}
Preserving uniformity, the kernel expansion \eqref{eq:Vkernexpan} can repeatedly be differentiated w.r.t. $x$, $y$.
\end{lemma}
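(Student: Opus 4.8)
The plan is to reduce \eqref{eq:Vkernexpan} to the classical uniform Airy-type asymptotics of the Bessel function at its turning point and then to chase three nested expansions. First I would record that, by \eqref{eq:Ebeta}, $V_\nu(x,y) = \tfrac12 J_\nu(\sqrt{xy})$, while $\omega_\nu'(t)\equiv -\nu h_\nu$ so that $\sqrt{\omega_\nu'(x)\omega_\nu'(y)} = \nu h_\nu$; combining this with $\omega_\nu(x)\omega_\nu(y) = \nu^2(1-h_\nu x)(1-h_\nu y)$ and the definition \eqref{eq:Psi} of $\Psi$ gives the exact identity
\[
\hat V_\nu(x,y) = \frac{\nu h_\nu}{2}\, J_\nu\big(\nu(1 - h_\nu\Psi(x,y;h_\nu))\big), \qquad \nu = 2^{-1/2} h_\nu^{-3/2}.
\]
Everything is thus governed by $J_\nu(\nu z)$ with $z = 1 - h_\nu\Psi$ lying in $(0,1)$ just below the turning point $z=1$, where by \eqref{eq:PsiBound} we have $(x+y)/2 \leq \Psi < h_\nu^{-1}$.

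Next I would invoke the classical uniform Airy-type expansion of the Bessel function near its turning point (see, e.g., Olver, \emph{Asymptotics and Special Functions}, Ch.~11, or DLMF~\S10.20): for $0 < z \leq 1$,
\[
J_\nu(\nu z) = \Big(\frac{4\zeta}{1-z^2}\Big)^{1/4}\!\left[\frac{\Ai(\nu^{2/3}\zeta)}{\nu^{1/3}}\sum_{k=0}^{M-1}\frac{A_k(\zeta)}{\nu^{2k}} + \frac{\Ai'(\nu^{2/3}\zeta)}{\nu^{5/3}}\sum_{k=0}^{M-1}\frac{B_k(\zeta)}{\nu^{2k}} + R_M\right],
\]
where $\tfrac23\zeta^{3/2} = \log\frac{1+\sqrt{1-z^2}}{z} - \sqrt{1-z^2}$ defines $\zeta=\zeta(z)$ as an analytic function of $1-z$ near $z=1$ with $\zeta(1)=0$, $\zeta'(1)\neq0$, $A_0\equiv1$, $A_k,B_k$ analytic at $\zeta=0$, and $R_M$ is bounded by $O(\nu^{-2M})$ times the Airy envelope $|\Ai(\nu^{2/3}\zeta)| + \nu^{-4/3}|\Ai'(\nu^{2/3}\zeta)|$, uniformly for $z\in(0,1]$. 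Substituting $z = 1 - h_\nu\Psi$: a short computation gives $\zeta(z) = 2^{1/3}(1-z) + O((1-z)^2)$, and since $\nu^{2/3}\cdot 2^{1/3}h_\nu = 1$, $\nu h_\nu\nu^{-1/3}\cdot 2^{1/3} = 1$ and $\nu^{-2} = 2h_\nu^3$, each of $\nu^{2/3}\zeta$, the prefactor $(4\zeta/(1-z^2))^{1/4}$, and $A_k(\zeta),B_k(\zeta)$ becomes a convergent power series in $h_\nu$ with coefficients polynomial in $\Psi$ — in particular $\nu^{2/3}\zeta = \Psi + O(h_\nu\Psi^2)$ and the prefactor equals $2^{1/3}(1+O(h_\nu\Psi))$. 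Hence the leading term of $\hat V_\nu$ is $\tfrac12\Ai(\Psi)$, which under $\Psi\to(x+y)/2$ is exactly $V_{\Ai}$; the $\Ai'$ channel and the $A_k,B_k$ corrections first contribute at orders $h_\nu^1$ and $h_\nu^2, h_\nu^3$.

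The third expansion is the one from Lemma~\ref{lemma:Psi}, \eqref{eq:PsiExpan}: with $u=(x+y)/2$, $v=xy$ and $(x-y)^2 = 4u^2-4v$, one has $\Psi = u + (4u^2-4v)\sum_{k\geq1} r_k(x,y) h_\nu^k$. Inserting this into the $h_\nu$-series obtained above makes every coefficient a symmetric rational polynomial in $x,y$ multiplying some $\Ai^{(p)}(u)$; repeatedly applying the Airy equation $\Ai''(u) = u\,\Ai(u)$ collapses these to the canonical shape $p_j(x,y)\Ai(u) + q_j(x,y)\Ai'(u)$ of \eqref{eq:KjForm} with $p_j,q_j$ symmetric polynomials with rational coefficients, and carrying the bookkeeping to order three (most conveniently in a computer algebra system) reproduces \eqref{eq:K1}--\eqref{eq:K3}. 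For the remainder, choose $M$ with $2M\geq m+2$; then $R_M$ and all truncation tails carry a factor $\Ai(\nu^{2/3}\zeta)$ or $\Ai'(\nu^{2/3}\zeta)$ times a bounded amount, hence are $O(h_\nu^{m+1})$ uniformly on bounded $u$-ranges. For large $u$ up to the moving endpoint one uses that $\zeta(z)/(1-z)$ is bounded below by some $c>0$ on all of $(0,1]$ (it tends to $2^{1/3}$ as $z\to1$ and to $+\infty$ as $z\to0$), so $\nu^{2/3}\zeta \geq 2^{-1/3}c\,u$ because $1-z = h_\nu\Psi \geq h_\nu u$; thus $\Ai(\nu^{2/3}\zeta) = O(e^{-c'u^{3/2}}) = O(e^{-u})$, and the $\Ai(u),\Ai'(u)$ occurring in the already-identified $K_j$ decay the same way, so all terms are individually dominated by the stated remainder $h_\nu^{m+1}\cdot O(e^{-(x+y)/2})$.

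Finally, differentiability: Olver's expansion holds with the same uniform error bounds when $z$ ranges over a fixed complex neighbourhood of $(0,1]$ (all ingredients are analytic in $z$), and $x,y\mapsto\omega_\nu(x),\omega_\nu(y)$ are affine, so the whole expansion may be differentiated term by term in $x,y$; equivalently, $\hat V_\nu$ and the remainder extend analytically to a fixed complex strip around $[t_0,h_\nu^{-1})$ and one applies Cauchy's estimate on disks of radius $\asymp1$, which preserves both uniformity and the exponential decay of the remainder. The main obstacle is not any single step but the composition: one must check that through the three nested re-expansions — Olver's series in $\nu^{2/3}\zeta$, the series of $\zeta$ in $1-z$, and the series of $\Psi$ in $h_\nu$ from Lemma~\ref{lemma:Psi} — the coefficients remain \emph{symmetric polynomials with rational coefficients}, and that the exponential decay of the error survives \emph{uniformly up to the moving right endpoint} $x,y\to h_\nu^{-1}$, where the Bessel argument degenerates to $0$ and a merely local turning-point approximation would be useless; this is exactly why the globally uniform Olver expansion, rather than an ad hoc turning-point analysis, is the right tool.
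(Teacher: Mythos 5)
Your proposal is correct and is essentially the paper's own argument: the same exact identity $2\hat V_\nu(x,y)=\tfrac{1}{\sqrt{2h_\nu}}\,J_\nu(\omega_\nu(t))$ with $t=\Psi(x,y;h_\nu)$, insertion of the $\Psi$-expansion from Lemma~\ref{lemma:Psi}, collapse to the form \eqref{eq:KjForm} via the Airy equation, and absorption of the region near the moving endpoint $x,y\to h_\nu^{-1}$ through the superexponential Airy decay. The only real difference is presentational — the paper imports the transition-variable expansion \eqref{eq:Olver52} ready-made from \cite{arxiv:2301.02022} (itself a repackaging of Olver's uniform Bessel asymptotics), whereas you unpack that ingredient directly from DLMF~\S 10.20 — and your order bookkeeping is slightly off (the $B_0\,\Ai'$ term of Olver's series enters at relative order $h_\nu^2$, not $h_\nu$; the order-$h_\nu$ $\Ai'$ contribution comes from converting $\Ai(\nu^{2/3}\zeta)$ to the transition variable), which does not affect the conclusion.
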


\begin{proof} Upon using the function $\Psi(x,y;h)$ as defined in \eqref{eq:Psi} we write
\[
2\hat V_\nu(x,y) =\frac{1}{\sqrt{2h_\nu}} J_\nu(\omega_\nu(t)), \quad t = \Psi(x,y;h_\nu).
\]
By \cite[Eq.~(3.12)]{arxiv:2301.02022} we have 
\begin{equation}\label{eq:Olver52}
\frac{1}{\sqrt{2h}} J_\nu(\omega_\nu(t)) = \Ai(t) \sum_{k=0}^m p^*_k(t)h_\nu^k + \Ai'(t) \sum_{k=0}^m q^*_k(t)h_\nu^k + h_\nu^{m+1} O(e^{-t}),
\end{equation}
uniformly for $t_0\leq t < h_\nu^{-1}$ as $h_\nu\to 0^+$. Here, the $p_k^*$ and $q_k^*$ are certain rational polynomials (cf.~\cite[Remark A.8]{arxiv:2301.02022} and \cite[Eqs.~(10.19.10/11)]{MR2723248}) starting with
\[
p_0^*(t) = 1, \quad p_1^*(t) = \frac{t}{5},\quad p_2^*(t) = \frac{9 t^5}{200}+\frac{3 t^2}{35},\quad p_3^*(t) = \frac{957 t^6}{14000}+\frac{173
   t^3}{3150}-\frac{2}{225},
\]
and
\[
q_0^*(t) = 0, \quad q_1^*(t) = \frac{3 t^2}{10},\quad q_2^*(t) =\frac{17 t^3}{70}+\frac{1}{35}, \quad q_3^*(t) = \frac{9 t^7}{2000}+\frac{611 t^4}{3150}+\frac{37t}{1575}.
\]
First, if $x$ or $y$ is between $\tfrac{3}{4}\cdot h_\nu^{-1}$ and $h_\nu^{-1}$, then using the bound \eqref{eq:PsiBound}  we see that
\[
h_\nu^{-1} > t \geq \frac{x+y}{2} \geq \frac{3h_\nu^{-1}}{8}  + \frac{t_0}{2},
\]
so that, by the superexponential decay of the Airy function and its derivative, $\hat V_\nu(x,y)$ and all of its expansion terms in \eqref{eq:Vkernexpan} get completely absorbed into the error term
\[
h_\nu^{m+1} \cdot O(e^{-(x+y)/2}).
\]
Second, if $t_0 \leq x,y \leq \tfrac{3}{4}\cdot h_\nu^{-1}$, then, by Lemma~\ref{lemma:Psi}, the power series expansion \eqref{eq:PsiExpan} converges uniformly; written in terms of the scaled elementary symmetric polynomials $u$ and $v$ we  get
\[
t = u + \sum_{k=1}^\infty r_k^*(u,v) h_\nu^k
\]
with certain rational polynomials $r_k^*$ of degree $k+1$ starting with
\[
r_1^*(u,v)= \frac{1}{2} \big(u^2-v \big),\quad r_2^*(u,v)=\frac{1}{2}\big(u^3-uv \big),\quad r_3^*(u,v) = \frac{1}{8} \big(5u^4-6u^2v+v^2 \big).
\]
Inserting the power series for $t$ into \eqref{eq:Olver52}, Taylor expanding and using the Airy differential equation $\Ai''(\xi) = \xi \Ai(\xi)$ yields the asserted form of the expansion \eqref{eq:Vkernexpan}, its uniformity and its differentiability (as well as the first three concrete terms).
\end{proof}

\subsection{Proof of the general form of the expansion}\label{sect:generalform} 

Lemma~\ref{lem:Vkernexpan} and Thm.~\ref{thm:detexpan} yield
\begin{equation}\label{eq:Gexpan}
\begin{aligned}
E_\pm(t;\nu)&:=\det(I \mp V_\nu)\big|_{L^2(0,\omega_\nu(t))} = \det(I \mp \hat V_\nu)\big|_{L^2(t,h_\nu^{-1})}\\*[1mm] 
&= F_\pm(t) + \sum_{j=1}^m E_{\pm,j}(t) h_\nu^j + h_\nu^{m+1}O(e^{-t}) + e^{-h_\nu^{-1}/2} O(e^{-t/2}),
\end{aligned}
\end{equation}
uniformly for $t_0\leq t < h_\nu^{-1}$ as $h_\nu\to 0^+$; preserving uniformity, this expansion can be repeatedly differentiated w.r.t. the variable $t$. Thm.~\ref{thm:detexpan} gives
\begin{equation}\label{eq:Gz}
F_\pm(t) = \det(I \mp V_{\Ai})\big|_{L^2(t,\infty)}
\end{equation}
and the $E_{\pm,j}(t)$ as products of $F_\pm(t)$ with certain operator traces (a form which is unwieldy to handle; much easier and more appealing expressions are given in Sect.~\ref{sect:functionalform}). Observing
\[
e^{-h_\nu^{-1}/2} < e^{-h_\nu^{-1}/4}e^{-t/4} = h_\nu^{m+1} O(e^{-t/4})\qquad (h_\nu\to 0^+)
\]
we can combine the two error terms as $h_\nu^{m+1} O(e^{-3t/4})$. Using the determinantal representations of $E_\beta^\text{hard}(\phi_\nu(t);\nu_\beta)$ given in \eqref{eq:Ebeta} and of $F_\beta(t)$ in \eqref{eq:Fbeta}, we finally obtain the general form  of the expansion \eqref{eq:hard2soft} with
\begin{subequations}\label{eq:FfromG}
\begin{gather}
F_1(t) = F_{+}(t),\qquad F_4(t) = \frac{1}{2}\big( F_{+}(t) + F_{-}(t)\big),\\*[1mm]
E_{1,j}(t) = E_{+,j}(t),\qquad E_{4,j}(t) = \frac{1}{2}\big( E_{+,j}(t) + E_{-,j}(t)\big).
\end{gather}
\end{subequations}

\subsection{Functional form of the \boldmath $E_{\beta,j}(t)$\unboldmath}\label{sect:functionalform} 

\subsubsection{Factorization of the $\beta=2$ hard- and soft-edge distributions}

For the case $\beta=2$ we discovered in \cite[Thm.~3.1]{arxiv:2301.02022} that the corresponding hard-to-soft edge transition expansion
\begin{equation}\label{eq:hard2soft2}
E_2^{\text{\rm hard}}(\phi_\nu(t);\nu) = F_2(t) + \sum_{j=1}^m E_{2,j}(t) h_\nu^j + h_\nu^{m+1}\cdot O(e^{-3t/2})
\end{equation}
enjoys a remarkable structure: the expansion terms $E_{2,j}$ are linear combinations of higher order derivatives of the limit distribution $F_2$ with rational polynomial coefficients,\footnote{This structure was explicitly established for $1\leq j\leq 10$: see \cite[Eqs.~(3.4a--c)]{arxiv:2301.02022}  and the supplementary Mathematica notebook coming with the sources of \cite{arxiv:2301.02022}. We conjecture it to be generally true.}
\begin{equation}\label{eq:E2j_Structure}
E_{2,j} = \sum_{k=1}^{2j} p_{2,jk} F_2^{(k)},\quad p_{2,jk} \in \Q[t],
\end{equation}
starting with the first order term\footnote{Formula \eqref{eq:E21} was proved in \cite[Eq.~(3.4a)]{arxiv:2301.02022} 
without harnessing the power of the Tracy--Widom theory. Other proofs, which are based on that theory, can be found in  \cite[Appendix B]{arxiv:2301.02022} or, by inference from the Poissonized length distributions, in \cite[Thm.~1.3]{MR3161478} and \cite[Eq.~(2.29)]{arxiv.2205.05257}.}
\begin{equation}\label{eq:E21}
E_{2,1}(t) = \frac{3t^2}{10}F_2'(t) - \frac{1}{5}F''_2(t).
\end{equation}

Now, utilizing the factorizations (see \cite[Eqs.~(3.12) and Cor.~1]{MR2229797} and \cite[Eq.~(34)]{MR2165698})
\begin{equation}\label{eq:detfactor}
E_2^\text{hard}(\phi_\nu(t);\nu) = E_+(t;\nu)E_-(t;\nu),\qquad F_2(t) = F_+(t)F_-(t)
\end{equation}
will help us to understand the functional form of the $E_{\pm,j}$ in relation to the $E_{2,j}$. To begin with, if we insert  the expansions \eqref{eq:Gexpan} and  \eqref{eq:hard2soft2} into  \eqref{eq:detfactor} and equate the coefficients of $h^m$, we get after a devision by the strictly positive $F_2 = F_+ F_-$ that
\begin{equation}\label{eq:EpmRec}
\frac{E_{+,m}}{F_+} + \frac{E_{-,m}}{F_-} = \frac{E_{2,m}}{F_2} - \sum_{k,l\geq 1:\, k+l=m} \frac{E_{+,k}}{F_+} \cdot \frac{E_{-,l}}{F_-}.
\end{equation}

\subsubsection{Tracy--Widom theory as an algebraic tool}\label{sect:TracyWidom} Tracy--Widom theory \cite{MR1257246, MR1266485, MR1385083} provides us with the means to use formulae such as
\eqref{eq:EpmRec} to validate, or even establish, specific functional forms. Borrowing from the set of ideas that we developed in \cite[Appendix B]{arxiv:2301.02022} to establish the functional form of the first few concrete cases of the $E_{2,j}$, we reformulate that theory in a more algebraic fashion: first, \cite[Eqs.~(1.15)]{MR1257246} and \cite[Eq.~(52--54)]{MR1385083}) give
\begin{equation}\label{eq:TWtheory}
\frac{F_2'(t)}{F_2(t)} = q'(t)^2 - t q(t)^2 - q(t)^4, \qquad F_\pm(t) = e^{\mp\frac12\int_{t}^\infty q(x)\,dx} \sqrt{F_2(t)}
\end{equation}
in terms of the Hastings--McLeod solution $q(t)\sim \Ai(t)$ $(t\to\infty)$ of the Painlevé II equation
\begin{equation}\label{eq:PII}
q''(t) = t q(t) + 2 q(t)^3.
\end{equation}
Next, from these formulae we obtain inductively that (with explicit expressions)\footnote{For instance, there is
\[
\frac{F_\pm'(t)}{F_\pm(t)} = \frac12\big(q'(t)^2 \pm q(t) -t q(t)^2 -q(t)^4 \big).
\]
~\\*[-0.4cm]}
\[
q^{(n)},\,\frac{F^{(n)}_2}{F_2},\, \frac{F_\pm^{(n)}}{F_\pm} \in \Q[t][q,q'],
\]
where $\Q[t][q,q']$ denotes the space, closed under differentiation, of polynomials in $q$ and $q'$ with coefficients in $\Q[t]$. Moreover, since by \cite[Thms.~20.2/21.1]{MR1960811} the non-zero solutions of \eqref{eq:PII} do not satisfy an algebraic differential equation of first order, the functions $q, q'$ are algebraically independent over the ring $\Q[t]$. Therefore, all terms in the space $\Q[t][q,q']$ are {\em uniquely} expanded to $\Q[t]$-linear combinations of $q$-monomials.\footnote{That is, terms of the form
$q(t)^{\alpha_0}q'(t)^{\alpha_1}$ with $\alpha_0,\alpha_1 \in \N_0$.}

Now, for concretely given terms $T_0,\ldots,T_n \in \Q[t][q,q']$ we can validate (as in Sect.~\ref{sect:firstorder}) a linear equation of the form
\begin{equation}\label{eq:Tequation}
T_0 = p_1 T_1 + \cdots + p_n T_n, 
\end{equation}
for given coefficients $p_1,\ldots, p_n \in \Q[t]$, by a simple term-wise comparison of the resulting expansion in $\Q[t][q,q']$. On the other hand, if the coefficients are yet to be determined (as in Sect.~\ref{sect:higherorder}), comparing the $\Q[t]$-coefficients of all $q$-monomials in \eqref{eq:Tequation} forces 
\[
(p_1,\ldots,p_n)^T \in \Q[t]^n
\]
to be the solution of a $N\times n$ linear system of equations over the ring $\Q[t]$, where $N$ denotes the number of different $q$-monomials that appear in the representations of $T_0,\ldots,T_n$. Explicit examples of such $\Q[t]$-linear systems can be found in \eqref{eq:exampleSystem} below and in \cite[Appendix B]{arxiv:2301.02022}.

For applying this methodology to \eqref{eq:EpmRec} we note that by \eqref{eq:E2j_Structure} there is 
\begin{equation}\label{eq:E2Qstructure}
E_{2,j}/F_2 \in \Q[t][q,q'],
\end{equation}
at least for the $j$ up to $10$ for which we cared to explicitly validate \eqref{eq:E2j_Structure} in the supplementary material of \cite{arxiv:2301.02022}. However, to extract new formulae from \eqref{eq:EpmRec}, we have to prove (as in Sect.~\ref{sect:firstorder}), or  assume (as in Sect.~\ref{sect:higherorder}), that a similar structure holds for  the $E_{\pm,j}/F_\pm$.

\subsubsection{The first order terms}\label{sect:firstorder}

Here, we start with \cite[Thm.~1.2]{MR3161478} of Baik and Jenkins, which provides us with formula \eqref{eq:BaikJenkinsBeta1} for the term $F_{1,1}(t)$ in the expansion \eqref{eq:PvaroastIntro}. By the simple relation  between $E_{1,1}$ and $F_{1,1}$ in \eqref{eq:Ebeta1Fbeta1}, and by the relation between $\beta$-terms and $\pm$-terms in~\eqref{eq:FfromG}, we thus get
\begin{subequations}\label{eq:EpmOne}
\begin{equation}\label{eq:EplusOne}
E_{+,1}(t) =  \frac{3t^2}{10} F_+'(t) - \frac{2}{5} F_+''(t).
\end{equation}
Hence, $E_{+,1}/F_+ \in \Q[t][q,q']$ so that by \eqref{eq:EpmRec} and \eqref{eq:E2Qstructure} there is also
\[
\frac{E_{-,1}}{F_-} = \frac{E_{2,1}}{F_2} - \frac{E_{+,1}}{F_+} \in \Q[t][q,q'].
\]
Now, this gives proof of the formula\footnote{See \eqref{eq:plusMinusSame} for the rationale of choosing the same $\Q[t]$-coefficients here as in \eqref{eq:EplusOne}.}
\[
\frac{E_{-,1}(t)}{F_-(t)}  = \frac{3t^2}{10} \frac{F_-'(t)}{F_-(t)} - \frac{2}{5} \frac{F_-'(t)}{F_-(t)}
\]
since a routine calculation exhibits that both sides expand in $\Q[t][q,q']$ to 
\begin{multline*}
\frac{1}{5} q(t)^4 q'(t)^2+\frac{t}{5} q(t)^2 q'(t)^2-\frac{1}{5} q(t) q'(t)^2 -\frac{1}{10} q'(t)^4 + \frac{3t^2}{20}  q'(t)^2 -\frac{1}{5}q'(t)\\*[1mm]
-\frac{1}{10} q(t)^8-\frac{ t }{5}q(t)^6+\frac{1}{5}q(t)^5-\frac{t^2}{4}  q(t)^4+\frac{t}{5} q(t)^3+\left(\frac{1}{10}-\frac{3t^3}{20}\right) q(t)^2+\frac{3t^2}{20}  q(t).
\end{multline*}
Hence, multiplying with $F_-$, we have proof for the following counterpart to \eqref{eq:EplusOne}:
\begin{equation}\label{eq:EminusOne}
E_{-,1}(t) =  \frac{3t^2}{10} F_-'(t) - \frac{2}{5} F_-''(t).
\end{equation}
\end{subequations}
By the linearity of \eqref{eq:FfromG}, and the fact that the $E_{\pm,1}(t)$ share the same $\Q[t]$-coefficients, we get \begin{equation}\label{eq:F1proof}
E_{\beta,1}(t) = \frac{3t^2}{10} F_\beta'(t) - \frac{2}{5} F_\beta''(t) \qquad(\beta=1,4),
\end{equation}
which finishes the proof of \eqref{eq:F1}. Using \eqref{eq:Ebeta1Fbeta1} once more, we infer the formula \eqref{eq:BaikJenkinsBeta4} for $F_{\beta,4}$.

\subsubsection{The higher order terms}\label{sect:higherorder}

Lacking a general method that would allow us to establish, not only for the first order term, but also for the higher order  terms (with explicit expressions)
\[
\frac{E_{\pm,j}}{F_\pm} \in \Q[t][q,q'],
\]
we have to resort to some reasonable assumptions. In view of the structure exposed for the higher order terms of the $\beta=2$ case in \eqref{eq:E2j_Structure}, and for the first-order term of the $\pm$-cases in~\eqref{eq:EpmOne}, as well as in view of the compelling numerical evidence which is detailed in Appendix~\ref{app:evidence}, we are led to the following hypothesis:

\subsubsection*{\bf Linear form hypothesis.} Not only $E_{\pm,1}$ but all the  $E_{\pm,j}$ can be represented in the form
\begin{equation}\label{eq:hypoL}
E_{\pm,j} = \sum_{k=1}^{2j} p_{\pm,jk} F_\pm^{(k)},\quad p_{\pm,jk} \in \Q[t].
\end{equation}

Based on that hypothesis, the relation \eqref{eq:EpmRec} between the $E_{2,j}$ and the $E_{\pm,j}$ can be recast as 
\begin{equation}\label{eq:linear1}
\sum_{\sigma=\pm}\sum_{\mu=1}^{2m} p_{\sigma,m\mu} \frac{F_\sigma^{(\mu)}}{F_\sigma} = \sum_{\mu=1}^{2m} p_{2,m\mu} \frac{F_2^{(\mu)}}{F_2}
-  \sum_{\substack{k,l\geq 1\\*[0.25mm] k+l=m}} \sum_{\kappa=1}^{2k} \sum_{\lambda=1}^{2l} p_{+,k\kappa} \, p_{-,l\lambda} \frac{F_+^{(\kappa)}}{F_+} \frac{F_-^{(\lambda)}}{F_-},
\end{equation}
which is a linear equation of the form \eqref{eq:Tequation}. Now, the method of Sect.~\ref{sect:TracyWidom} suggests the following algorithm for inductively calculating the $p_{\pm,jk} \in \Q[t]$:

Suppose we have already found the $p_{\pm,jk}$ for all $j< m$, we get by comparing the $\Q[t]$-coefficients of the $q$-monomials in \eqref{eq:linear1} that the polynomials $p_{\pm,m\mu}$ satisfy an overdetermined linear system of size $(16m^2-8m+5)\times 4m$. For instance, $m=1$ gives the $13\times 4$ system
{\begin{equation}\label{eq:exampleSystem}
\begin{pmatrix}
 0 & 0 & -\frac{1}{2} & \frac{1}{2} \\*[1mm]
 \frac{1}{2} & \frac{1}{2} & 0 & 0 \\*[1mm]
 0 & 0 & \frac{1}{4} & \frac{1}{4} \\*[1mm]
 -\frac{1}{2} & \frac{1}{2} & 0 & 0 \\*[1mm]
 0 & 0 & -\frac{1}{2} & \frac{1}{2} \\*[1mm]
 -\frac{t}{2} & -\frac{t}{2} & -\frac{1}{4} & -\frac{1}{4} \\*[1mm]
 0 & 0 & -\frac{t}{2} & -\frac{t}{2} \\*[1mm]
 0 & 0 & \frac{t}{2} & -\frac{t}{2} \\*[1mm]
 -\frac{1}{2} & -\frac{1}{2} & \frac{t^2}{4} & \frac{t^2}{4} \\*[1mm]
 0 & 0 & -\frac{1}{2} & -\frac{1}{2} \\*[1mm]
 0 & 0 & \frac{1}{2} & -\frac{1}{2} \\*[1mm]
 0 & 0 & \frac{t}{2} & \frac{t}{2} \\*[1mm]
 0 & 0 & \frac{1}{4} & \frac{1}{4} 
\end{pmatrix}
\begin{pmatrix}
p_{+,11}\\*[1mm]
p_{-,11}\\*[1mm]
p_{+,12}\\*[1mm]
p_{-,12}
\end{pmatrix} =
\begin{pmatrix}
 0 \\
 p_{2,11} \\*[1mm]
 p_{2,12} \\*[1mm]
 0 \\
 0 \\
 -t p_{2,11}-p_{2,12} \\*[1mm]
 -2 t p_{2,12} \\*[1mm]
 0 \\
 t^2 p_{2,12}-p_{2,11} \\*[1mm]
 -2 p_{2,12} \\*[1mm]
 0 \\*[1mm]
 2 t p_{2,12} \\*[1mm]
 p_{2,12} \\*[1mm]
\end{pmatrix},
\end{equation}}%
which is uniquely solved by (reclaiming the already known relation between \eqref{eq:E21} and \eqref{eq:EpmOne})
\begin{subequations}\label{eq:hyposol}
\begin{equation}
p_{\pm,11} = p_{2,11},\qquad p_{\pm,12} = 2p_{2,12}.
\end{equation}
Continuing in the same fashion, the $53\times 8$ system for $m=2$ is uniquely solved by
\begin{equation}
\begin{aligned}
p_{\pm,21} &= p_{2,21}+2 p_{2,24},&\qquad p_{\pm,22} &= 2 p_{2,22}-\tfrac{1}{2} p_{2,11}^2,\\*[1mm]
p_{\pm,23} &=4 p_{2,23}-2 p_{2,11}p_{2,12},&\qquad p_{\pm,24} &= 8 p_{2,24}-2 p_{2,12}^2,
\end{aligned}
\end{equation}
 and the $125\times 12$ system for $m=3$ uniquely by
\begin{equation}
\begin{aligned}
p_{\pm,31} &= 8 t p_{2,36}+p_{2,12} p_{2,11}^2-2 p_{2,23} p_{2,11}+p_{2,31}+2 p_{2,34},\\*[1mm] 
p_{\pm,32} &= 8 p_{2,11} p_{2,12}^2-8 p_{2,23} p_{2,12}-p_{2,11} p_{2,21}-18 p_{2,11} p_{2,24}+2 p_{2,32}+20 p_{2,35},\\*[1mm]
p_{\pm,33} &=\tfrac{1}{2} p_{2,11}^3-2 p_{2,22} p_{2,11}+16 p_{2,12}^3-2 p_{2,12} p_{2,21}-68 p_{2,12} p_{2,24}+4 p_{2,33}+120 p_{2,36},\\*[1mm]
p_{\pm,34} &=3 p_{2,12} p_{2,11}^2-4 p_{2,23} p_{2,11}-4 p_{2,12} p_{2,22}+8 p_{2,34},\\*[1mm]
p_{\pm,35} &=6 p_{2,11} p_{2,12}^2-8 p_{2,23} p_{2,12}-8 p_{2,11} p_{2,24}+16 p_{2,35},\\*[1mm]
p_{\pm,36} &= 4 p_{2,12}^3-16 p_{2,24} p_{2,12}+32 p_{2,36}.
\end{aligned}
\end{equation}
\end{subequations}
With the help of a CAS\footnote{\label{fn:suppl}A supplementary Mathematica notebook that covers the results up to $m=10$ comes with the source files at \url{https://arxiv.org/abs/2306.03798}. There, also the formulae stated in \eqref{eq:FbetaP}, \eqref{eq:FbetaD}, \eqref{eq:Fstar_boxslash}, \eqref{eq:Fstar_boxbslash}, \eqref{eq:mu_fixedpointfree}, \eqref{eq:nu_fixedpointfree} and the numerical data shown in Tables~\ref{tab:mu}--\ref{tab:nu} are extended up to $m=10$.} these calculations were successfully (with unique solutions in each step) continued up to the $1525\times 40$ system for $m=10$---where we decided to stop. Just by inspection\footnote{Interestingly, and even more puzzling, such an inspection reveals that the polynomials $p_{\pm,jk}$ and $p_{2,jk}$ share a common sparsity pattern.} we obtain that, in all these instances, the $E_{\pm,j}$ share the same coeffient polynomials
\begin{equation}\label{eq:plusMinusSame}
p_{+,jk} = p_{-,jk} \quad (k=1,\ldots,2j, \, j=1,\ldots,m);
\end{equation}
we conjecture this to be generally true and the systems always to be uniquely solvable. 

Now, from the linearity of \eqref{eq:FfromG}, and from the representations \eqref{eq:hypoL} sharing the same coefficient polynomials for both choices of the sign $\pm$,  we get that  the  $E_{\beta,j}$ satisfy
\[
E_{\beta,j} = \sum_{k=1}^{2j} p_{+,jk} F_\beta^{(k)}.
\]
Inserting 
 the concrete expressions for $p_{2,jk}$ (see \cite[Eqs.~(3.4a--c)]{arxiv:2301.02022}) into \eqref{eq:hyposol} gives the explicit formulae for the $p_{+,jk}$ ($\beta=1,4$) as displayed in (\ref{eq:F_beta_j}b/c); which finishes the proof of Thm.~\ref{thm:hard2soft}.

\section{Expansion of the (Generalized) Poissonized Distributions}\label{sect:poissondist}

Expanding the (generalized) Poissonizations of the length distributions requires the hard-to-soft edge transition of Thm.~\ref{thm:hard2soft} to be applied to the probability distributions 
\[
E_\beta^\text{hard}(4r_\varoast;\nu_\beta)\Big|_{\beta=\beta(\varoast)}
\]
for integer $\nu=l^\varoast$, but we begin with considering a general $\nu$ first. Using the scaling $t_\nu(r)$ as introduced in \eqref{eq:tnu_vanilla}, 
which satisfies the differential equation
\begin{equation}\label{eq:tnu}
t_\nu'(r) = -r^{-2/3} - \frac{r^{-1}}{6} t_\nu(r),
\end{equation}
we observe that, for large intensities $r$, the modes of those probability distributions are located in the range of parameters $\nu$ for which the scaled variable $t_{\nu}(r_\varoast)$ stays bounded. 

In fact, with literally the same proof as for the $\beta=2$ case \cite[Thm.~4.1]{arxiv:2301.02022}, we infer from Thm.~\ref{thm:hard2soft} the following theorem. In particular, we recall from that proof the relation
\begin{equation}\label{eq:Ebeta1Fbeta1}
F_{\beta,1}(t) = \frac{1}{2} E_{\beta,1}(t) - \frac{t^2}{6} F_\beta'(t)
\end{equation}
which we already used in Sect.~\ref{sect:firstorder} to infer the $\beta=1$ case of \eqref{eq:F1} from  \eqref{eq:BaikJenkinsBeta1}.

\begin{theorem}\label{thm:FbetaP} Let be $\beta=1$ or $\beta = 4$. Then there holds the expansion
\begin{equation}\label{eq:FbetaPExpan}
E_\beta^{\text{\rm hard}}(4r;\nu_\beta) = F_\beta(t) + \sum_{j=1}^m F_{\beta,j}(t) \cdot r^{-j/3} +r^{-(m+1)/3}\cdot O(e^{-t/2})\bigg|_{t=t_{\nu}(r)},
\end{equation}
which is uniformly valid when $r,\nu \to \infty$ subject to $t_0 \leq t_{\nu}(r) \leq r^{1/3}$,
with $m$ being any fixed non-negative integer and $t_0$ any fixed real number. Preserving uniformity, the expansion can be repeatedly differentiated w.r.t. the variable $r$. Here the $F_{\beta,j}(t)$ are certain smooth functions{\rm;} that have simple expressions in terms of the functions $F_{\beta}$, $E_{\beta,j}$ in \eqref{eq:hard2soft}; the first three  are\footnote{To validate the formulae displayed in (\ref{eq:FbetaP}), 
Fig.~\ref{fig:FbetaP} plots $F_{\beta,3}(t)$ next to the approximation
\begin{equation}\label{eq:FbetaP3}
F_{\beta,3}(t) \approx r \cdot \Big( E_\beta^{\text{hard}}(4r;\nu_\beta) - F_\beta(t) - F_{\beta,1}(t)\cdot r^{-1/3} - F_{\beta,2}(t)\cdot r^{-2/3}\Big)\,\Big|_{t=t_{\nu}(r)}
\end{equation}
for $r=160$ and $r=1200$, varying $\nu$ in such a way that $t_{\nu}(r)$ covers the range of $t$ on display.}
\begin{subequations}\label{eq:FbetaP}
\begin{align}
F_{\beta,1}(t) &= -\frac{t^2}{60} F_\beta'(t) - \frac{1}{5} F_\beta''(t)\label{eq:FbetaP1}\\
\intertext{and, subject to the linear form hypothesis,}
F_{\beta,2}(t) &= \Big(\frac{9}{700} + \frac{2t^3}{1575}\Big) F_\beta'(t) + \Big(\frac{11t}{525} + \frac{t^4}{7200}\Big) F_\beta''(t) + \frac{t^2}{300} F_\beta'''(t) + \frac{1}{50} F_\beta^{(4)}(t),\label{eq:FbetaP2}\\*[2mm]
F_{\beta,3}(t) &=  -\Big(\frac{34 t}{7875} + \frac{41 t^4}{283500}\Big)  F_\beta'(t)
-\Big( \frac{13t^2}{3600}  + \frac{t^5}{47250} \Big) F_\beta''(t)  \\*[1mm]
&\quad -\Big(\frac{289}{31500} +\frac{19 t^3}{31500} + \frac{t^6}{1296000}\Big) F_\beta'''(t)
 -\Big(\frac{11t}{2625} +\frac{t^4}{36000} \Big)F_\beta^{(4)}(t)\notag\\*[1mm]
&\quad -\frac{t^2 }{3000}F_\beta^{(5)}(t)
-\frac{1}{750} F_\beta^{(6)}(t).\notag
\end{align}
\end{subequations}
\end{theorem}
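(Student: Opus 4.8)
The plan is to reduce Theorem~\ref{thm:FbetaP} to Theorem~\ref{thm:hard2soft} by a change of variables, following verbatim the argument given for $\beta=2$ in \cite[Thm.~4.1]{arxiv:2301.02022}. Theorem~\ref{thm:hard2soft} expands $E_\beta^{\text{hard}}(\phi_\nu(t);\nu_\beta)$ in powers of $h_\nu=2^{-1/3}\nu^{-2/3}$ at \emph{fixed} $t$, whereas here we want an expansion of $E_\beta^{\text{hard}}(4r;\nu_\beta)$ in powers of $r^{-1/3}$ at fixed $\tilde t:=t_\nu(r)$. By \eqref{eq:phitrans} the two regimes are linked through $\phi_\nu(t)=4r \Leftrightarrow \omega_\nu(t)=2\sqrt r \Leftrightarrow \nu(1-h_\nu t)=2\sqrt r$; since $\phi_\nu$ is strictly decreasing on $\{t<h_\nu^{-1}\}$ this determines $t$ uniquely from $(r,\nu)$, and conversely, by \eqref{eq:tnu_vanilla}, $\nu=2\sqrt r + r^{1/6}\tilde t$.

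The first step is to express the two quantities that feed into Theorem~\ref{thm:hard2soft}, namely $h_\nu$ and the old variable $t$, as convergent power series in $r^{-1/3}$ whose coefficients are polynomials in $\tilde t$. Substituting $\nu=2\sqrt r + r^{1/6}\tilde t$ and simplifying gives
\[
h_\nu = \tfrac12 r^{-1/3}\big(1+\tfrac{\tilde t}{2}r^{-1/3}\big)^{-2/3},
\qquad
t = \tilde t\,\big(1+\tfrac{\tilde t}{2}r^{-1/3}\big)^{-1/3},
\]
whence $h_\nu = \tfrac12 r^{-1/3}+O(r^{-2/3})$ and $t=\tilde t-\tfrac{\tilde t^2}{6}r^{-1/3}+O(r^{-2/3})$; both series converge uniformly as soon as $\tilde t\leq r^{1/3}$, since then $\tfrac{\tilde t}{2}r^{-1/3}\leq\tfrac12$.

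The second step is to insert these series into the expansion of Theorem~\ref{thm:hard2soft}, Taylor-expand $F_\beta(t)$ and each $E_{\beta,j}(t)$ about $\tilde t$ (this is where the differentiability-in-$t$ clause of Theorem~\ref{thm:hard2soft} is used), and collect equal powers of $r^{-1/3}$. Because the substitution is purely algebraic and does not involve $\beta$, the coefficient $F_{\beta,j}$ of $r^{-j/3}$ is produced by a fixed recipe — literally the same as for $\beta=2$ — expressing it as a $\Q[t]$-linear combination of $F_\beta^{(k)}$ and of derivatives $E_{\beta,k}^{(i)}$ with $k\leq j$; in particular $F_{\beta,1}=\tfrac12 E_{\beta,1}-\tfrac{t^2}{6}F_\beta'$, which is \eqref{eq:Ebeta1Fbeta1}. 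Feeding in the formulas \eqref{eq:F_beta_j} for $E_{\beta,1},E_{\beta,2},E_{\beta,3}$ — unconditional for $j=1$ and, under the linear form hypothesis, for $j=2,3$ — and simplifying yields \eqref{eq:FbetaP}. Differentiability of the expansion in $r$ follows by the chain rule, using the ODE \eqref{eq:tnu} for $t_\nu$, which shows that applying $\tfrac{d}{dr}$ preserves the structure.

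The step I expect to be the main obstacle is the uniformity of this re-expansion over the full admissible range $t_0\leq\tilde t\leq r^{1/3}$, together with the precise error $r^{-(m+1)/3}O(e^{-\tilde t/2})$. When $\tilde t$ stays bounded the shift $t-\tilde t=O(r^{-1/3})$ is small and the Taylor step is innocuous; but when $\tilde t\asymp r^{1/3}$ the shift $t-\tilde t=O(\tilde t^2 r^{-1/3})$ is itself large, so a naive Taylor estimate fails. The resolution, exactly as in \cite[Thm.~4.1]{arxiv:2301.02022} and in the proof of Lemma~\ref{lem:Vkernexpan}, is a two-regime split: for $\tilde t$ large one uses that $F_\beta^{(k)}$ ($k\geq1$), the $E_{\beta,j}$ and the Theorem~\ref{thm:hard2soft} remainder are all super-exponentially small in $\tilde t$ — note $\tilde t\asymp t$, the correction factor $(1+\tfrac{\tilde t}{2}r^{-1/3})^{-1/3}$ lying in $[(3/2)^{-1/3},1]$ on the range — so the whole expansion is absorbed into the error; for $\tilde t$ bounded the direct argument applies. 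Matching $h_\nu^{m+1}O(e^{-3t/4})$ to $r^{-(m+1)/3}O(e^{-\tilde t/2})$ then only uses $h_\nu\asymp r^{-1/3}$ and $3t/4\geq\tilde t/2$ up to an additive constant, both valid on the admissible range.
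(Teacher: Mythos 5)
Your proposal is correct and is essentially the paper's own argument: the paper proves Theorem~\ref{thm:FbetaP} by declaring it to follow ``with literally the same proof as for the $\beta=2$ case'' of \cite[Thm.~4.1]{arxiv:2301.02022} applied to Thm.~\ref{thm:hard2soft}, i.e.\ precisely your substitution $\nu=2\sqrt r+r^{1/6}\tilde t$, re-expansion of $h_\nu$ and $t$ in powers of $r^{-1/3}$, Taylor expansion of $F_\beta$ and the $E_{\beta,j}$ (yielding in particular \eqref{eq:Ebeta1Fbeta1}), and absorption of the large-$\tilde t$ regime into the error via superexponential decay. Your reconstruction, including the error-matching $h_\nu^{m+1}O(e^{-3t/4})\to r^{-(m+1)/3}O(e^{-\tilde t/2})$ and the chain-rule/\eqref{eq:tnu} argument for differentiability in $r$, matches the intended proof.
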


\begin{figure}[tbp]
\includegraphics[width=0.325\textwidth]{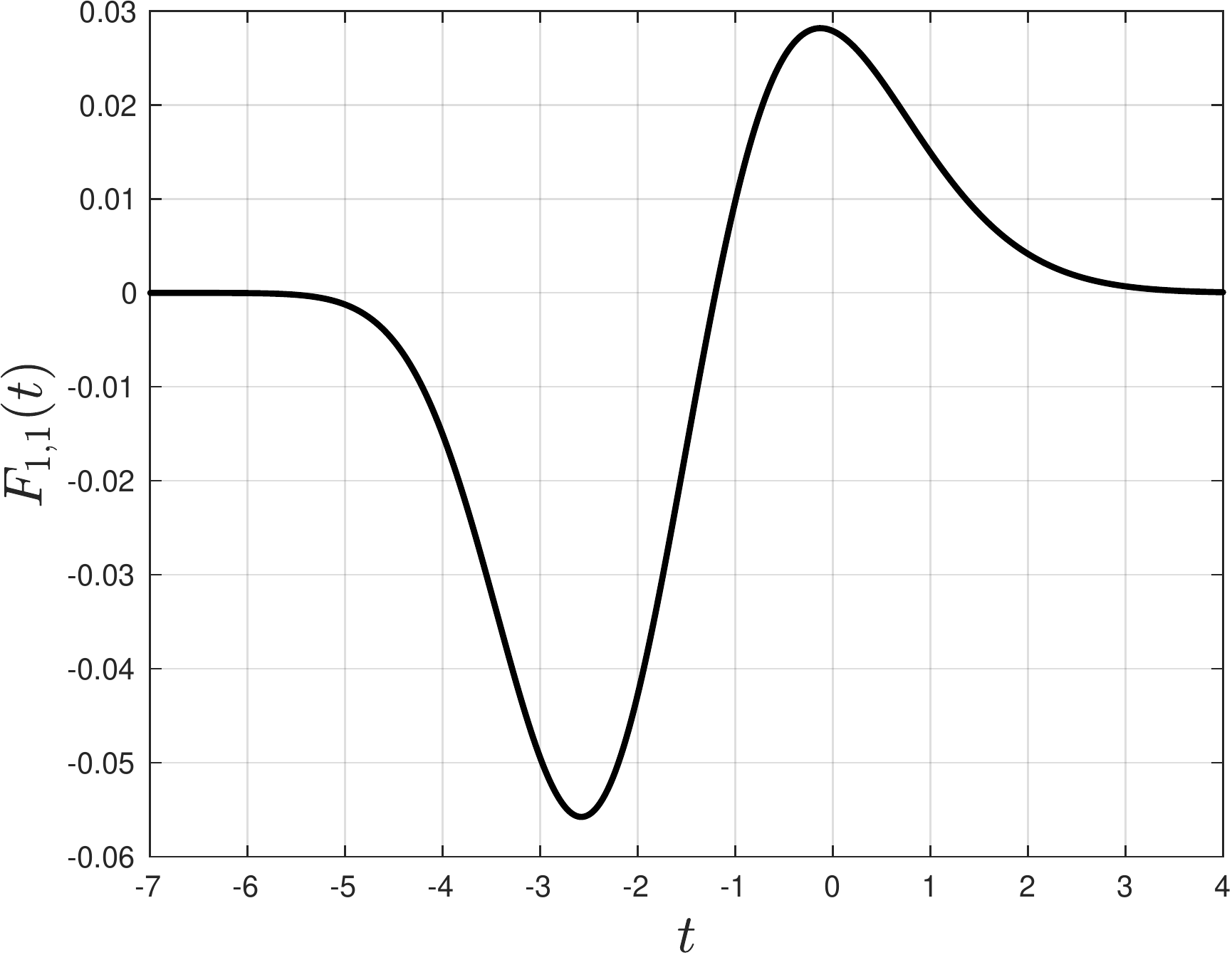}\hfil\,
\includegraphics[width=0.325\textwidth]{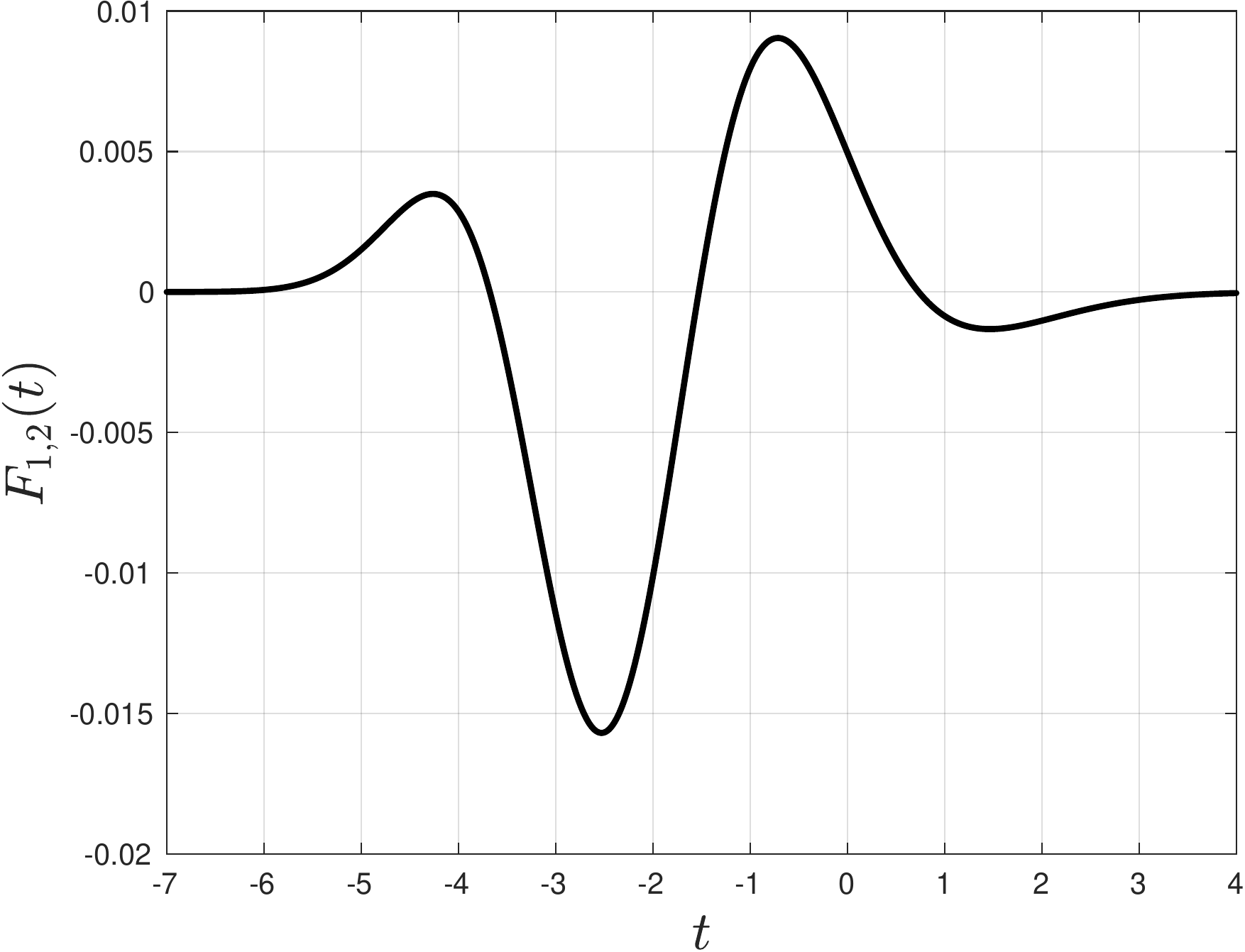}\hfil\,
\includegraphics[width=0.3125\textwidth]{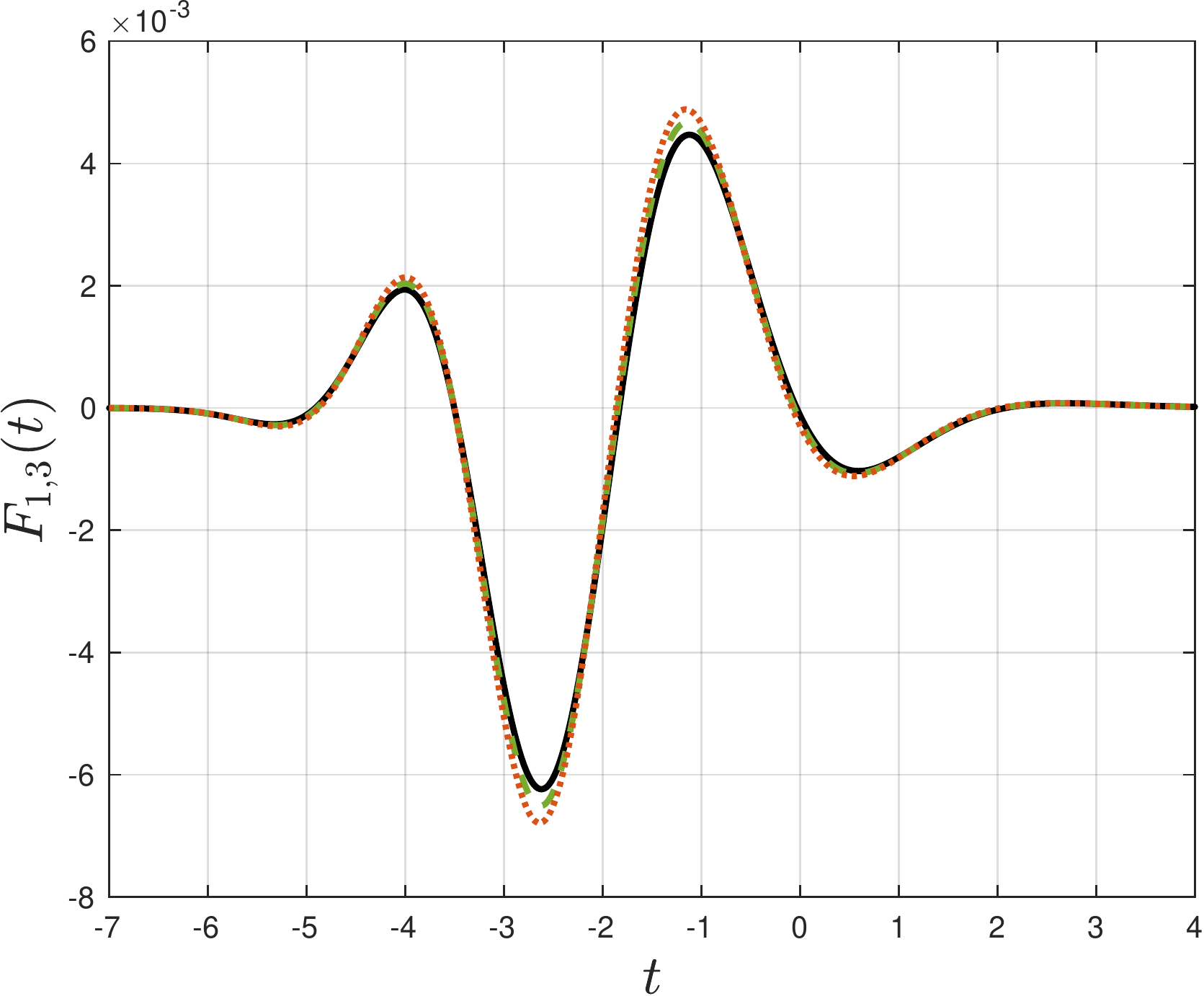}\\
\includegraphics[width=0.325\textwidth]{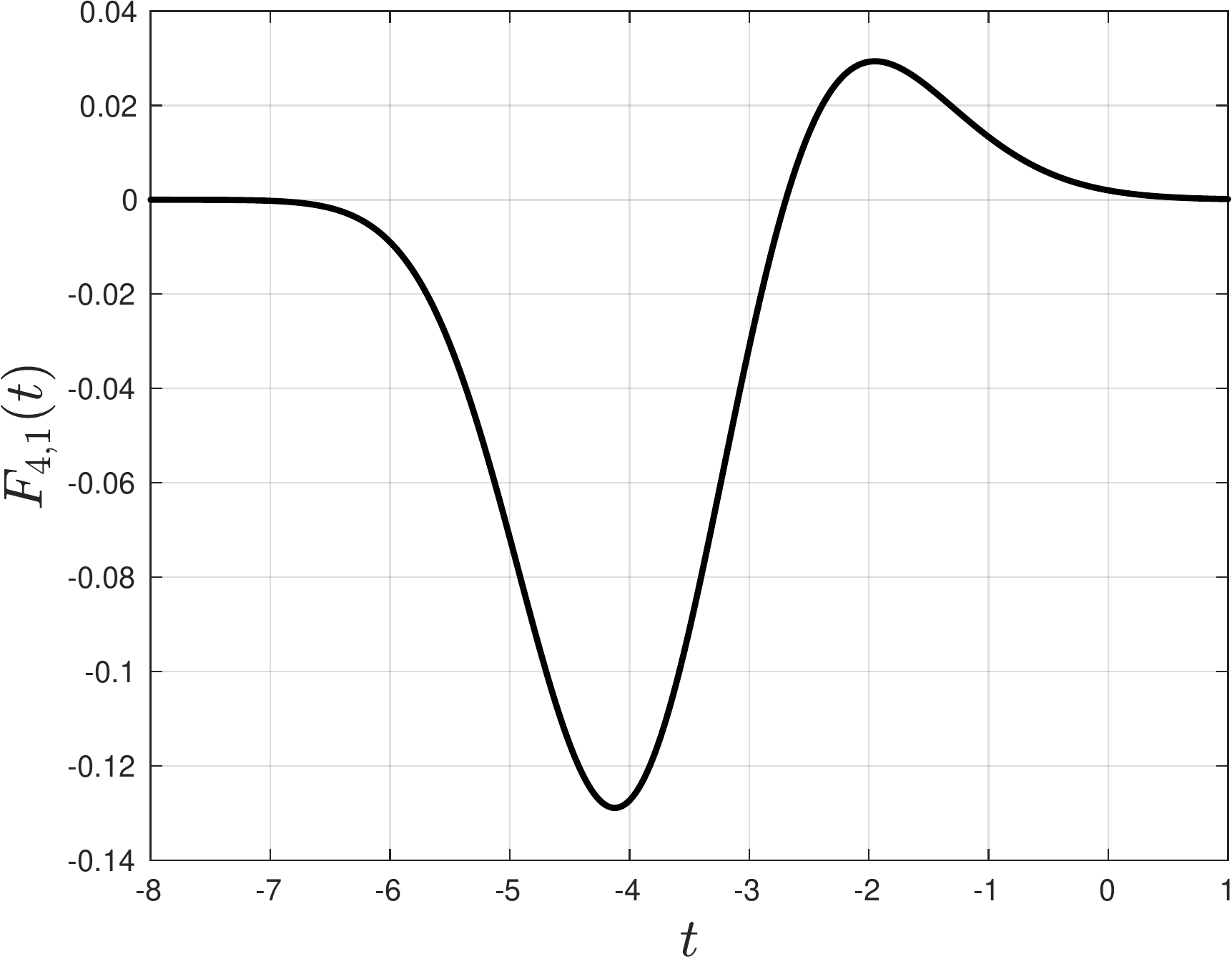}\hfil\,
\includegraphics[width=0.325\textwidth]{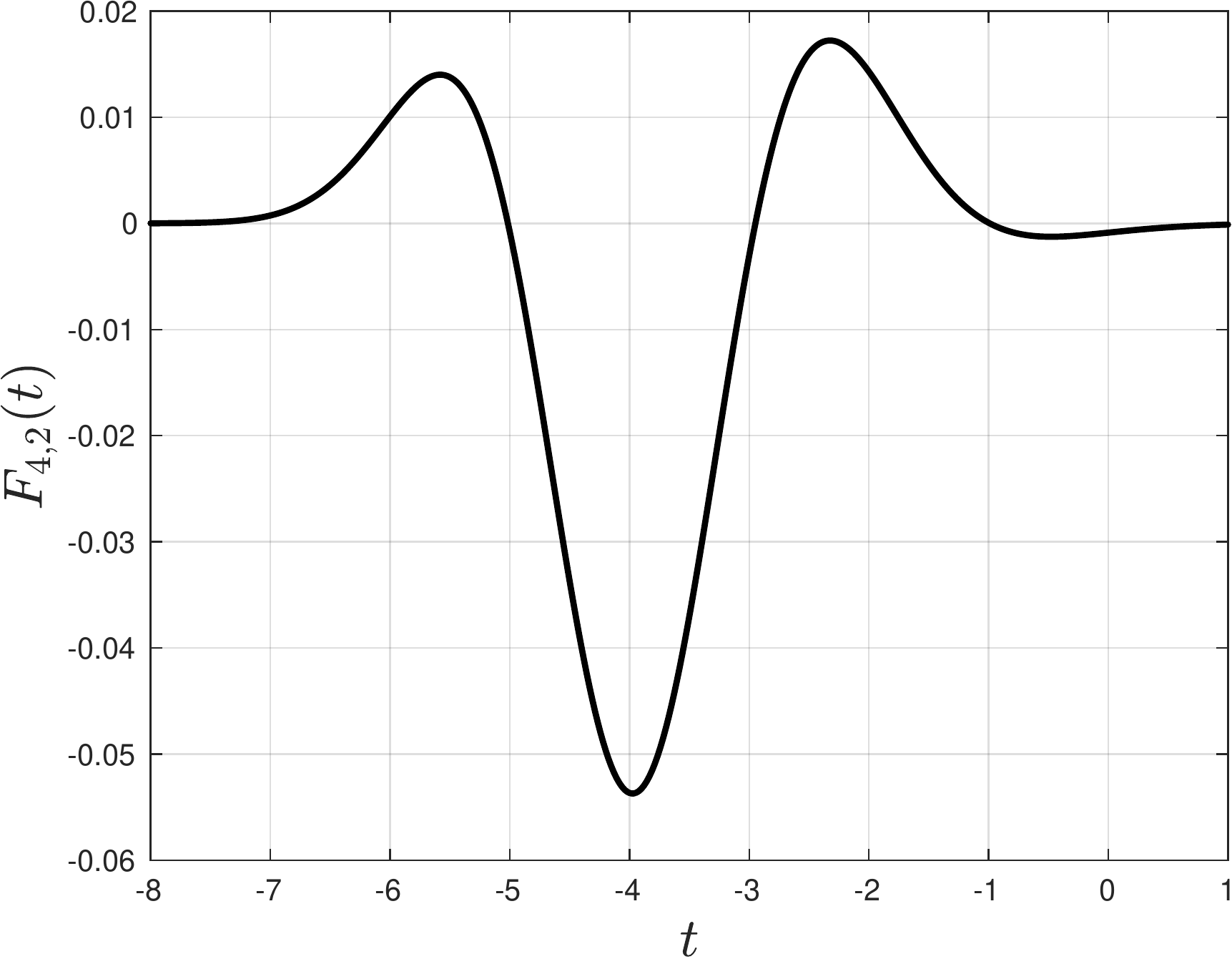}\hfil
\includegraphics[width=0.325\textwidth]{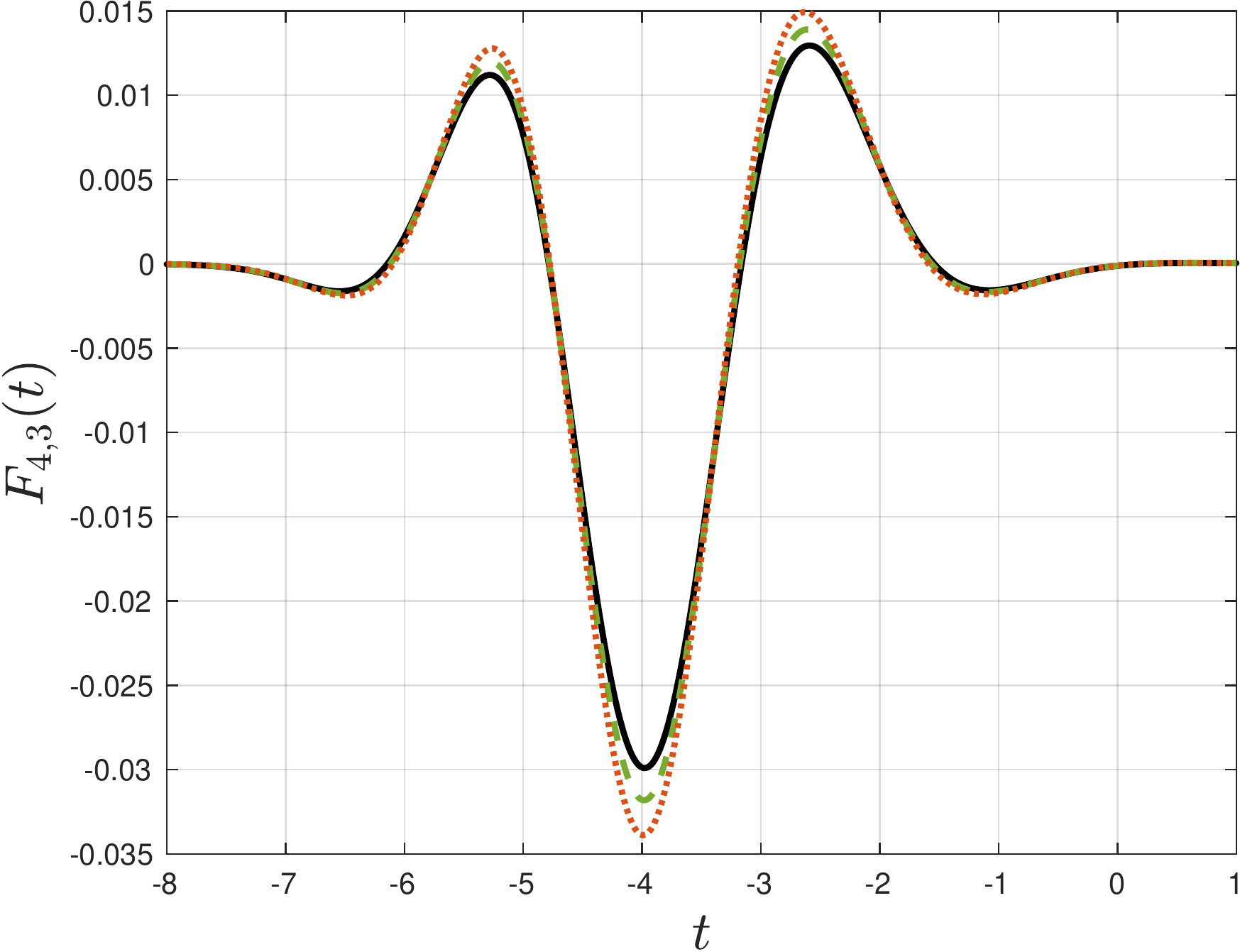}
\caption{{\footnotesize Top row $\beta=1$; bottom row $\beta=4$. Plots of $F_{\beta,1}(t)$ (left panels) and $F_{\beta,2}(t)$ (middle panels) as in \eqref{eq:FbetaP}.
The right panels show $F_{\beta,3}(t)$ as in \eqref{eq:FbetaP} (black solid line) with the approximation \eqref{eq:FbetaP3} for $r=160$ (red dotted line) and $r=1200$ (green dashed line); the parameter $\nu$ has been varied such that $t_{\nu}(2r)$ covers the range of $t$ on display. Note that the functions $F_{\beta,j}(t)$ ($j=1,2,3$) are about two orders of magnitude smaller in scale than their counterparts in Fig.~\ref{fig:hard2soft}.}}
\label{fig:FbetaP}
\end{figure}

%\begin{proof} 
%For $r, \nu>0$ (i.e., equivalently, $t > -2r^{1/3}$, $s< h_{\nu}^{-1}$) the transformations
%\[
%8r = \phi_{\nu}(s),  \quad t = t_{\nu}(r),
%\]
%are inverted by the expressions (cf.~\cite[Eq.~(52)]{arxiv:2301.02022})
%\[
%s = \frac{t}{(1+\frac{t}2 r^{-1/3})^{1/3}},\quad h_{\nu} = \frac{r^{-1/3}}{2(1+\frac{t}2 r^{-1/3})^{2/3}}.
%\]
%For $t_0 \leq t \leq r^{-1/3}$ we have
%\[
%s_0:= (\tfrac23)^{1/3} t_0 \leq (\tfrac23)^{1/3} t \leq s < h_{\nu}^{-1}
%\]
%and observe that in this range the expressions for $s$ and $h_{\nu}$ expand as uniformly convergent power series in powers of $(2r)^{-1/3}$, starting with
%\begin{align*}
%s &= t - \frac{t^2}{6}r^{-1/3} + \frac{t^3}{18}r^{-2/3} - \frac{7t^4}{324}r^{-1} + \cdots,\\*[1mm]
%h_{\nu} &= \frac{1}{2}r^{-1/3} - \frac{t}{6}r^{-2/3} + \frac{5t^2}{72}r^{-1}+ \cdots\,.
%\end{align*}
%If we plug these uniformly convergent power series into the uniform expansion of Thm.~\ref{thm:hard2soft},
%\[
%E_\beta^{\text{\rm hard}}(4r;\nu_\beta)  = E_\beta^{\text{\rm hard}}(\phi_{\nu}(s);\nu_\beta) = F_\beta(s) + \sum_{j=1}^m E_{\beta,j}(s) h_{\nu}^j + h_{\nu}^{m+1} O(e^{-3s/4}),
%\]
%we obtain the asserted form of the expansion \eqref{eq:FbetaPExpan} (as well as the claim about the repeated differentiability),
%simplifying the exponential error term by observing that $(3/2)^{2/3} > 1$. A routine calculation with truncated power series yields the expressions in \eqref{eq:FbetaP} for $F_{\beta,j}$ from those for their counterparts $E_{\beta,j}$ as displayed in \eqref{eq:F_beta_j}.
%\end{proof}

\begin{remark} Combined with the $\beta=2$ case \cite[Thm.~4.1]{arxiv:2301.02022} the leading order term of \eqref{eq:FbetaPExpan} and its optimal rate of convergence can be written, for $\beta=1,2,4$, in the unified form 
\[
E_\beta^\text{hard}(4r^2;\nu_\beta)\Big|_{\nu=2r+t r^{1/3}} = F_\beta(t) + O(\nu^{-2/3}),
\]
uniformly valid as $\nu\to\infty$ while $t$ stays bounded; this corrects a suggestion in \cite[Eq.(3.27)]{arxiv.2205.05257}.

\end{remark}

If we use the abbreviations \eqref{eq:lr_varoast}
we infer from the case of integer $\nu=l^\varoast$ in Thm.~\ref{thm:FbetaP} that the (generalized) Poisson generating functions 
in \eqref{eq:Poisson} expand in a unified fashion as follows:\footnote{\label{fn:FM23}Up to a slightly different scaling and the use of Gauss brackets, the case $m=1$ was previously put forward for $\varoast=\boxbslash, \boxslash$ as \cite[Eqs.~(3.11)/(3.21)]{arxiv.2205.05257} with $F_{\beta,1}(t)$ expressed in 
operator theoretic terms and in terms of  Painlevé III$'$ transcendents. We note, however, that the error estimate as stated in \cite[Eqs.~(3.11)/(3.21)]{arxiv.2205.05257} neglects the effects of the Gauss bracket (see \cite[Rem.~4.2]{arxiv:2301.02022}). When comparing the left and middle columns of Fig.~\ref{fig:FbetaP} with \cite[Figs.~4/5]{arxiv.2205.05257} a similar remark applies as in \cite[Fn.~27]{arxiv:2301.02022}.}%
\begin{corollary}\label{cor:gen_Poisson_expan} For $\varoast\in\{\boxslash,\boxbslash,\boxdot\}$ the (generalized) Poissonizations have the expansion
\begin{equation}\label{eq:PoissonVaroastExpan}
P_\varoast(r;l) = F_\beta(t) + \sum_{j=1}^m F_{\beta,j}(t) \cdot r_\varoast^{-j/3} +r_\varoast^{-(m+1)/3}\cdot O(e^{-t/2})\bigg|_{t=t_{l^\varoast}(r_\varoast),\; \beta = \beta(\varoast)},
\end{equation}
which is uniformly valid when $r,l \to \infty$ subject to $t_0 \leq t_{l^\varoast}(r_\varoast) \leq r_\varoast^{1/3}$, with $m$ being any fixed non-negative integer and $t_0$ any fixed real number. Preserving uniformity, the expansion can be repeatedly differentiated w.r.t. the variable $r$. 
\end{corollary}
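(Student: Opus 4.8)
The plan is to deduce Corollary~\ref{cor:gen_Poisson_expan} directly from Theorem~\ref{thm:FbetaP} by specializing the continuous parameter $\nu$ to the integer value $l^\varoast$ and then substituting the explicit identities~\eqref{eq:Poisson} that express each $P_\varoast(r;l)$ as a hard-edge gap probability $E_{\beta(\varoast)}^{\mathrm{hard}}$. Concretely, first I would recall from~\eqref{eq:Poisson} that $P_\boxslash(r;l) = E_4^{\mathrm{hard}}(8r;l)$, $P_\boxbslash(r;l) = E_1^{\mathrm{hard}}(8r;l)$, and $P_\boxdot(r;l) = E_1^{\mathrm{hard}}(4r^2;\tfrac{l-1}{2})$. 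In each case the argument of $E_\beta^{\mathrm{hard}}$ has the form $4 r_\varoast$ with $r_\varoast$ as defined in~\eqref{eq:lr_varoast}: for $\varoast=\boxslash,\boxbslash$ one has $8r = 4\cdot(2r) = 4 r_\varoast$, and for $\varoast=\boxdot$ one has $4r^2 = 4 r_\varoast$. Simultaneously, matching the second parameter against $\nu_\beta$ in~\eqref{eq:BF2003} forces the integer $\nu = l^\varoast$: indeed $\nu_4 = \nu+1 = l$ gives $\nu = l-1 = l^\boxslash$; $\nu_1 = (\nu-1)/2$ applied with $\nu_1 = l$ gives $\nu = 2l+1 = l^\boxbslash$; and $\nu_1 = (\nu-1)/2 = (l-1)/2$ gives $\nu = l = l^\boxdot$. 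Thus in all three cases $E_{\beta(\varoast)}^{\mathrm{hard}}(4 r_\varoast;\nu_{\beta(\varoast)})$ with $\nu = l^\varoast$ is literally $P_\varoast(r;l)$.

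Next I would invoke Theorem~\ref{thm:FbetaP} with $\beta = \beta(\varoast)$, with the role of the intensity variable played by $r_\varoast$, and with the integer parameter $\nu = l^\varoast$. The theorem yields
\[
E_{\beta}^{\mathrm{hard}}(4 r_\varoast;\nu_\beta) = F_\beta(t) + \sum_{j=1}^m F_{\beta,j}(t)\, r_\varoast^{-j/3} + r_\varoast^{-(m+1)/3}\cdot O(e^{-t/2})\Big|_{t = t_{l^\varoast}(r_\varoast)},
\]
uniformly valid as $r_\varoast,\,l^\varoast \to \infty$ subject to the constraint $t_0 \le t_{l^\varoast}(r_\varoast) \le r_\varoast^{1/3}$, with the expansion differentiable in $r_\varoast$ while preserving uniformity. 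Combining this with the parameter identifications of the previous paragraph gives exactly~\eqref{eq:PoissonVaroastExpan}. One should note that the growth conditions transfer correctly: $r \to \infty$ is equivalent to $r_\varoast \to \infty$ (since $r_\varoast$ is $2r$ or $r^2$), and $l \to \infty$ is equivalent to $l^\varoast \to \infty$ (since $l^\varoast$ is an affine, strictly increasing function of $l$ in each case); the admissible region $t_0 \le t_{l^\varoast}(r_\varoast) \le r_\varoast^{1/3}$ is stated verbatim in terms of the transformed variables, so no reformulation is needed there.

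The one point that needs genuine care — and the place where I expect the only real subtlety to lie — is the differentiability clause: the corollary asserts the expansion can be repeatedly differentiated with respect to $r$, whereas Theorem~\ref{thm:FbetaP} gives differentiability with respect to $r_\varoast$. Since $r \mapsto r_\varoast$ is a smooth change of variable on $(0,\infty)$ with non-vanishing derivative ($r_\varoast' = 2$ or $r_\varoast' = 2r$), the chain rule converts $\partial_{r_\varoast}$-differentiability into $\partial_r$-differentiability; I would spell this out so that the error term $r_\varoast^{-(m+1)/3}\cdot O(e^{-t/2})$ remains controlled after differentiation, which is immediate because differentiating the expansion of Theorem~\ref{thm:FbetaP} in $r_\varoast$ and multiplying by the (polynomially bounded) factor $r_\varoast'$ only changes the power of $r_\varoast$ in the remainder by a bounded amount while leaving the $O(e^{-t/2})$ factor of the same type, up to the usual accounting via~\eqref{eq:tnu}. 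Beyond this bookkeeping, the proof is a pure substitution and therefore very short.
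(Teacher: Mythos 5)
Your proposal is correct and follows essentially the same route as the paper: the corollary is obtained by specializing Theorem~\ref{thm:FbetaP} to the integer value $\nu=l^\varoast$ and substituting the identities \eqref{eq:Poisson}, with the parameter matching $4r_\varoast$ and $\nu_\beta\mapsto l$ exactly as you describe. Your extra remark on transferring differentiability from $r_\varoast$ to $r$ via the chain rule is sound bookkeeping that the paper leaves implicit.
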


\section{Generalized De-Poissonization}\label{sect:gen_de_poisson}

Consider an entire function 
\[
f(z) = \sum_{n=0}^\infty a_n z^n
\]
with positive Maclaurin coefficients $a_n > 0$, such that in particular $f(r) > 0$ for $r\geq 0$. Thus there is a random variable $N_r \in \N_0$ with distribution
\begin{equation}\label{eq:gen_poisson_dist}
\prob(N_r = n) = \frac{a_n r^n}{f(r)},
\end{equation}
which we call the {\em generalized} Poisson distribution\footnote{In the context of 
sampling from random combinatorial structures, these distributions are often called {\em Boltzmann probabilities}, cf.~\cite{MR2095975}.} of intensity $r$ induced by $f$; the standard Poisson case corresponds to the choice $f(z)=e^z$. 

The mean and variance of $N_r$ are easily seen to be the auxiliary functions associated with the entire function $f$ (cf. the positivity part of Def.~\ref{def:hayman}), 
\begin{equation}\label{eq:gen_Poisson_mean_var}
\E(N_r) = r \frac{f'(r)}{f(r)} =: a(r),\quad \Var(N_r) = r a'(r) =: b(r).
\end{equation}
By Hadamard's convexity theorem $a(r)$ is monotonely increasing and $b(r)$ is positive.

Given a sequence $(p_n)$ of probabilities, the associated {\em generalized} Poisson generating function, or {\em generalized} Poissonization, is defined as
\begin{equation}\label{eq:genPr}
P(r) := \sum_{n=0}^\infty p_n \frac{a_n r^n}{f(r)},
\end{equation}
which clearly continues to a meromorphic (entire if $f$ is zero free) function $P(z)$ in the complex plane. If the probability distribution is sufficiently concentrated around its mean $a(r)$, we would expect intuitively that there are some intensities $r_n$ such that simultaneously
\begin{equation}\label{eq:intuition}
a(r_n) \approx n, \quad p_n \approx P(r_n).
\end{equation}
When $r_n \to \infty$ as $n\to\infty$, such an approximate reconstruction is particularly useful if the generalized Poissonization $P(r)$ enjoys a comparatively simple asymptotics as $r\to \infty$. We call such an asymptotic reconstruction of $p_n$ a {\em generalized} de-Poissonization. Because of the necessity of suitable additional assumptions on $P(r)$ (or on the sequence $p_n$ to begin with), such a de-Poissonization is {\em Tauberian} in nature.  We discuss
the generalization of two Tauberian conditions that have been put forward to address the standard Poisson case.

\subsection{Generalized De-Poissonization by monotonicity}\label{sect:johansson}

The following result generalizes Johansson's de-Poissonization lemma \cite{MR1618351} (we modify the exposition given in \cite[§2.2]{MR3468920} accordingly), the Tauberian condition being here the monotonicity of the sequence of probabilities under consideration.

\begin{lemma}\label{lem:depoisson_mono} Let $f(z)$ be an $H$-admissible entire function with positive Mclaurin coefficients and auxiliary functions $a(r), b(r)$ as in \eqref{eq:gen_Poisson_mean_var}. Let $P(z)$ be the associated generalized Poisson generating function of a decreasing sequence of probabilities
\[
1\geq p_0 \geq p_1 \geq p_2 \geq \cdots \geq 0.
\] 
Then:
\begin{itemize}\itemsep=5pt
\item[I.]
If $r_n^- < r_n < r_n^+$, where $r_n$ is the unique solution of $a(r_n)=n$, there holds 
\begin{equation}\label{eq:enclosure}
P(r_n^+) - \Delta(r_n^+) \leq p_n \leq P(r_n^-) + \Delta(r_n^-), \qquad \Delta_n(r) := (r/r_n)^n\frac{f(r_n)}{f(r)}.
\end{equation}
Note that the error terms are independent of the sequence $(p_n)$. 

\item[II.] If,  for any fixed $\alpha > 0$, the $r_n^\pm$ are chosen as solutions of
\begin{equation}\label{eq:rnpm}
a(r_n^\pm) = n  \pm \sqrt{2\alpha\cdot b(r_n^\pm) \log n}
\end{equation}
such that $b(r_n^\pm) \sim b(r_n)$ as $n\to \infty$, the error term in \eqref{eq:enclosure} satisfies
\begin{equation}\label{eq:DeltaEst}
\Delta_n(r_n^\pm) = n^{-\alpha} + o(1).
\end{equation}
\end{itemize}
\end{lemma}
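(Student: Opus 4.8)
The plan is to prove Part I by a monotone rearrangement combined with a Chernoff-type tail bound, and Part II by a second-order (Hadamard-convexity) expansion of $\log\Delta_n$.

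For Part I, I would first record that, with $\E[t^{N_r}]=f(tr)/f(r)$, the exponential tilt $t=r_n/r_n^+\in(0,1)$ yields the one-sided Chernoff bound $\prob(N_{r_n^+}<n)\le t^{-n}\E[t^{N_{r_n^+}}]=(r_n^+/r_n)^n f(r_n)/f(r_n^+)=\Delta_n(r_n^+)$, and symmetrically $\prob(N_{r_n^-}>n)\le\Delta_n(r_n^-)$ via the tilt $t=r_n/r_n^->1$. For the lower bound on $p_n$, split $P(r_n^+)=\sum_{m<n}p_m\,\prob(N_{r_n^+}=m)+\sum_{m\ge n}p_m\,\prob(N_{r_n^+}=m)$ and bound $p_m\le1$ in the first sum and $p_m\le p_n$ in the second (monotonicity), to get $P(r_n^+)\le\prob(N_{r_n^+}<n)+p_n\le\Delta_n(r_n^+)+p_n$. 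For the upper bound, split $P(r_n^-)$ at $m=n$ and bound $p_m\ge p_n$ for $m\le n$, $p_m\ge0$ otherwise: $P(r_n^-)\ge p_n\,\prob(N_{r_n^-}\le n)\ge p_n\big(1-\Delta_n(r_n^-)\big)\ge p_n-\Delta_n(r_n^-)$, using $p_n\le1$. Rearranging gives \eqref{eq:enclosure}; note that only the positivity of the $a_n$ and the monotonicity of $a$ (hence the existence and uniqueness of $r_n$) enter here, and the error terms are manifestly independent of $(p_n)$ since the tilt used is the one that identifies the constant with $\Delta_n$.

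For Part II I would pass to the logarithmic variable $u=\log r$ and put $G(u):=\log f(e^u)$, so that $G'(u)=a(e^u)$ and $G''(u)=b(e^u)>0$ by Hadamard convexity. Integrating $G''$ twice and using $a(r_n)=n$ gives the exact identity
\[
-\log\Delta_n(r)=\int_{\log r_n}^{\log r}\big(a(e^u)-n\big)\,du=\int_{\log r_n}^{\log r}\!\!\int_{\log r_n}^{u}b(e^w)\,dw\,du .
\]
Setting $r=r_n^\pm$ and $\delta_n^\pm:=|\log r_n^\pm-\log r_n|$, equation \eqref{eq:rnpm} says that $\int b(e^w)\,dw=\sqrt{2\alpha\,b(r_n^\pm)\log n}$ when the integral is taken over the window from $\log r_n$ to $\log r_n^\pm$, whereas the iterated integral above equals $\int b(e^w)\,\big(\delta_n^\pm-|w-\log r_n|\big)\,dw$ over the same window. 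If $b(e^w)=(1+o(1))\,b(r_n)$ uniformly on that window, the first relation forces $\delta_n^\pm=(1+o(1))\sqrt{2\alpha\log n/b(r_n)}$, and then the second gives $-\log\Delta_n(r_n^\pm)=(1+o(1))\,\tfrac12\,b(r_n)(\delta_n^\pm)^2=(1+o(1))\,\alpha\log n$, i.e. $\Delta_n(r_n^\pm)=n^{-\alpha(1+o(1))}=n^{-\alpha}+o(1)$.

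The one genuinely technical point — and the main obstacle — is the uniform estimate $b(e^w)=(1+o(1))\,b(r_n)$ across that window; everything else in Part II is bookkeeping. Here $H$-admissibility and the hypothesis $b(r_n^\pm)\sim b(r_n)$ do the work: $H$-admissibility gives $b(r)\to\infty$, so on the window $b(e^w)$ is bounded below by a quantity tending to $\infty$, and the identity $\int b(e^w)\,dw=\sqrt{2\alpha\,b(r_n^\pm)\log n}$ then forces $\delta_n^\pm\to0$ as soon as $b(r_n)/\log n\to\infty$ — which holds throughout this paper, where $b(r_n)$ grows like a positive power of $n$; on the shrinking window the real-analytic $b$ is then squeezed between $(1-o(1))b(r_n)$ and $(1+o(1))b(r_n)$ by continuity together with $b(r_n^\pm)\sim b(r_n)$, exactly as needed. (Alternatively one can read $\Delta_n(r)=\prob(N_r=n)/\prob(N_{r_n}=n)$ and invoke Hayman's local Gaussian approximation $\prob(N_r=m)\sim(2\pi b(r))^{-1/2}e^{-(m-a(r))^2/2b(r)}$; but its additive error $o\big((b(r))^{-1/2}\big)$ need not lie below the target $n^{-\alpha}(b(r))^{-1/2}$ in the moderate-deviation regime $|n-a(r_n^\pm)|\sim\sqrt{2\alpha\,b(r_n^\pm)\log n}$, so for the sharp constant one is led back to the more precise uniform asymptotics available for the generating functions at hand, which is why the statement only claims $n^{-\alpha}+o(1)$.)
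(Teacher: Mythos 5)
Your Part~I is correct and is essentially the paper's own argument: the same monotone splitting of $P(r_n^\pm)$ at the index $n$, combined with the Markov/Chernoff bound at the tilt $t=r_n/r_n^\pm$ (the paper identifies this $t$ as the optimizer of $t^{-n}f(tr)/f(r)$ via $a(tr)=n$; you simply choose it outright), so there is nothing to add there.

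Part~II is where the genuine gap lies. Your identity $-\log\Delta_n(r)=\int_{\log r_n}^{\log r}\int_{\log r_n}^{u}b(e^w)\,dw\,du$ is fine, but everything after it hinges on the uniform interior estimate $b(e^w)=(1+o(1))\,b(r_n)$ on the window between $\log r_n$ and $\log r_n^\pm$, and this is not a consequence of the lemma's hypotheses. The assumption $b(r_n^\pm)\sim b(r_n)$ controls $b$ only at the endpoints; $b=G''$ of a convex $G$ need be neither monotone nor slowly varying, and the window sits around a point $\log r_n\to\infty$, so plain continuity gives no uniform multiplicative control in its interior. Writing the iterated integral as $\int b(e^w)\,(\log r_n^{+}-w)\,dw$ makes the problem visible: its value depends on where the mass of $b$ sits inside the window, which the endpoint data do not determine, so neither $\delta_n^\pm=(1+o(1))\sqrt{2\alpha\log n/b(r_n)}$ nor the evaluation $(1+o(1))\,\alpha\log n$ follows. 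Moreover, your supporting condition $b(r_n)/\log n\to\infty$ is imported from the applications later in the paper, not from the lemma, which is stated for an arbitrary $H$-admissible $f$; $H$-admissibility only guarantees $b(r)\to\infty$, possibly very slowly.

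Finally, your reason for discarding the local Gaussian route rests on a misreading of the target: \eqref{eq:DeltaEst} is the additive statement $n^{-\alpha}+o(1)$, not the multiplicative $n^{-\alpha}(1+o(1))$ (which the paper claims, only in a remark, to hold ``often'' and verifies in the concrete examples by direct computation). For the additive form, Hayman's normal approximation \eqref{eq:CLT} applied to $\Delta_n(r)=\frac{a_nr^n/f(r)}{a_nr_n^n/f(r_n)}$ gives at once $\Delta_n(r)=\sqrt{b(r_n)/b(r)}\,\bigl(\exp\bigl(-(n-a(r))^2/(2b(r))\bigr)+o(1)\bigr)$; with $r=r_n^\pm$ the exponent equals $-\alpha\log n$ exactly by \eqref{eq:rnpm}, and the prefactor tends to $1$ by the hypothesis $b(r_n^\pm)\sim b(r_n)$. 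This two-line argument is precisely the paper's proof of Part~II and closes the gap without any extra regularity of $b$; your more elementary convexity route would need an additional slow-variation assumption on $b$ over the moderate-deviation window to be salvaged.
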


\begin{remark} The estimate \eqref{eq:DeltaEst} can often be sharpened to take the form (see Example~\ref{ex:depoisson_mono})
\[
\Delta_n(r_n^\pm) = n^{-\alpha}(1 + o(1)),
\]
which is the reason why we prefer the form stated in \eqref{eq:DeltaEst} to writing $\Delta_n(r_n^\pm) = o(1)$.
\end{remark}

\begin{proof} I. Take a random variable $N_r$ distributed according to \eqref{eq:gen_poisson_dist}.
The monotonicity of the probabilities $p_n$ then implies, for all $r\geq 0$,
\begin{multline*}
p_n - \prob(N_r \geq n) \leq (1-\prob(N_r>n)) \cdot p_n = \prob(N_r\leq n)\cdot p_n = \left(\sum_{m\leq n} \frac{a_m r^m}{f(r)}\right)\cdot p_n\\*[1mm]
\leq P(r) = \sum_{m\leq n}  \frac{a_m r^m}{f(r)} p_m + \sum_{m> n}  \frac{a_m r^m}{f(r)} p_m
\leq \sum_{m\leq n}  \frac{a_m r^m}{f(r)} + p_n = p_n + \prob(N_r \leq n),
\end{multline*}
that is, after rearranging, the enclosure
\[
P(r) - \prob(N_r \leq n) \leq p_n \leq P(r) + \prob(N_r \geq  n).
\]
We now establish bounds on the tails of the generalized Poisson distribution.
First, Markov's inequality  allows us to separate the tails by introducing a free parameter $t$ in the form
%\begin{align*}
%t>1 &\;\Rightarrow \; 
%\prob\big(N_r \geq n \big) = \prob\big(t^{N_r} \geq t^n \big) \leq \frac{\E \big(t^{N_r} \big)}{t^n},\\*[1mm]
%0<t<1 & \;\Rightarrow \; \prob\big(N_r \leq n \big) =\prob\big(t^{N_r} \geq t^{n} \big)\leq \frac{\E \big(t^{N_r} \big)}{t^n}.
%\end{align*}
\[
\frac{\E \big(t^{N_r} \big)}{t^n} \geq \prob\big(t^{N_r} \geq t^{n} \big) =
\begin{cases}
\prob\big(N_r \geq n \big) &\quad \text{$t>1$,}\\*[2mm]
\prob\big(N_r \leq n \big) &\quad \text{$0<t<1$.}
\end{cases}
\]
Next, since by \eqref{eq:gen_poisson_dist} 
\[
\E \big(t^{N_r} \big) = \sum_{k=0}^\infty t^k \cdot \prob(N_r = k) = \frac{f(tr)}{f(r)}
\]
and thus
\[
\frac{d}{dt} \frac{\E \big(t^{N_r} \big)}{t^n} = \frac{d}{dt} \frac{f(t r)}{t^{n} f(r)} = \frac{f(t r)}{t^{n+1} f(r)} \left(a(t r) - n\right),
\]
an optimization of the Markov bounds over the free parameter $t$ yields the Chernoff bounds
\[
\prob(N_{r_n^-} \geq n) \leq \Delta_n(r_n^-),\qquad \prob(N_{r_n^+} \leq n) \leq \Delta_n(r_n^+).
\]

II. By normal approximation \eqref{eq:CLT} we get, when $r,n \to  \infty$, the asymptotics
\[
\Delta_n(r) = \sqrt{\frac{b(r_n)}{b(r)}}\left( \exp\left(-\frac{(n-a(r))^2}{2b(r)}\right)+ o(1)\right),
\]
so that the particular choices \eqref{eq:rnpm} for $r_n^\pm$ yield \eqref{eq:DeltaEst} if $b(r_n^\pm) \sim b(r_n)$ as $n\to \infty$.
\end{proof}

\begin{example}\label{ex:depoisson_mono} Looking at the two concrete cases that are relevant in the present paper, we demonstrate  that the estimate \eqref{eq:DeltaEst} can be sharpened to 
\[
\Delta_n(r_n^\pm) = n^{-\alpha}(1 + o(1))
\]
even if we construct the $r_n^\pm$ by solving \eqref{eq:rnpm} through an expansion as $n\to\infty$, stopping right at the first order where the choice of the sign becomes relevant. 

\medskip

\begin{subequations}
\begin{itemize}
\item The standard Poisson case,  $f(z)=e^z$. Here we have $a(r)=b(r)=r$, $r_n=n$ and the equation \eqref{eq:rnpm} is solved to leading orders by
\begin{equation}\label{eq:rnpm_std_poisson}
r_n^{\pm} := n  \pm \sqrt{2\alpha\, n \log n}.
\end{equation}
\item The exponential generating function \eqref{eq:finv} of the number of involutions, $f(z)=e^{z+z^2/2}$. Here we have $a(r)=r+r^2$, $b(r)=r+2r^2$,
\[
r_n = \sqrt{n+\frac14}-\frac12 = n^{1/2} - \frac{1}{2} + O(n^{-1/2})
\]
and the equation \eqref{eq:rnpm} is solved to leading orders by
\begin{equation}\label{eq:rnpm_inv}
r_n^\pm := n^{1/2} - \frac{1}{2} \pm \sqrt{\alpha \log n}.
\end{equation}
\end{itemize}

\noindent
In both cases we have $r_n^- < r_n < r_n^+$ for sufficiently large\footnote{E.g., $n\geq 2$ if $\alpha\geq 1/94$ (only relevant in the second example, though).} $n$ and, if plugged into the definition of $\Delta_n(r)$, a routine calculation shows
\begin{equation}\label{eq:Delta_dePoisson}
\Delta_n(r_n^{\pm}) = n^{-\alpha} \left(1 + O\big(n^{-1/2} \log^{3/2} n\big)\right) \qquad (n\to\infty).
\end{equation}
\end{subequations}
\end{example} 

\subsection{Generalized Jasz expansion}\label{sect:jasz} 

In the standard Poisson case, a much finer tool for an asymptotic reconstruction of $p_n$ from the Poisson generating function $P(r)$ is {\em analytic} de-Poissonization, as studied in the 1998 memoir of Jacquet and Szpankowski \cite{MR1625392}.
This technique allows for precise asymptotic expansions,\footnote{Dubbed {\em Jasz expansions} in \cite{MR2483235}.} the Tauberian condition being a growth condition of $P(z)$ in the complex domain at its essential singularity $z=\infty$.

This technique was used in our work \cite{arxiv:2301.02022} on asymptotic expansions in the general permutation case; the required amount of uniformity was obtained from the theory of $H$-admissibility and a certain hypothesis (tameness hypothesis) regarding the $l$-dependent families of the finitely many zeros of $P(z;l)$ in certain sectors of the complex plane. For details and more references see \cite[§5 and Appendix A]{arxiv:2301.02022}.

Though it is rather clear that the fundamental theorem of Jacquet and Szpankowski \cite[Thm.~A.2]{arxiv:2301.02022} can be extended to cover generalized Poisson generating functions \eqref{eq:gen_Poisson_mean_var} as studied here, we leave the detailed analytic estimates to future work and content ourselves, for the time being, with establishing generalized Jasz expansions in just a formal fashion.

We consider the case of a zero free $f(z)$ such that the induced generalized Poisson generating function $P(z)$ is entire. We write the power series expansion of $P(z)$, centered at $z=r$, in the operator form
\[
P(z) = e^{(z-r)D} P(r),
\]
where $D$ denotes differentiation w.r.t. the variable $r$. By Cauchy's formula (taking a contour encircling $z=0$ counter-clockwise with index one) we get
\begin{subequations}\label{eq:jasz_gen}
\begin{multline}
p_n = \frac{1}{a_n\,2\pi i} \oint P(z) f(z) \frac{dz}{z^{n+1}} =  \left(\frac{e^{-rD}}{a_n\,2\pi i} \oint e^{zD} f(z) \frac{dz}{z^{n+1}} \right) P(r)\\*[2mm]
= \left(\frac{e^{-rD} }{a_n} [z^n] e^{zD} f(z) \right) P(r) = \left(e^{-rD} \sum_{k=0}^n \frac{a_{n-k}}{a_n k!} D^k \right) P(r) = \sum_{j=0}^\infty c_j(n;r) P^{(j)}(r),
\end{multline}
where the symbol $[z^n]$ means extraction of the coefficient of $z^n$ in a power series and the coefficients $c_j(n;r)$ are polynomials in $r$, obtained from evaluating the Cauchy product
\[
e^{-rx} \sum_{k=0}^n \frac{a_{n-k}}{a_n k!} x^k = \sum_{j=0}^\infty c_j(n;r) x^j.
\]
Putting $a_k = 0$ for $k<0$, we thus get the explicit expression
\begin{equation}
c_j(n;r) = \frac{1}{j!} \sum_{k=0}^{j} \binom{j}{k} \frac{a_{n-k}}{a_n} (-r)^{j-k}.
\end{equation}
\end{subequations}
The first few of these polynomials are
\[
c_0(n;r) = 1,\quad c_1(n;r) = \frac{a_{n-1}}{a_n} - r,\quad c_2(n;r) = \frac{1}{2}\left( \frac{a_{n-2}}{a_n}-\frac{2a_{n-1}}{a_n}r+r^2\right).
\]
As $n\to\infty$, the formal series \eqref{eq:jasz_gen} can be turned into an asymptotic expansion by first choosing any reasonably simple approximation $r_n^* \approx r_n$ (where $a(r_n)=n$), followed by expanding the derivatives 
 $P^{(j)}(r_n^*)$ and the ratios $a_{n-k}/a_n$.

\subsubsection{The standard Poisson case} Here we have $f(z)=e^z$, $a_n = 1/n!$ and $r_n=n$. As seen in \cite[Eq.~(A.4)]{arxiv:2301.02022} the coefficients $b_j(n):=c_j(n,n)$ are polynomials in $n$ of a degree $\leq j/2$. In Sect.~\ref{sect:CDFexpan} we apply the Jasz expansion \eqref{eq:jasz_gen} to a (family of) Poisson generating functions with
\begin{subequations}\label{eq:jasz_std_4}
\begin{equation}
P^{(j)}(n) = O(n^{-2j/3}).
\end{equation}
For such a $P(r)$, if we truncate the expansion at $O(n^{-4/3})$ and keep only those terms which do not get absorbed in the error term, we thus get the particular Jasz expansion
\begin{equation}
p_n = P(n) - \frac{n}{2} P''(n) + \frac{n}{3} P'''(n) + \frac{n^2}{8} P^{(4)}(n) - \frac{n^3}{48}P^{(6)}(n) + O(n^{-4/3}).
\end{equation}
\end{subequations}
We note that if the (family of) $P(z)$ satisfies the assumptions of \cite[Thm.~A.2]{arxiv:2301.02022}, this expansion can be established rigorously (see~\cite[Example~A.3]{arxiv:2301.02022} for details).

\subsubsection{The case of the exponential generating function of the number of involutions}\label{sect:jasz_gen} Here we have $f(z)=e^{z+z^2/2}$ with $a_n = I_n/n!$. Particularly simple intermediate results are obtained if we approximate \eqref{eq:rinv} by
\[
r_n^* = n^{1/2} - \frac{1}{2}.
\]
We then get, by using the asymptotic expansion \eqref{eq:iotaexpansion} for $a_n$, that $c_0(n;r_n^*) = 1$ and
\begin{align*}
c_1(n;r_n^*) &= \frac{3 n^{-1/2}}{8}-\frac{n^{-1}}{8}-\frac{n^{-3/2}}{128} + O(n^{-2}),\\*[1mm]
c_2(n;r_n^*) &=-\frac{1}{4}+\frac{n^{-1/2}}{8}+\frac{n^{-1}}{128} + O(n^{-3/2}),\\*[1mm]
c_3(n;r_n^*) &=-\frac{5 n^{-1/2}}{96}+\frac{5n^{-1}}{64} -\frac{167n^{-3/2}}{3072} + O(n^{-2}),\\*[1mm]
c_4(n;r_n^*) &= \frac{1}{32}-\frac{n^{-1/2}}{32}+\frac{11n^{-1}}{512} + O(n^{-3/2}),\\*[1mm]
c_5(n;r_n^*) &= \frac{n^{-1/2}}{768} -\frac{n^{-1}}{96} + \frac{159n^{-3/2}}{10240} + O(n^{-2}),\\*[1mm]
c_6(n;r_n^*) &=-\frac{1}{384}+\frac{n^{-1/2}}{256} - \frac{175n^{-1}}{36864} + O(n^{-3/2}). 
\end{align*}
Generally we have $c_{2j}(n;r_n^*) = O(1)$ and $c_{2j+1}(n;r_n^*) = O(n^{-1/2})$.
In Sect.~\ref{sect:CDFexpan} we apply the generalized Jasz expansion \eqref{eq:jasz_gen} to a (family of) Poisson generating functions satisfying
\begin{subequations}\label{eq:jasz_gen_4}
\begin{equation}
P^{(j)}(n) = O(n^{-j/6}).
\end{equation}
For such a $P(r)$, if we truncate the expansion at $O(n^{-4/3})$ and keep only those terms which do not get absorbed in the error term, we thus get the particular generalized Jasz expansion
\begin{equation}
\begin{aligned}
p_n &= P(r_n^*) + \left(\frac{3n^{-1/2}}{8} - \frac{n^{-1}}{8}\right) P'(r_n^*) + \left(-\frac{1}{4}+\frac{n^{-1/2}}{8}\right) P''(r_n^*) -\frac{5 n^{-1/2}}{96} P'''(r_n^*) \\*[2mm]
 &\quad +\left(\frac{1}{32}-\frac{n^{-1/2}}{32}\right) P^{(4)}(r_n^*) -\frac{1}{384} P^{(6)}(r_n^*) + O(n^{-4/3}).
\end{aligned}
\end{equation}
\end{subequations}

\section{Expansions of the Length Distributions}\label{sect:depoisson}

\subsection{The limit law} If we apply the monotonicity based (generalized) de-Poissonization of Lemma~\ref{lem:depoisson_mono} to the expansion \eqref{eq:PoissonVaroastExpan}, we obtain the following theorem. To the extent that an error estimate is given here (suboptimal, though), we sharpen the limit laws of Baik and Rains \cite[Thm.~3.1/3.4]{MR1845180}.\footnote{The general involution case was studied in \cite{MR1845180} by means of a multi-variate standard Poisson generating function that accounts for the undetermined number of fixed-points in its second intensity.
 In contrast, we deal with that case in a more direct fashion, using the notion of generalized Poisson generating functions.} Similar suboptimal  $O\big(n^{-1/6}\sqrt{\log n}\,\big)$ error estimates were established in \cite[Cor.~1.2]{MR3161478} for the joint probability distribution of maximal crossing and nesting in random matchings, and in \cite[Eq.~(1.12)]{arxiv:2301.02022} for the general permutation case.

\begin{theorem}\label{thm:leading_de_poisson} For $\varoast\in\{\boxslash,\boxbslash,\boxdot\}$, writing $\gamma(\boxdot) := 1$ and $\gamma(\varoast) := 2$ otherwise, the discrete probability distributions satisfy the limit law
\begin{equation}\label{eq:leading_de_poisson}
p_\varoast(n;l) = F_{\beta(\varoast)}\big(t_{l^\varoast}(\gamma(\varoast) \cdot n)\big) + O\big(n^{-1/6}\sqrt{\log n}\,\big),
\end{equation}
which is uniformly valid when $n, l \to \infty$ subject to $t_0\leq t_{l^\varoast}(\gamma(\varoast) \cdot n) \leq t_1$, with $t_0<t_1$ being any fixed ordered pair of real numbers.
\end{theorem}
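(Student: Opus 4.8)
The plan is to de-Poissonize the leading-order ($m=0$) instance of the expansion \eqref{eq:PoissonVaroastExpan} by means of the monotonicity-based de-Poissonization, Lemma~\ref{lem:depoisson_mono}. First I would record the relevant (generalized) Poisson structure. In the fixed-point free cases $\varoast=\boxslash,\boxbslash$, $P_\varoast(r;l)$ is the \emph{standard} Poissonization induced by $f(z)=e^z$, so that $a(r)=b(r)=r$ and $r_n=n$; in the general involution case $\varoast=\boxdot$, $P_\varoast(r;l)$ is the generalized Poissonization induced by $f(z)=e^{z+z^2/2}$, so that $a(r)=r+r^2$, $b(r)=r+2r^2$ and $r_n=\sqrt{n+\tfrac14}-\tfrac12$ (cf.\ Example~\ref{ex:depoisson_mono}). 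In both cases $f$ is zero free, and $P_\varoast(z;l)$ is entire and $H$-admissible by Theorem~\ref{thm:gen_fun_zeros}, resp.\ Corollary~\ref{cor:f_boxdot_Hadmiss}; this furnishes the standing hypotheses of Lemma~\ref{lem:depoisson_mono}.

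The only genuinely combinatorial input is the Tauberian condition of Lemma~\ref{lem:depoisson_mono}: for each fixed $l$, the sequence $\big(p_\varoast(n;l)\big)_{n\geq0}$ must be non-increasing. In the fixed-point free cases this is immediate by a coupling: for a uniformly random perfect matching $M\in S_{n+1}^\boxtimes$ of $[2n+2]$, delete the two points forming the pair that contains $2n+2$ and relabel order-preservingly, obtaining a uniformly random $M'\in S_n^\boxtimes$; since $M'$ is then a subword of $M$, one has $L_n^{\boxslash}(M')\leq L_{n+1}^{\boxslash}(M)$ and $L_n^{\boxbslash}(M')\leq L_{n+1}^{\boxbslash}(M)$, whence $p_\varoast(n+1;l)\leq p_\varoast(n;l)$. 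For $\varoast=\boxdot$ this is more delicate, since removing a point from an involution need not leave an involution; monotonicity still holds, but establishing it --- e.g.\ via a suitably adapted coupling, or by a direct combinatorial comparison of the counts $\sum_{\lambda\,\vdash n,\,\lambda_1\leq l}d_\lambda$ and $I_n=\sum_{\lambda\,\vdash n}d_\lambda$ arising from \eqref{eq:SYT} together with the branching $d_\lambda=\sum_{\mu\,\nearrow\,\lambda}d_\mu$ --- is the step I expect to cost the most work; the remainder is quantitative bookkeeping.

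Granting monotonicity, fix any $\alpha\geq\tfrac16$ (say $\alpha=1$) and take $r_n^{\pm}$ as in Example~\ref{ex:depoisson_mono}: $r_n^{\pm}=n\pm\sqrt{2\alpha\,n\log n}$ in the fixed-point free cases, $r_n^{\pm}=n^{1/2}-\tfrac12\pm\sqrt{\alpha\log n}$ in the general involution case. These satisfy $r_n^-<r_n<r_n^+$ for large $n$, and by \eqref{eq:Delta_dePoisson} the error in the enclosure \eqref{eq:enclosure} is $\Delta_n(r_n^{\pm})=n^{-\alpha}(1+o(1))$. Inserting the $m=0$ case of \eqref{eq:PoissonVaroastExpan} into \eqref{eq:enclosure}, and writing $\beta=\beta(\varoast)$, gives $P_\varoast(r_n^{\pm};l)=F_{\beta}\big(t_{l^\varoast}((r_n^{\pm})_\varoast)\big)+O(n^{-1/3})$, because by \eqref{eq:lr_varoast} one has $(r_n^{\pm})_\varoast=2r_n^{\pm}=\gamma(\varoast)n+O(\sqrt{n\log n})$ when $\varoast=\boxslash,\boxbslash$ and $(r_n^{\pm})_\varoast=(r_n^{\pm})^2=\gamma(\boxdot)n+O(\sqrt{n\log n})$ when $\varoast=\boxdot$; so in all cases the intensity $r_\varoast$ is shifted from $\gamma(\varoast)n$ by $O(\sqrt{n\log n})$. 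Since $t_\nu'(r)=-r^{-2/3}-\tfrac16 r^{-1}t_\nu(r)=O(r^{-2/3})$ for bounded $t_\nu$ by \eqref{eq:tnu}, the mean value theorem yields
\[
\big|\,t_{l^\varoast}\big((r_n^{\pm})_\varoast\big)-t_{l^\varoast}\big(\gamma(\varoast)n\big)\,\big|=O\!\big(n^{-2/3}\sqrt{n\log n}\,\big)=O\!\big(n^{-1/6}\sqrt{\log n}\,\big),
\]
uniformly in $l$ subject to $t_{l^\varoast}(\gamma(\varoast)n)\in[t_0,t_1]$; in particular the scaled argument stays in a fixed compact interval, which legitimises the use of the uniform expansion \eqref{eq:PoissonVaroastExpan}. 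As $F_\beta$ is smooth, hence Lipschitz on compact sets, it follows that $F_\beta\big(t_{l^\varoast}((r_n^{\pm})_\varoast)\big)=F_\beta\big(t_{l^\varoast}(\gamma(\varoast)n)\big)+O(n^{-1/6}\sqrt{\log n})$. Collecting the three error contributions --- $O(n^{-1/6}\sqrt{\log n})$ from the argument shift, $O(n^{-1/3})$ from the Poissonized expansion, and $n^{-\alpha}(1+o(1))$ from the Chernoff enclosure --- the first dominates, so the upper and lower bounds in \eqref{eq:enclosure} collapse to the common value $F_{\beta(\varoast)}\big(t_{l^\varoast}(\gamma(\varoast)n)\big)+O(n^{-1/6}\sqrt{\log n})$, which is \eqref{eq:leading_de_poisson}. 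The asserted uniformity in $l$ is inherited from that of \eqref{eq:PoissonVaroastExpan} and from the $l$-independence of $\Delta_n$.
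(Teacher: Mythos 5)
Your proposal follows essentially the same route as the paper: apply the monotonicity-based de-Poissonization Lemma~\ref{lem:depoisson_mono} with the intensities $r_n^\pm$ of Example~\ref{ex:depoisson_mono} (any $\alpha\geq 1/6$ works; the paper takes $\alpha=1$), insert the $m=0$ case of \eqref{eq:PoissonVaroastExpan}, and absorb the $O(\sqrt{n\log n})$ shift of the intensity into an $O(n^{-1/6}\sqrt{\log n})$ shift of the scaled variable via \eqref{eq:tnu} and the smoothness of $F_\beta$; your error bookkeeping is correct and the analytic part is exactly the paper's argument. The one place where you diverge is the Tauberian monotonicity input. Your deletion coupling for the fixed-point free cases is fine (removing the pair containing $2n+2$ does produce a uniform matching of $[2n]$ whose word is a subword), but for $\varoast=\boxdot$ you leave monotonicity of $n\mapsto p_\boxdot(n;l)$ open, and as you note the naive coupling fails because deleting a $2$-cycle drops $n$ by two. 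The paper does not prove this either: it cites the combinatorial monotonicity statements of Baik and Rains \cite[Lemma~7.5]{MR1845180}, which cover all three cases, so your flagged step is genuinely the only missing piece and is available from the literature rather than something you would have to create from scratch. One small mis-attribution: the hypotheses of Lemma~\ref{lem:depoisson_mono} concern the $H$-admissibility of the \emph{inducing} function $f$ (here $e^z$, respectively $e^{z+z^2/2}$), not of the generating functions $f_l^\varoast$; Theorem~\ref{thm:gen_fun_zeros} and Corollary~\ref{cor:f_boxdot_Hadmiss} are not what is needed at this point, though the required admissibility of $e^z$ and $e^{z+z^2/2}$ is standard (Theorem~\ref{thm:haymancriteria}.III), so this does not affect the validity of the argument.
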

\begin{proof}  Using $f(z)=e^z$ in the fixed-point free cases $\varoast=\boxslash,\boxbslash$ and $f(z)=e^{z+z^2/2}$ in the general involution case $\varoast=\boxdot$, the generalized Poisson generating function of  $(p_\varoast(n;l))_n$ is given by $P_{\varoast}(r;l)$. Since the combinatorial monotonicity properties outlined in \cite[Lemma~7.5]{MR1845180} allow us to apply Lemma~\ref{lem:depoisson_mono}, we get by choosing $\alpha=1$ in the estimates of Example~\ref{ex:depoisson_mono} the enclosure
(with error terms that are independent of $l$)
\[
P_{\varoast}(r_{n;\varoast}^+;l) + O(n^{-1}) \leq p_\varoast(n;l) \leq P_{\varoast}(r_{n;\varoast}^-;l) + O(n^{-1}),
\]
where 
\[
r_{n;\varoast}^\pm :=n\pm \sqrt{2n\log n} \quad (\varoast =\boxslash,\boxbslash), \qquad
r_{n;\boxdot}^\pm := n^{1/2}-\frac{1}{2}\pm\sqrt{\log n}.
\]
We observe that, in all cases,
\[
(r_{n;\varoast}^\pm)_\varoast = \gamma(\varoast) n + O(\sqrt{n\log n})
\]
so that a routine expansion gives, uniformly valid under the stated assumptions on $n$, $l$, 
\[
t_{l^\varoast}((r_{n;\varoast}^\pm)_\varoast) = t_{l^\varoast}(\gamma(\varoast) n) + O\big(n^{1/6}\sqrt{\log n}\,\big).
\]
The case $m=0$ of the expansion \eqref{eq:PoissonVaroastExpan}, followed by a Taylor expansion, gives then
\[
P_{\varoast}(r_{n;\varoast}^\pm;l) = F_{\beta(\varoast)}\big(t_{l^\varoast}((r_{n;\varoast}^\pm)_\varoast)\big) + O(n^{-1/3})
= F_{\beta(\varoast)}\big(t_{l^\varoast}(\gamma(\varoast) n)\big) + O\big(n^{1/6}\sqrt{\log n}\,\big),
\]
which finishes the proof.
\end{proof}

The weakest link in the estimates of the proof, which causes the suboptimal $O\big(n^{1/6}\sqrt{\log n}\,\big)$ error term (cf. Rem.~\ref{rem:onesixth} and Cor.~\ref{cor:limit_law_finite_size} for the optimal error terms), is that the monotonicity based sandwiching of Lemma~\ref{lem:depoisson_mono} forces us to keep,  by a factor
\[
1 \pm c \sqrt{n^{-1} \log n},
\]
a safe distance to the ``proper'' choice $r_n$  of the intensity (which satisfies $a(r_n)=n$).
Analytic generalized de-Poissonization allows us to eliminate that need of keeping a distance.

\subsection{Expansions of distributions: the fixed-point free cases}\label{sect:CDFexpan}

For $\varoast \in \{\boxslash,\boxbslash\}$ we are dealing with standard Pois\-son\-ization and follow the arguments given in \cite[§5.1]{arxiv:2301.02022} for the structurally similar general permutation case. 

Let us consider any fixed compact interval $[t_0,t_1]$ and a sequence of integers $l_n\to \infty$ with
\begin{equation}\label{eq:tnstar} 
t_0 \leq t_n^*:= t_{l_n^\varoast}(2n) \leq t_1.
\end{equation}
When $n-n^{3/5} \leq r \leq n + n^{3/5}$ and $n\geq n_0$ with $n_0$ large enough (depending only on $t_0, t_1$) we thus get the uniform bounds
\[
2\sqrt{2r} + (t_0-1) (2r)^{1/6} \leq l_n^\varoast \leq 2\sqrt{2r} + (t_1+1) (2r)^{1/6}.
\]
Suppressing the dependence on $\varoast$, we briefly write  $\beta=\beta(\varoast)$ and write the induced Poisson generating function, and exponential generating function of the length distribution, as
\[
P_k(z):= P_\varoast(z;l_k),\quad f_k(z) := e^z P_k(z) = f_{l_k}^\varoast(\sqrt{2z}).
\]
Now, Cor.~\ref{cor:gen_Poisson_expan} gives the expansion
\begin{equation}\label{eq:Pn_expan}
P_n(r) = F_\beta(t) + \sum_{j=1}^m F_{\beta,j}(t) (2r)^{-j/3} + O(r^{-(m+1)/3}) \bigg|_{t=t_{l_n^\varoast}(2r)},
\end{equation}
which is uniformly valid when $n-n^{3/5} \leq r \leq n + n^{3/5}$ as $n\to \infty$, $m$ being any fixed non-negative integer. Here, the implied constant in the error term depends only on $t_0,t_1$, but not on the specific sequence $l_n$. Preserving uniformity, the expansion can be repeatedly differentiated w.r.t. the variable $r$. In particular, using the differential equation \eqref{eq:tnu} we get that $P_n^{(j)}(n)$ expands in powers of $n^{-1/3}$, starting with a leading order term of the form
\begin{subequations}\label{eq:Pn_deri_expan}
\begin{equation}\label{eq:Pn_deri_est}
P_n^{(j)}(n) = (-1)^j 2^j F_\beta^{(j)}(t_n^*)\cdot (2n)^{-2j/3} + O(n^{-(2j+1)/3}) \qquad (n\to\infty);
\end{equation}
the first cases, insofar as they are needed for the Jasz expansion \eqref{eq:jasz_std_4}, are (cf. also~\cite[Eq.~(5.4)]{arxiv:2301.02022})
\begin{align}
P_n(n) &= F_\beta(t) + F_{\beta,1}(t) \cdot (2n)^{-1/3} + F_{\beta,2}(t) \cdot (2n)^{-2/3} \\*[1mm] 
&\qquad + F_{\beta,3}(t) \cdot (2n)^{-1} + O(n^{-4/3})\bigg|_{t=t_n^*}, \notag \\
P''_n(n) &= 4 F_\beta''(t) \cdot (2n)^{-4/3} + \left(\frac{10}{3} F_\beta'(t) +\frac{4t}{3} F_\beta''(t) + 4 F_{\beta,1}''(t)\right) \cdot (2n)^{-5/3} \\
&\hspace*{-1.2cm} + \left(\frac{7t}{9} F_\beta'(t) + 6 F_{\beta,1}'(t) + \frac{t^2}{9} F_{\beta}''(t) + \frac{4t}{3} F_{\beta,1}''(t) + 4 F_{\beta,2}''(t)\right) \cdot(2n)^{-2} + O(n^{-7/3})\bigg|_{t=t_n^*}, \notag \\
P'''_n(n) &= -8 F_\beta'''(t) \cdot (2n)^{-2} + O(n^{-7/3})\Big|_{t=t_n^*}, \\
P^{(4)}_n(n) &=  16F_\beta^{(4)}(t)  \cdot(2n)^{-8/3}\\
& \qquad + \left(80 F_\beta'''(t) +\frac{32t}{3} F_\beta^{(4)}(t) + 16 F_{\beta,1}^{(4)}(t)\right) \cdot (2n)^{-3}+ O(n^{-10/3})\bigg|_{t=t_n^*},\notag \\
P^{(6)}_n(n) &= 64 F_\beta^{(6)}(t) \cdot (2n)^{-4} + O(n^{-13/3})\Big|_{t=t_n^*},
\end{align}
\end{subequations}
where the implied constants in the error terms depend only on $t_0$ and $t_1$.

By Thm.~\ref{thm:gen_fun_zeros} we know that the $H$-admissible exponential generating functions $f_n(z)$ have only finitely many zeros in some sector $|\arg z\,| \leq \pi/2 + \epsilon$, $\epsilon > 0$. If we denote the auxiliary functions of $f_n(z)$ by $a_n(r)$ and $b_n(r)$, the expansion \eqref{eq:Pn_expan} and its derivatives give
\[
a_n(r) = r + O(r^{1/3}),\qquad b_n(r) = r + O(r^{2/3}),
\]
uniformly valid when $n-n^{3/5} \leq r \leq n + n^{3/5}$ as $n\to \infty$; the implied constants in the error terms depend only on $t_0,t_1$.

Therefore, as in the general permutation case \cite[§5.1]{arxiv:2301.02022}, we have now all the uniformity properties in place which are required for the application of \cite[Cor.~A.7]{arxiv:2301.02022}---with the sole exception of the uniform tameness 
(see \cite[Def.~A.2]{arxiv:2301.02022}) of the finitely many zeros of $f_n(z)$ in the sector $|\arg z\,| \leq \pi/2 + \epsilon$: these zeros should neither come too close to the positive real axis nor should they be getting too large. Since numerical experiments strongly indicate this to be true,\footnote{By Sects.~\ref{sect:ChazyI}--\ref{sect:exact}, the zeros of the generating functions correspond to the pole field of a Chazy I equation which is equivalent of a certain Painlevé III equation. Fornberg and Weideman \cite{MR2804960} developed a numerical method, the {\em pole field solver}, specifically for the task of numerically studying the pole fields of equations of the Painlevé class. They documented results for Painlevé I \cite{MR2804960}, Painlevé II \cite{MR3260257}, its imaginary variant \cite{MR3390079} and, together with Fasondini, for (multivalued) variants of the Painlevé III, V, and VI equations \cite{MR3656730, MR3724928}.} 
we will assume the validity of the following hypothesis.

\subsection*{Tameness hypothesis} For $\varoast = \boxslash,\boxbslash$, any real $t_0<t_1$ and any sequence of integers $l_n\to\infty$ satis\-fy\-ing~\eqref{eq:tnstar}, the zeros of the family $f_n(z)= f_{l_n}^\varoast(\sqrt{2z})$ of exponential generating functions are uniformly tame, with parameters and implied constants only depending on $t_0$ and $t_1$. 

\medskip

Subject to this hypothesis we prove the following theorem.

\begin{theorem}\label{thm:p_slash_cases_expan}
Let be $\varoast\in\{\boxslash,\boxbslash\}$, $t_0<t_1$ any ordered pair of real numbers and assume the tameness hypothesis. Then
there holds the expansion
\begin{equation}\label{eq:p_slash_cases_expan}
p_\varoast(n;l) = F_{\beta(\varoast)}(t) + \sum_{j=1}^m F_{\varoast,j}(t) \cdot (2n)^{-j/3} + O\big(n^{-(m+1)/3}\big)\bigg|_{t=t_{l^\varoast}(2n)},
\end{equation}
which is uniformly valid when $n,l \to \infty$ subject to $t_0 \leq t_{l^\varoast}(2n) \leq t_1$, with $m$ being any fixed non-negative integer. Here the $F_{\varoast,j}$ are certain smooth functions that have simple expressions in terms 
 of the functions $F_{\beta}$, $F_{\beta,j}$ as defined in \eqref{eq:PoissonVaroastExpan}; the first three are, with $\beta=\beta(\varoast)$,\footnote{To validate the formulae in (\ref{eq:FbetaD}), 
Fig.~\ref{fig:FbetaD} plots, for $\varoast=\boxslash,\boxbslash$, the function $F_{\varoast,3}(t)$ next to the approximation
\begin{equation}\label{eq:FbetaD3}
F_{\varoast,3}(t) \approx 2n\cdot  \Big( p_\varoast(n;l) - F_{\beta(\varoast)}(t) - F_{\varoast,1}(t)\cdot (2n)^{-1/3} - F_{\varoast,2}(t)\cdot (2n)^{-2/3}\Big)\,\Big|_{t=t_{l^\varoast}(2n)}
\end{equation}
for $n=250$, $n=500$, $n=1000$, varying the integer $l$ such that $t_{l^\varoast}(2n)$ covers the range of $t$ on display.}
\begin{subequations}\label{eq:FbetaD}
\begin{align}
F_{\varoast,1}(t) &= -\frac{t^2}{60} F_\beta'(t) - \frac{6}{5} F_\beta''(t)\label{eq:FbetaD1}\\
\intertext{and, subject to the linear form hypothesis,}
F_{\varoast,2}(t) &= \Big(-\frac{551}{700} + \frac{2t^3}{1575}\Big) F_\beta'(t) + \Big(-\frac{43t}{175} + \frac{t^4}{7200}\Big) F_\beta''(t) + \frac{t^2}{50} F_\beta'''(t) + \frac{18}{25} F_\beta^{(4)}(t),\label{eq:FbetaD2}\\*[2mm]
F_{\varoast,3}(t) &=  -\Big(\frac{1144 t}{7875} + \frac{41 t^4}{283500}\Big)  F_\beta'(t)
+\Big( \frac{11t^2}{1680}  - \frac{t^5}{47250} \Big) F_\beta''(t)  \\*[1mm]
&\quad +\Big(\frac{20413}{15750} +\frac{9 t^3}{3500} - \frac{t^6}{1296000}\Big) F_\beta'''(t)
 +\Big(\frac{258t}{875} -\frac{t^4}{6000} \Big)F_\beta^{(4)}(t)\notag\\*[1mm]
&\quad -\frac{3t^2 }{250}F_\beta^{(5)}(t)
-\frac{36}{125} F_\beta^{(6)}(t).\notag
\end{align}
\end{subequations}
\end{theorem}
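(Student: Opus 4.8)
The plan is to follow the template of the general-permutation case in \cite[§5.1]{arxiv:2301.02022}, since for $\varoast=\boxslash,\boxbslash$ we are dealing with \emph{standard} Poissonization ($f(z)=e^z$). Fix a compact $[t_0,t_1]$ and a sequence of integers $l_n\to\infty$ with $t_0\leq t_n^*:=t_{l_n^\varoast}(2n)\leq t_1$ as in \eqref{eq:tnstar}, and write $P_n(z):=P_\varoast(z;l_n)$, $f_n(z):=e^zP_n(z)=f_{l_n}^\varoast(\sqrt{2z})$. Three ingredients are already in place: (i) by Thm.~\ref{thm:gen_fun_zeros} the $f_n(z)$ are $H$-admissible with at most finitely many zeros in a sector $|\arg z\,|\leq\pi/2+\epsilon$; (ii) Cor.~\ref{cor:gen_Poisson_expan} gives the differentiable expansion \eqref{eq:Pn_expan} of $P_n(r)$, uniformly on the widening strip $n-n^{3/5}\leq r\leq n+n^{3/5}$, with implied constants depending only on $t_0,t_1$; (iii) differentiating \eqref{eq:Pn_expan} and using the ODE \eqref{eq:tnu} for $t_\nu$ yields $a_n(r)=r+O(r^{1/3})$, $b_n(r)=r+O(r^{2/3})$ for the auxiliary functions of $f_n$. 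The only remaining hypothesis needed to invoke the uniform analytic de-Poissonization result \cite[Cor.~A.7]{arxiv:2301.02022} is the uniform tameness of those finitely many zeros, which is exactly the tameness hypothesis; granting it, \cite[Cor.~A.7]{arxiv:2301.02022} delivers the Jasz expansion \eqref{eq:jasz_std_4} with $r_n=n$ applied to $P=P_n$.

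Next I would substitute the derivative expansions \eqref{eq:Pn_deri_expan} of $P_n^{(j)}(n)$ — all of the form $(-1)^j2^jF_\beta^{(j)}(t_n^*)(2n)^{-2j/3}+O(n^{-(2j+1)/3})$, with the subleading terms computed from \eqref{eq:Pn_expan} via the chain rule and \eqref{eq:tnu} — into the truncated generalized Jasz expansion \eqref{eq:jasz_std_4}. Collecting the outcome by powers of $(2n)^{-1/3}$ produces the asserted form \eqref{eq:p_slash_cases_expan}: the leading term is $F_\beta(t)$, and the coefficient of $(2n)^{-j/3}$ is a fixed polynomial-coefficient differential operator in $t$ applied to $F_\beta,F_{\beta,1},\dots,F_{\beta,j}$, which is what is meant by the $F_{\varoast,j}$ having ``simple expressions'' in the functions of \eqref{eq:PoissonVaroastExpan}. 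The error term $O(n^{-(m+1)/3})$ is inherited from the error in \eqref{eq:Pn_expan} together with the controlled tail of the Jasz series; all constants depend only on $t_0,t_1$, and since $l_n$ was arbitrary subject to \eqref{eq:tnstar} the statement follows for all admissible $(n,l)$. Repeated differentiability in $r$ is propagated through the same steps.

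For the explicit formulae \eqref{eq:FbetaD} I would insert the explicit $F_{\beta,1},F_{\beta,2},F_{\beta,3}$ from \eqref{eq:FbetaP} (which, subject to the linear form hypothesis and via \eqref{eq:Ebeta1Fbeta1} and Sect.~\ref{sect:functionalform}, are $\Q[t]$-linear combinations of $F_\beta',\dots,F_\beta^{(2j)}$) into the identities just obtained, apply \eqref{eq:tnu} and the chain rule wherever a derivative in $r$ has generated a power of $t$ times a derivative in $t$, and collect; a short computation (done by CAS for the higher terms) yields the displayed linear combinations. In particular one checks the structurally common first-order term $F_{\varoast,1}(t)=-\tfrac{t^2}{60}F_\beta'(t)-\tfrac{6}{5}F_\beta''(t)$ — the extra $-F_\beta''$ relative to $F_{\beta,1}$ comes precisely from the $-\tfrac{n}{2}P_n''(n)$ term in \eqref{eq:jasz_std_4} — recovering, for $\varoast=\boxbslash$, the Baik--Jenkins term \cite[Thm.~1.2]{MR3161478} up to the Gauss-bracket effect. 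The main obstacle is entirely the analytic one that is deliberately bypassed here: an unconditional proof of the tameness hypothesis (control of the pole field of the underlying Chazy~I / Painlevé~III equation). Within the conditional argument, the only point requiring genuine care is the uniformity and differentiability of \eqref{eq:Pn_expan} on the strip $|r-n|\leq n^{3/5}$ with $l_n$-independent constants, so that $P_n^{(j)}(n)$ really expands in powers of $n^{-1/3}$; this is handled exactly as in \cite[§5.1]{arxiv:2301.02022}.
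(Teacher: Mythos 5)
Your proposal is correct and follows essentially the same route as the paper: the same preparation (uniform expansion \eqref{eq:Pn_expan}, $H$-admissibility and auxiliary-function estimates, the tameness hypothesis to invoke \cite[Cor.~A.7]{arxiv:2301.02022}), the Jasz expansion \eqref{eq:jasz_std_4} with the derivative expansions \eqref{eq:Pn_deri_expan} inserted, and finally substitution of the explicit $F_{\beta,j}$ from \eqref{eq:FbetaP} to obtain \eqref{eq:FbetaD}. Your sanity check on the first-order term (the extra $-F_\beta''$ from $-\tfrac{n}{2}P_n''(n)$, giving $F_{\varoast,1}=F_{\beta,1}-F_\beta''$) matches the paper's intermediate relations exactly.
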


\begin{figure}[tbp]
\includegraphics[width=0.325\textwidth]{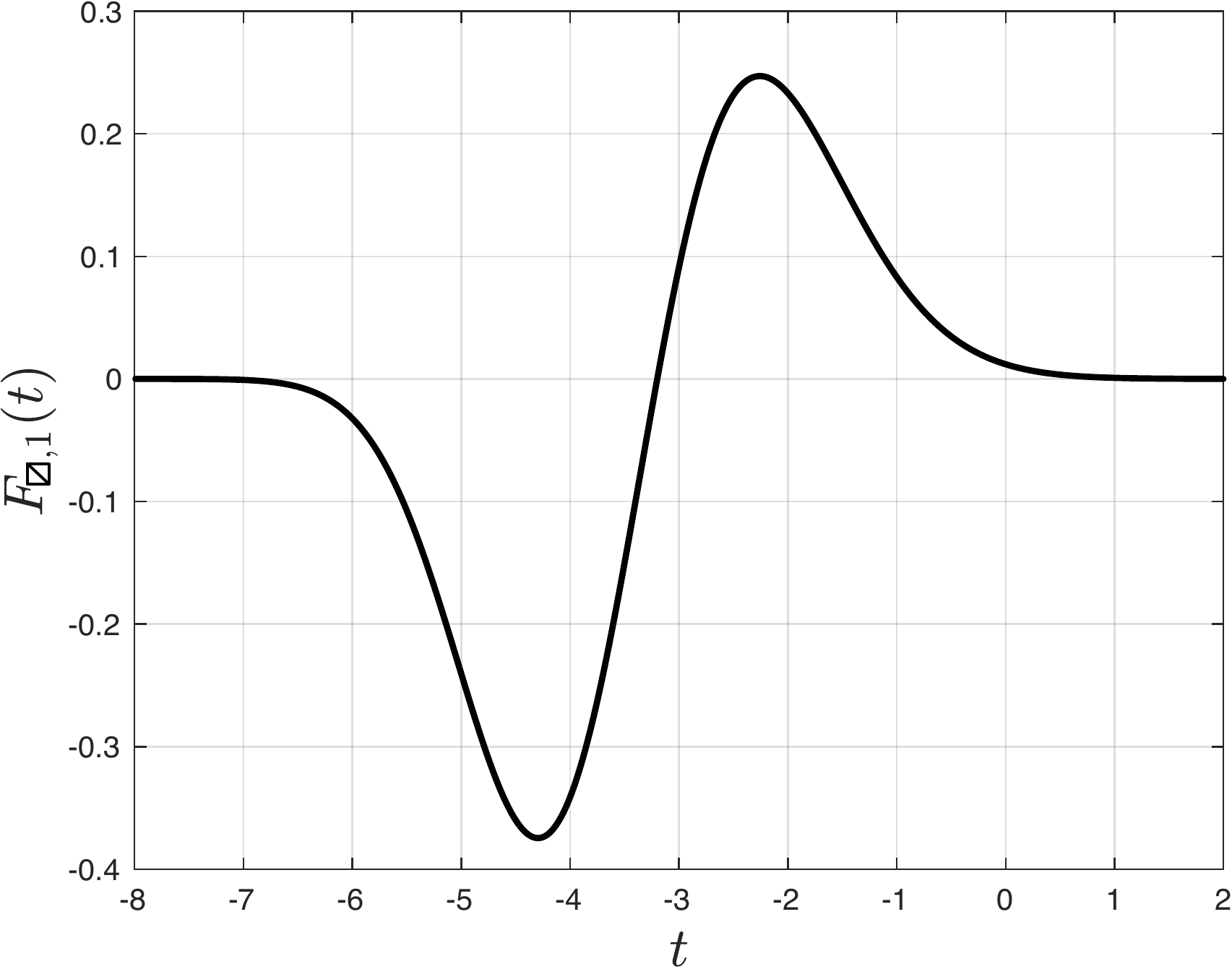}\hfil\,
\includegraphics[width=0.325\textwidth]{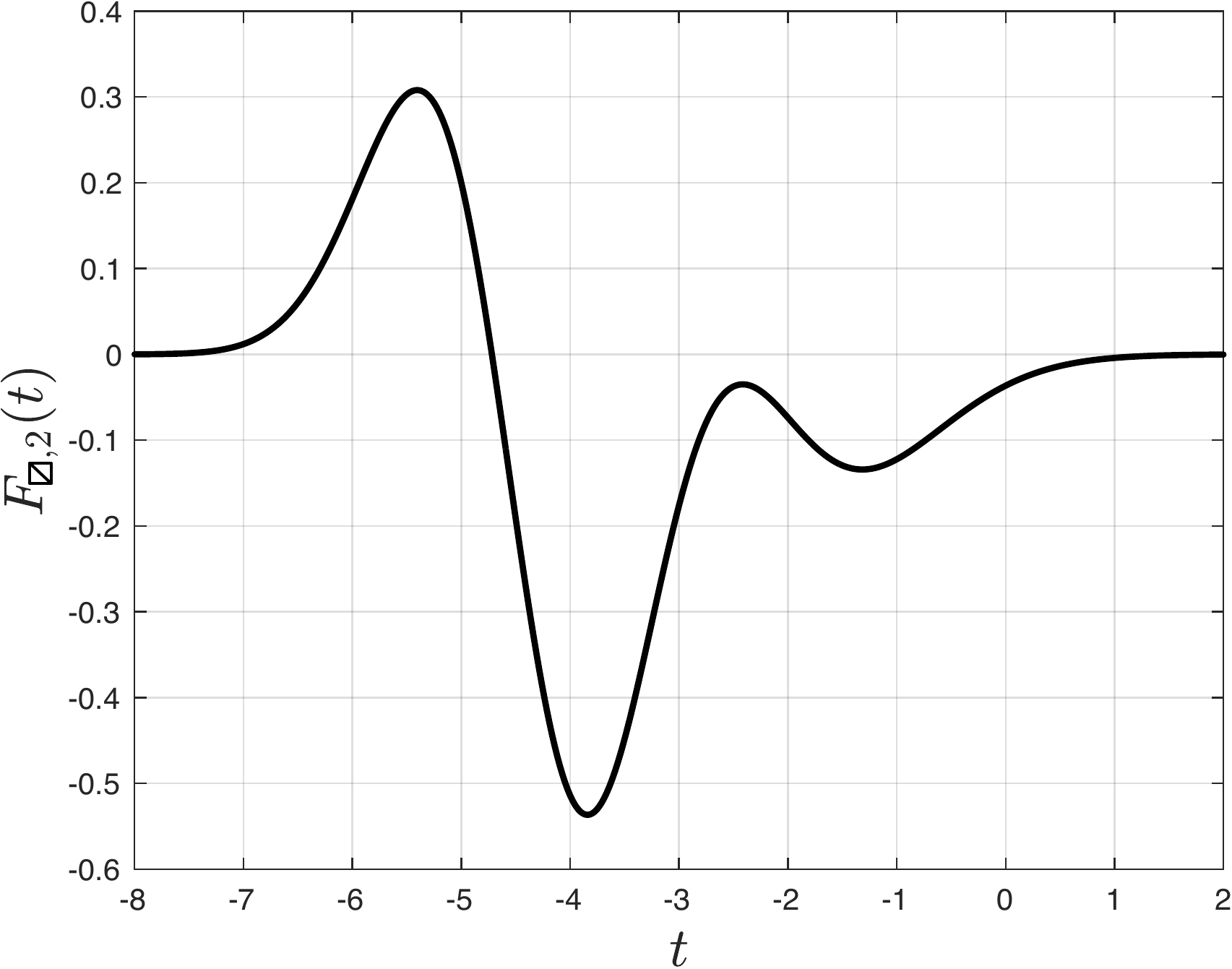}\hfil
\includegraphics[width=0.325\textwidth]{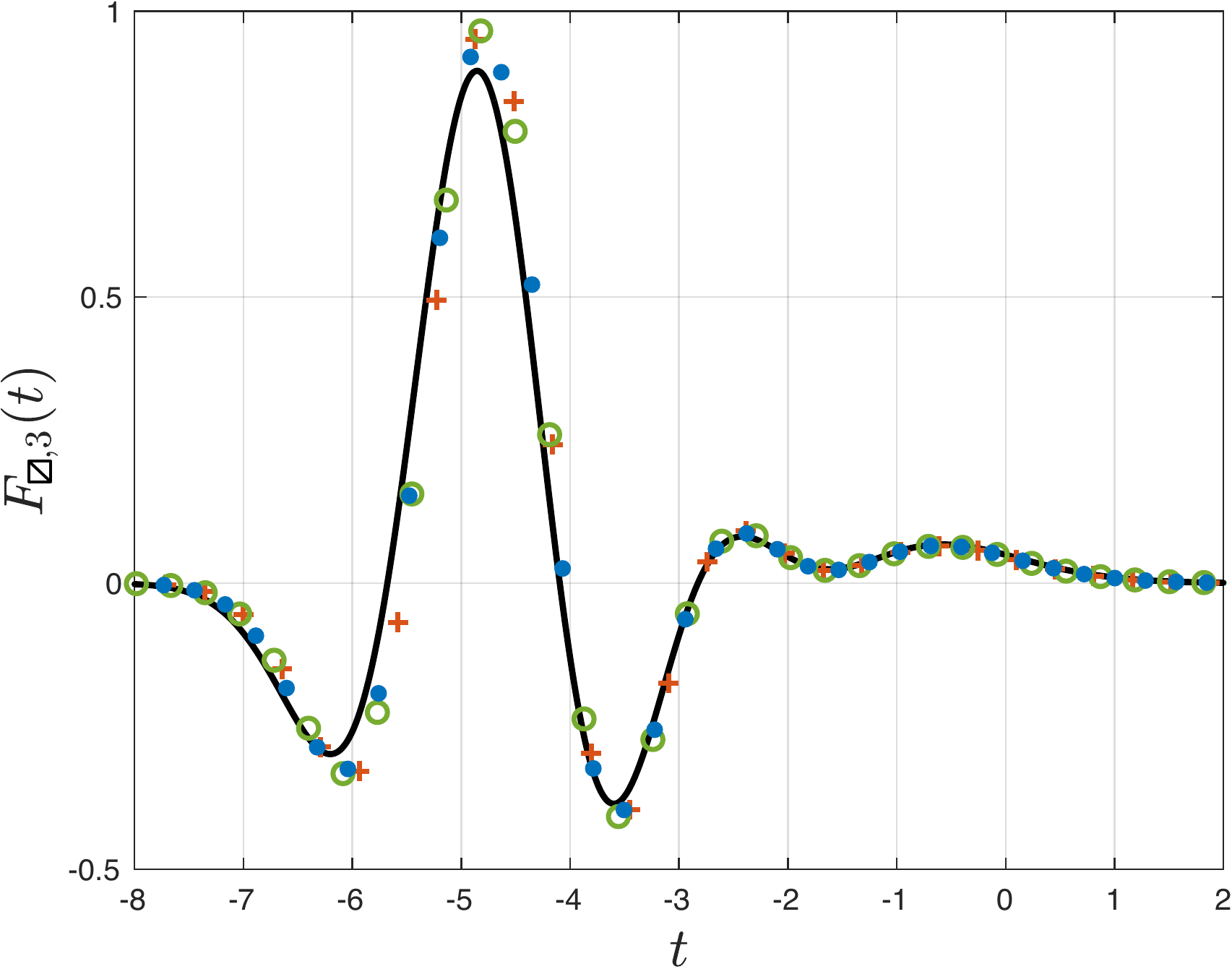}\\
\includegraphics[width=0.325\textwidth]{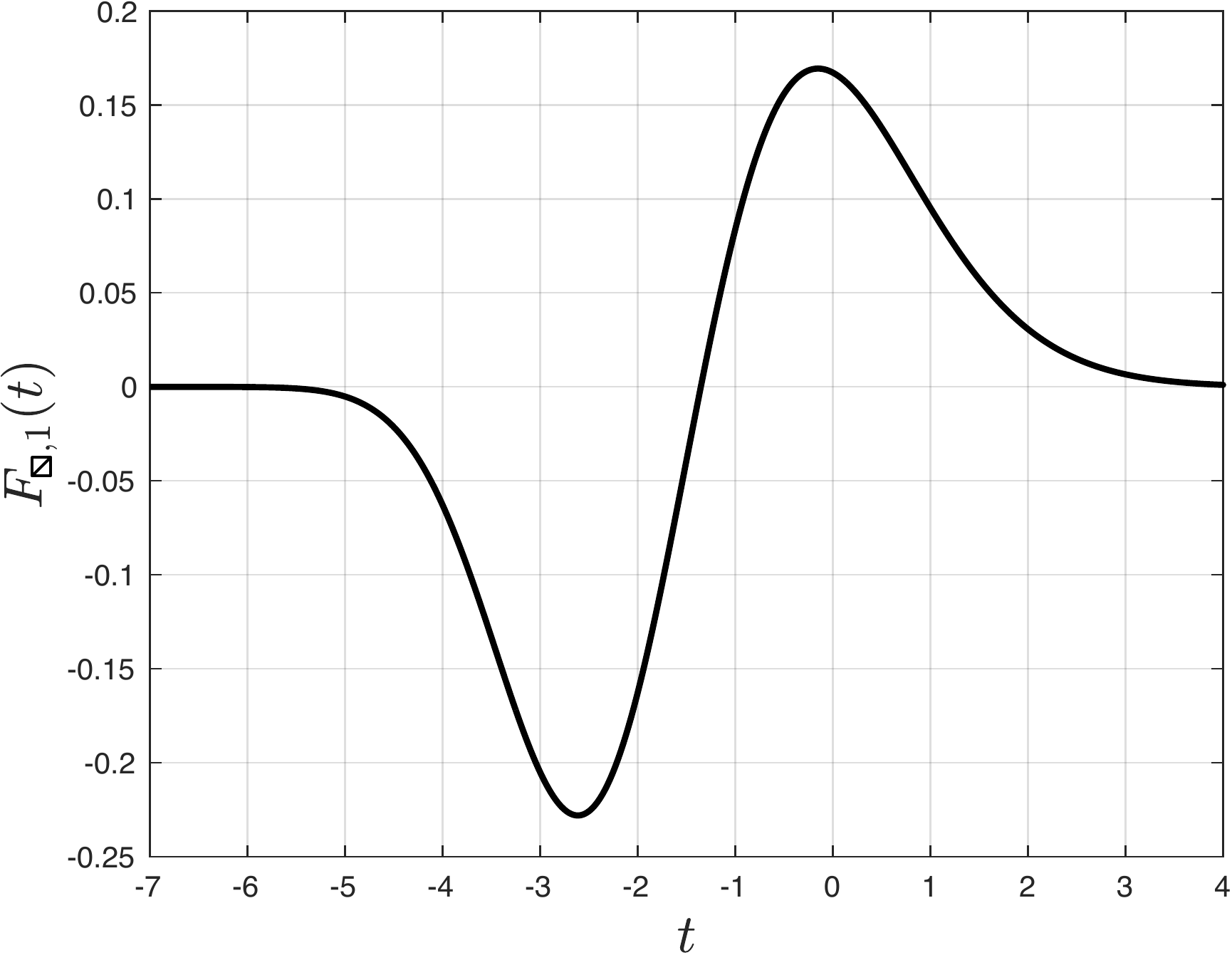}\hfil\,
\includegraphics[width=0.325\textwidth]{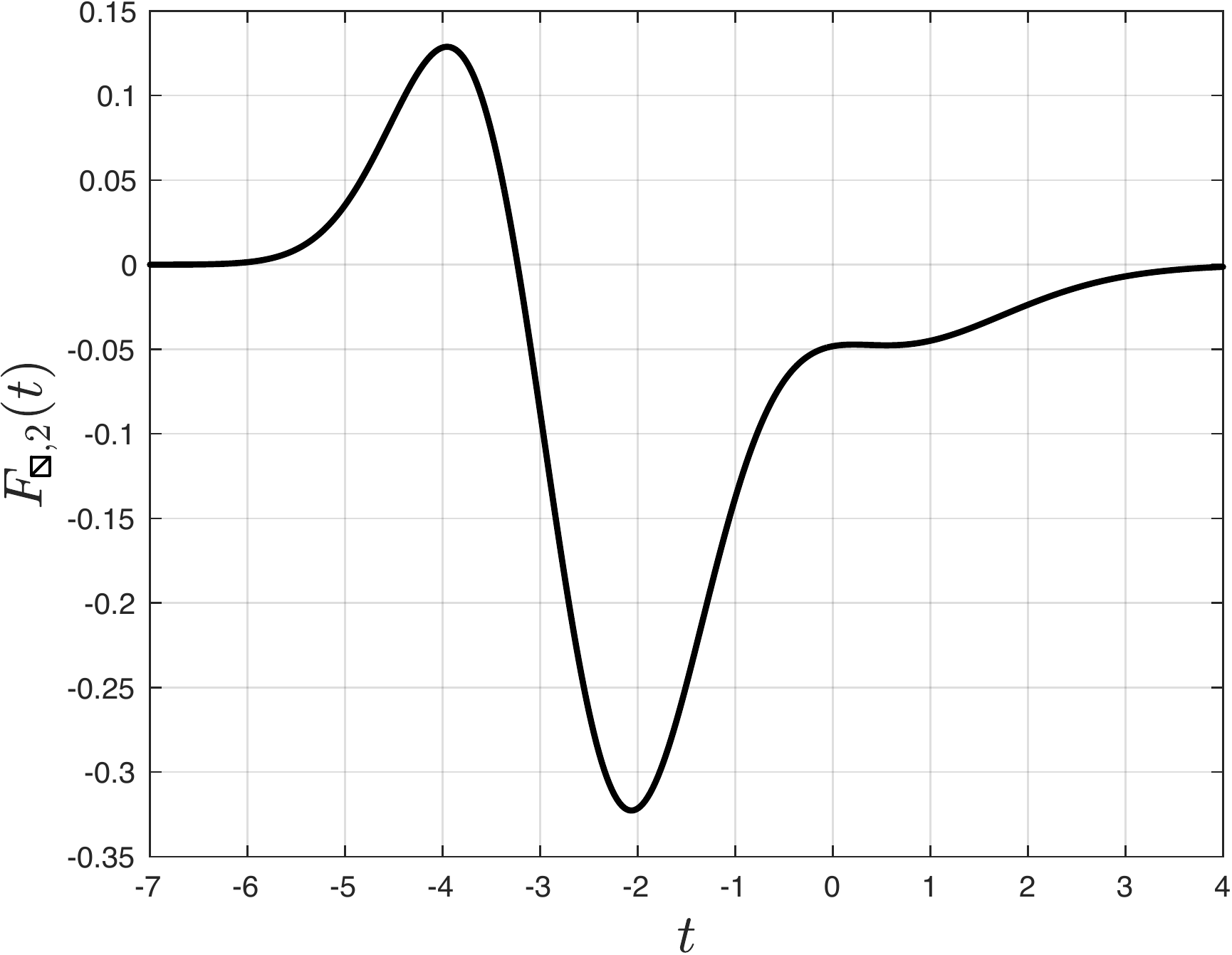}\hfil\,
\includegraphics[width=0.325\textwidth]{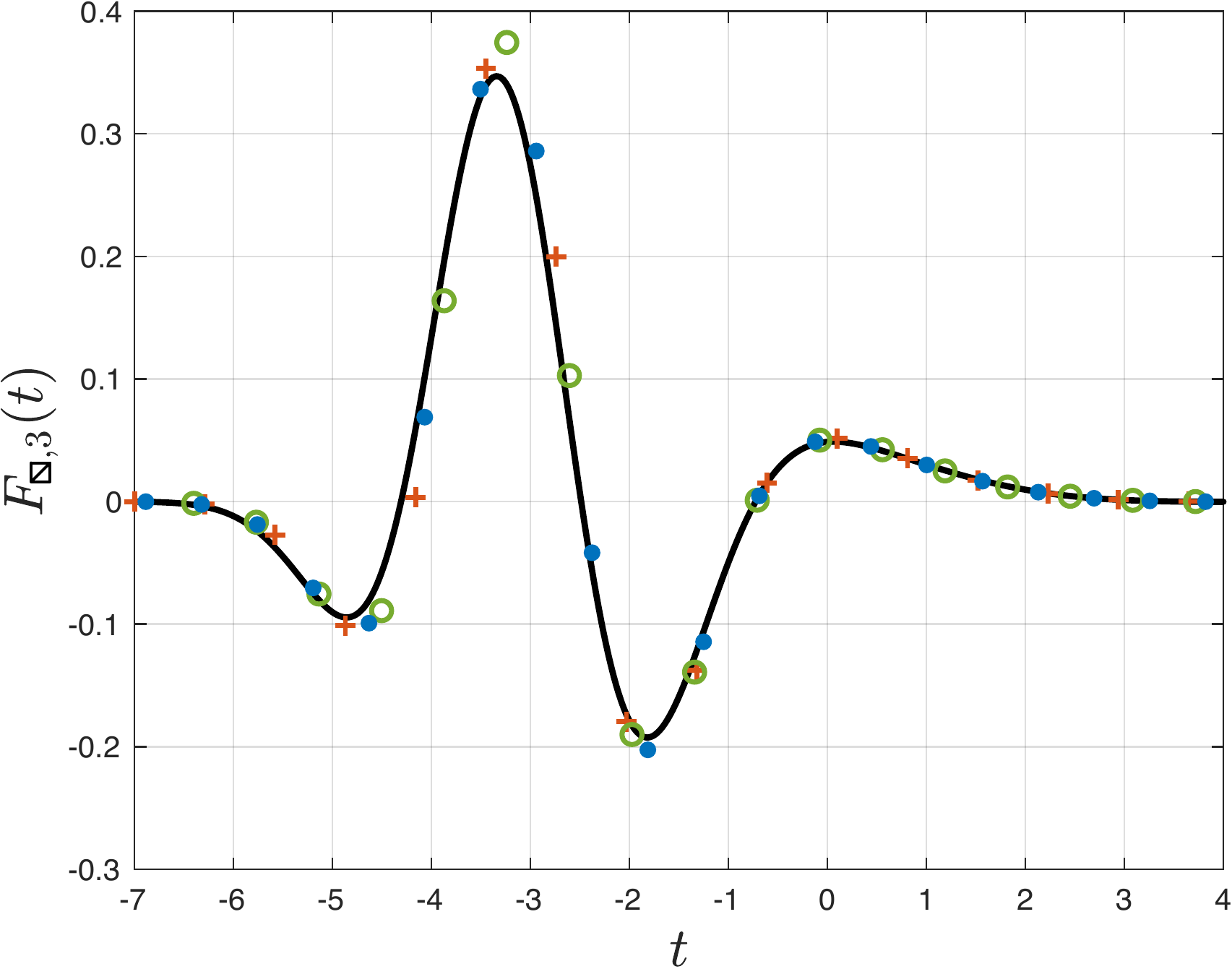}
\caption{{\footnotesize Top row $\varoast=\boxslash$; bottom row $\varoast=\boxbslash$. Plots of $F_{\varoast,1}(t)$ (left panels; both agree with the simulation-based approximation of their graphical form in \cite[Fig.~12]{arxiv.2205.05257}) and $F_{\varoast,2}(t)$ (middle panels) as in \eqref{eq:FbetaD}.
The right panels show $F_{\varoast,3}(t)$ as displayed in \eqref{eq:FbetaD} (black solid line) next to the approximations \eqref{eq:FbetaD3} for $n=250$ (red $+$), $n=500$ (green $\circ$) and $n=1000$ (blue $\bullet$); the integer $l$ has been varied such that $t_{l^\varoast}(2n)$ covers the range of $t$ on display. The evaluation of \eqref{eq:FbetaD3} uses a table of exact values of $p_\varoast(n;l)$ up to $n=1000$ (see~Sect.~\ref{sect:exact}).}}
\label{fig:FbetaD}
\end{figure}

\begin{proof} Given the preparations preceding the formulation of the theorem, the proof of \eqref{eq:p_slash_cases_expan} follows literally the one of \cite[Cor.~A.7]{arxiv:2301.02022} for the general permutation case. In particular, the Jasz expansion \eqref{eq:jasz_std_4} is now rigorously established (it applies because of \eqref{eq:Pn_deri_est}) and gives 
\[
p_\varoast(n;l_n) = P_n(n) - \frac{n}{2} P_n''(n) + \frac{n}{3} P_n'''(n) + \frac{n^2}{8} P_n^{(4)}(n) - \frac{n^3}{48}P_n^{(6)}(n) + O(n^{-4/3}).
\]
Inserting the expansions of the derivatives displayed in \eqref{eq:Pn_deri_expan} yields (cf. also \cite[Eq.~(5.10)]{arxiv:2301.02022})
\begin{align*}
F_{\varoast,1}(t) &= F_{\beta,1}(t) - F_\beta''(t), \\*[1mm]
F_{\varoast,2}(t) &= F_{\beta,2}(t) - \frac{5}{6} F_\beta'(t) - \frac{t}{3}F_{\beta}''(t) - F_{\beta,1}''(t)  + \frac{1}{2} F_\beta^{(4)}(t), \\*[1mm]
F_{\varoast,3}(t) &= F_{\beta,3}(t) -\frac{7t}{36} F_\beta'(t) - \frac{3}{2}F_{\beta,1}'(t) -\frac{t^2}{36}F_{\beta}''(t) - \frac{t}{3}F_{\beta,1}''(t) - F_{\beta,2}''(t) \\*[1mm]
&\qquad + \frac{7}{6} F_\beta'''(t) +
\frac{t}{3} F_\beta^{(4)}(t) + \frac{1}{2} F_{\beta,1}^{(4)}(t) - \frac{1}{6} F_\beta^{(6)}(t).
\end{align*}
Finally, using the expressions of the functions $F_{\beta,j}$ as displayed in \eqref{eq:FbetaP} gives \eqref{eq:FbetaD}.
\end{proof}

\subsection{Expansions of distributions: the general involution case} For the time being, in contrast to the case of the standard de-Poissonization, we lack in the general case rigorous analytic results such as the theorem of Jacquet and Szpankowski \cite[Thm.~A.2]{arxiv:2301.02022}. Therefore, to discuss $\varoast=\boxdot$, we content ourselves with a formal asymptotic expansion.

As in Sect.~\ref{sect:jasz_gen}, let us choose $r_n^* = n^{1/2} - \frac12$. For any fixed compact interval $[t_0,t_1]$ we then consider a sequence $l_n\to \infty$ such that
\[
t_0 \leq t_n^* = t_{l_n}((r_n^*)^2) \leq t_1,
\]
and Cor.~\ref{cor:gen_Poisson_expan} gives the expansion, for $r$ sufficiently close to $r_n^*$,
\begin{equation}
P_n(r) := F_1(t) + \sum_{j=1}^m F_{1,j}(t) \cdot r^{-2j/3} + O(r^{-2(m+1)/3}) \bigg|_{t = t_{l_n}(r^2)}.
\end{equation}
This expansion can be repeatedly differentiated w.r.t. the variable $r$. In particular, using the differential equation \eqref{eq:tnu} we get that $P_n^{(j)}(n)$ expands in powers of $n^{-1/6}$, starting with a leading order term of the form
\[
P_n^{(j)}(r_n^*) = (-1)^j 2^j F_1^{(j)}(t_n^*) \cdot n^{-j/6} + O(n^{-(j+1)/6})\qquad (n\to\infty);
\]
the first specific cases, insofar as they are needed for the  Jasz expansion \eqref{eq:jasz_gen_4}, are 
\begin{align*}
P_n(r_n^*) &= F_1(t) + F_{1,1}(t) \cdot n^{-1/3} + F_{1,2}(t) \cdot n^{-2/3} + \frac{1}{3}F_{1,1}(t) \cdot n^{-5/6}
+ F_{1,3}(t) \cdot n^{-1}\\
&\qquad + \frac{2}{3}F_{1,2}(t) \cdot n^{-7/6} + O(n^{-4/3})\bigg|_{t=t_n^*}, \\
P'_n(r_n^*) &= -2 F_1'(t) \cdot n^{-1/6} - \Big(\frac{t}{3} F_1'(t) + 2 F_{1,1}'(t) \Big)  \cdot n^{-1/2} - \frac{1}{3} F_1'(t) \cdot n^{-2/3} + O(n^{-5/6})\bigg|_{t=t_n^*},\\
\intertext{}
P''_n(r_n^*) &= 4 F_1''(t) \cdot n^{-1/3} + \Big(\frac{4}{3} F_1'(t) +\frac{4t}{3} F_1''(t) + 4 F_{1,1}''(t) \Big)  \cdot n^{-2/3} 
+\frac{4}{3} F_1''(t) \cdot n^{-5/6}\\
&\qquad + \Big(\frac{4t}{9} F_1'(t) + 4 F_{1,1}'(t) + \frac{t^2}{9} F_{1}''(t) + \frac{4t}{3} F_{1,1}''(t) + 4 F_{1,2}''(t) \Big) \cdot n^{-1}\\
&\qquad\qquad + \Big(\frac{8}{9} F_1'(t) + \frac{8t}{9}F_1''(t) + \frac{8}{3}F_{1,1}''(t) \Big) \cdot n^{-7/6} + O(n^{-4/3})\bigg|_{t=t_n^*}, \\
P'''_n(r_n^*) &= -8 F_1'''(t) \cdot n^{-1/2} + O(n^{-5/6})\bigg|_{t=t_n^*}\\
%\intertext{and}
P^{(4)}_n(r_n^*) &=  16F_1^{(4)}(t) \cdot n^{-2/3}+ \Big(32 F_1'''(t) +\frac{32t}{3} F_1^{(4)}(t) + 16 F_{1,1}^{(4)}(t) \Big)  \cdot n^{-1}\\ 
& \qquad + \frac{32}{3}F_1^{(4)}(t) \cdot n^{-7/6} + O(n^{-4/3})\bigg|_{t=t_n^*}, \\
P^{(6)}_n(r_n^*) &= 64 F_1^{(6)}(t) \cdot n^{-1} + O(n^{-4/3}).
\end{align*}
If we plug these expansions into the generalized Jasz expansion \eqref{eq:jasz_gen_4}, we get
\begin{multline*}
p_\boxdot(n;l_n) = F_1(t) + (F_{1,1}(t) - F_1''(t))n^{-1/3} \\
+ \Big( F_{1,2}(t) - \frac{13}{12} F_1'(t) - \frac{t}{3}F_1''(t) - F_{1,1}''(t) + \frac{1}{2}F_1^{(4)}(t) \Big)n^{-2/3} + \Big(\frac{1}{3}F_{1,1}(t) + \frac{1}{6}F_1''(t) \Big) n^{-5/6}\\
+ \Big( F_{1,3}(t) - \frac{17t}{72} F_1'(t) - \frac{7}{4}F_{1,1}'(t) - \frac{t^2}{36}F_1''(t) - \frac{t}{3}F_{1,1}''(t) - F_{1,2}''(t) \\
 + \frac{17}{12}F_1'''(t)+ \frac{t}{3}F_1^{(4)}(t) +\frac{1}{2}F_{1,1}^{(4)}(t)-\frac{1}{6}F_1^{(6)}(t) \Big)n^{-1}
\\
+ \Big(\frac{2}{3}F_{1,2}(t) + \frac{5}{72} F_1'(t) - \frac{t}{18}F_1''(t) -  \frac{1}{6}F_{1,1}''(t)-\frac{1}{4}F_1^{(4)} \Big)n^{-7/6} + O(n^{-4/3}) \bigg|_{t=t_n^*}.
\end{multline*}
Finally, after inserting the expressions of the functions $F_{1,j}$ as displayed in \eqref{eq:FbetaP} and using
\[
t_l((r_n^*)^2) = t_{l+1}(n) \cdot \big(1-n^{-1/2}/2\big)^{-1/3} = t_{l+1}(n) \cdot \Big(1+ \frac{1}{6}n^{-1/2} + \frac{1}{18}n^{-1} + O(n^{-4/3})\Big)
\]
we are led to the following very specific conjecture.

\begin{figure}[tbp]
\includegraphics[width=0.375\textwidth]{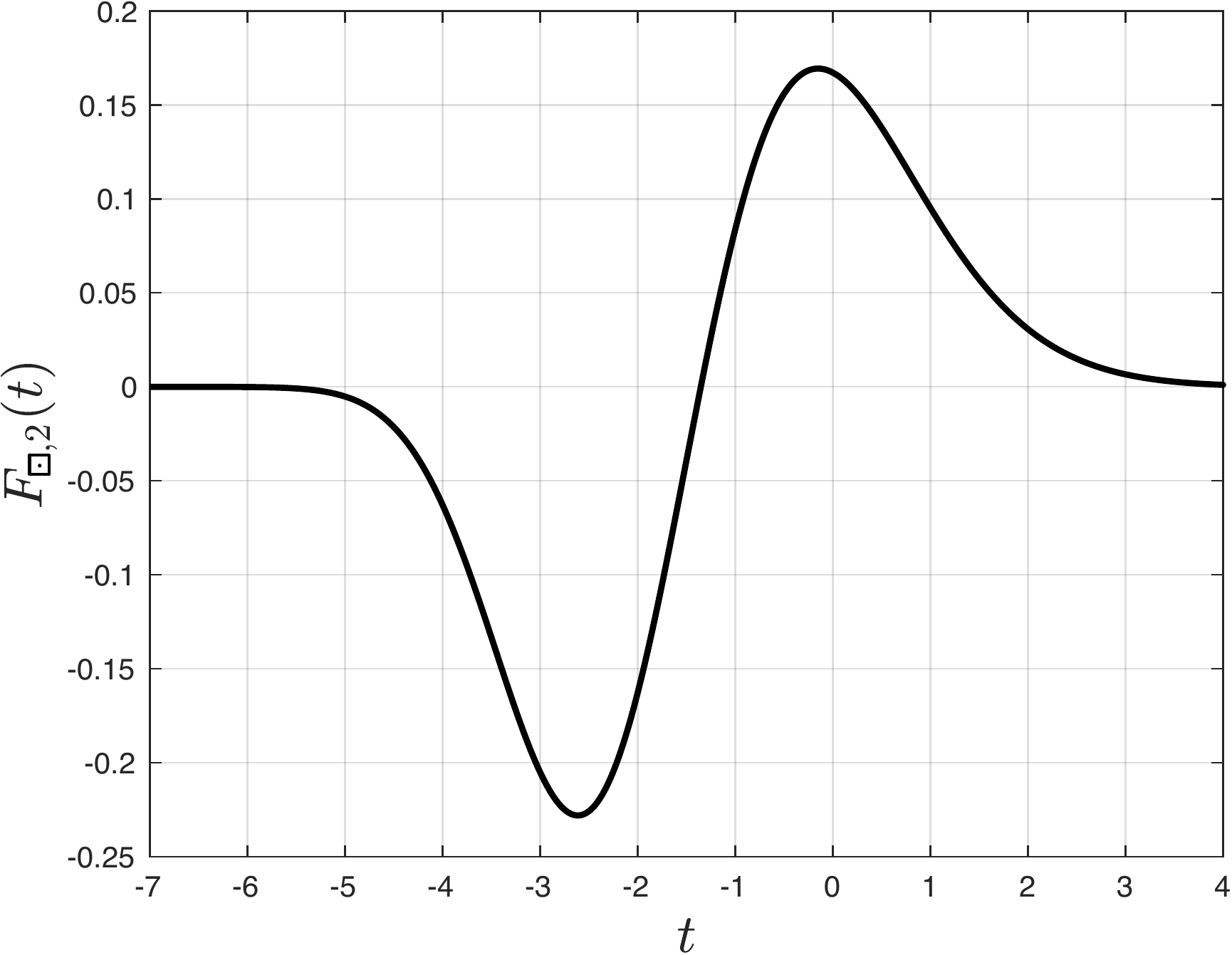}\hfil\,
\includegraphics[width=0.375\textwidth]{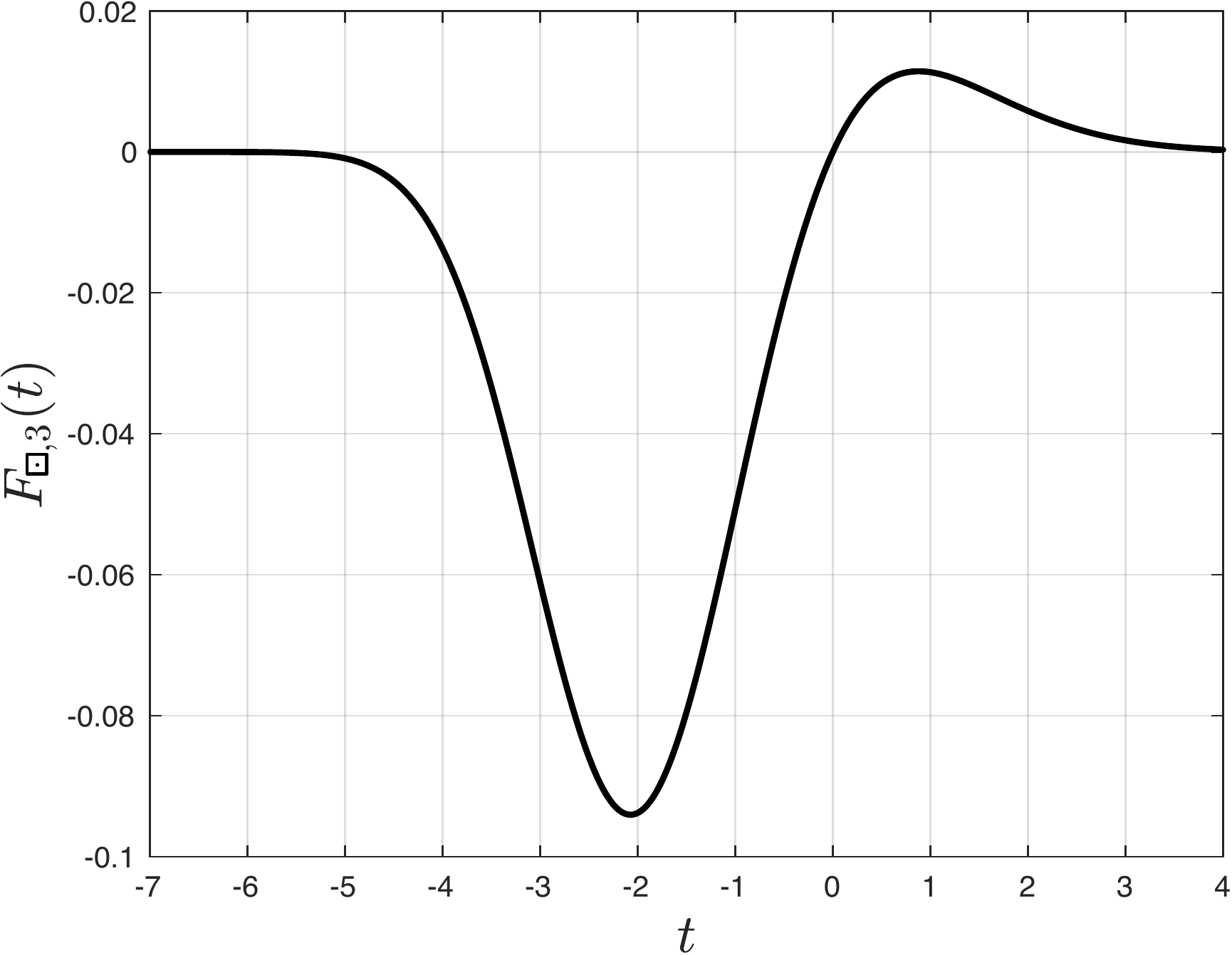}\\
\includegraphics[width=0.375\textwidth]{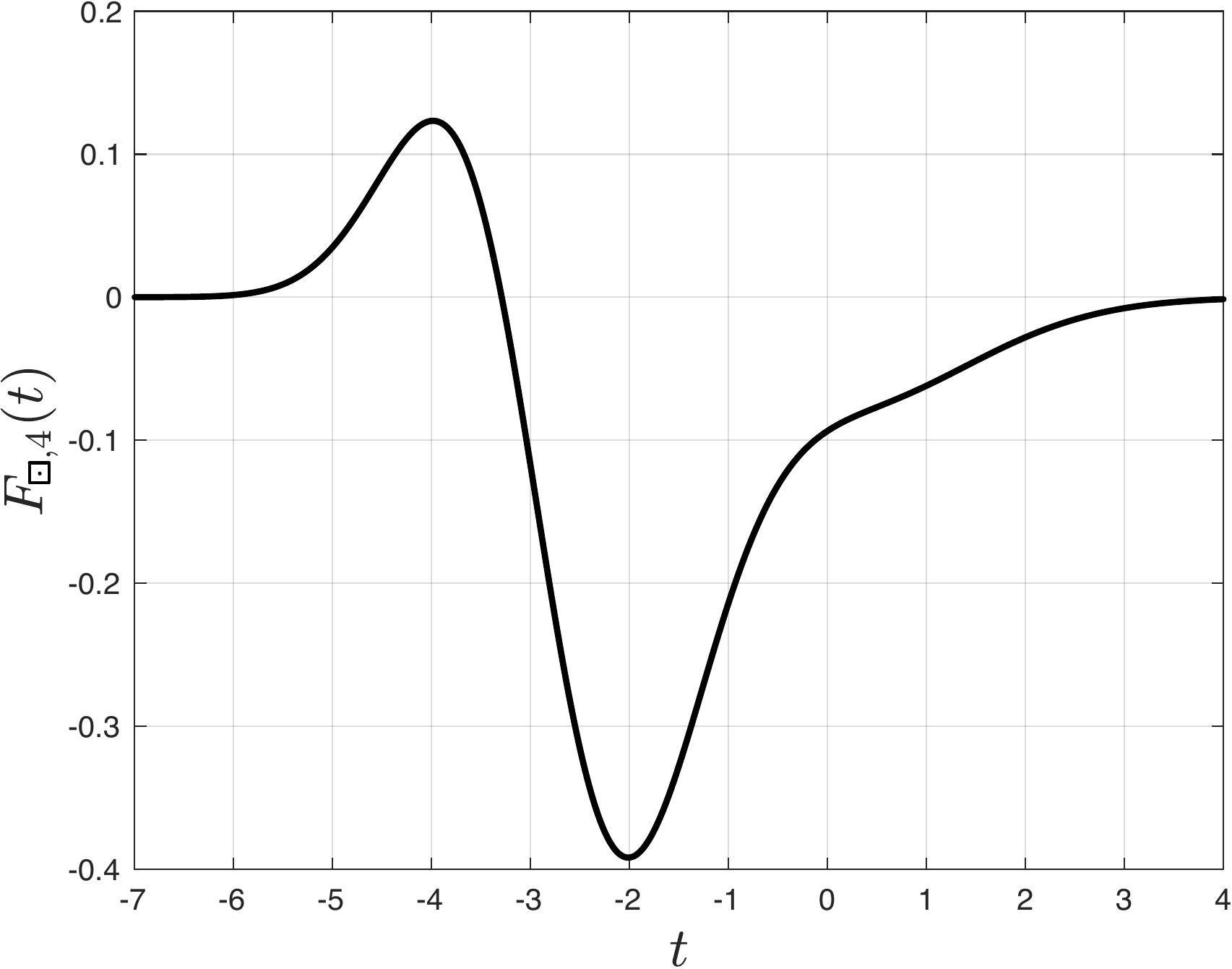}\hfil\,
\includegraphics[width=0.38\textwidth]{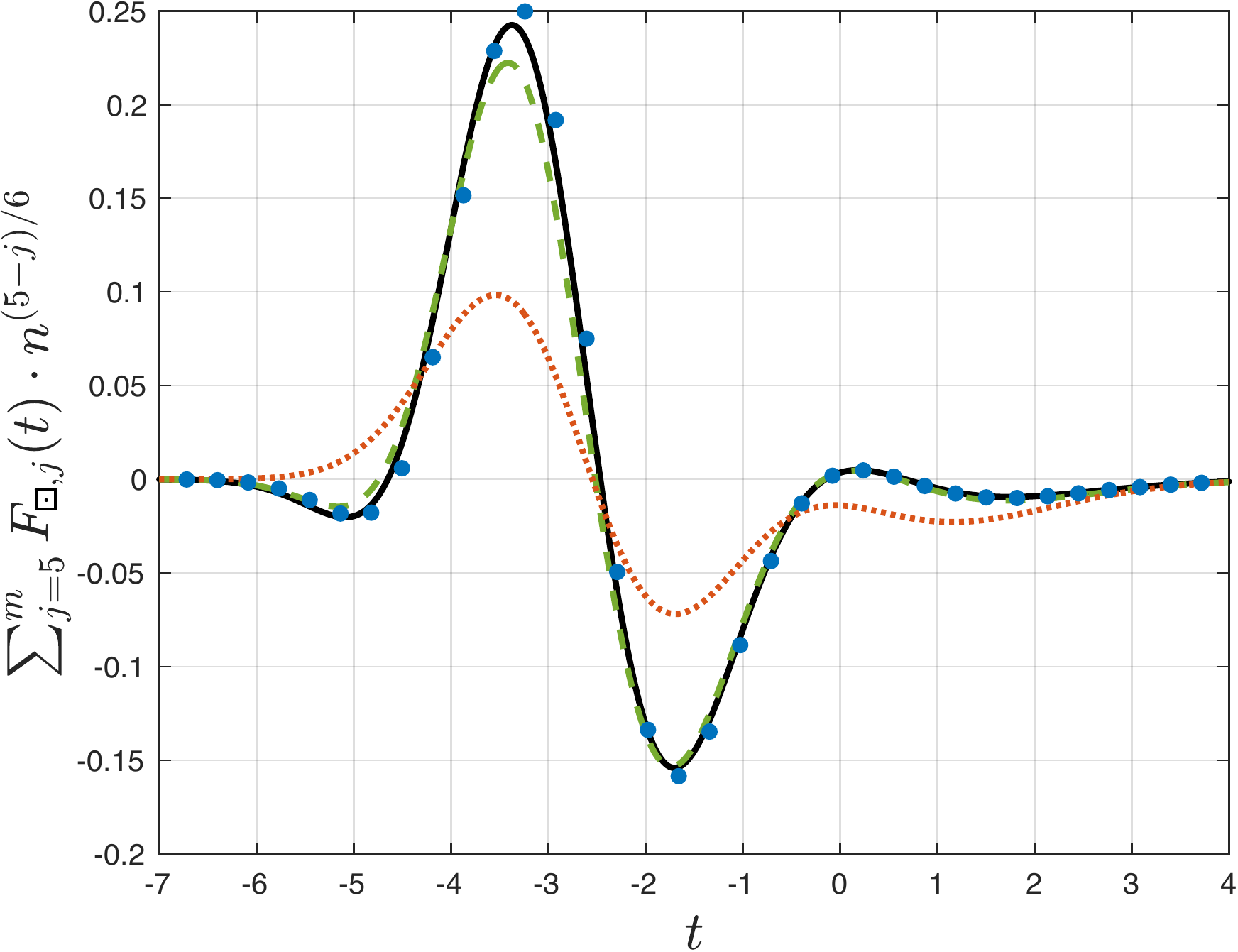}
\caption{\footnotesize First three panels: plots of $F_{\boxdot,j}(t)$, $j=2,3,4$. Last panel: plot of $\sum_{j=5}^m F_{\boxdot,j}(t) \cdot n^{(5-j)/6}$ for $m=5$ (dotted red line), $m=6$ (dashed green line), $m=7$ (solid black line) vs. the right rand side of \eqref{eq:Fdot5}
 for $n=1000$ (blue $\bullet$); the integer $l$ has been varied such that $t_{l+1}(n)$ covers the range of $t$ on display. The evaluation of \eqref{eq:Fdot5} uses a table of exact values of the probabilities $p_\boxdot(n;l)$ for $n=1000$ (see~Sect.~\ref{sect:exact}). The choice of $m=7$ in~\eqref{eq:Fdot5} uses all the expressions displayed in \eqref{eq:Fdot} and exhibits an excellent agreement (whereas $m=5$ is insufficient, reflecting that $n^{-1/6}\approx 0.316$ is a comparatively large quantity here).}
\label{fig:Fdot}
\end{figure}%

\begin{conjecture}\label{conj:p_dot_expan}
Let be $t_0<t_1$ any ordered pair of real numbers. Then there holds
\begin{equation}\label{eq:p_dot_expan}
p_\boxdot(n;l) = F_1(t) + \sum_{j=1}^m F_{\boxdot,j}(t) \cdot n^{-j/6} + O\big(n^{-(m+1)/6}\big)\bigg|_{t=t_{l+1}(n)},
\end{equation}
which is uniformly valid when $n,l \to \infty$ subject to $t_0 \leq t_{l+1}(n) \leq t_1$, with $m$ being any fixed non-negative integer. Here the $F_{\boxdot,j}$ are certain smooth functions that have simple expressions in terms 
 of the functions $F_1$, $F_{1,j}$ in \eqref{eq:PoissonVaroastExpan}. The first seven of them are $F_{\boxdot,1}(t) = 0$ and
\begin{subequations}\label{eq:Fdot}
\begin{align}
F_{\boxdot,2}(t) &= -\frac{t^2}{60} F_1'(t) - \frac{6}{5} F_1''(t),\\*[1mm]
F_{\boxdot,3}(t) &= \frac{t}{6} F_1'(t),\\*[1mm]
F_{\boxdot,4}(t) &= \Big(-\frac{363}{350} + \frac{2t^3}{1575}\Big) F_1'(t) + \Big(-\frac{43t}{175} + \frac{t^4}{7200}\Big) F_1''(t) + \frac{t^2}{50} F_1'''(t) + \frac{18}{25} F_1^{(4)}(t),\\*[1mm]
F_{\boxdot,5}(t) &= -\frac{t^2}{90}F_1'(t) + \Big(\frac{1}{10}- \frac{t^3}{360} \Big) F_1''(t) - \frac{t}{5}F_1'''(t),\\*[1mm]
\intertext{}
F_{\boxdot,6}(t) &=  -\Big(\frac{323 t}{2625} + \frac{41 t^4}{283500}\Big)  F_1'(t)
+\Big( \frac{31t^2}{1260}  - \frac{t^5}{47250} \Big) F_1''(t)  \\*[1mm]
&\quad +\Big(\frac{12569}{7875} +\frac{9 t^3}{3500} - \frac{t^6}{1296000}\Big) F_1'''(t)
 +\Big(\frac{258t}{875} -\frac{t^4}{6000} \Big)F_1^{(4)}(t)\notag\\*[1mm]
&\quad -\frac{3t^2 }{250}F_1^{(5)}(t)
-\frac{36}{125} F_1^{(6)}(t),\notag\\*[1mm]
F_{\boxdot,7}(t) &= \Big(\frac{117}{1400} + \frac{t^3}{675}\Big) F_1'(t) + \Big(-\frac{171t}{700}+ \frac{t^4}{2520}\Big)F_1''(t) \\*[2mm]
&\quad +\Big(-\frac{41t^2}{1400}+\frac{t^5}{43200}\Big)F_1'''(t) + \Big(- \frac{3}{25} + \frac{t^3}{300}\Big) F_1^{(4)}(t) + \frac{3t}{25} F_1^{(5)}(t).\notag
\end{align}
\end{subequations}
\end{conjecture}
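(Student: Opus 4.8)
The statement is presented as a conjecture because a fully rigorous proof requires a piece of analysis we have deliberately postponed: an extension of the Jacquet--Szpankowski fundamental theorem \cite[Thm.~A.2]{arxiv:2301.02022} from the classical Poisson generating function $f(z)=e^z$ to the generalized one $f(z)=e^{z+z^2/2}$. The plan is therefore in two stages. Stage one establishes such a generalized analytic de-Poissonization theorem, uniform in the parameter $l$, which legitimizes the generalized Jasz expansion \eqref{eq:jasz_gen} as a genuine asymptotic expansion. Stage two is the bookkeeping already carried out above: run the expansion of Cor.~\ref{cor:gen_Poisson_expan} through \eqref{eq:jasz_gen_4}, collect powers of $n^{-1/6}$, and substitute the explicit $F_{1,j}$ of \eqref{eq:FbetaP} to obtain the functional forms \eqref{eq:Fdot}. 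The substance of the argument is stage one; stage two is routine once stage one is in place.

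For stage one I would follow the architecture of the classical proof \cite[Thm.~A.2]{arxiv:2301.02022} as used in Sect.~\ref{sect:CDFexpan}. The Cauchy-integral representation underlying \eqref{eq:jasz_gen} is split into a central cone about the positive real axis and a complementary region. On the complement one invokes the $H$-admissibility of $f_l^\boxdot(z)=e^z g^-_l(z)$ from Cor.~\ref{cor:f_boxdot_Hadmiss} together with Thm.~\ref{thm:gen_fun_zeros} to bound $|f_l^\boxdot(z)|$ from above. Inside the cone one controls $P_\boxdot(z;l)=e^{-z-z^2/2}f_l^\boxdot(z)$ through the complex-plane asymptotics \eqref{eq:gen_large_z} of the group integral over $O^-(l+1)$, which supply the needed uniform polynomial-times-exponential estimates; the resulting integral is then evaluated by a saddle-point analysis at $r_n^* = n^{1/2}-\tfrac{1}{2}$, the generalized Jasz coefficients $c_j(n;r_n^*)$ being computed from the asymptotics \eqref{eq:iotaexpansion} of $a_n = I_n/n!$ and satisfying $c_{2j}(n;r_n^*)=O(1)$, $c_{2j+1}(n;r_n^*)=O(n^{-1/2})$, so that \eqref{eq:jasz_gen} truncates at $O(n^{-4/3})$ to \eqref{eq:jasz_gen_4}. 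As in the fixed-point-free cases, the uniformity in $l$ along sequences $l_n$ with $t_0 \leq t_{l_n}((r_n^*)^2)\leq t_1$ rests on a tameness hypothesis for the finitely many zeros of $f_{l_n}^\boxdot(z)$ in the sector $|\arg z\,|\leq \pi/2+\epsilon$ --- neither too close to the positive real axis nor too large --- for which the pole-field evidence cited in Sect.~\ref{sect:CDFexpan} applies verbatim.

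Stage two then proceeds exactly as in the computation preceding the statement. With $r_n^* = n^{1/2}-\tfrac{1}{2}$, Cor.~\ref{cor:gen_Poisson_expan} and the differential equation \eqref{eq:tnu} for $t_\nu(r)$ give, after repeated differentiation, the expansions of $P_n^{(j)}(r_n^*)$ in powers of $n^{-1/6}$ with leading term $(-1)^j 2^j F_1^{(j)}(t_n^*)\,n^{-j/6}$. Inserting these into \eqref{eq:jasz_gen_4} and collecting powers of $n^{-1/6}$ produces an expansion in $t=t_{l}((r_n^*)^2)$; re-expanding every $F_1^{(k)}$ and $F_{1,j}^{(k)}$ about $t_{l+1}(n)$ via $t_l((r_n^*)^2) = t_{l+1}(n)\bigl(1 + \tfrac{1}{6}n^{-1/2} + \tfrac{1}{18}n^{-1} + O(n^{-4/3})\bigr)$ and finally substituting the explicit $F_{1,j}$ from \eqref{eq:FbetaP} yields \eqref{eq:Fdot}. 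The vanishing $F_{\boxdot,1}=0$ reflects that no term of order $n^{-1/6}$ is produced, since $c_1(n;r_n^*)=O(n^{-1/2})$ while $P_n(r_n^*)$ contributes its first correction only at order $n^{-1/3}$; the formulas for $F_{\boxdot,2},F_{\boxdot,3}$ are unconditional, whereas $F_{\boxdot,4},\ldots,F_{\boxdot,7}$ inherit their dependence on the linear form hypothesis through $F_{1,2}$ and $F_{1,3}$.

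The main obstacle is squarely stage one: carrying the analytic de-Poissonization machinery through for the non-standard weight $e^{z+z^2/2}$ with the uniformity in $l$ demanded by the statement, and in particular discharging the tameness hypothesis on the sectorial zeros of $f_l^\boxdot$. Once a generalized analogue of \cite[Thm.~A.2]{arxiv:2301.02022} is available, the passage to \eqref{eq:p_dot_expan} is purely mechanical.
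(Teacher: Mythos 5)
Your proposal takes essentially the same route as the paper: the paper supports Conjecture~\ref{conj:p_dot_expan} by exactly your stage two — feeding Cor.~\ref{cor:gen_Poisson_expan} through the generalized Jasz expansion \eqref{eq:jasz_gen_4} at $r_n^*=n^{1/2}-\tfrac12$ and re-expanding at $t=t_{l+1}(n)$ — while explicitly leaving the rigorous generalized analytic de-Poissonization (your stage one) to future work, which is precisely why the statement is stated as a conjecture rather than a theorem. The only minor inaccuracy is your attribution of conditionality: $F_{\boxdot,5}$ involves only $F_{1,1}$ (plus re-expansion of lower-order terms) and is therefore unconditional, whereas $F_{\boxdot,4}$, $F_{\boxdot,6}$, $F_{\boxdot,7}$ depend on the linear form hypothesis through $F_{1,2}$ and $F_{1,3}$.
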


To validate the intricate formulae of Conjecture~\ref{conj:p_dot_expan} as displayed in (\ref{eq:Fdot}), we have to exercise some care: though Sect.~\ref{sect:exact} provides us with a table of the exact values of $p_\boxdot(n;l)$ for $n=1000$, the different powers of the comparatively large quantity $n^{-1/6}\approx 0.316$ can barely differentiate the corresponding terms of the expansion \eqref{eq:p_dot_expan}. We therefore
plot in Fig.~\ref{fig:Fdot} both sides of the approximative relation
\begin{equation}\label{eq:Fdot5}
\sum_{j=5}^{m} F_{\boxdot,j}(t) \cdot n^{(5-j)/6} \approx n^{5/6} \Big(p_\varoast(n;l) - F_1(t) - \sum_{j=1}^4 F_{\boxdot,j} n^{-j/6}\Big)\Bigg|_{t=t_{l+1}(n)}
\end{equation}
for $m=5,6,7$ and $n=1000$, varying the integer $l$ such that $t_{l+1}(n)$ covers the range of $t$ on display.
The choice $m=7$ in \eqref{eq:Fdot5}, which uses all the expressions displayed in \eqref{eq:Fdot}, exhibits an excellent agreement. This provides a strong testament for the correctness of the conjecture.

\begin{remark}\label{rem:onesixth} Note that the expansion \eqref{eq:p_dot_expan} is evaluated at $t=t_{l+1}(n)$, which lets the $n^{-1/6}$ term vanish and the expansion to start with
\[
p_\boxdot(n;l) = F_1(t)  - \Big(\frac{t^2}{60} F_1'(t) + \frac{6}{5} F_1''(t) \Big)n^{-1/3} + O(n^{-1/2})\bigg|_{t=t_{l+1}(n)}.
\]
In contrast, if we evaluated at $t=t_{l}(n) = t_{l+1}(n) - n^{-1/6}$ as in Thm.~\ref{thm:leading_de_poisson}, the expansion would start with
\[
p_\boxdot(n;l) = F_1(t)  + F_1'(t) n^{-1/6} - \Big(\frac{t^2}{60} F_1'(t) + \frac{7}{10} F_1''(t) \Big)n^{-1/3} + O(n^{-1/2})\bigg|_{t=t_{l}(n)},
\]
which shows that in the case $\varoast=\boxdot$ the error estimate in \eqref{eq:leading_de_poisson} is suboptimal only up to the sublogarithmic factor $\sqrt{\log n}$.
\end{remark}

Upon observing $F_{\boxdot,1}(t)=0$ and $F_{\boxdot,2}(t) = F_{\boxbslash,1}(t)$ we can combine the first order terms in Thm.~\ref{thm:p_slash_cases_expan}
and Conjecture~\ref{conj:p_dot_expan} into the following strengthening of Thm.~\ref{thm:leading_de_poisson}. To this end we modify \eqref{eq:lr_varoast}, writing
\begin{equation}\label{eq:l_varoast_plus}
l^\varoast_* := 
\begin{cases}
l-1,  & \varoast = \boxslash,\\*[0.5mm]
2l+1, & \varoast = \boxbslash,\\*[0.5mm]
l+1,  & \varoast = \boxdot.
\end{cases}
\end{equation}

\begin{corollary}\label{cor:limit_law_finite_size} Subject to the tameness hypothesis for $\varoast=\boxslash,\boxbslash$ and to Conjecture~{\em\ref{conj:p_dot_expan}} for $\varoast=\boxdot$, the discrete probability distributions satisfy the Baik–Rains limit laws with a first order finite-size correction and optimal convergence rate of the form
\[
p_\varoast(n;l) = F_{\beta}(t) - \Big(\frac{t^2}{60} F_\beta'(t) + \frac{6}{5} F_\beta''(t) \Big)(\gamma n)^{-1/3} + O\big(n^{-(1+\gamma/2)/3}\big)\bigg|_{t = t_{l_*^\varoast}(\gamma n), \beta=\beta(\varoast), \gamma=\gamma(\varoast)},
\]
which is uniformly valid when $n, l \to \infty$ while $t$ stays bounded.
\end{corollary}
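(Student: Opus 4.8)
The plan is to assemble the corollary directly from the two distributional expansions already established: the $m=1$ instance of Theorem~\ref{thm:p_slash_cases_expan} for the fixed-point free cases, and the $m=2$ instance of Conjecture~\ref{conj:p_dot_expan} for the general involution case. The only work is bookkeeping — matching the indexing conventions, observing that $F_{\boxdot,1}=0$, checking that the leading correction terms $F_{\varoast,1}$ ($\varoast=\boxslash,\boxbslash$) and $F_{\boxdot,2}$ all have the common shape $-\bigl(\tfrac{t^2}{60}F_\beta'(t)+\tfrac{6}{5}F_\beta''(t)\bigr)$, and unifying the error exponents in terms of $\gamma=\gamma(\varoast)$.

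First, for $\varoast=\boxslash,\boxbslash$, under the tameness hypothesis Theorem~\ref{thm:p_slash_cases_expan} with $m=1$ gives
\[
p_\varoast(n;l) = F_{\beta}(t) + F_{\varoast,1}(t)\,(2n)^{-1/3} + O\bigl(n^{-2/3}\bigr)\Big|_{t=t_{l^\varoast}(2n)},\qquad \beta=\beta(\varoast),
\]
uniformly when $n,l\to\infty$ with $t$ bounded. Here $\gamma(\varoast)=2$, so $(2n)^{-1/3}=(\gamma n)^{-1/3}$ and $n^{-2/3}=n^{-(1+\gamma/2)/3}$, while a comparison of \eqref{eq:lr_varoast} with \eqref{eq:l_varoast_plus} shows $l^\varoast=l_*^\varoast$ in these two cases. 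Inserting the explicit form $F_{\varoast,1}(t)=-\tfrac{t^2}{60}F_\beta'(t)-\tfrac{6}{5}F_\beta''(t)$ from \eqref{eq:FbetaD1} reproduces the asserted expansion.

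Next, for $\varoast=\boxdot$, Conjecture~\ref{conj:p_dot_expan} with $m=2$ gives
\[
p_\boxdot(n;l) = F_1(t) + F_{\boxdot,1}(t)\,n^{-1/6} + F_{\boxdot,2}(t)\,n^{-1/3} + O\bigl(n^{-1/2}\bigr)\Big|_{t=t_{l+1}(n)}.
\]
Since $F_{\boxdot,1}(t)=0$ the $n^{-1/6}$ term is absent, and by \eqref{eq:Fdot} we have $F_{\boxdot,2}(t)=-\tfrac{t^2}{60}F_1'(t)-\tfrac{6}{5}F_1''(t)$. With $\gamma(\boxdot)=1$ this reads $n^{-1/3}=(\gamma n)^{-1/3}$, $n^{-1/2}=n^{-(1+\gamma/2)/3}$, and $l+1=l_*^\boxdot$; so the asserted expansion follows again, now with $\beta=1=\beta(\boxdot)$.

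Finally, since in all three cases the first-order correction is $-\bigl(\tfrac{t^2}{60}F_\beta'(t)+\tfrac{6}{5}F_\beta''(t)\bigr)(\gamma n)^{-1/3}$, the remainder is $O\bigl(n^{-(1+\gamma/2)/3}\bigr)$, and the scaled argument is $t=t_{l_*^\varoast}(\gamma n)$, the three expansions merge into the single displayed formula, the uniformity statement being inherited verbatim from Theorem~\ref{thm:p_slash_cases_expan} and Conjecture~\ref{conj:p_dot_expan}. There is no substantial analytic obstacle at this stage, as all the content — the de-Poissonization, the Jasz expansions, and the closed forms of $F_{\varoast,1}$ and $F_{\boxdot,2}$ — was already proved (conditionally on the stated hypotheses) in those two results; the point of the corollary is precisely that the leading finite-size correction is structurally the same across the three ensembles. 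The only mildly delicate remark is that the stated rate $n^{-(1+\gamma/2)/3}$ is indeed \emph{optimal}: the next term in each expansion, namely $F_{\varoast,2}(t)(2n)^{-2/3}$ for $\varoast=\boxslash,\boxbslash$ and $F_{\boxdot,3}(t)\,n^{-1/2}=\tfrac{t}{6}F_1'(t)\,n^{-1/2}$ for $\varoast=\boxdot$, is not identically zero, so the error cannot be improved to $o$ of that order uniformly.
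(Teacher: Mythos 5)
Your proposal is correct and matches the paper's own (very brief) argument: the paper likewise obtains the corollary by combining the $m=1$ case of Thm.~\ref{thm:p_slash_cases_expan} with the $m=2$ case of Conjecture~\ref{conj:p_dot_expan}, using $F_{\boxdot,1}=0$, $F_{\boxdot,2}=F_{\boxbslash,1}$, and the re-indexing \eqref{eq:l_varoast_plus}. Your added observation that the next expansion terms are not identically zero, justifying the word ``optimal,'' is a small but sound supplement the paper leaves implicit.
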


\subsection{Expansions of the discrete probability densities} Thm.~\ref{thm:p_slash_cases_expan} and Conjecture~\ref{conj:p_dot_expan} give expansions of 
the discrete probabilities \eqref{eq:prob_dens}, if written as differences which we cast in the joint form (with $\gamma=\gamma(\varoast)$)
\begin{equation}\label{eq:difference}
\begin{aligned}
p^*_\varoast(n;l) & = p_\varoast(n;l) - p_\varoast(n;l-1)\\*[1mm]
& =  F_{\beta(\varoast)}(t) + \sum_{j=1}^m F_{\varoast,j}(t) \cdot (\gamma n)^{-\gamma j/6} + O\big(n^{-\gamma(m+1)/6}\big)\bigg|_{t=t_{(l-1)_*^\varoast}(\gamma n)}^{t=t_{l_*^\varoast}(\gamma n)}.
\end{aligned}
\end{equation}
The differences 
can be further expanded by applying the central differencing formula (which is, basically, just a Taylor expansion for smooth $F$ centered at the midpoint)
\begin{equation}\label{eq:central_diff}
F(t+h) - F(t) = h F'(t+h/2) + \frac{h^3}{24}F'''(t+h/2) +  \frac{h^5}{1920}F^{(5)}(t+h/2) + \cdots.
\end{equation}
The increments  in \eqref{eq:difference} are $h = h_\varoast:= t_{l_*^\varoast}(\gamma(\varoast) n)-t_{(l-1)_*^\varoast} (\gamma(\varoast) n)$,
that is,
\begin{equation}
h_\boxslash = (2n)^{-1/6},\qquad h_\boxbslash = 2(2n)^{-1/6},\qquad h_\boxdot = n^{-1/6}.
\end{equation} 
We thus get the following result.

\begin{figure}[tbp]
\includegraphics[width=0.325\textwidth]{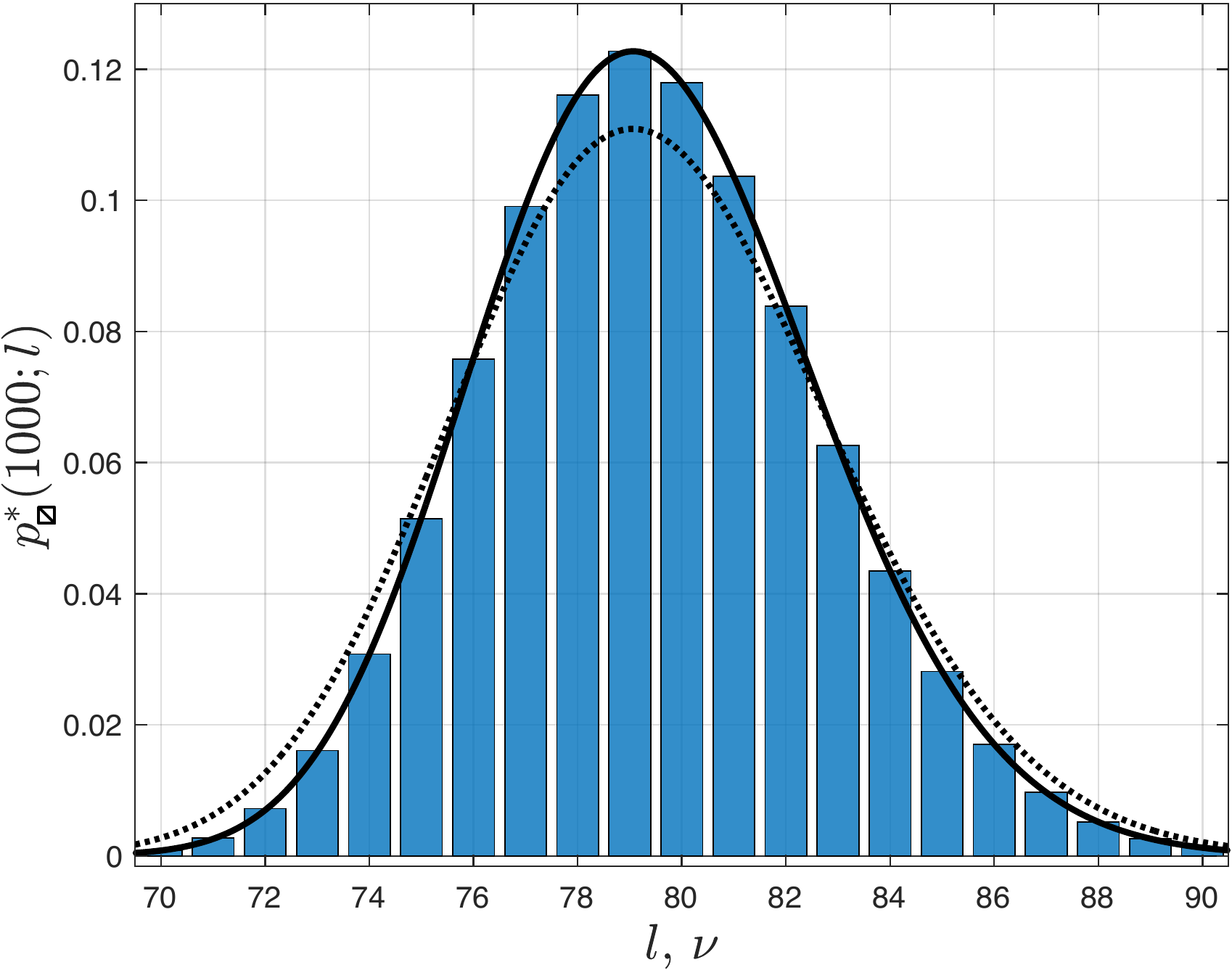}\hfil\,
\includegraphics[width=0.325\textwidth]{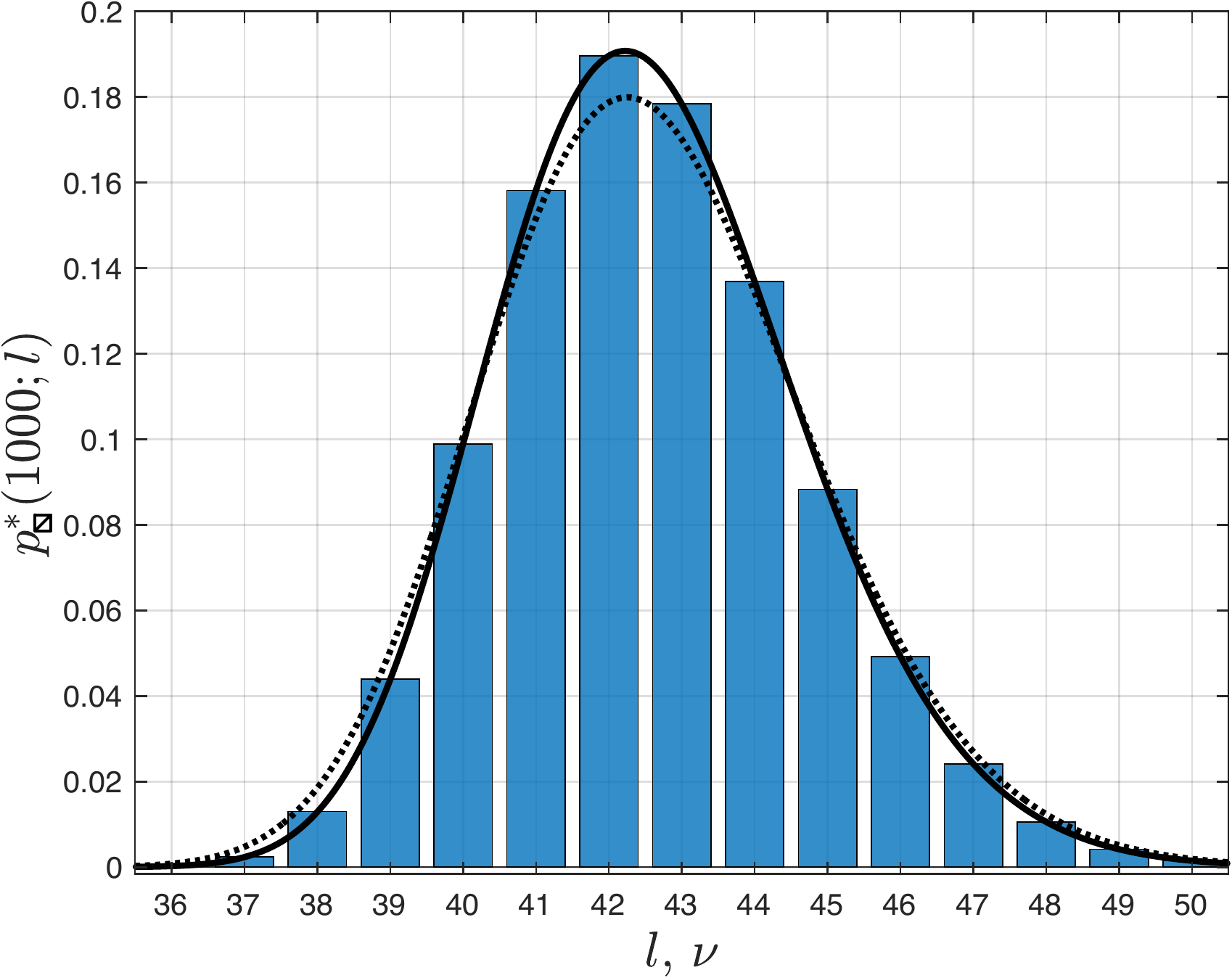}\hfil\,
\includegraphics[width=0.325\textwidth]{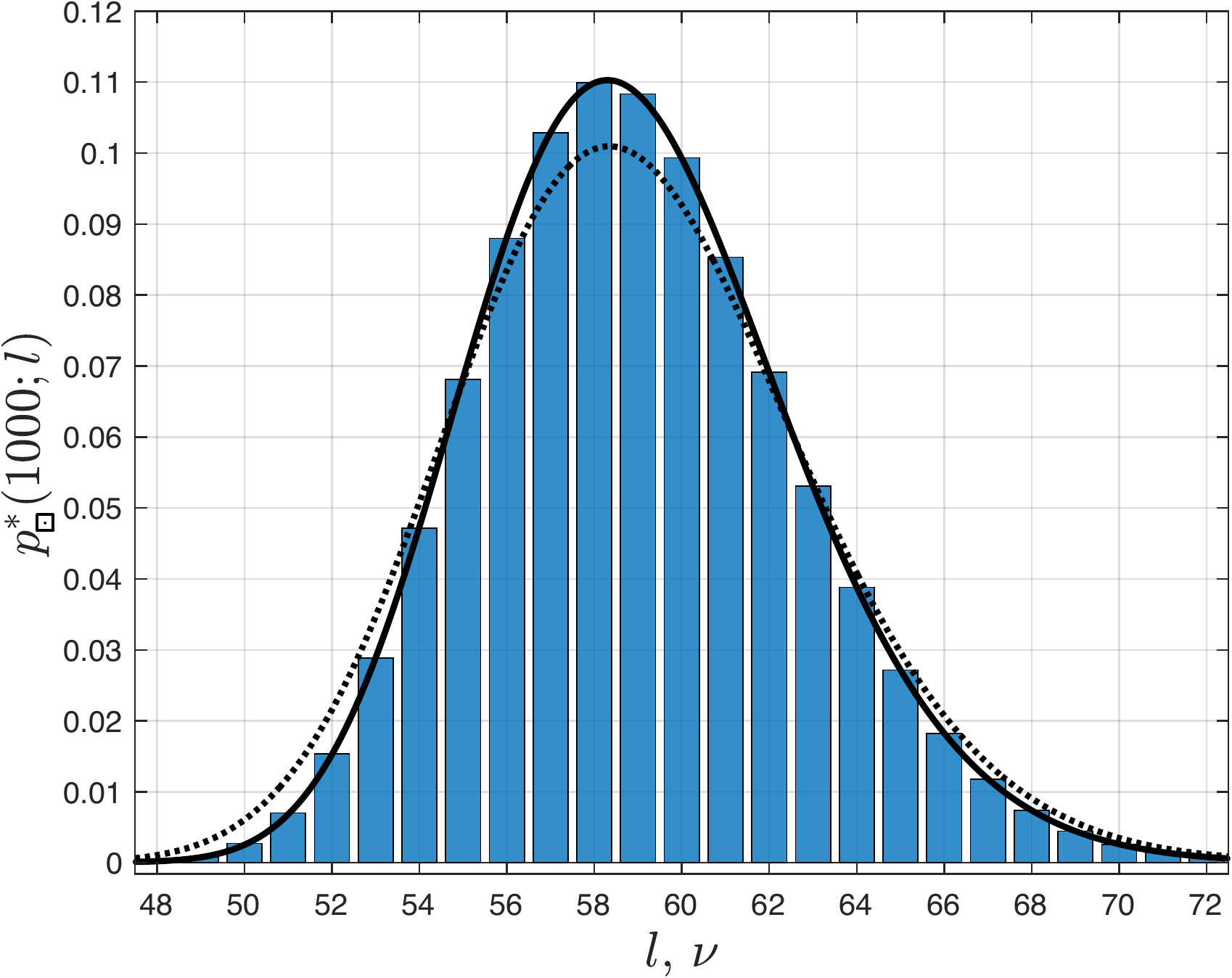}
\caption{{\footnotesize The exact discrete length probabilities for $n=1000$ (blue bars centered at the integers $l$) vs. their asymptotic expansions \eqref{eq:PDF_expan} with $m=0$ (the Baik–Rains limit laws; dotted lines) and with $m$ chosen such that the error improves by a factor of $O(n^{-2/3})$ (solid line). The expansions are displayed as functions of a continuous variable $\nu$, evaluating the right-hand-side of~\eqref{eq:PDF_expan} with $\nu$ replacing the integer $l$. Left panel: $p^*_\boxslash(n;l)$ vs. the choice $m=2$ (solid line). Middle panel:  $p^*_\boxbslash(n;l)$ vs. the choice $m=2$ (solid line). Right panel: $p^*_\boxdot(n;l)$ vs. the choice $m=5$ (solid line). The exact values are from the tables compiled in Sect.~\ref{sect:exact}. Note that a graphically accurate continuous approximation of the discrete distribution must intersect the bars right in the middle of their top sides: this is the case for the choices of $m$ made for the solid lines.  In contrast, the uncorrected limit laws (dotted lines) are noticeable inaccurate.}}
\label{fig:PDF}
\end{figure}

\begin{corollary} For $\varoast \in \{\boxslash,\boxbslash,\boxdot\}$, subject to the tameness hypothesis and Conjecture~\ref{conj:p_dot_expan}, there hold the expansions
\begin{equation}\label{eq:PDF_expan}
h_\varoast^{-1} \cdot p^*_\varoast(n;l) = F_{\beta(\varoast)}'(t) + \sum_{j=1}^m F_{\varoast,j}^*(t) \cdot (\gamma n)^{-\gamma j/6} + O(n^{-\gamma(m+1)/6}) \bigg|_{t=t_{(l-\frac12)^\varoast_*}(\gamma n),\;\gamma=\gamma(\varoast)},
\end{equation}
which are uniformly valid when $n,l\to\infty$ while $t_0\leq t\leq t_1$, with $m$ being any fixed non-negative integer and
$t_0 < t_1$ any fixed ordered pair of reals. The $F_{\varoast,j}^*$ are certain smooth functions that have simple expressions in terms of the functions $F_{\beta}$, $F_{\beta,j}$ in \eqref{eq:PoissonVaroastExpan}. 
\end{corollary}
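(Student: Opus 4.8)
This is a short, essentially mechanical consequence of the expansions \eqref{eq:p_slash_cases_expan} and \eqref{eq:p_dot_expan} together with the central differencing formula \eqref{eq:central_diff}; the only point requiring care is the uniform control of the Taylor remainder. Fix $\varoast\in\{\boxslash,\boxbslash,\boxdot\}$ and abbreviate $\beta=\beta(\varoast)$, $\gamma=\gamma(\varoast)$, $h=h_\varoast$, and $\epsilon_n:=(\gamma n)^{-\gamma/6}$. For a truncation order $m'$ to be fixed below, set
\[
G_n(t) := F_\beta(t) + \sum_{j=1}^{m'} F_{\varoast,j}(t)\,\epsilon_n^{\,j},
\]
so that Thm.~\ref{thm:p_slash_cases_expan} (for $\varoast=\boxslash,\boxbslash$) and Conjecture~\ref{conj:p_dot_expan} (for $\varoast=\boxdot$) give $p_\varoast(n;l) = G_n\big(t_{l^\varoast_*}(\gamma n)\big) + O\big(\epsilon_n^{\,m'+1}\big)$, uniformly for the argument in a fixed compact interval, and the same after any number of $t$-differentiations. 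From $t_\nu(r)-t_{\nu-1}(r) = r^{-1/6}$ and the definition \eqref{eq:l_varoast_plus} one checks that $t_{(l-1)^\varoast_*}(\gamma n) = t_{l^\varoast_*}(\gamma n) - h$ with $h_\boxslash = (2n)^{-1/6}$, $h_\boxbslash = 2(2n)^{-1/6}$, $h_\boxdot = n^{-1/6}$, and that the midpoint of the two arguments is exactly $t := t_{(l-\frac12)^\varoast_*}(\gamma n)$. Hence \eqref{eq:difference} (applied with $m'$ in place of $m$) reads $p^*_\varoast(n;l) = G_n\big(t+\tfrac h2\big) - G_n\big(t-\tfrac h2\big) + O\big(\epsilon_n^{\,m'+1}\big)$.

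First I would apply \eqref{eq:central_diff} to the smooth function $G_n$ about the midpoint $t$, obtaining
\[
G_n\big(t+\tfrac h2\big) - G_n\big(t-\tfrac h2\big) = \sum_{k=0}^{K} \frac{h^{2k+1}}{2^{2k}(2k+1)!}\, G_n^{(2k+1)}(t) + R_{n,K}(t),
\]
where the Taylor remainder satisfies $R_{n,K}(t) = O\big(h^{2K+3}\big)$ uniformly for $t\in[t_0,t_1]$, since the differentiated expansions of Thm.~\ref{thm:p_slash_cases_expan} / Conjecture~\ref{conj:p_dot_expan} bound $\sup_{[t_0,t_1]}|G_n^{(k)}|$ by constants depending only on $t_0,t_1,m'$. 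Dividing by $h$ and using $h^2 = c_\varoast\,\epsilon_n$ with $c_\boxslash=c_\boxdot=1$ and $c_\boxbslash=4$, every surviving power of $h$ turns into an integer power of $\epsilon_n$; inserting $G_n^{(2k+1)} = F_\beta^{(2k+1)} + \sum_{i\ge1} F_{\varoast,i}^{(2k+1)}\epsilon_n^{\,i}$ and collecting the coefficient of $\epsilon_n^{\,j}$ produces \eqref{eq:PDF_expan} with
\[
F_{\varoast,j}^*(t) = \sum_{k=0}^{j} \frac{c_\varoast^{\,k}}{2^{2k}(2k+1)!}\, F_{\varoast,j-k}^{(2k+1)}(t), \qquad F_{\varoast,0} := F_\beta.
\]
In particular the $j=0$ term is $F_\beta'(t)$, matching the left-hand side of \eqref{eq:PDF_expan}, and after substituting the explicit formulae \eqref{eq:FbetaD} / \eqref{eq:Fdot} each $F^*_{\varoast,j}$ becomes the asserted $\Q[t]$-linear combination of higher-order derivatives of $F_\beta$.

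It then remains to balance the error. Dividing the $O(\epsilon_n^{\,m'+1})$ of the input expansion by $h = O(n^{-1/6})$ costs a factor $O(n^{1/6})$, so choosing $m' = m+1$ keeps the resulting remainder $O(\epsilon_n^{\,m+1})$ (this works for every $\gamma\ge1$). Likewise, truncating the central differencing sum at $K=m+1$ leaves $h^{-1}R_{n,m+1} = O(h^{2m+2}) = O(\epsilon_n^{\,m+1})$, and the finitely many terms with index $j>m$ created by the double sum each carry a factor $\epsilon_n^{\,j}$, $j\ge m+1$, with bounded coefficient on $[t_0,t_1]$, hence are absorbed as well. Uniformity over $t\in[t_0,t_1]$, with implied constants depending only on $t_0,t_1$, propagates from Thm.~\ref{thm:p_slash_cases_expan} and Conjecture~\ref{conj:p_dot_expan} once one observes that $t$ and $t\pm h/2$ all lie in a fixed compact interval for $n$ large. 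The one genuinely delicate ingredient — and the main obstacle — is precisely the uniformity (in $n$ and $l$) of the Taylor remainder in \eqref{eq:central_diff}; this is exactly what the ``preserving uniformity, the expansion can be repeatedly differentiated'' clauses of Thm.~\ref{thm:p_slash_cases_expan} and Conjecture~\ref{conj:p_dot_expan} provide, which is also why the case $\varoast=\boxdot$ stays conditional on Conjecture~\ref{conj:p_dot_expan}.
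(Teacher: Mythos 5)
Your proposal is correct in structure and follows essentially the same route as the paper: write $p^*_\varoast(n;l)$ as the difference \eqref{eq:difference} of the cumulative expansions of Thm.~\ref{thm:p_slash_cases_expan} resp.\ Conjecture~\ref{conj:p_dot_expan} (taken to one order higher so that the division by $h_\varoast$ can be absorbed), and Taylor-expand about the midpoint via the central differencing formula \eqref{eq:central_diff}. Your identification of the increments $h_\varoast$, of the midpoint $t_{(l-\frac12)^\varoast_*}(\gamma n)$, and your error balancing with $m'=m+1$ are all sound, for every $\gamma\in\{1,2\}$.

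One concrete slip: for $\varoast=\boxdot$ the relation ``$h^2=c_\varoast\,\epsilon_n$ with $c_\boxdot=1$'' is false, since there $h_\boxdot=n^{-1/6}=\epsilon_n$ and hence $h_\boxdot^2=\epsilon_n^2$. Consequently your closed formula $F^*_{\varoast,j}=\sum_{k=0}^{j}\frac{c_\varoast^k}{2^{2k}(2k+1)!}F^{(2k+1)}_{\varoast,j-k}$ is wrong in the general involution case; the correct collection there pairs $h^{2k}\epsilon_n^{\,i}=\epsilon_n^{\,2k+i}$, i.e.
\[
F^*_{\boxdot,j}=\sum_{0\le 2k\le j}\frac{1}{2^{2k}(2k+1)!}\,F^{(2k+1)}_{\boxdot,j-2k},\qquad F_{\boxdot,0}:=F_1 .
\]
Your version would give $F^*_{\boxdot,1}=\frac{1}{24}F_1'''\neq 0$, contradicting $F^*_{\boxdot,1}=0$ as displayed in \eqref{eq:Fstar_boxdot}, whereas the corrected rule reproduces all the listed $F^*_{\boxdot,j}$; your formula is correct as stated for $\varoast=\boxslash,\boxbslash$ (it recovers, e.g., the coefficients $-\tfrac{139}{120}$ and $-\tfrac{31}{30}$ of $F_\beta'''$ in \eqref{eq:Fstar_boxslash} and \eqref{eq:Fstar_boxbslash}). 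Since the corollary itself only asserts the existence of smooth coefficient functions with simple expressions, this indexing error does not invalidate your derivation of \eqref{eq:PDF_expan}. A cosmetic point: the uniform control of the Taylor remainder requires nothing beyond the smoothness of the finitely many functions $F_{\varoast,i}$ on a slightly enlarged compact interval (the coefficients $\epsilon_n^{\,i}$ being bounded by $1$); the ``repeatedly differentiable'' clauses you invoke are not actually part of the statements of Thm.~\ref{thm:p_slash_cases_expan} or Conjecture~\ref{conj:p_dot_expan}, but they are not needed either.
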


By applying the difference formula \eqref{eq:central_diff} to the differences \eqref{eq:difference}
after inserting the concrete expressions \eqref{eq:FbetaD}/\eqref{eq:Fdot} for $F_{\varoast,j}$, a routine calculation shows that the first few instances of the functions $F_{\varoast,j}^*$ are in the case $\varoast=\boxslash$ given by
\begin{subequations}\label{eq:Fstar_boxslash}
\begin{align}
F_{\boxslash,1}^*(t) &= -\frac{t}{30} F_4'(t) - \frac{t^2}{60}F_4''(t) - \frac{139}{120} F_4'''(t)\\
\intertext{and, subject to the linear form hypothesis, by}
F_{\boxslash,2}^*(t) &= \frac{2t^2}{525}F_4'(t) + \Big(-\frac{8711}{8400}+\frac{23t^3}{12600}\Big)F_4''(t) + \Big(-\frac{1763t}{8400}+\frac{t^4}{7200}\Big)F_4'''(t) \\*[1mm]
&\qquad\quad + \frac{139t^2}{7200}F_4^{(4)}(t) + \frac{6437}{9600}F_4^{(5)}(t), \notag \\*[1mm]
F_{\boxslash,3}^*(t) &=
-\Big(\frac{761}{5250} + \frac{41 t^3}{70875} \Big) F_4'(t)
-\Big(\frac{1573 t}{12000} + \frac{71 t^4 }{283500}\Big) F_4''(t)\\*[1mm]
&\qquad\quad+\Big(\frac{837 t^2}{56000}-\frac{13 t^5}{504000}\Big) F_4'''(t) 
+\Big(\frac{514831 }{336000}+\frac{613 t^3}{302400} -\frac{t^6}{1296000} \Big) F_4^{(4)}(t)\notag\\*[1mm]
&\qquad\quad+\Big(\frac{535313 t}{2016000} - \frac{139 t^4}{864000}\Big) F_4^{(5)}(t)
-\frac{6437 t^2}{576000} F_4^{(6)}(t)
-\frac{2085527}{8064000}F_4^{(7)}(t)\notag,
\end{align}
\end{subequations}
in the case $\varoast=\boxbslash$ given by 
\begin{subequations}\label{eq:Fstar_boxbslash}
\begin{align}
F_{\boxbslash,1}^*(t) &= -\frac{t}{30} F_1'(t) - \frac{t^2}{60}F_1''(t) - \frac{31}{30} F_1'''(t)\\
\intertext{and, subject to the linear form hypothesis, by}
F_{\boxbslash,2}^*(t) &= \frac{2t^2}{525}F_1'(t) + \Big(-\frac{551}{525}+\frac{23t^3}{12600}\Big)F_1''(t) + \Big(-\frac{467t}{2100}+\frac{t^4}{7200}\Big)F_1'''(t)\hspace*{0.75cm} \\*[1mm]
&\qquad\quad + \frac{31t^2}{1800}F_1^{(4)}(t) + \frac{317}{600}F_1^{(5)}(t), \notag\\*[1mm]
\intertext{}
F_{\boxbslash,3}^*(t) &=
-\Big(\frac{18}{125} + \frac{41 t^3}{70875} \Big) F_1'(t)
-\Big(\frac{671 t}{5250} + \frac{71 t^4 }{283500}\Big) F_1''(t)\\*[1mm]
&\qquad\quad+\Big(\frac{17 t^2}{1000}-\frac{13 t^5}{504000}\Big) F_1'''(t) 
+\Big(\frac{7109 }{5250}+\frac{181 t^3}{75600} -\frac{t^6}{1296000} \Big) F_1^{(4)}(t)\notag\\*[1mm]
&\qquad\quad+\Big(\frac{31313 t}{126000} - \frac{31 t^4}{216000}\Big) F_1^{(5)}(t)
-\frac{317 t^2}{36000} F_1^{(6)}(t)
-\frac{22403}{126000}F_1^{(7)}(t)\notag,
\end{align}
\end{subequations}
and finally, in the case $\varoast=\boxdot$, subject to Conjecture~\ref{conj:p_dot_expan}, given by $F_{\boxdot,1}^*(t)=0$,
\begin{subequations}\label{eq:Fstar_boxdot}
\begin{align}
F_{\boxdot,2}^*(t) &= -\frac{t}{30} F_1'(t) - \frac{t^2}{60}F_1''(t) - \frac{139}{120} F_1'''(t)\\*[1mm]
F_{\boxdot,3}^*(t) &= \frac{1}{6} F_1'(t) + \frac{t}{6} F_1''(t)  \\
\intertext{and}
F_{\boxdot,4}^*(t) &= \frac{2t^2}{525}F_1'(t) + \Big(-\frac{10811}{8400}+\frac{23t^3}{12600}\Big)F_1''(t) + \Big(-\frac{1763t}{8400}+\frac{t^4}{7200}\Big)F_1'''(t) \\*[1mm]
&\hspace*{-0.5cm} + \frac{139t^2}{7200}F_1^{(4)}(t) + \frac{6437}{9600}F_1^{(5)}(t), \notag\\*[1mm]
F_{\boxdot,5}^*(t) &= -\frac{t}{45}F_1'(t) - \frac{7t^2}{360}F_1''(t) - \Big(\frac{19}{240}+\frac{t^3}{360}\Big)F_1'''(t) - \frac{139t}{720}F_1^{(4)}(t), \\*[1mm]
F_{\boxdot,6}^*(t) &=
-\Big(\frac{1933}{15750} + \frac{41 t^3}{70875} \Big) F_1'(t)
-\Big(\frac{291 t}{4000} + \frac{71 t^4 }{283500}\Big) F_1''(t)\\*[1mm]
&\hspace*{-0.5cm}+\Big(\frac{16633 t^2}{504000}-\frac{13 t^5}{504000}\Big) F_1'''(t) 
+\Big(\frac{612131 }{336000}+\frac{613 t^3}{302400} -\frac{t^6}{1296000} \Big) F_1^{(4)}(t)\notag\\*[1mm]
&\hspace*{-0.5cm}+\Big(\frac{535313 t}{2016000} - \frac{139 t^4}{864000}\Big) F_1^{(5)}(t)
-\frac{6437 t^2}{576000} F_1^{(6)}(t)
-\frac{2085527}{8064000}F_1^{(7)}(t)\notag,\\*[1mm]
F_{\boxdot,7}^*(t) &=
\frac{t^2}{225} F_1'(t) +\Big(-\frac{331}{2016} + \frac{29 t^3 }{9450}\Big) F_1''(t)
+\Big(-\frac{15509 t}{50400}+\frac{31 t^4}{60480}\Big) F_1'''(t)\\*[1mm]
&\hspace*{-0.5cm} +\Big(-\frac{6287 t^2}{302400}+\frac{t^5}{43200} \Big) F_1^{(4)}(t)
+\Big(-\frac{47}{2304} + \frac{139 t^3}{43200}\Big) F_1^{(5)}(t)
+\frac{6437 t}{57600} F_1^{(6)}(t)\notag.
\end{align}
\end{subequations}

\smallskip

Fig.~\ref{fig:PDF} displays plots of the length probability densities for $n=1000$ together with their Baik--Rains limit laws (that is, the choice $m=0$ in \eqref{eq:PDF_expan}; note that because of $F_{\boxdot,1}^*=0$ the error term is the same in all three cases) and expansions for which $m$ has been chosen to improve the error by a factor of $O(n^{-2/3})$. 

\section{Expansions of Expected Value and Variance}\label{sect:mean}

\subsection{Expected value}
The expected values of the random variables $L_n^\varoast$, that is,
\[
\E(L^\varoast_n) = 
\begin{cases}
\displaystyle\sum_{l=1}^n l\cdot \prob(L_n^\varoast=l),  &\quad \varoast = \boxslash,\boxdot,\\*[5mm]
\displaystyle\sum_{l=1}^n 2l\cdot \prob(L_n^\boxbslash=2l),  &\quad \varoast = \boxbslash,
\end{cases}
\]
can be written briefly in the joint form 
\[
\E(L^\varoast_n) = (\gamma(\varoast)n)^{1/6}  h_\varoast \sum_{l=1}^n l \cdot p_\varoast^*(n;l).
\]
Following \cite[§7]{arxiv:2301.02022}, the latter expression can be recast, by shift and rescale, in the form
\begin{subequations}
\begin{equation}
\E(L^\varoast_n) = 2\sqrt{\gamma n} + \delta(\varoast) +   h_\varoast \sum_{l=1}^n t_{(l-\frac12)^\varoast_*}(\gamma n) \cdot (\gamma n)^{1/6} h_\varoast^{-1} p_\varoast^*(n;l) \bigg|_{\gamma=\gamma(\varoast)},
\end{equation}
where we use the abbreviations
\begin{equation}
\delta(\boxslash) := 3/2,\qquad \delta(\boxbslash):=0,\qquad \delta(\boxdot) := -1/2.
 \end{equation}
\end{subequations}
If we assume a sufficiently uniform decay of the tails, inserting the expansions \eqref{eq:PDF_expan} gives
\begin{equation}
\E(L^\varoast_n) = 2\sqrt{\gamma n} + \delta(\varoast)  + \sum_{j=0}^m \mu_{\varoast,j}^{(n)} \cdot (\gamma n)^{(1-\gamma j)/6} + O(n^{(1-\gamma(m+1))/6}) \bigg|_{\gamma=\gamma(\varoast)}
\end{equation}
with coefficients (still depending on $n$, though), writing $F_{\varoast,0}^* := F_{\beta(\varoast)}'$,
\[
\mu_{\varoast,j}^{(n)} := h_\varoast \sum_{l=1}^n t_{(l-\frac12)^\varoast_*}(\gamma n)\cdot F_{\varoast,j}^*\big(t_{(l-\frac12)^\varoast_*}(\gamma n)\big)  \bigg|_{\gamma=\gamma(\varoast)}.
\]
Arguing as in \cite[§4.3]{arxiv.2206.09411}, \cite[§7]{arxiv:2301.02022}, if we assume (a) that  the decay $F_{\varoast,j}^*(t) \to 0$ (and likewise of all their derivatives) is exponentially fast as $t\to\pm\infty$ and (b) that  the
$F_{\varoast,j}^*$ can be extended analytically to a strip containing the real axis, we obtain
\begin{equation}\label{eq:trap}
\mu_{\varoast,j}^{(n)} \doteq h_\varoast \sum_{l=-\infty}^\infty t_{(l-\frac12)^\varoast_*}(\gamma n) \cdot F_{\varoast,j}^*\big(t_{(l-\frac12)^\varoast_*}(\gamma n)\big)  \bigg|_{\gamma=\gamma(\varoast)} \doteq \int_{-\infty}^\infty t\, F_{\varoast,j}^*(t)\,dt =:\mu_{\varoast,j},
\end{equation}
where “$\doteq$” denotes equality up to terms that are exponentially small for large $n$. Here, in the first step
the series was obtained by adding, under assumption (a), the exponentially small tails, and in the second step  we have identified the series as the trapezoidal rule with step-size $h =h_\varoast$---a quadrature rule known to converge, under assumption~(b), exponentially fast to the integral (see, e.g., \cite[Eq.~(3.4.14)]{MR760629}). In summary we are led to the following conjecture.

\begin{conjecture}\label{conj:E_expan} For $\varoast\in\{\boxslash,\boxbslash,\boxdot\}$ and $m$ any fixed non-negative integer, as $n\to\infty$,
\begin{equation}\label{eq:E_expan}
\E(L^\varoast_n) = 2\sqrt{\gamma n} + \delta(\varoast)  + \sum_{j=0}^m \mu_{\varoast,j} \cdot (\gamma n)^{(1-\gamma j)/6} + O(n^{(1-\gamma(m+1))/6}) \bigg|_{\gamma=\gamma(\varoast)},
\end{equation}
where the constants $\mu_{\varoast,j}$ are given by
\[
\mu_{\varoast,0}  = \int_{-\infty}^\infty t F_{\beta(\varoast)}'(t)\,dt, \qquad \mu_{\varoast,j}  = \int_{-\infty}^\infty t F_{\varoast,j}^*(t)\,dt \quad (j=1,2,\ldots).
\] 
\end{conjecture}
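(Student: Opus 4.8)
The plan is to carry out, in the involution cases, the argument used for the general permutation case in \cite[§7]{arxiv:2301.02022}: turn the weighted sum defining $\E(L_n^\varoast)$ into a trapezoidal-rule approximation of $\int_{-\infty}^\infty t F_{\varoast,j}^*(t)\,dt$ and then invoke the exponential convergence of that rule together with the density expansion \eqref{eq:PDF_expan}.

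First I would record the elementary change of variables already indicated in the text. With $\nu=(l-\tfrac12)^\varoast_*$ one has $t_\nu(\gamma n)=(\nu-2\sqrt{\gamma n})(\gamma n)^{-1/6}$, which is affine in $l$ with increment exactly $h_\varoast$; hence, using $\sum_l p_\varoast^*(n;l)=1$ to absorb the constant part of $l$,
\[
\E(L_n^\varoast)=2\sqrt{\gamma n}+\delta(\varoast)+(\gamma n)^{1/6}\sum_{l=1}^n t_{(l-\frac12)^\varoast_*}(\gamma n)\,p_\varoast^*(n;l)\bigg|_{\gamma=\gamma(\varoast)}.
\]
On the bulk range of indices---those $l$ for which $t_{(l-\frac12)^\varoast_*}(\gamma n)$ lies in a fixed compact window $[t_0,t_1]$ (there are $O(n^{1/6})$ of them, with $|t|=O(1)$)---I would substitute \eqref{eq:PDF_expan}. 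Since $(\gamma n)^{1/6}h_\varoast=O(1)$ and the $O(n^{-\gamma(m+1)/6})$ remainder of \eqref{eq:PDF_expan} is uniform, each windowed term contributes an error $O(n^{-\gamma(m+1)/6})$ to $\E(L_n^\varoast)$, so the bulk remainder is $O(n^{(1-\gamma(m+1))/6})$ and the main part takes the asserted shape with the $n$-dependent coefficients
\[
\mu_{\varoast,j}^{(n)}=h_\varoast\sum_{l\in\text{window}}t_{(l-\frac12)^\varoast_*}(\gamma n)\,F_{\varoast,j}^*\!\big(t_{(l-\frac12)^\varoast_*}(\gamma n)\big),\qquad F_{\varoast,0}^*:=F_{\beta(\varoast)}'.
\]

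Second, I would pass from $\mu_{\varoast,j}^{(n)}$ to $\mu_{\varoast,j}=\int_{-\infty}^\infty t F_{\varoast,j}^*(t)\,dt$ up to an exponentially small error. Because each $F_{\varoast,j}^*$ is a $\Q[t]$-linear combination of the derivatives $F_{\beta(\varoast)}^{(k)}$, the classical Tracy--Widom tail asymptotics give exponentially fast (in fact super-exponential) decay of $F_{\varoast,j}^*$ and all its derivatives as $t\to\pm\infty$; hence the windowed sum above may be completed to a sum over all $l\in\Z$ at the cost of an exponentially small error, and the completed sum is exactly the trapezoidal rule with uniform step size $h_\varoast\to 0^+$ applied to $t\mapsto t F_{\varoast,j}^*(t)$. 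Granting that this function extends analytically to a strip containing the real axis---which follows from the analytic continuation of the Hastings--McLeod solution off the real axis together with \eqref{eq:TWtheory}---that quadrature rule converges to $\mu_{\varoast,j}$ with error exponentially small in $h_\varoast^{-1}$, by \cite[Eq.~(3.4.14)]{MR760629}. Absorbing all such exponentially small contributions into the remainder yields \eqref{eq:E_expan}.

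The main obstacle---and the reason the statement is phrased as a conjecture---is the far tail of the $l$-sum, i.e.\ the indices with $|t_{(l-\frac12)^\varoast_*}(\gamma n)|$ large, for which \eqref{eq:PDF_expan} gives no information. What one needs is a uniform large-deviation-type estimate showing that $\sum_{l:\,|t_{l^\varoast}(\gamma n)|>T} l\cdot p_\varoast^*(n;l)=o\big(n^{(1-\gamma(m+1))/6}\big)$ for suitably coupled $T,n\to\infty$: on the right this must beat the prefactor $l\le n$, on the left it must control the near-minimal lengths. Such tail bounds for the longest monotone subsequences of random involutions, analogous to those available in the general permutation case, are exactly the inputs left unproved here; everything else (the change of variables, the term-by-term substitution on the bulk, and the exponential convergence of the trapezoidal rule) is routine given Thm.~\ref{thm:p_slash_cases_expan}, Conjecture~\ref{conj:p_dot_expan}, and the Tracy--Widom tail asymptotics.
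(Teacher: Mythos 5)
Your derivation is essentially the paper's own: the same shift-and-rescale of the weighted sum, term-wise insertion of the density expansion \eqref{eq:PDF_expan}, identification of the coefficient sums as trapezoidal rules with step $h_\varoast$ converging exponentially fast to $\int_{-\infty}^\infty t\,F_{\varoast,j}^*(t)\,dt$, and the same diagnosis that the uncontrolled far tails of the $l$-sum are precisely what keeps the statement at the level of a conjecture. The only (cosmetic) difference is that you confine the substitution to a bulk window and state the missing uniform tail bound explicitly, whereas the paper inserts the expansion into the full sum under the blanket assumption of ``sufficiently uniform decay of the tails'' and lists the exponential decay and strip-analyticity of the $F_{\varoast,j}^*$ as assumptions (a)/(b) rather than, as you do, as consequences of Tracy--Widom asymptotics and the analytic continuation of the Hastings--McLeod solution.
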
 

\begin{table}[tbp]
\caption{Highly accurate values of the moments $M_{\beta,1},\ldots,M_{\beta,5}$ of the Tracy–Widom distributions $F_\beta$, $\beta=1,4$, computed as in \cite[Table~3]{arxiv.2206.09411} (cf. also \cite[Table~9]{MR2895091}).}
\label{tab:moments}
{\footnotesize
\begin{tabular}{rrr}
\hline\noalign{\smallskip}
$j$ & $M_{1,j}$\hspace*{1.75cm} & $M_{4,j}$ \hspace*{1.5cm}\\
\noalign{\smallskip}\hline\noalign{\smallskip}
$1$ & $ -1.20653\,35745\,82024\,81442\cdots$& $  -3.26242\,79028\,55175\,75465\cdots$ \\
$2$ & $  3.06350\,43011\,75039\,55546\cdots$& $  11.67888\,32628\,73371\,70764\cdots$ \\
$3$ & $ -6.97763\,61359\,60327\,30644\cdots$& $ -44.68327\,25223\,28257\,83341\cdots$ \\
$4$ & $ 21.45673\,87602\,71069\,02360\cdots$& $ 180.40053\,05488\,70404\,65211\cdots$ \\
$5$ & $-61.49120\,46024\,95471\,52526\cdots$& $-762.06682\,37306\,65236\,06580\cdots$ \\
\noalign{\smallskip}\hline
\end{tabular}}
\end{table}

The functional form of $F_{\varoast,j}^*$ displayed in (\ref{eq:Fstar_boxslash}–\ref{eq:Fstar_boxdot}), namely being a linear combination of higher order derivatives of $F_{\beta(\varoast)}$ with polynomial coefficients, allows us to express the coefficients $\mu_{\varoast,j}$ in terms of the moments (for highly accurate numerical values see Table~\ref{tab:moments})
\[
M_{\beta,j}:= \int_{-\infty}^\infty t^j F_\beta'(t)\,dt
\]
of the Tracy--Widom distributions $F_\beta$ ($\beta=1,4$). In fact, repeated integration by parts yields the simplifying rule (where $k\geq 1$)
\[
\int_{-\infty}^\infty t^j\, F_\beta^{(k)}(t) \,dt = 
\begin{cases}
\displaystyle\frac{(-1)^{k-1} j!}{(j-k+1)!} M_{\beta,j-k+1} &\quad k\leq j+1,\\*[4mm]
0 &\quad\text{otherwise}.
\end{cases}
\]
Applying this rule yields the first few instances in the cases $\varoast=\boxslash,\boxbslash$, writing $\beta=\beta(\varoast)$, as
\begin{subequations}\label{eq:mu}
\begin{equation}\label{eq:mu_fixedpointfree}
\begin{gathered}
\mu_{\varoast,0} = M_{\beta,1},\quad \mu_{\varoast,1} = \frac{M_{\beta,2}}{60}, \quad \mu_{\varoast,2} = \frac{351}{700} - \frac{M_{\beta,3}}{1400},\\*[1mm]
\mu_{\varoast,3} = \frac{8753M_{\beta,1}}{63000} +\frac{281M_{\beta,4}}{4536000} ,
\end{gathered}
\end{equation}
and in the case $\varoast=\boxdot$ as
\begin{equation}
\begin{gathered}
\mu_{\boxdot,0} = M_{1,1},\quad \mu_{\boxdot,1} = 0, \quad \mu_{\boxdot,2} = \frac{M_{1,2}}{60}, \quad \mu_{\boxdot,3} =  - \frac{M_{1,1}}{6},\quad \mu_{\boxdot,4} = \frac{263}{350} - \frac{M_{1,3}}{1400}, \\*[1mm]
\mu_{\boxdot,5} = \frac{M_{1,2}}{360}, \quad \mu_{\boxdot,6} = \frac{2407M_{1,1}}{15750} +\frac{281 M_{1,4}}{4536000},\quad \mu_{\boxdot,7} = -\frac{349}{1400}-\frac{M_{1,3}}{2800}.
\end{gathered}
\end{equation}
\end{subequations}
Highly accurate numerical values of these coefficients are listed in Table~\ref{tab:mu}.

\begin{table}[tbp]
\caption{Highly accurate values of $\mu_{\varoast,j}$, computed from \eqref{eq:mu} and Table~\ref{tab:moments}. (For the values of $\mu_{\boxslash,4},\ldots,\mu_{\boxslash,7}, \mu_{\boxbslash,4},\ldots,\mu_{\boxbslash,7}$  see the supplementary material mentioned in Fn.~\ref{fn:suppl}.)}
\label{tab:mu}
{\footnotesize
\begin{tabular}{rrrr}
\hline\noalign{\smallskip}
$j$ & $\mu_{\boxslash,j}$\hspace*{1.75cm} & $\mu_{\boxbslash,j}$ \hspace*{1.5cm}& $\mu_{\boxdot,j}$ \hspace*{1.5cm} \\
\noalign{\smallskip}\hline\noalign{\smallskip}
$0$ & $ -3.26242\,79028\,55175\,75465\cdots$& $-1.20653\,35745\,82024\,81442\cdots$ & $ -1.20653\,35745\,82024\,81442\cdots$\\
$1$ & $  0.19464\,80543\,81222\,86179\cdots$& $ 0.05105\,84050\,19583\,99259\cdots$ & $  0.00000\,00000\,00000\,00000\cdots$\\
$2$ & $  0.53334\,51946\,58805\,89845\cdots$& $ 0.50641\,25972\,39971\,66236\cdots$ & $  0.05105\,84050\,19583\,99259\cdots$\\
$3$ & $ -0.44209\,47341\,58188\,90204\cdots$& $-0.16630\,23411\,92052\,28837\cdots$ & $  0.20108\,89290\,97004\,13573\cdots$\\
$4$ & $ -0.03369\,11358\,88346\,37655\cdots$& $-0.00981\,19952\,49787\,79659\cdots$ & $  0.75641\,25972\,39971\,66236\cdots$\\
$5$ & $  0.03852\,72197\,81191\,57824\cdots$& $ 0.05746\,29940\,91860\,81619\cdots$ & $  0.00850\,97341\,69930\,66543\cdots$\\
$6$ & $  0.05153\,76359\,56129\,10159\cdots$& $ 0.02332\,12891\,18646\,87049\cdots$ & $ -0.18305\,97519\,50135\,96634\cdots$\\
$7$ & $  0.00772\,23611\,77637\,88596\cdots$& $ 0.00472\,40800\,60295\,18940\cdots$ & $ -0.24679\,37013\,80014\,16881\cdots$\\
\noalign{\smallskip}\hline
\end{tabular}}
\end{table}

As a sanity check we have fitted (computed in extended precision) model expansions to exact data sets obtained from the tables up to $n=1000$ as compiled in Sect.~\ref{sect:exact}. Even though the accuracies of these fits vary strongly (significantly larger tables would be required in the cases $\varoast=\boxbslash,\boxdot$ to match the accuracy obtained for $\varoast=\boxslash$), all the digits that have been deemed correct (by comparing the fits for two different data sets) agree, up to one unit in the last place, with the numbers shown in Table~\ref{tab:mu}. The specific models and fits are:\footnote{The upper bounds of the index $j$ in the models have been chosen as to maximize the number of matching digits for the two different data sets that are used.}

\begin{itemize}
\item $\displaystyle\E(L_n^\boxslash) \approx 2\sqrt{2n} +\frac{3}{2} + \sum_{j=0}^{13} c_{\boxslash,j}\cdot (2n)^{(1-2j)/6}$
with data for $700 (800) \leq n \leq 1000$, 
\begin{gather*}
c_{\boxslash,0} \approx -3.26242\,79028\,55175,\quad c_{\boxslash,1} \approx 0.19464\,80543\,81, \\*[1mm]
c_{\boxslash,2} \approx 0.53334\,51946, \quad c_{\boxslash,3} \approx -0.44209\,4734, \quad c_{\boxslash,4} \approx -0.03369\,11,\\*[1mm]
 c_{\boxslash,5} \approx 0.03852\,7, \quad c_{\boxslash,6} \approx 0.05153, \quad c_{\boxslash,7}\approx  0.0077.
\end{gather*}
\item $\displaystyle\E(L_n^\boxbslash) \approx 2\sqrt{2n} + \sum_{j=0}^{5} c_{\boxbslash,j} \cdot(2n)^{(1-2j)/6}$
with data starting for $600(700)\leq n\leq 1000$, 
\[
c_{\boxbslash,0} \approx -1.20653\,35,\quad c_{\boxbslash,1} \approx 0.05105\,8, \quad c_{\boxbslash,2} \approx 0.506, \quad c_{\boxbslash,3} \approx -0.166 .
\]
\item $\displaystyle\E(L_n^\boxdot) \approx 2\sqrt{n} -\frac{1}{2} + c_{\boxdot,0} \cdot n^{1/6} + \sum_{j=1}^{8} c_{\boxdot,j+1} \cdot n^{-j/6}$,
with data for $700(800)\leq n\leq 1000$, 
\[
c_{\boxdot,0} \approx -1.20653\,2,\quad c_{\boxdot,2} \approx 0.050, \quad
c_{\boxdot,3} \approx 0.20, \quad c_{\boxdot,4} \approx 0.7 .
\]
\end{itemize}

%\begin{remark}\label{rem:sanity_E}
%Note that, for $\varoast=\boxslash,\boxbslash$, Forrester and Mays \cite[p.~31]{arxiv.2205.05257} report to have found no convincing evidence in fitting a model of the form 
%\[
%\E(L_n^\varoast) \approx 2\sqrt{2n} + M_{\beta(\varoast),1}\cdot (2n)^{1/6} + c + d \cdot n^{-1/3}
%\]
%to their tables. See \cite[Fn.~30]{arxiv.2206.09411} for a discussion of how such a ``short'' model is prone to suggesting wrong expansion exponents (here, in the last term, $-1/3$ instead of $-1/6$).
%\end{remark}

\subsection{Variance}
The variances of the random variables $L_n^\varoast$, 
\[
\Var(L^\varoast_n) = 
\begin{cases}
\displaystyle\sum_{l=1}^n l^2\cdot \prob(L_n^\varoast=l) - \E(L_n^\varoast)^2,  &\quad \varoast = \boxslash,\boxdot,\\*[5mm]
\displaystyle\sum_{l=1}^n (2l)^2\cdot \prob(L_n^\boxbslash=2l)- \E(L_n^\boxbslash)^2,,  &\quad \varoast = \boxbslash,
\end{cases}
\]
can be recast, by a shift and rescale, in the form
\[
\Var(L^\varoast_n) =  h_\varoast \sum_{l=1}^n \left(t_{(l-\frac12)^\varoast_*}(\gamma n)\right)^2 \cdot (\gamma n)^{1/3} h_\varoast^{-1} p_\varoast^*(n;l) - \Big(\E(L_n^\varoast)-2\sqrt{\gamma n} -\delta(\varoast)\Big)^2\bigg|_{\gamma=\gamma(\varoast)}.
\]
By inserting the expansions \eqref{eq:PDF_expan} and \eqref{eq:E_expan} and arguing as for Conjecture \ref{conj:E_expan} we get:
\begin{conjecture} For $\varoast\in\{\boxslash,\boxbslash,\boxdot\}$ and $m$ any fixed non-negative integer, as $n\to\infty$,
\begin{equation}
\Var(L^\varoast_n) = \sum_{j=0}^m \nu_{\varoast,j} \cdot (\gamma n)^{(2-\gamma j)/6} + O(n^{(2-\gamma(m+1))/6}) \bigg|_{\gamma=\gamma(\varoast)},
\end{equation}
where the $\nu_{\varoast,j}$ can be expressed in terms of the $\mu_{\varoast,k}$ and the second moments of $F'_{\beta(\varoast)}$, $F_{\varoast,j}^*$.
\end{conjecture}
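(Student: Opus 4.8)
The plan is to follow, \emph{mutatis mutandis}, the argument that led to Conjecture~\ref{conj:E_expan}, now carrying one extra power of $t$ inside the summation and subtracting the square of the expected-value expansion \eqref{eq:E_expan}. As there, one works under the two standing analytic assumptions: (a) that $F_{\varoast,j}^*$ and all its derivatives decay exponentially fast as $t\to\pm\infty$, and (b) that $F_{\varoast,j}^*$ extends analytically to a strip containing the real axis. The statement will therefore be conditional on (a)/(b), on the tameness hypothesis, and on Conjecture~\ref{conj:p_dot_expan}.

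First I would start from the shift-and-rescale identity for $\Var(L^\varoast_n)$ displayed just above and insert into its first term the density expansion \eqref{eq:PDF_expan}, with the convention $F_{\varoast,0}^* := F_{\beta(\varoast)}'$. For the $O(n^{1/6})$ indices $l$ that carry the bulk of the probability the argument $t_{(l-\frac12)^\varoast_*}(\gamma n)$ ranges over a fixed compact interval, so the truncation error $O(n^{-\gamma(m+1)/6})$ of \eqref{eq:PDF_expan}, once multiplied by $(\gamma n)^{1/3}$ and by the bounded weight $\big(t_{(l-\frac12)^\varoast_*}(\gamma n)\big)^2$ and summed over those indices with step $h_\varoast \asymp (\gamma n)^{-1/6}$, contributes precisely $O\big(n^{(2-\gamma(m+1))/6}\big)$; the indices outside that window contribute only exponentially little, by the tail decay implicit in (a).

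Second, exactly as in the passage that produced \eqref{eq:trap}, each of the finitely many resulting inner sums is completed to a bi-infinite sum by adjoining its exponentially small tails and then recognised, under (b), as the trapezoidal rule of step $h_\varoast$ for a smooth, exponentially decaying integrand, so that
\[
h_\varoast \sum_{l=-\infty}^\infty \big(t_{(l-\frac12)^\varoast_*}(\gamma n)\big)^2 F_{\varoast,j}^*\big(t_{(l-\frac12)^\varoast_*}(\gamma n)\big) \doteq \int_{-\infty}^\infty t^2 F_{\varoast,j}^*(t)\,dt,
\]
up to terms exponentially small in $n$. For the second term of the variance identity I would substitute \eqref{eq:E_expan}, write $\E(L_n^\varoast)-2\sqrt{\gamma n}-\delta(\varoast)=\sum_{k=0}^m \mu_{\varoast,k}(\gamma n)^{(1-\gamma k)/6}+O\big(n^{(1-\gamma(m+1))/6}\big)$, and square it as a Cauchy product, noting that the cross term of its remainder with the leading $\mu_{\varoast,0}(\gamma n)^{1/6}$ is again $O\big(n^{(2-\gamma(m+1))/6}\big)$. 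Collecting the two contributions at each common order $(\gamma n)^{(2-\gamma j)/6}$ then yields the claimed expansion with
\[
\nu_{\varoast,j} = \int_{-\infty}^\infty t^2 F_{\varoast,j}^*(t)\,dt - \sum_{k=0}^{j}\mu_{\varoast,k}\,\mu_{\varoast,j-k},
\]
in particular $\nu_{\varoast,0} = M_{\beta,2}-M_{\beta,1}^2$ with $\beta=\beta(\varoast)$, the Tracy--Widom variance; as with the $\mu_{\varoast,j}$, the second moments $\int_{-\infty}^\infty t^2 F_{\varoast,j}^*(t)\,dt$ reduce, via the integration-by-parts rule stated after Conjecture~\ref{conj:E_expan}, to $\Q$-linear combinations of the moments $M_{\beta,k}$.

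The main obstacle is the same one that already keeps Conjecture~\ref{conj:E_expan} conjectural---the unproven analytic input (a)/(b)---compounded here by the cancellation inherent in subtracting two quantities of size $\asymp n^{1/3}$: one must be sure that the expansions of the raw second moment $\E\big((L^\varoast_n-2\sqrt{\gamma n}-\delta(\varoast))^2\big)$ and of $\big(\E(L^\varoast_n)-2\sqrt{\gamma n}-\delta(\varoast)\big)^2$ are each developed to sufficiently high order that their difference is controlled to the stated precision, and, as in the expected-value case, that the contribution of the far tails (where $t_{(l-\frac12)^\varoast_*}(\gamma n)$ escapes every fixed compact window) is uniformly negligible---a point requiring upper bounds on $p_\varoast^*(n;l)$ beyond what \eqref{eq:PDF_expan} provides on its own.
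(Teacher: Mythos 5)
Your proposal follows exactly the paper's route: insert the density expansion \eqref{eq:PDF_expan} and the expected-value expansion \eqref{eq:E_expan} into the shift-and-rescale identity for $\Var(L_n^\varoast)$, complete the sums to trapezoidal rules that are exponentially close to $\int_{-\infty}^\infty t^2 F_{\varoast,j}^*(t)\,dt$ under the decay/analyticity assumptions, and collect orders against the Cauchy square of the mean. Your explicit formula $\nu_{\varoast,j}=\int t^2 F_{\varoast,j}^*(t)\,dt-\sum_{k=0}^{j}\mu_{\varoast,k}\mu_{\varoast,j-k}$ indeed reproduces the coefficients in \eqref{eq:nu} (e.g. $\nu_{\varoast,0}$ and $\nu_{\varoast,1}$), so the argument is correct and matches the paper's.
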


Using the formulae for $\mu_{\varoast,j}$ in \eqref{eq:mu} we obtain in the cases $\varoast=\boxslash,\boxbslash$, writing $\beta=\beta(\varoast)$,
\begin{subequations}\label{eq:nu}
\begin{equation}\label{eq:nu_fixedpointfree}
\begin{gathered}
\nu_{\varoast,0} = -M_{\beta,1}^2 + M_{\beta,2},\quad \nu_{\varoast,1} =  -c_\varoast - \frac{M_{\beta,1} M_{\beta,2} - M_{\beta,3}}{30} \quad \Big(c_\boxslash=\frac{139}{60}, c_\boxbslash=\frac{31}{15}\Big),\\*[1mm]
\nu_{\varoast,2} = -\frac{114M_{\beta,1}}{175}  + \frac{M_{\beta,1} M_{\beta,3}}{700}  - \frac{M_{\beta,2}^2 }{3600} - \frac{29M_{\beta,4}}{25200},\\*[1mm]
\nu_{\varoast,3} =-\frac{8753M_{\beta,1}^2}{31500} -\frac{281M_{\beta,1}M_{\beta,4}}{2268000}
+\frac{7289M_{\beta,2}}{31500} +\frac{M_{\beta,2}M_{\beta,3} }{42000}+ \frac{227M_{\beta,5}}{2268000},
\end{gathered}
\end{equation}
and in the case $\varoast=\boxdot$
\begin{equation}
\begin{gathered}
\nu_{\boxdot,0} = -M_{1,1}^2 + M_{1,2},\quad \nu_{\boxdot,1} = 0, \quad \nu_{\boxdot,2} =  -\frac{139}{60} - \frac{M_{1,1} M_{1,2} - M_{1,3}}{30},\\*[1mm]
 \nu_{\boxdot,3} =  -\frac{\nu_{\boxdot,0}}{3}, \quad
\nu_{\boxdot,4} = -\frac{114M_{1,1}}{175}  + \frac{M_{1,1} M_{1,3}}{700}  - \frac{M_{1,2}^2 }{3600} - \frac{29M_{1,4}}{25200},\quad \nu_{\boxdot,5}=1,\\*[3mm]
\nu_{\boxdot,6} = -\frac{1167M_{1,1}^2}{3500} -\frac{281M_{1,1}M_{1,4}}{2268000}
+\frac{3013M_{1,2}}{10500} +\frac{M_{1,2}M_{1,3} }{42000}+ \frac{227M_{1,5}}{2268000},\\*[1mm]
\nu_{\boxdot,7} = \frac{61M_{1,1}}{525} + \frac{M_{1,1}M_{1,3}}{2100} - \frac{M_{1,2}^2}{10800} - \frac{29M_{1,4}}{75600}.
\end{gathered}
\end{equation}
\end{subequations}
Highly accurate numerical values of these coefficients are listed in Table~\ref{tab:nu}.

\begin{table}[tbp]
\caption{Highly accurate values of $\nu_{\varoast,j}$, computed from \eqref{eq:nu} and Table~\ref{tab:moments}. (For the values of $\nu_{\boxslash,4},\ldots,\nu_{\boxslash,7}, \nu_{\boxbslash,4},\ldots,\nu_{\boxbslash,7}$  see the supplementary material mentioned in Fn.~\ref{fn:suppl}.)}
\label{tab:nu}
{\footnotesize
\begin{tabular}{rrrr}
\hline\noalign{\smallskip}
$j$ & $\nu_{\boxslash,j}$\hspace*{1.75cm} & $\nu_{\boxbslash,j}$ \hspace*{1.5cm}& $\nu_{\boxdot,j}$ \hspace*{1.5cm} \\
\noalign{\smallskip}\hline\noalign{\smallskip}
$0$ & $  1.03544\,74415\,45351\,61669\cdots$& $ 1.60778\,10345\,81361\,12010\cdots$ & $  1.60778\,10345\,81361\,12010\cdots$\\
$1$ & $ -2.53605\,85963\,78062\,38245\cdots$& $-2.17604\,71780\,23873\,29698\cdots$ & $  0.00000\,00000\,00000\,00000\cdots$\\
$2$ & $  2.08799\,84622\,98281\,00943\cdots$& $ 0.77069\,79923\,71449\,91015\cdots$ & $ -2.42604\,71780\,23873\,29698\cdots$\\
$3$ & $ -0.27084\,67828\,98392\,09134\cdots$& $ 0.30092\,29577\,39733\,86828\cdots$ & $ -0.53592\,70115\,27120\,37336\cdots$\\
$4$ & $ -0.52590\,53029\,58638\,02379\cdots$& $-0.49703\,33761\,70673\,86818\cdots$ & $  0.77069\,79923\,71449\,91015\cdots$\\
$5$ & $ -0.03597\,95025\,33132\,40447\cdots$& $-0.00633\,41102\,95048\,26528\cdots$ & $  1.00000\,00000\,00000\,00000\cdots$\\
$6$ & $  0.21324\,06135\,58107\,00045\cdots$& $ 0.02666\,33723\,56650\,13943\cdots$ & $  0.39024\,41263\,27587\,26384\cdots$\\
$7$ & $  0.27770\,22051\,50779\,68026\cdots$& $ 0.24693\,23583\,17086\,74356\cdots$ & $ -0.14527\,85274\,03524\,96809\cdots$\\
\noalign{\smallskip}\hline
\end{tabular}}
\end{table}

A similar sanity check as done for the expected value can be run for the variance: fitting (computed in extended precision) model expansions to exact data sets obtained from the tables up to $n=1000$ as compiled in Sect.~\ref{sect:exact}. Once more, even though the accuracies of these fits vary strongly, all the digits that have been deemed correct (by comparing the fits for two different data sets) agree, up to one unit in the last place, with the numbers shown in Table~\ref{tab:nu}. The specific models and fits are:\footnote{The upper bounds of the index $j$ in the models have been chosen as to maximize the number of matching digits for the two different data sets that are used.}
\begin{itemize}
\item $\displaystyle\Var(L_n^\boxslash) \approx (2n)^{1/3} \sum_{j=0}^{12} d_{\boxslash,j}\cdot (2n)^{-j/3}$
with data for $700(800)\leq n \leq 1000$,
\begin{gather*}
d_{\boxslash,0} \approx 1.03544\,74415\,4535,\quad d_{\boxslash,1} \approx -2.53605\,85963\,7, \\*[1mm]
d_{\boxslash,2} \approx 2.08799\,84623, \quad d_{\boxslash,3} \approx -0.27084\,678, \quad
d_{\boxslash,4} \approx -0.52590\,52,\\*[1mm]
d_{\boxslash,5} \approx -0.03598, \quad d_{\boxslash,6} \approx 0.2132, \quad d_{\boxslash,7} \approx 0.277.
\end{gather*}
\item $\displaystyle\Var(L_n^\boxbslash) \approx (2n)^{1/3} \sum_{j=0}^{4} d_{\boxbslash,j}\cdot (2n)^{-j/3}$
with data for $600(700)\leq n \leq 1000$, 
\begin{gather*}
d_{\boxbslash,0} \approx 1.60778\,1,\; d_{\boxbslash,1} \approx -2.17605, \;
d_{\boxbslash,2} \approx 0.770, \; d_{\boxbslash,3} \approx 0.299, \; d_{\boxbslash,4} \approx -0.49.
\end{gather*}
\item $\displaystyle\Var(L_n^\boxdot) \approx  d_{\boxdot,0} \cdot n^{1/3} + \sum_{j=0}^{8} d_{\boxdot,j+2} \cdot n^{-j/6}$
with data for $n=700(800)\leq n\leq 1000$,
\[
d_{\boxdot,0} \approx 1.60777\,9,\quad d_{\boxdot,2} \approx -2.425, \quad
d_{\boxdot,3} \approx -0.54, \quad d_{\boxdot,4} \approx 0.8 .
\]
\end{itemize}

\subsection*{Acknowledgements} We thank Peter Forrester for his encouragement and for suggesting the use of $\tau$-function representations to prove Thm.~\ref{thm:E_group}.

\appendix

\section{Expansions of Operator Determinants}

We briefly explain the modifications of \cite[§2]{arxiv:2301.02022} that are required for the purposes of this paper. In Sect.~\ref{sect:hard-to-soft} we consider, with $m$ being some non-negative integer and $t_0$ some real number, kernel expansions of the form
\[
K_{(h)}(x,y) = \sum_{j=0}^m h^j K_j(x,y)  + h^{m+1} R_{m+1,h}(x,y), \quad R_{m+1,h}(x,y) = O(e^{-(x+y)/2}),
\]
which are
\begin{itemize}\itemsep=3pt
\item uniformly valid for $t_0 \leq x,y < c h^{-1}$ as $h\to 0^+$, where $c>0$ is some constant;
\item repeatedly differentiable w.r.t. $x$, $y$ as uniform expansions under the same conditions.
\end{itemize}

\smallskip\noindent
Here, $K_{(h)}$ is a family of smooth kernels and the $K_j$ are of the functional form \eqref{eq:KjForm}, that is,
\begin{equation}\label{eq:KjFormGen}
p(x,y) \Ai\Big(\frac{x+y}{2}\Big)+q(x,y) \Ai'\Big(\frac{x+y}{2}\Big)
\end{equation}
with certain symmetric polynomials $p$ and $q$.

For a given continuous kernel $K(x,y)$ we denote the induced integral operator on $L^2(t, ch^{-1})$ by $\bar{\mathbf K}$
and the one on $L^2(t,\infty)$, if defined, by ${\mathbf K}$ (suppressing the dependence on $t$ in both cases). The space of trace class operators acting on $L^2(t,s)$ is written as ${\mathcal J}^1(t,s)$. As noted in \cite[Eq.~(2.3)]{arxiv:2301.02022} we have
\begin{equation}\label{eq:traceclassinclusion}
\|\bar {\mathbf K}\|_{{\mathcal J}^1(t,ch^{-1})} \leq \|{\mathbf K}\|_{{\mathcal J}^1(t,\infty)}.
\end{equation}
Using the factoring into Hilbert-Schmidt operators by differentiation as explained in \cite[§2.1]{arxiv:2301.02022}, we get the trace class bounds of the expansion kernels
\[
\|{\mathbf K_j}\|_{{\mathcal J}^1(t,\infty)} = O(e^{-t})
\]
and of the remainder 
\[
h^{m+1}\|\bar{\mathbf R}_{m+1,h}\|_{{\mathcal J}^1(t,ch^{-1})} = h^{m+1} O(e^{-t}) + e ^{-c h^{-1}/2} O(e^{-t/2}).
\]
If the symmetric operator ${\mathbf K_0}$ has a spectral radius $\rho({\mathbf K_0})$ that stays below $1$, uniformly when $t\geq t_0$, so that $\|{\mathbf K_0}\| = \rho({\mathbf K_0}) \leq c(t_0) < 1$,
we get by functional calculus the uniform operator norm bound
\[
\|({\mathbf I - \mathbf K_0})^{-1}\| = \frac{1}{1-\rho({\mathbf K_0})} \leq \frac{1}{1-c(t_0)}.
\]
In particular, this is the case for the specific choice $K_0(x,y) = z V_{\Ai}(x,y)$, with some fixed parameter $-1 \leq z \leq 1$, since numerical evidence shows that the minimal eigenvalue $\lambda_{\min}(\mathbf V_{\Ai})$ interpolates strictly monotonically between the limit cases
\[
\lim_{t\to -\infty} \lambda_{\min}(\mathbf V_{\Ai}) = -1,\quad \lim_{t\to \infty} \lambda_{\min}(\mathbf V_{\Ai}) = 0,
\]
and the maximal eigenvalue $\lambda_{\max}(\mathbf V_{\Ai})$ strictly monotonically between the limit cases
\[
\lim_{t\to -\infty} \lambda_{\max}(\mathbf V_{\Ai}) = 1,\quad \lim_{t\to \infty} \lambda_{\max}(\mathbf V_{\Ai}) = 0.
\]
Therefore, with exactly the same proof as for \cite[Thm.~2.1]{arxiv:2301.02022}, we obtain the following theorem.

\begin{theorem}\label{thm:detexpan} Let $K_{(h)}(x,y)$ be a continuous kernel, $K_0(x,y)= z V_{\Ai}(x,y)$ with $-1 \leq z \leq 1$, and $K_j$ ($j=1,2,\ldots$) of the functional form \eqref{eq:KjFormGen}. If, for some fixed non-negative integer $m$ and some real number $t_0$, there is a kernel expansions of the form 
\[
K_{(h)}(x,y) = K_0(x,y) + \sum_{j=1}^m h^j K_j(x,y) + h^{m+1} \cdot O\big(e^{-(x+y)/2}\big),
\]
which, for some constant $c>0$,  holds uniformly in $t_0\leq x,y < ch^{-1}$ as $h\to 0^+$and which can be repeatedly differentiated w.r.t. $x$ and $y$ as uniform expansions, then the Fredholm determinant of $K_{(h)}$ on $(t,ch^{-1})$ satisfies
\begin{equation}\label{eq:detexpan}
\det(I-K_{(h)})|_{L^2(t,ch^{-1})} = F(t)\cdot \sum_{j=0}^m d_j(t) h^j + h^{m+1} O(e^{-t}) +  e ^{-c h^{-1}/2} O(e^{-t/2}),
\end{equation}
uniformly for $t_0\leq t < ch^{-1}$ as $h\to 0^+$. Here $F(t) = \det(I-K_0)|_{L^2(t,\infty)}$
and the $d_j(t)$ are smooth functions depending on the kernels $K_0,\ldots,K_j$ which satisfy the right tail bounds $F(t)d_j(t) = O(e^{-t})$. 
If we write briefly 
\[
{\mathbf E}_j = ({\mathbf I}-{\mathbf K_0})^{-1}{\mathbf K_j}
\]
then the first cases of the expansion terms are explicitly given as $d_0(t)=1$ and
\begin{align*}
d_1(t) &= - \tr {\mathbf E}_1, \\*[0.5mm]
d_2(t) &= \frac{1}{2}(\tr{\mathbf E_1})^2 - \frac{1}{2}\tr{\mathbf E}_1^2 - \tr{\mathbf E_2},\\*[0.5mm]
d_3(t) &= -\frac{1}{6}(\tr{\mathbf E_1})^3 +\frac{1}{2}\tr {\mathbf E}_1\tr {\mathbf E}_1^2 -\frac{1}{2}\tr({\mathbf E}_1{\mathbf E}_2 + {\mathbf E}_2 {\mathbf E}_1) - \frac{1}{3}\tr {\mathbf E}_1^3 + \tr{\mathbf E}_1 \tr{\mathbf E}_2 - \tr{\mathbf E}_3,
\end{align*}
where the resolvents and traces are taken over $L^2(t,\infty)$. The determinantal expansion \eqref{eq:detexpan} can repeatedly be differentiated w.r.t. $t$, preserving uniformity.
\end{theorem}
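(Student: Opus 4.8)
The plan is to run the proof of \cite[Thm.~2.1]{arxiv:2301.02022} with $K_0 = z V_{\Ai}$ in place of $V_{\Ai}$, the point being that its structural hypotheses have already been secured in the discussion preceding the statement. First I would record the two inputs that drive the argument. (i)~By the Hilbert--Schmidt factorisation obtained from differentiating kernels of the form \eqref{eq:KjFormGen}, together with the superexponential decay of $\Ai$ and $\Ai'$, each $\mathbf K_j$ is trace class on $L^2(t,\infty)$ with $\|\mathbf K_j\|_{\mathcal J^1(t,\infty)} = O(e^{-t})$, while the inclusion \eqref{eq:traceclassinclusion} and the decay $R_{m+1,h}(x,y) = O(e^{-(x+y)/2})$ give $h^{m+1}\|\bar{\mathbf R}_{m+1,h}\|_{\mathcal J^1(t,ch^{-1})} = h^{m+1}O(e^{-t}) + e^{-ch^{-1}/2}O(e^{-t/2})$. (ii)~Since $|z| \leq 1$ and the extremal eigenvalues of $\mathbf V_{\Ai}$ stay strictly inside $[-1,1]$ for $t \geq t_0$, the spectral radius $\rho(\mathbf K_0) \leq |z|\,\rho(\mathbf V_{\Ai})$ is bounded away from $1$ uniformly in $t \geq t_0$, so $(\mathbf I - \mathbf K_0)^{-1}$ is uniformly bounded on $L^2(t,\infty)$ --- hence, again by \eqref{eq:traceclassinclusion}, on $L^2(t,ch^{-1})$ --- and $\mathbf I - \bar{\mathbf K}_{(h)}$ is invertible for all sufficiently small $h$.

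Second, I would apply the multiplicative identity $\det(I - \bar{\mathbf K}_{(h)}) = \det(I - \bar{\mathbf K}_0)\,\det\!\big(I - (\mathbf I - \bar{\mathbf K}_0)^{-1}(\bar{\mathbf K}_{(h)} - \bar{\mathbf K}_0)\big)$ and expand the logarithm of the second factor as the absolutely convergent trace series $-\sum_{k\geq 1} k^{-1}\tr\!\big[\big((\mathbf I - \bar{\mathbf K}_0)^{-1}(\bar{\mathbf K}_{(h)} - \bar{\mathbf K}_0)\big)^k\big]$. Substituting $\bar{\mathbf K}_{(h)} - \bar{\mathbf K}_0 = \sum_{j=1}^m h^j \bar{\mathbf K}_j + h^{m+1}\bar{\mathbf R}_{m+1,h}$, collecting powers of $h$ up to $h^m$ and re-exponentiating produces the asserted expansion $F(t)\sum_{j=0}^m d_j(t) h^j$, in which $d_j(t)$ is a universal polynomial in traces of words in the operators $\mathbf E_i = (\mathbf I - \mathbf K_0)^{-1}\mathbf K_i$; the first three are read off directly and match the displayed $d_1, d_2, d_3$. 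The traces produced this way are over the finite interval $(t, ch^{-1})$; replacing them by the corresponding traces over $(t,\infty)$, and likewise $\det(I - \bar{\mathbf K}_0)$ by $F(t) = \det(I - K_0)|_{L^2(t,\infty)}$, costs only $e^{-ch^{-1}/2}O(e^{-t/2})$, since the tails beyond $ch^{-1}$ of all relevant Hilbert--Schmidt factors are that small; this is the source of the second error term in \eqref{eq:detexpan}. The right-tail bound $F(t)d_j(t) = O(e^{-t})$ follows because every monomial occurring in $d_j$ carries at least one factor $\mathbf E_i$ with $i \geq 1$, hence at least one $\|\mathbf K_i\|_{\mathcal J^1(t,\infty)} = O(e^{-t})$, while $F(t)$ is bounded.

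Third, for differentiability in $t$ I would differentiate under the determinant. Using the standard formula for the derivative of a Fredholm determinant in its left endpoint, $\partial_t \det(I - \bar{\mathbf K})|_{L^2(t,s)}$ reduces to a boundary evaluation of the resolvent kernel of $\bar{\mathbf K}$ at the corner $(t,t)$; since the kernel expansion \eqref{eq:KjFormGen} may itself be repeatedly differentiated in $x, y$ with the same uniform decay and the resolvents are uniformly bounded, these corner quantities inherit an expansion in $h$ uniformly in $t \geq t_0$, and iterating gives differentiability to every order while preserving uniformity. Equivalently, one differentiates the trace series term by term, each $\partial_t$ again reducing to boundary evaluations of kernels and resolvent kernels controlled by the decay in \eqref{eq:KjFormGen}.

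The only genuine subtlety --- and it is exactly the one already faced in the $\beta = 2$ treatment --- is bookkeeping the \emph{uniformity} in $t$ across the whole range $t_0 \leq t < ch^{-1}$ simultaneously with the limit $h \to 0^+$: one must check that the constants in the Hilbert--Schmidt bounds, in the uniform resolvent bound (which ultimately rests on the monotonicity of the extremal eigenvalues of $\mathbf V_{\Ai}$ recorded above), and in the tail estimates do not degenerate as $t$ approaches $ch^{-1}$, and that $t$-differentiation preserves this. Because all of this was carried out in detail for $K_0 = V_{\Ai}$ in \cite{arxiv:2301.02022} and the present hypotheses are identical in structure, the proof amounts to repeating that argument verbatim with the scalar $z$ inserted.
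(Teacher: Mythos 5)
Your proposal follows essentially the same route as the paper, which itself proves the theorem by repeating the argument of \cite[Thm.~2.1]{arxiv:2301.02022} verbatim after securing precisely the ingredients you list (trace-class bounds for the $\mathbf K_j$ and the remainder via \eqref{eq:traceclassinclusion}, and the uniform resolvent bound from the spectral radius of $z\mathbf V_{\Ai}$ staying below $1$ for $t\geq t_0$). Your sketch of the determinant splitting, trace-log expansion, tail replacement, and endpoint differentiation is consistent with that proof, so there is nothing substantive to add.
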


\section{Yet another criterion for $H$-admissibility}\label{app:hayman}

The $H$-admissibility\footnote{For an exposition of Hayman's memoir \cite{Hayman56} on $H$-admissibility and Stirling-type formulae, and further pointers to the literature, see our previous work \cite[Appendix~A]{arxiv:2301.02022}.} of the generating function $f_l^\boxdot(z)$, as stated in Cor.~\ref{cor:f_boxdot_Hadmiss}, is based on a new
elegant criterion (Thm.~\ref{thm:criterion} below). Because we use the definition of $H$-admissibility to prove this result, we will review that definition and some fundamental results for convenience.
\begin{definition}[Hayman \protect{\cite[p.~68]{Hayman56}}]\label{def:hayman}
An entire function $f(z)$ is said to be {\em $H$-admissible} if the following four conditions are satisfied:
\begin{itemize}
\item[--] [{\em positivity}\/] for sufficiently large $r>0$, there holds $f(r)>0$; inducing there the real functions (which we call the auxiliary functions associated with $f$)
\[
a(r) = r \frac{f'(r)}{f(r)},\qquad b(r) = r a'(r);
\]
by Hadamard's convexity theorem $a(r)$ is monotonely increasing and $b(r)$ is positive.\\*[-3mm]
\item[--]  [{\em capture}\/] $b(r) \to \infty$ as $r\to\infty$;\\*[-3mm]
\item[--]  [{\em locality}\/] for some function $0<\delta(r)<\pi$ there holds\footnote{As is customary in asymyptotic analysis in the complex plane, we understand such asymptotics to hold {\em uniformly} in the stated angular segments for all $r\geq r_0$ with some sufficiently large $r_0>0$. }
\[
f(r e^{i\theta}) = f(r) e^{i\theta a(r) - \theta^2 b(r)/2}\, (1+ o(1)) \qquad (r\to\infty,\; |\theta|\leq \delta(r));
\]
\item[--]  [{\em decay}\/] for the angles in the complement there holds
\[
f(r e^{i\theta}) = \frac{o(f(r))}{\sqrt{b(r)}}\qquad (r\to\infty,\; \delta(r) \leq |\theta| \leq \pi).
\]
\end{itemize}
\end{definition}

The fundamental properties of $H$-admissible functions are as follows.

\begin{theorem}[Hayman \protect{\cite[Thm.~I/V, Cor.~I/II, Eq.~(4.3)]{Hayman56}}]\label{thm:hayman} Let $f$ 
be an entire $H$-admissible function with Maclaurin series
\[
f(z) = \sum_{n=0}^\infty a_nz^n\qquad (z\in \C).
\]
Then:
\begin{itemize}
\item[I.] {\em [maximum modulus]} For sufficiently large $r>0$, there holds
\begin{equation}\label{eq:maxmod}
|f(r e^{i\theta})| < f(r) \qquad (0 < |\theta| \leq \pi)
\end{equation}
and, as $r\to\infty$,
\begin{equation}\label{eq:bdelta}
a(r)\to\infty, \qquad b(r) = o(a(r))^2, \qquad b(r)\delta(r)^2 \to \infty.
\end{equation}
\item[II.]  {\em [normal approximation]} There holds, uniformly in $n \in \N_0$, that
\begin{equation}\label{eq:CLT}
\frac{a_n r^n}{f(r)} = \frac{1}{\sqrt{2\pi b(r)}}\left(\exp\left(-\frac{(n-a(r))^2}{2b(r)}\right)+ o(1)\right) \qquad (r\to\infty).
\end{equation}

\item[III.]  {\em [Stirling-type formula]} For $n$ sufficiently large, it follows from\/ {\rm I} that $a(r_n) = n$ has a unique solution $r_n$ such that $r_n\to\infty$ as $n\to\infty$ and therefore, by the normal approximation \eqref{eq:CLT}, there holds
\begin{equation}\label{eq:stirling}
a_n = \frac{f(r_n)}{r_n^n \sqrt{2\pi b(r_n)}} (1+ o(1)) \qquad (n\to\infty).
\end{equation}
\end{itemize}
\end{theorem}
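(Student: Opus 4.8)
The statement is Hayman's classical theorem on $H$-admissible functions, so the plan is to reproduce the saddle-point analysis of the Cauchy integral
\[
a_n = \frac{1}{2\pi r^n}\int_{-\pi}^{\pi} f(re^{i\theta})\, e^{-in\theta}\,d\theta \qquad (r>0),
\]
which is legitimate because $f$ is entire and $f(r)>0$ for all large $r$. For part~I I would split $[-\pi,\pi]$ at $\pm\delta(r)$: on $|\theta|\leq\delta(r)$ the \emph{locality} hypothesis gives $|f(re^{i\theta})| = f(r)e^{-\theta^2 b(r)/2}(1+o(1))$, which lies strictly below $f(r)$ for $0<|\theta|\leq\delta(r)$ once $b(r)$ is large, while on $\delta(r)\leq|\theta|\leq\pi$ the \emph{decay} hypothesis gives $|f(re^{i\theta})| = o(f(r))/\sqrt{b(r)}<f(r)$; together these prove \eqref{eq:maxmod}. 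Still within part~I, $a(r)\to\infty$ follows from \emph{capture}: since $a'(r)=b(r)/r$ with $b(r)\to\infty$, for every $C>0$ one eventually has $a'(r)\geq C/r$, hence $a(R)\geq C\log R+O(1)\to\infty$; the remaining relations $b(r)=o(a(r)^2)$ and $b(r)\delta(r)^2\to\infty$ of \eqref{eq:bdelta} I would extract from the defining conditions together with a first pass at the Cauchy integral.

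For part~II --- the heart of the argument --- I would again split the integral for $a_nr^n/f(r)$ at $\pm\delta(r)$. The outer range is controlled by \emph{decay}, which bounds the integrand uniformly by $o(1)/\sqrt{b(r)}$ and so contributes $o(1/\sqrt{b(r)})$. On the inner range \emph{locality} replaces $f(re^{i\theta})/f(r)$ by $e^{i\theta a(r)-\theta^2 b(r)/2}(1+o(1))$; the substitution $\phi=\theta\sqrt{b(r)}$ turns this into $\tfrac{1}{\sqrt{b(r)}}$ times a Gaussian integral over $|\phi|\leq\delta(r)\sqrt{b(r)}$, and because $\delta(r)\sqrt{b(r)}\to\infty$ the limits may be sent to $\pm\infty$ at the cost of an exponentially small error, producing $\sqrt{2\pi}\,e^{-(n-a(r))^2/(2b(r))}$. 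Assembling the pieces gives \eqref{eq:CLT}, with uniformity in $n$ for free: $n$ enters only through the bounded factor $e^{-in\theta}$, so no estimate depends on it. Part~III is then immediate: by part~I the function $a(r)$ is real-analytic for large $r$, strictly increasing, and unbounded, so $a(r_n)=n$ has a unique solution $r_n\to\infty$; setting $r=r_n$ in \eqref{eq:CLT} kills the exponent, and rearranging $a_nr_n^n/f(r_n)=(1+o(1))/\sqrt{2\pi b(r_n)}$ gives \eqref{eq:stirling}.

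The main obstacle will be the uniformity bookkeeping: one must check that the $o(1)$ supplied by \emph{locality} is uniform in $\theta$ across $|\theta|\leq\delta(r)$ and survives the $\phi$-substitution (especially near the seam $|\theta|\approx\delta(r)$, where the locality and decay regimes abut), and one must verify that the regularity facts in \eqref{eq:bdelta} are genuine consequences of Definition~\ref{def:hayman} rather than additional hypotheses. These are exactly the technical points worked out in Hayman's memoir; for the purposes of this paper I would simply invoke \cite{Hayman56} rather than reprove them.
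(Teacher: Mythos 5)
The paper offers no proof of this theorem at all: it is quoted, with attribution, from Hayman's memoir \cite{Hayman56}, which is precisely what you end up proposing, and your saddle-point sketch (split the Cauchy integral at $\pm\delta(r)$, Gaussian approximation inside, decay outside) is the route Hayman himself takes. One caution if you ever tried to make the sketch self-contained: the strict inequality \eqref{eq:maxmod} for arbitrarily small $0<|\theta|$ does \emph{not} follow from the locality estimate alone, since the uniform $1+o(1)$ factor can swamp $e^{-\theta^2 b(r)/2}$ when $\theta^2 b(r)$ is smaller than the error, so that step — like $b(r)=o(a(r)^2)$ in \eqref{eq:bdelta} — genuinely has to be delegated to \cite{Hayman56} rather than to the splitting argument.
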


For the probabilistic content of the normal approximation \eqref{eq:CLT} see, e.g., \cite{MR2095975} and \cite[Remark~2.1]{arxiv.2206.09411}. In practice, the normal approximation simplifies establishing asymptotic formulae for $a_n$, as compared to using the Stirling-type formula \eqref{eq:stirling}, since it allows us to solve the equation $a(r_n)=n$ just approximately, see, e.g., Appendix~\ref{app:inv}. The definition of $H$-admissible functions is often difficult to be verified and therefore rarely directly used. Instead, one uses general criteria and closure properties that guarantee $H$-admissibility, such as:

% The next theorem collects all the relevant results from Hayman's memoir, except for one using the theory of entire functions that will be addressed next.
 
\begin{theorem}[Hayman \protect{\cite[Thm.~VI--X]{Hayman56}}]\label{thm:haymancriteria}
Let $f(z)$ and $g(z)$ be entire $H$-admissible functions and let $p(z)$ denote a polynomial with real coefficients. Then there hold the following closure properties:
\begin{itemize}
\item[I.] $f(z)g(z)$, $e^{f(z)}$ and $f(z) + p(z)$ are $H$-admissible.\\*[-3mm]
\item[II.] If the leading coefficient of $p$ is positive, $f(z)p(z)$ and $p(f(z))$ are $H$-admissible.\\*[-3mm]
\item[III.] If the Maclaurin coefficients of $e^{p(z)}$ are eventually positive, $e^{p(z)}$ is $H$-admissible.  
\end{itemize}
\end{theorem}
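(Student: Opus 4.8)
These four closure properties constitute Theorems~VI--X of Hayman's memoir \cite{Hayman56}, so the cleanest route is simply to cite it; what follows is a sketch of the direct verification one would carry out. For each construction the plan is to check, in turn, the four conditions of Definition~\ref{def:hayman}. \emph{Positivity} on the positive axis is immediate once one notes that an $H$-admissible $f$ has $f(r)>0$ and, since $a(r)\to\infty$ forces $f'(r)/f(r)\to\infty$, also $f(r)\to\infty$; in particular $f$ outgrows every polynomial, so $f+p$, $fp$ (for $r$ large, using that $p$ has positive leading coefficient), $p(f)$, $e^f$ and $e^p$ are all positive for large $r$ (for $e^p$ one may alternatively invoke the eventual positivity of its Maclaurin coefficients).

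The next step is to record how the auxiliary functions transform. Logarithmic differentiation gives $a_{fg}=a_f+a_g$ and $b_{fg}=b_f+b_g$, hence $a_{f^d}=d\,a_f$ and $b_{f^d}=d\,b_f$; since $\log p(r)=d\log r+O(r^{-1})$, multiplying by a degree-$d$ polynomial perturbs $a$ by $d+O(r^{-1})$ and $b$ by $O(1)$; and $p(f(r))\sim c_d f(r)^d$ shows that $p(f)$ shares the leading auxiliary behaviour of $f^d$ up to lower-order terms. For $e^f$ one computes $a_{e^f}(r)=rf'(r)=a_f(r)f(r)$ and $b_{e^f}(r)=f(r)\big(b_f(r)+a_f(r)^2\big)$, and for $e^p$ with $p(z)=\sum_k c_k z^k$ one gets $a(r)=\sum_k k c_k r^k$ and $b(r)=\sum_k k^2 c_k r^k$, a degree-$d$ polynomial with positive leading coefficient. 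From these the \emph{capture} condition $b\to\infty$ is read off at once. For \emph{locality} I would Taylor-expand: on the intersection of the factors' locality cones the estimate for $fg$ is the product of those for $f$ and $g$; adding or multiplying by a polynomial only multiplies $f(re^{i\theta})$ by a factor equal to $(1+o(1))$ times $p(r)$ on the cone, which is absorbed; for $e^f$ one substitutes the locality expansion of $f$ into the exponent, which forces the locality angle of $e^f$ to be shrunk so that the exponent error $o(1)\cdot f(r)$ stays negligible; for $e^p$ one uses $\cos k\theta=1-\tfrac12 k^2\theta^2+O(\theta^4)$ and $\sin k\theta=k\theta+O(\theta^3)$.

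The hard part will be the \emph{decay} condition on $\delta(r)\leq|\theta|\leq\pi$, and within it the transition region where $\theta$ has left the product's locality cone but still lies in that of one factor. There I would combine the locality bound $|f(re^{i\theta})|\leq f(r)e^{-\theta^2 b_f(r)/2}(1+o(1))$ with the maximum-modulus inequality $|g(re^{i\theta})|<g(r)$ of Theorem~\ref{thm:hayman}\,I, noting that on this region $\theta^2 b_f(r)\geq \delta(r)^2 b(r)\to\infty$ provided the product's locality angle is chosen with $b(r)\delta(r)^2\to\infty$; on the remaining far region the factors' own decay estimates suffice. The analogous split works for $e^f$, governed by how fast $\Re\big(f(re^{i\theta})-f(r)\big)\to-\infty$. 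The genuinely delicate case is $e^{p(z)}$: here $|e^{p(re^{i\theta})}|=\exp\!\big(\sum_k c_k r^k(\cos k\theta-1)\big)$, and away from the finitely many resonant angles $\theta=2\pi j/d$ the degree-$d$ term alone drives this to $-\infty$, whereas near a resonant angle the leading term contributes nothing and one must extract decay from the subleading coefficients — this is precisely where the hypothesis that the Maclaurin coefficients of $e^{p(z)}$ be eventually positive enters, and it is the step I expect to require the most care (it is carried out in full in \cite{Hayman56}). Finally the remaining items follow by reduction: $f+p$ because $p$ is dominated by $f$; $fp$ by the perturbation argument together with the decay split above; and $p(f)$ by writing $p(f)=c_d f^d\big(1+o(1)\big)$ on the positive axis after using closure under products to dispose of $f^d$.
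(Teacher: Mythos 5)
The paper offers no proof of this statement at all — it is quoted verbatim as Hayman's Theorems VI–X — so your primary move (cite \cite{Hayman56}) is exactly the paper's approach, and your appended sketch of how the four conditions of Definition~\ref{def:hayman} propagate is consistent with Hayman's actual arguments, including the correct identification that the only delicate point is the decay estimate for $e^{p(z)}$ near the resonant angles, where the eventual positivity of the Maclaurin coefficients is what saves the day. One small slip worth fixing: $a(r)\to\infty$ does \emph{not} force $f'(r)/f(r)\to\infty$ (take $f(z)=e^{z}$, where $f'/f\equiv 1$); what it does give, by integrating $a(r)/r$, is that $\log f(r)$ outgrows every multiple of $\log r$, which is the fact you actually use to dominate the polynomial terms, so the conclusion stands.
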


Hayman gives also a criterion \cite[Thm.~XI]{Hayman56} that is based on the concept of entire functions of genus zero, subject to conditions on the distribution of their zeros. As shown by the author in his study \cite{arxiv.2206.09411} of a Stirling-type formula related to the general permutation case, Hayman's result can conveniently repackaged in form of a singularity analysis at $z=\infty$, as provided by the following theorem. 

\begin{theorem}[Cf. the proof of \protect{\cite[Lemma~2.1]{arxiv.2206.09411}}]\label{thm:born} Let $f(z)$ be an entire function of exponential type with positive Maclaurin coefficients. If there are real constants $c, \tau, \nu$ with $c,\tau > 0$ and a positive integer $m \geq 2$ such that there holds, for the principal branch of the power function and for each $0 < \delta\leq \frac\pi2$,
the asymptotic expansion\begin{equation}\label{eq:mexpansion}
f(z^m) = cz^{\nu} e^{\tau z}(1 + O(z^{-1}))\qquad (z\to\infty,\; |\!\arg z|\leq \tfrac{\pi}{m}-\delta),
\end{equation}
then $f$ has genus zero, possesses at most finitely many zeros in the sector $|\arg z\,| \leq \pi - m \delta$ and is $H$-admissible. For $r\to\infty$, the associated auxiliary functions $a(r)$ and $b(r)$ satisfy
\begin{equation}\label{eq:auxexpansion}
a(r) = \frac{\tau}{m}r^{1/m} + \frac{\nu}{m} + O(r^{-1/m}), \qquad
b(r) = \frac{\tau}{m^2}r^{1/m}  + O(r^{-1/m}),
\end{equation}
and the solution $r_n$ of $a(r_n)=n$ satisfies
\begin{equation}\label{eq:rnexpansion}
r_n^{1/m} =\frac{m n - \nu}{\tau} + O(n^{-1}) \qquad (n\to\infty).
\end{equation}
\end{theorem}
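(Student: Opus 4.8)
The plan is to extract the growth of $f$ and the location of its zeros from the sectorial asymptotics \eqref{eq:mexpansion}, and then to verify the four conditions of Definition~\ref{def:hayman} directly. Write $w$ for the argument of $f$; substituting $w=z^m$ turns the hypothesis into $f(w)=c\,w^{\nu/m}e^{\tau w^{1/m}}(1+O(w^{-1/m}))$ as $w\to\infty$ in $|\arg w|\le\pi-m\delta$, for every fixed $0<\delta\le\pi/2$, where $w^{1/m}$ is the branch positive on $(0,\infty)$. In particular $f(w)\ne0$ once $|w|$ is large and $|\arg w|\le\pi-m\delta$, which is the asserted statement on finitely many zeros, and $\log|f(w)|=O(|w|^{1/m})$ there. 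Because $f$ is of exponential type, a Phragmén--Lindelöf argument on the two thin wedges $\pi-m\delta\le|\arg w|\le\pi$, each of opening $<\pi$ once $\delta$ is chosen small (possible since $m\ge2$), upgrades this to $\log|f(w)|=O(|w|^{1/m})$ on all of $\C$. Hence $f$ has order $\le1/m<1$, so Hadamard's factorization theorem gives $f(z)=c_0z^q\prod_k(1-z/z_k)$ with $\sum_k|z_k|^{-1}<\infty$: $f$ has genus zero.

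For the auxiliary functions, take logarithms: $\log f(r)=\log c+\tfrac{\nu}{m}\log r+\tau r^{1/m}+O(r^{-1/m})$, an asymptotic expansion valid in a sector about the positive axis, hence differentiable termwise; this yields \eqref{eq:auxexpansion}, and solving $a(r_n)=n$ gives \eqref{eq:rnexpansion}. For $H$-admissibility, \emph{positivity} follows from $f(r)\sim c\,r^{\nu/m}e^{\tau r^{1/m}}>0$ and \emph{capture} from $b(r)\sim\tfrac{\tau}{m^2}r^{1/m}\to\infty$. For \emph{locality} choose, say, $\delta(r)=r^{-2/(5m)}$: writing $f(re^{i\theta})=c\,r^{\nu/m}e^{i\nu\theta/m}\exp(\tau r^{1/m}e^{i\theta/m})(1+O(r^{-1/m}))$, expanding $e^{i\theta/m}$ to second order, and matching with the expansions of $a(r),b(r)$ gives $f(re^{i\theta})=f(r)\exp(i\theta a(r)-\tfrac12\theta^2 b(r))(1+o(1))$ uniformly for $|\theta|\le\delta(r)$, since the cubic tail is $O(r^{1/m}\delta(r)^3)=o(1)$ while $b(r)\delta(r)^2\to\infty$. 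For \emph{decay} on the range $\delta(r)\le|\theta|\le\pi-m\delta_0$ (fixed small $\delta_0$) the same expansion gives $|f(re^{i\theta})/f(r)|\asymp e^{-\tau r^{1/m}(1-\cos(\theta/m))}$, and $1-\cos(\theta/m)\gtrsim\delta(r)^2$ forces this to be $o(1/\sqrt{b(r)})$.

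The main obstacle is \emph{decay} on the wedge $\pi-m\delta_0\le|\theta|\le\pi$ adjacent to the negative real axis, where \eqref{eq:mexpansion} is silent and the crude Phragmén--Lindelöf bound of the first step only gives $\log|f(w)|\lesssim|w|^{1/m}$ with a constant that need not lie below $\tau$, so it is not small enough against $f(r)\sim e^{\tau r^{1/m}}$. My plan here is to pass to the $(1/m)$-indicator $h_f$, which is finite by the growth bound already proved; being $(1/m)$-trigonometrically convex it is continuous, and $h_f(\theta)=\tau\cos(\theta/m)$ for $|\theta|<\pi$ by the asymptotics, so $h_f(\pm\pi)=\tau\cos(\pi/m)$, which is \emph{strictly} less than $\tau$ exactly because $m\ge2$. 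Then $\log|f(re^{i\theta})|\le(\tau\cos(\pi/m)+o(1))r^{1/m}$ near $\theta=\pm\pi$ while $\log f(r)=(\tau+o(1))r^{1/m}$, so $f(re^{i\theta})=o(f(r)/\sqrt{b(r)})$ there too, and Definition~\ref{def:hayman} is verified. Should the trigonometric-convexity input be unwanted, an alternative is to use the genus-zero factorization to split $f=p\cdot g$ with $p$ a real polynomial of positive leading coefficient absorbing the finitely many zeros lying off the negative-real direction and $g$ a canonical product whose zeros are confined to a thin cone about $(-\infty,0)$, and to estimate $\log|g|$ near $\arg w=\pm\pi$ against the counting measure of its zeros; either route leaves only the routine sectorial bookkeeping indicated above, from which \eqref{eq:auxexpansion} and \eqref{eq:rnexpansion} then follow as recorded.
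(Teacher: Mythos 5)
Your argument is correct in substance, but it takes a genuinely different route from the paper's. The paper gives no self-contained proof of Thm.~\ref{thm:born}: as the sentence preceding it indicates, the theorem is a repackaging of Hayman's zero-distribution criterion \cite[Thm.~XI]{Hayman56}, and the cited proof of \cite[Lemma~2.1]{arxiv.2206.09411} proceeds by (i) reading off from \eqref{eq:mexpansion}, together with the exponential-type hypothesis and a Phragm\'en--Lindel\"of argument on the residual wedge, that $f$ has order $\le 1/m<1$ (hence genus zero) and at most finitely many zeros outside a thin cone about the negative real axis, (ii) invoking Hayman's Thm.~XI to conclude $H$-admissibility, and (iii) obtaining \eqref{eq:auxexpansion} and \eqref{eq:rnexpansion} by differentiating the sectorial asymptotics (Ritt-type differentiation, justified by analyticity in a sector). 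You carry out (i) and (iii) in essentially the same way, but you replace (ii) by a direct verification of the four conditions of Definition~\ref{def:hayman}, treating the wedge about the negative real axis---where \eqref{eq:mexpansion} is silent---via the order-$1/m$ indicator: $h_f(\theta)=\tau\cos(\theta/m)$ for $|\theta|<\pi$, continuity by trigonometric convexity, hence $h_f(\pm\pi)=\tau\cos(\pi/m)<\tau$. This is sound and in effect reproves the hard part of Hayman's criterion from classical growth theory; the appeal to Thm.~XI is precisely what keeps the paper's proof short, while your version is more self-contained at the price of heavier machinery. One point you should make explicit: Hayman's decay condition demands an estimate uniform in $\theta$ over the whole wedge, and the pointwise $\limsup$ defining $h_f$ does not give this by itself; you need the standard uniform indicator bound $\log|f(re^{i\theta})|\le\big(h_f(\theta)+\epsilon\big)r^{1/m}$ for all sufficiently large $r$ uniformly in $\theta$ (Levin), or else your alternative canonical-product estimate, which is essentially how Hayman proves Thm.~XI itself. (Minor quibbles: the smallness of $\delta$ needed for Phragm\'en--Lindel\"of is available for every $m\ge 1$, and $\cos(\pi/m)<1$ already for $m=1$; the hypothesis $m\ge2$ is really used for the order bound $1/m<1$, i.e., for genus zero.)
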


Yet another criterion, which appears to be new, is stated in the following theorem.  Note that this criterion does {\em not} follow simply from closure under multiplication (Thm.\ref{thm:haymancriteria}.I): entire functions of the form $f(z^m)$, $m\geq 2$, are never $H$-admissible since only every $m$-th of their Maclaurin coefficients are non-zero, which is inconsistent with the Stirling-type formula~\eqref{eq:stirling} forcing all coefficients to be non-zero.

\begin{theorem}\label{thm:criterion} Let $f(z)$ be an entire $H$-admissible function that satisfies the capture condition, for some $\alpha, \beta >0$, in the form
\[
b(r) = \alpha r^\beta(1+o(1)) \qquad (r\to\infty).
\]
Then, for each positive integer $m$, the entire function $f_*(z):=e^z f(z^m)$ is $H$-admissible, too. 
\end{theorem}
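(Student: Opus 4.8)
The plan is to verify directly the four defining conditions of $H$-admissibility (Definition~\ref{def:hayman}) for $f_*$. The case $m=1$ requires nothing new: $e^z$ is $H$-admissible, so $f_*=e^zf$ is $H$-admissible by closure under products (Theorem~\ref{thm:haymancriteria}.I); hence assume $m\geq 2$. Writing $a(r),b(r)$ for the auxiliary functions of $f$ and differentiating $\log f_*(z)=z+\log f(z^m)$, one computes the auxiliary functions of $f_*$ to be
\[
a_*(r)=r+m\,a(r^m),\qquad b_*(r)=r+m^2\,b(r^m),
\]
the second one using $b(s)=s\,a'(s)$. Then \emph{positivity} ($f_*(r)=e^rf(r^m)>0$ for $r$ large, since $f(r^m)>0$ for $r$ large) and \emph{capture} ($b_*(r)\geq r\to\infty$) are immediate. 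The growth hypothesis enters only to upgrade this to $b_*(r)=r+m^2\alpha r^{m\beta}(1+o(1))$, so that $b_*(r)$ is squeezed between positive multiples of $r^{\max(1,m\beta)}$ for $r$ large and $\log b_*(r)=O(\log r)$. This polynomial control is indispensable, and is exactly what fails without the hypothesis: for $f(z)=e^{e^z}$ the function $e^zf(z^2)$ is \emph{not} $H$-admissible because at $z=-r$ the ``fake mode'' $f(r^2)$ survives while $e^z$ supplies only the insufficient gain $e^{-2r}\gg b_*(r)^{-1/2}$.

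For the remaining two conditions I would fix the angular cutoff
\[
\delta_*(r):=\tfrac1m\min(\delta(r^m),\,r^{-2/5})\in(0,\pi),
\]
where $\delta(r)$ is a locality cutoff for $f$. \emph{Locality}: for $|\theta|\leq\delta_*(r)$ we have $|m\theta|\leq\delta(r^m)$, so Hayman's locality for $f$ gives $f(r^me^{im\theta})=f(r^m)\,e^{im\theta a(r^m)-m^2\theta^2 b(r^m)/2}(1+o(1))$ uniformly; and $|\theta|\leq r^{-2/5}$ makes the remainder in $re^{i\theta}=r+ir\theta-\tfrac12 r\theta^2+O(r\theta^3)$ satisfy $r\theta^3\to 0$, so $e^{re^{i\theta}}=e^r e^{ir\theta-r\theta^2/2}(1+o(1))$ uniformly. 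Multiplying the two expansions and collecting exponents reproduces exactly $f_*(re^{i\theta})=f_*(r)\,e^{i\theta a_*(r)-\theta^2 b_*(r)/2}(1+o(1))$. One also records $b_*(r)\delta_*(r)^2\to\infty$ (from $b_*(r)\geq r$ and $b_*(r)\geq m^2 b(r^m)$, together with $b(r^m)\delta(r^m)^2\to\infty$), which the consistency of locality and decay at $|\theta|=\delta_*(r)$ demands anyway.

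\emph{Decay}: for $\delta_*(r)\leq|\theta|\leq\pi$ I would estimate $|f_*(re^{i\theta})|=e^{r\cos\theta}|f(r^me^{im\theta})|$ on the three ranges $[\pi/m,\pi]$, $[\delta(r^m)/m,\pi/m]$, and $[\delta_*(r),\delta(r^m)/m]$ (the last possibly degenerate), in each producing a bound $o(f_*(r))\,b_*(r)^{-1/2}$ uniformly. On $[\pi/m,\pi]$: $\cos\theta\leq\cos(\pi/m)<1$ and $|f(r^me^{im\theta})|\leq f(r^m)$ by maximum modulus (Theorem~\ref{thm:hayman}.I), so $|f_*(re^{i\theta})|\leq f_*(r)\,e^{-r(1-\cos(\pi/m))}$, which beats the polynomially small $b_*(r)^{-1/2}$; this step tames the $m-1$ ``fake modes'' of $f(z^m)$ at the angles $2\pi k/m$. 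On $[\delta(r^m)/m,\pi/m]$: here $|m\theta|\in[\delta(r^m),\pi]$, so Hayman's decay for $f$ gives $|f(r^me^{im\theta})|=o(f(r^m))\,b(r^m)^{-1/2}$; if $m\beta\geq 1$ then $b_*(r)/b(r^m)=r/b(r^m)+m^2=O(1)$ and the bound follows, while if $m\beta<1$ one keeps the Gaussian damping $e^{r\cos\theta}\leq e^r\exp(-c\,r\theta^2)$ ($c>0$ fixed) with $r\theta^2\geq r\delta(r^m)^2/m^2\asymp r^{1-m\beta}\,b(r^m)\delta(r^m)^2$, which outgrows $\log r$. Finally, on $[\delta_*(r),\delta(r^m)/m]$ (non-degenerate only when $\delta_*(r)=r^{-2/5}/m$): since $e^{r\cos\theta}=e^re^{-2r\sin^2(\theta/2)}$ is decreasing in $\theta$ and $|f(r^me^{im\theta})|\leq f(r^m)$, the bound is maximal at the left endpoint, giving $|f_*(re^{i\theta})|\leq f_*(r)\,e^{-cr^{1/5}/m^2}$, again dominated by $b_*(r)^{-1/2}$. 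Assembling the three ranges yields the decay condition, completing the verification.

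I expect the decay step to be the main obstacle: one must organize the hand-off between the regime where $f$'s own decay does the job and the regime where the Gaussian damping of the $e^z$ factor must be invoked (including the ``fake modes''), all while maintaining uniformity in $\theta$ and in the implicit family of zeros of $f$ — and it is precisely there that the polynomial bound on $b_*(r)$ furnished by the capture hypothesis $b(r)=\alpha r^\beta(1+o(1))$ is what makes everything go through.
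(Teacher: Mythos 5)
Your proposal is correct and follows essentially the same route as the paper's proof: the same auxiliary functions $a_*(r)=r+m\,a(r^m)$, $b_*(r)=r+m^2 b(r^m)$, the same locality cutoff of size $\min(\delta(r^m)/m,\,r^{-2/5})$, the elementary bound $\cos\theta-1\leq-\theta^2/5$ to exploit the damping from the $e^z$ factor, and the same dichotomy $m\beta\gtrless 1$ in the intermediate range where $f$'s own decay is traded against the Gaussian gain (with the capture hypothesis supplying the polynomial control $b_*(r)^{-1/2}\asymp r^{-\max(m\beta,1)/2}$). The only cosmetic difference is the decomposition of the decay region: you use three ranges, handling $[\pi/m,\pi]$ (and hence the fake modes) by a fixed-angle exponential gain, whereas the paper absorbs all of $|\theta|\geq r^{-2/5}$ into a single segment via $e^{r(\cos\theta-1)}\leq e^{-r^{1/5}/5}$ together with the maximum-modulus bound.
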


\begin{proof} We will show that $f_*(z)$ inherits the constitutive conditions of positivity, capture, locality and decay from $f(z)$. The auxiliary functions  of $f(z)$ will be denoted by $a(r)$, $b(r)$ and by $\delta(r)$ we denote the angle that separates the segment of locality from that of  decay. Accordingly, the corresponding quantities of $f_*(z)$ will be denoted by $a_*(r)$, $b_*(r)$ and $\delta_*(r)$. We will use the following elementary inequality:
\begin{equation}\label{eq:cosineq}
|e^{r(e^{i\theta}-1)}| = e^{r(\cos \theta - 1)} \leq e^{-r \theta^2/5} \qquad (r\geq 0,\; |\theta| \leq \pi).
\end{equation}

\medskip

{\em Positivity and capture.}
Since $f_*(r) = e^r f(r^m)$ there holds, inherited from the positivity condition for the factor $f(r^m)$, that eventually $f_*(r)>0$ as $r\to\infty$. In particular, for $r>0$ large enough, we have
\[
a_*(r) := r \frac{f_*'(r)}{f_*(r)} = r + m a(r^m), \qquad b_*(r) := r a_*'(r) = r + m^2 b(r^m).
\]
By the assumption on $b(r)$ we thus get the capture condition for $f_*(z)$ specifically in the form
\[
b_*(r) = r + m^2\alpha r^{m\beta} (1+ o(1)) \to \infty \qquad (r\to\infty),
\]
which implies the estimate
\begin{equation}\label{eq:bstar}
r^{-\max(m\beta/2,1/2)} = O(b_*(r)^{-1/2})\qquad (r\to\infty)
\end{equation}
that we will use below when proving the decay estimates for $f_*(z)$.

\medskip

{\em Locality and decay.} We split, for $r$ sufficiently large, the angles $0\leq |\theta| \leq \pi$ into three segments $S_1, S_2, S_3$ such that $f_*(z)$ localizes in the first and decays in the other two:

\begin{itemize}
\item[$(S_1$)] $0\leq |\theta| \leq \min(\delta(r^m)/m,r^{-2/5}) =:\delta_*(r)$. 

\medskip
\noindent Here the $f(z^m)$ factor of $f_*(z)$ localizes and we get
\begin{align*}
f_*(re^{i\theta}) &= e^{i r(\sin\theta-\theta) + r( \cos\theta -1 + \theta^2/2)} \cdot e^re^{i \theta r - \theta^2 r/2}   f(r^m e^{im\theta})\\*[2mm] 
&= e^{O(r^{-1/5})} \cdot e^r f(r^m) e^{i \theta r - \theta^2 r/2} e^{i\theta m a(r^m) - \theta^2 m^2 b(r^m)/2}  (1+o(1))\\*[2mm] 
 &= f_*(r) e^{i\theta  a_*(r) - \theta^2 b_*(r)/2}  (1+o(1)),
\end{align*}
that is, $f_*(z)$ localizes in the segment $S_1$.\\*[-3mm]

\item[$(S_2$)] $\delta(r^m)/m \leq |\theta| \leq r^{-2/5}$, a segment that could possibly be void.

\medskip
\noindent By combining \eqref{eq:cosineq} and the third limit in \eqref{eq:bdelta}  with the assumption on $b(r)$ we obtain, for $r$ sufficiently large, 
\[
|e^{r (e^{i\theta}-1)}| \leq e^{-r\delta(r^m)^2/(5m^2)} \leq e^{-r^{1-m\beta}}.
\]
We thus get, inherited from the decay of $f(z^m)$, the decay of $f_*(z)$ in the form 
\[
f_*(re^{i\theta}) = e^{r (e^{i\theta}-1)} \cdot  \frac{o(e^r f(r^m))}{b(r^m)^{1/2}} = r^{-m\beta/2} e^{-r^{1-m\beta}}\cdot o(f_*(r)) =  \frac{o(f_*(r))}{b_*(r)^{1/2}},
\]
where we have used $r^{-m\beta/2} e^{-r^{1-m\beta}} = O\big(r^{-\max(m\beta/2,1/2)}\big)$ and \eqref{eq:bstar}. \\*[-3mm]

\item[$(S_3$)] $\max(\delta(r^m)/m, r^{-2/5}) \leq |\theta| \leq \pi$. 

\medskip
\noindent By \eqref{eq:maxmod} and \eqref{eq:cosineq} we get the decay of $f_*(z)$ in the form
\[
f_*(re^{i\theta}) = e^{r (e^{i\theta}-1)} \cdot O(e^r f(r^m)) = e^{-r^{1/5}/5} \cdot O(f_*(r)) = \frac{o(f_*(r))}{b_*(r)^{1/2}},
\]
where we have used $e^{-r^{1/5}/5} = o\big(r^{-\max(m\beta/2,1/2)}\big)$ and \eqref{eq:bstar}. 
\end{itemize}~\\*[-10mm]
\end{proof}

\section{Asymptotic Expansion of the Number of Involutions}\label{app:inv}

The exponential generating function of the number $I_n$ of involutions in the symmetric group of order $n!$ is given by
\begin{equation}\label{eq:finv}
f(z) = \sum_{n=0}^\infty \frac{I_n}{n!} z^n = e^{z + z^2/2},
\end{equation}
cf.~\cite[Example~II.13]{MR2483235}. Since $f(z)$ is $H$-admissible by Thm.~\ref{thm:haymancriteria}.III, with auxiliary functions
\[
a(r) = r f'(r)/f(r) = r+r^2,\qquad   b(r) = r a'(r) = r + 2r^2,
\]
we plug $r=n^{1/2}$ into the normal approximation \eqref{eq:CLT} and get immediately, as $n\to\infty$, 
\begin{equation}\label{eq:iotalead}
a_n=\frac{I_n}{n!} = \frac{e^{\sqrt{n}+n/2}}{n^{n/2}\sqrt{2\pi(\sqrt{n}+2n)}}\left(e^{-\frac{n}{2\sqrt{n}+4n}}
+ o(1)\right) = \frac{e^{\sqrt{n}-1/4}}{2\sqrt{\pi n}} \left(\frac{e}{n}\right)^{n/2} (1+ o(1)).
\end{equation}
Note that this straightforward calculation is considerably simpler than solving $a(r_n)=n$ for 
\begin{equation}\label{eq:rinv}
r_n = \sqrt{n+\frac14}-\frac12
\end{equation}
and using the Stirling-type formula \eqref{eq:stirling} as done, e.g., in \cite[Example~5.17]{MR2172781}. 

For the purposes of Sect.~\ref{sect:jasz} we need an asymptotic expansion of $a_n$ that goes beyond the leading order term. Following the approach of Wimp and Zeilberger \cite{MR808671}, which is based on the Birkhoff--Trjitzinsky theory, we first infer from the differential equation $f'(z) = (1+z)f(z)$ the three-term recursion
\[
(n+2) a_{n+2} = a_{n+1} + a_n \quad (n\geq 0), \quad a_0 = a_1 = 1,
\]
and then get, by inserting the corresponding asymptotic Birkhoff series\footnote{Note that this series is generally only determined up to a multiplicative constant $K>0$ which has to be computed by other methods. Here the value 
$K = 1/(2e^{1/4}\pi^{1/2})$ is obtained by matching \eqref{eq:Birkhoff} to \eqref{eq:iotalead}.}
\begin{equation}\label{eq:Birkhoff}
a_n \sim K \cdot \frac{e^{\sqrt{n}}}{\sqrt{n}} \left(\frac{e}{n}\right)^{n/2} \big(1 + c_1 n^{-1/2} + c_2 n^{-1} + c_3 n^{-3/2} + \cdots \big) \qquad (n\to\infty)
\end{equation}
into the recurrence and by solving recursively for the coefficients, that
\begin{equation}\label{eq:iotaexpansion}
a_n = \frac{e^{\sqrt{n}-1/4}}{2\sqrt{\pi n}} \left(\frac{e}{n}\right)^{n/2} \Big(
 1+\frac{7}{24}n^{-1/2} - \frac{215}{1152} n^{-1}
 -\frac{18013}{414720}n^{-3/2} + O\big(n^{-2}\big) \Big)
 \end{equation}
which is the expansion used in Sect.~\ref{sect:jasz}. To get an expansion of $I_n$ itself, we multiply \eqref{eq:iotaexpansion} with the well-known expansion of Stirling's formula (cf. \cite[p.~760]{MR2483235}),
\begin{equation}\label{eq:tauexpansion}
n! = \sqrt{2\pi n}\left(\frac{n}{e}\right)^n \Big( 1+\frac{n^{-1}}{12}+O\big(n^{-2}\big) \Big),
\end{equation}
which gives
\begin{equation}\label{eq:I_n_asympt}
I_n = \frac{e^{\sqrt{n}-1/4}}{\sqrt{2}} \left(\frac{n}{e}\right)^{n/2} \Big( 1+\frac{7}{24}n^{-1/2}-\frac{119}{1152}n^{-1}
-\frac{7933}{414720}n^{-3/2} + 
O\big(n^{-2}\big) \Big).
\end{equation}

\medskip

\begin{remark}
By using Laplace's method for the Cauchy integrals that give the Maclaurin coefficients $a_n$, Moser and Wyman \cite[Eq.~(3.40)]{MR68564} obtained the expansion \eqref{eq:I_n_asympt}
truncated at order $O(n^{-3/2})$. Knuth \cite[pp.~62--64]{Knuth3} gets the slightly less precise
\[
I_n  = \frac{e^{\sqrt{n}-1/4}}{\sqrt{2}} \left( \frac{n}{e}\right)^{n/2} \Big( 1+\frac{7}{24}n^{-1/2} + O\big(n^{-3/4}\big) \Big),
\]
by first applying Laplace's method to the sum (obtained from expanding $f(z)=e^z \cdot e^{z^2/2}$)
\[
I_n = \sum_{0\leq k \leq n/2} \frac{n!}{(n-2k)!\,2^k k!},
\]
and then using Euler--Maclaurin's formula to replace the remaining sums by integrals. 
\end{remark}

\section{Evidence for the Linear Form Hypothesis}\label{app:evidence}

\subsection{Reducing the number of non-zero terms} By using a meticulous change of variables, it is possible to significantly reduce the number of non-zero terms in the putative polynomial coefficients of \eqref{eq:hypoL}. In doing so we hugely improve the stability of the numerical method to be discussed in \ref{app:num_evidence}.

Consistent with the already observed shared sparsity patterns, the transformation $t=\psi_h(s)$ as defined in \cite[Lemma~3.5]{arxiv:2301.02022} does not only reduce the number of non-zero terms in the polynomial coefficients for $\beta=2$ (see \cite[Eqs.~(3.20a/c, 3.21b)]{arxiv:2301.02022}) but to exactly the same degree here. 
We recall that $t=\psi_h(s)$ maps $s\in\R$ analytically and monotonically increasing onto $-\infty < t < h^{-1}$ if $h>0$, and  expands around $sh = 0$ into the power series
\begin{equation}\label{eq:psiexpan}
t = \psi_h(s) = s-\frac{3 s^2}{10} h-\frac{ s^3}{350}h^2+\frac{479 
   s^4}{63000}h^3+\cdots\,.
\end{equation}
Now, if we plug \eqref{eq:psiexpan} into \eqref{eq:Gexpan} and take  $p_{+,jk}$ as shown in \eqref{eq:F_beta_j}, then a routine calculation with truncated power series (and their reversals) implies that the linear form hypothesis for up to $m=3$ is {\em equivalent} to the following statement: the expansion
\[
E_\pm(\psi_{h_\nu}(s);\nu) = F_{\pm}(s) + \tilde E_{\pm,1}(s) h_\nu + \tilde E_{\pm,2}(s) h_\nu^2 + \tilde E_{\pm,3}(s) h_\nu^3 + O(h_\nu^4),
\]
as $h_\nu\to 0^+$ with $s$ being any fixed real number, has expansion terms satisfying
\begin{subequations}\label{eq:hypoFconcrete}
\begin{align}
\tilde E_{\pm,1}(s) &= -\frac{2}{5} F_\pm''(s),\label{eq:Gz1}\\*[1mm]
\tilde E_{\pm,2}(s) &= \frac{9}{175} F_\pm'(s) - \frac{32s}{175}F_\pm''(s) + \frac{4}{50} F_\pm^{(4)}(s),\\*[1mm]
\tilde E_{\pm,3}(s) &= \frac{268s}{7875} F_\pm'(s) - \frac{48s^2}{875} F_\pm''(s) - \frac{578}{7875}  F_\pm^{'''}(s) + \frac{64s}{875} F_\pm^{(4)}(s) - \frac{8}{750} F_\pm^{(6)}(s).
\end{align}
\end{subequations}
We will check the validity of the latter equations numerically, thereby using the first one, which we know to be unconditionally true, as a sanity check. In doing so, $\tilde E_{\pm,1}, \tilde E_{\pm,2}, \tilde E_{\pm,3}$ have to be evaluated using Thm.~\ref{thm:detexpan}. To prepare, we transform Lemma~\ref{lem:Vkernexpan} accordingly:

\begin{lemma}\label{lem:auxtrans} For $h>0$, we consider the $m=3$ case of the kernel expansion in Lemma~\ref{lem:Vkernexpan},
\[
K_{(h)}(x,y):= V_{\Ai}(x,y) + K_1(x,y) h + K_2(x,y) h^2 + K_3(x,y) h^3.
\] 
Then $t=\psi_h(s)$ induces the symmetrically transformed kernel
\begin{subequations}\label{eq:auxkernelexpan}
\begin{align}
\tilde K_{(h)}(x,y) &:= \sqrt{\psi_h'(x)\psi_h'(y)} \, K_{(h)}(\psi_h(x),\psi_h(y))\\
\intertext{which expands as}
\tilde K_{(h)}(x,y) &= V_{\Ai}(x,y) + \tilde K_1(x,y)h + \tilde K_2(x,y)h^2 + \tilde K_3(x,y)h^3 + h^4 \cdot O\big(e^{-(x+y)/2}\big),
\end{align}
uniformly in $s_0 \leq x, y \leq 2h^{-1}$ as $h \to 0^+$, $s_0$ being a fixed real number. Written in terms of the (scaled) elementary symmetric polynomials $u=(x+y)/2$, $v=xy$, the kernels are
\begin{align*}
\tilde K_1(x,y) &= -\frac{u}{5}\Ai(u) + \frac{u^2-v}{10}\Ai'(u),\\*[1mm]
\tilde K_2(x,y) &= \frac{7 u^5-14 u^3 v+7 u v^2-88 u^2+52 v}{700} \Ai(u)
 + \frac{-12 u^3+12 u v+5}{350}\Ai'(u),\\*[1mm]
\tilde K_3(x,y) &= \frac{-69 u^6+117 u^4 v-27 u^2 v^2 -990 u^3-21v^3 +1110 u v-140}{31500}\Ai(u)\\*[1mm]
& \quad + \frac{21 u^7-63 u^5 v+63 u^3 v^2 -1378 u^4-21 uv^3+1976 u^2 v-598 v^2+100 u}{31500}\Ai'(u).
\end{align*}
\end{subequations}
Preserving uniformity, the kernel expansion can repeatedly be differentiated w.r.t. $x$, $y$.
\end{lemma}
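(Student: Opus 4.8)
The plan is to follow the pattern of the proof of Lemma~\ref{lem:Vkernexpan}, with the change of variables $t=\psi_h(s)$ of \eqref{eq:psiexpan} now playing the role that $\Psi(x,y;h)$ played there. First I would record the relevant $h$-expansions: writing $\psi_h(s)=s+\sum_{k\geq1}\rho_k(s)h^k$ with rational polynomials $\rho_k$ of degree $k+1$ (so that, say, $\rho_1(s)=-\tfrac{3}{10}s^2$), termwise differentiation of \eqref{eq:psiexpan} gives $\psi_h'(s)=1+\sum_{k\geq1}\rho_k'(s)h^k$, whence $\sqrt{\psi_h'(x)\,\psi_h'(y)}$ expands into a power series in $h$ with rational-polynomial coefficients in $x,y$; all these series converge uniformly on the set where the $\psi_h$-series of \cite[Lemma~3.5]{arxiv:2301.02022} converges uniformly.

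Next I would split the domain $s_0\leq x,y\leq 2h^{-1}$ into a near-diagonal part on which those series converge uniformly, and a complementary part where $x$ or $y$ is large (of order $h^{-1}$). On the near-diagonal part I would substitute $t=\psi_h(x)$, $t'=\psi_h(y)$ into the truncated kernel $K_{(h)}(t,t')=V_{\Ai}(t,t')+K_1(t,t')h+K_2(t,t')h^2+K_3(t,t')h^3$ with the explicit $K_j$ of \eqref{eq:K}, Taylor-expand each $K_j(\psi_h(x),\psi_h(y))$ about $(x,y)$ in powers of $h$, and reduce every Airy derivative of order $\geq2$ back to $\Ai$ and $\Ai'$ via $\Ai''(\xi)=\xi\Ai(\xi)$. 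Multiplying by the square-root prefactor and collecting the coefficients of $h^0,\dots,h^3$ produces $V_{\Ai}+\tilde K_1h+\tilde K_2h^2+\tilde K_3h^3$, with the $\tilde K_j$ coming out as the stated $\Q[x,y]$-combinations of $\Ai(u)$ and $\Ai'(u)$ in the scaled symmetric variables $u=(x+y)/2$, $v=xy$; the already unconditionally known $\tilde K_1=-\tfrac{u}{5}\Ai(u)+\tfrac{u^2-v}{10}\Ai'(u)$ is a useful consistency check against \eqref{eq:Gz1}. Every remaining term is $h^j$ ($j\geq4$) times a polynomial times $\Ai(u)$ or $\Ai'(u)$ with $u=(x+y)/2\geq s_0$, so superexponential decay of the Airy functions turns the remainder into $h^4\cdot O(e^{-(x+y)/2})$ on this part.

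On the complementary part, exactly as in Lemma~\ref{lem:Vkernexpan}, monotonicity of $\psi_h$ forces the Airy argument $\tfrac12(\psi_h(x)+\psi_h(y))$ to be bounded below by a fixed positive multiple of $h^{-1}$, while $\tfrac{x+y}{2}\leq 2h^{-1}$; since $\Ai(\xi),\Ai'(\xi)=O(e^{-\frac{2}{3}\xi^{3/2}})$ as $\xi\to+\infty$ and $h^{-1}=o(h^{-3/2})$, this decay dominates $e^{-(x+y)/2}$, so that $\tilde K_{(h)}(x,y)$ and each candidate term $\tilde K_j(x,y)h^j$ is individually absorbed into $h^4\cdot O(e^{-(x+y)/2})$. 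Uniformity over $s_0\leq x,y\leq2h^{-1}$ and the legitimacy of repeated differentiation in $x$ and $y$ would be inherited from the corresponding properties in Lemma~\ref{lem:Vkernexpan} together with termwise differentiation of the $\psi_h$-series. The main obstacle is not the finite, essentially mechanical (CAS-assisted) determination of the polynomials in $\tilde K_1,\tilde K_2,\tilde K_3$, but the analytic bookkeeping across the two regions --- in particular confirming, from the properties of $\psi_h$ in \cite[Lemma~3.5]{arxiv:2301.02022}, that $\psi_h'$ remains bounded and that $\psi_h(x)+\psi_h(y)$ remains bounded below by a fixed multiple of $h^{-1}$ once $x$ or $y$ exceeds the convergence threshold, so that the single error estimate $h^4\cdot O(e^{-(x+y)/2})$ holds uniformly throughout.
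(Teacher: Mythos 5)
Your proposal is correct and follows essentially the same route as the paper, whose proof simply argues as in the proof of \cite[Lemma~3.5]{arxiv:2301.02022} (the $\psi_h$-transformation, its uniformity properties and the domain splitting that absorbs the far region into the $h^4\cdot O(e^{-(x+y)/2})$ error) and then obtains $\tilde K_1,\tilde K_2,\tilde K_3$ by a routine truncated power series calculation starting from \eqref{eq:K}. The points you flag as needing confirmation (boundedness of $\psi_h'$ and the lower bound on $\psi_h(x)+\psi_h(y)$ beyond the convergence threshold) are exactly the properties supplied by that cited lemma, so your fleshed-out argument matches the paper's intended proof.
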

\begin{proof} We argue as in the proof of \cite[Lemma~3.5]{arxiv:2301.02022}. Then, starting with \eqref{eq:K}, a routine calculation with truncated power series yields the asserted formulae for $\tilde K_1, \tilde K_2, \tilde K_3$. 
\end{proof}

\subsection{Numerical evidence}\label{app:num_evidence} 

We provide numerical evidence for \eqref{eq:hypoFconcrete} while at the same time reclaiming, as yet another token of consistency, the rational coefficients independently of the theory in Sect.~\ref{sect:functionalform}. That is, we check for
\begin{subequations}\label{eq:hypoF}
\begin{align}
\tilde E_{\pm,1}(s) &= a_{11} F_\pm''(s),\\*[1mm]
\tilde E_{\pm,2}(s) &= a_{21} F_\pm'(s) + a_{22} s F_\pm''(s) + a_{23} F_\pm^{(4)}(s),\\*[1mm]
\tilde E_{\pm,3}(s) &= a_{31} s F_\pm'(s) + a_{32} s^2 F_\pm''(s) + a_{33}  F_\pm^{'''}(s) + a_{34} s F_\pm^{(4)}(s) + a_{35} F_\pm^{(6)}(s),
\end{align}
\end{subequations}
with yet to be determined coefficients $a_{jk}\in\Q$, applying the following algorithm:

\smallskip

\begin{itemize}\itemsep=3pt
\item in Eqs.~\eqref{eq:hypoF}, insert some distinct points $s_{jk}$, $k=1,\ldots,n_j$, numerically into $\tilde E_{\pm,j}(s)$ and into the  functions of $s$ on the right hand side, up to a controlled error level $\epsilon$;
\item taking $n_1=1$, $n_2=3$ and $n_3=5$, solve the resulting $n_j\times n_j$ linear systems  for some unique  numerical values of the coefficients $a_{jk}$ ($k=1,\ldots,n_j$);
\item compute a rational best approximation of the $a_{jk}$ with sufficiently small error;
\item using these rational values, check whether both sides of Eqs.~\eqref{eq:hypoF} agree numerically on a sufficiently large interval $[s_0, s_1]$ of $s$, up to the error level $\epsilon$.
\end{itemize}

\smallskip

\noindent
By analyticity an agreement on any interval $[s_0,s_1]$ implies the agreement on the real line.

Actually, we apply the algorithm to the more general case which interpolates between the two sign choices in 
the transformed variant of \eqref{eq:Gexpan}: i.e., for a fixed $-1\leq z \leq 1$ we consider\footnote{Cf. \cite[Eq.~(3.22)]{arxiv:2301.02022} for the upper bound $2h^{-1}$ in the Fredholm determinant on the left.}
\begin{equation}\label{eq:Gz_tilde}
\begin{aligned}
\det(I-z\tilde K_{(h)})\big|_{L^2(s,2h^{-1})} &= F_z(s) + \tilde E_{z,1}(s) h_\nu + \tilde E_{z,2}(s) h_\nu^2 + \tilde E_{z,3}(s) h_\nu^3 + O(h_\nu^4), \\*[2mm]
F_z(s) &= \det(I-zV_{\Ai})\big|_{L^2(s,\infty)},
\end{aligned}
\end{equation}
and check (numerically) the expressions \eqref{eq:hypoF} to be valid with $z$ replacing $\pm$. As it turns out, we get compelling numerical evidence that the rational coefficients $a_{jk}$ are {\em independent} of $z$.
Note that, when $0\leq z\leq 1$, the quantity $\theta:=1-z$ corresponds to a thinning rate  in the corresponding point processes as in \cite[§3.3]{MR3647807}; see also Rem.~\ref{rem:thinning}.

\subsubsection{Implementation}
The algorithm can straightforwardly be implemented in our Matlab toolbox \cite{MR2895091} (with the extension to resolvents and traces developed in \cite{MR3647807}). Note the symbolic ``look and feel,'' the formulae of Thm.~\ref{thm:detexpan} and Lemma~\ref{lem:auxtrans} are, basically, taken just as stated. 

\medskip

\noindent
We walk the reader through the code by commenting on it block-wise:

\begin{itemize}
\item[I)]  the linear form hypothesis is checked for up to $m=3$
\end{itemize}

\begin{lstlisting}[firstnumber=1,numbers=left,basicstyle=\ttfamily\footnotesize]
m = 3; 
\end{lstlisting}

\begin{itemize}
\item[II)]  implementation of the kernel formulae of Lemma~\ref{lem:auxtrans} with a prefactor $-1 \leq z \leq 1$
\end{itemize}

\begin{lstlisting}[firstnumber=2,numbers=left,basicstyle=\ttfamily\footnotesize]
V = @(x,y) z*airy((x+y)/2)/2;
p{1} = @(u,v) -u/5;
q{1} = @(u,v) (u.^2 - v)/10;
p{2} = @(u,v) (7*u.^5 - 14*u.^3.*v + 7*u.*v.^2 - 88*u.^2 + 52*v)/700;
q{2} = @(u,v) (- 12*u.^3 + 12*u.*v + 5)/350;
p{3} = @(u,v) (- 69*u.^6 + 117*u.^4.*v - 27*u.^2.*v.^2 - 990*u.^3 ...
    - 21*v.^3 + 1110*u.*v - 140)/31500;
q{3} = @(u,v) (21*u.^7 - 63*u.^5.*v + 63*u.^3.*v.^2 - 1378*u.^4 ...
    - 21*u.*v.^3 + 1976*u.^2.*v - 598*v.^2 + 100*u)/31500;
for j=1:m
    Kuv{j} = @(u,v) p{j}(u,v).*airy(u) + q{j}(u,v).*airy(1,u);
    K{j} = @(x,y) z*Kuv{j}((x+y)/2, x.*y);
end
\end{lstlisting}

\pagebreak 
\begin{itemize}
\item[III)]  implementation of the general formulae of Thm.~\ref{thm:detexpan} 
\end{itemize}

\begin{lstlisting}[firstnumber=15,numbers=left,basicstyle=\ttfamily\footnotesize]
tr = @(T) trace(T);
I = @(K) eye(size(K));
E = @(K,L) (I(K)-K)\L;
d{1} = @(K0,K1) - tr(E(K0,K1));
d{2} = @(K0,K1,K2) tr(E(K0,K1))^2/2 - tr(E(K0,K1)^2)/2 - tr(E(K0,K2));
d{3} = @(K0,K1,K2,K3) - tr(E(K0,K1))^3/6 + tr(E(K0,K1))*tr(E(K0,K1)^2)/2 ...
    - tr(E(K0,K1)*E(K0,K2) + E(K0,K2)*E(K0,K1))/2 - tr(E(K0,K1)^3)/3 ...
    + tr(E(K0,K1))*tr(E(K0,K2)) - tr(E(K0,K3));
\end{lstlisting}

\begin{itemize}
\item[IV)]  implementation of Thm.~\ref{thm:detexpan} applied to Lemma~\ref{lem:auxtrans}, yielding $\tilde E_{z,j}(s)$ in \eqref{eq:Gz_tilde}
\end{itemize}

\begin{lstlisting}[firstnumber=23,numbers=left,basicstyle=\ttfamily\footnotesize]
function val = Ez_tilde(j,s) % \tilde E_{z,j}(s)
    ops = {op(K{1},[s,inf]), op(K{2},[s,inf]), op(K{3},[s,inf])};
    val = det1m(op(V,[s,inf]))*OperatorTerm(d{j}, op(V,[s,inf]), ops{1:j});
end
\end{lstlisting}

\begin{itemize}
\item[V)]  implementation of $F_z(s)$ as defined in \eqref{eq:Gz_tilde}
\end{itemize}

\begin{lstlisting}[firstnumber=27,numbers=left,basicstyle=\ttfamily\footnotesize]
function val = Fz(s) % F_z(s)
    val = det1m(op(V,[s,inf]));
end
\end{lstlisting}

\begin{itemize}
\item[VI)]  polynomial approximation of $F_z(s)$ on the interval $[-7,4]$
\end{itemize}

\begin{lstlisting}[firstnumber=30,numbers=left,basicstyle=\ttfamily\footnotesize]
ival = [-7, 4];
F = chebfun(vec(@(s) Fz(s)), ival);
\end{lstlisting}

\begin{itemize}
\item[VII)]  corresponding polynomial approximations of the derivatives $F_z^{(k)}(s)$ on that interval for $k=1,\ldots,2m$; good to just an error of about $10^{-14+3k/2}$ for $1/8 \leq |z| \leq 1$
\end{itemize}

\begin{lstlisting}[firstnumber=32,numbers=left,basicstyle=\ttfamily\footnotesize]
for k=1:2*m
    f{k} = diff(F,k); % accuracy < 2*10^{-14+1.5*k} (0.125<=|z|<=1)
end
\end{lstlisting}

\begin{itemize}
\item[VIII)]  list of the ansatz functions whose linear combinations give the RHS of \eqref{eq:hypoF}
\end{itemize}

\begin{lstlisting}[firstnumber=35,numbers=left,basicstyle=\ttfamily\footnotesize]
ff = @(s) {[f{2}(s)], [f{1}(s) s*f{2}(s) f{4}(s)], ...
    [s*f{1}(s) s^2*f{2}(s) f{3}(s) s*f{4}(s) f{6}(s)]};
\end{lstlisting}  

\begin{itemize}
\item[IX)]  choice of sample points to set up the linear system
\end{itemize}
    
\begin{lstlisting}[firstnumber=37,numbers=left,basicstyle=\ttfamily\footnotesize]
ss = {[-3.5], [-4.5, -3.5, -2.5], [-4.5 -4 -3.5 -3 -2.5]}; % sample points
\end{lstlisting}  

\begin{itemize}
\item[X)]  finally, the algorithm is started, looping through $j=1,2,3$
\end{itemize}

\begin{lstlisting}[firstnumber=38,numbers=left,basicstyle=\ttfamily\footnotesize]
ind = @(j) struct('type','{}','subs',{{j}});
for j=1:m
\end{lstlisting} 

\begin{itemize}
\item[XI)]  first, create the linear system for the $a_{jk}$ from \eqref{eq:hypoF} evaluated in the sample points
\end{itemize}

\begin{lstlisting}[firstnumber=40,numbers=left,basicstyle=\ttfamily\footnotesize]
    E = zeros(length(ss{j}),1); 
    M = zeros(length(ss{j}));   
    for k=1:length(E)
        s = ss{j}(k);
        E(k) = Ez_tilde(j,s);  
        M(k,:) = subsref(ff(s), ind(j)); % k-th row of matrix
    end
\end{lstlisting} 

\begin{itemize}
\item[XII)]  second, solve that linear system and perform a rational reconstruction of the $a_{jk}$
\end{itemize}

\begin{lstlisting}[firstnumber=47,numbers=left,basicstyle=\ttfamily\footnotesize]
    D = 10000; % bound of denominator
    [num, denom] = rat(M\E, 1/(2*D^2));
\end{lstlisting} 

\begin{itemize}
\item[XIII)]  third, take the values of the $a_{jk}$ and compute polynomial approximations of the LHS and RHS in \eqref{eq:hypoF} on the interval $[-7,4]$; compute the $L^\infty$-norm of their difference
\end{itemize}

\begin{lstlisting}[firstnumber=49,numbers=left,basicstyle=\ttfamily\footnotesize]
    LHS = chebfun(vec(@(s) Ez_tilde(j,s)), ival, 48);
    RHS = chebfun(vec(@(s) subsref(ff(s), ind(j))*(num./denom)), ival, 48);
    err = norm(LHS-RHS, inf);
\end{lstlisting} 

\begin{itemize}
\item[XIV)]   if the numerical tolerances are met, print out the rational values of the $a_{jk}$
\end{itemize}

\begin{lstlisting}[firstnumber=52,numbers=left,basicstyle=\ttfamily\footnotesize]
    if err < 2*10^(-14+3*j) % bound = numerical error level of RHS
        for k=1:length(E)
            fprintf("a_{%i%i} = %i/%i, ", j, k, num(k), denom(k));
        end
        fprintf("\n");
\end{lstlisting} 

\begin{itemize}
\item[XV)]  otherwise, tell that \eqref{eq:hypoF} is not met within the numerical tolerances
\end{itemize}

\begin{lstlisting}[firstnumber=57,numbers=left,basicstyle=\ttfamily\footnotesize]
    else
        fprintf("j = %i: LHS = RHS does NOT meet numerical tolerances\n", j)
    end
end
\end{lstlisting}

\noindent
When run on $z=\pm n/8$ ($n=1,\ldots 8$) the output is consistently
\medskip

{\footnotesize
\begin{verbatim}
a_{11} = -2/5, 
a_{21} = 9/175, a_{22} = -32/175, a_{23} = 2/25, 
a_{31} = 268/7875, a_{32} = -48/875, a_{33} = -578/7875, a_{34} = 64/875, a_{35} = -4/375,
\end{verbatim}}

\medskip

\noindent
which reclaims the coefficients in \eqref{eq:hypoFconcrete}. Clearly, by applying the algorithm adapted to the case $\beta=2$ as a further sanity-check, we were able to reproduce the rational coefficients of the rigorously established formulae \cite[Eqs.~(3.20a/c, 3.21b)]{arxiv:2301.02022}.

\begin{remark}
For $j=3$, where the rather inaccurate polynomial approximation of $F_z^{(6)}(s)$ enters the calculations, the numerical errors in evaluating the RHS of \eqref{eq:hypoF} are just about matching the tolerance required for the rational reconstruction of the  $a_{3k}$. Still, there are no problems of robustness here: we get consistently the same results for a variety of $z$ (and a variety of different choices of the sample points).
%and the corresponding algorithm for the case $\beta=2$ (suffering from the same level of numerical inaccuracies) reproduces the analytically obtained coefficients displayed in \cite[Eqs.~(45a/c,46b)]{arxiv:2301.02022}. 
However, it should be clear that extending the numerical approach to  $j=4$, or even larger, would require significantly more accurate numerical representations of the higher order derivatives of $F_z(s)$. 
\end{remark}

\subsection{Further insight into the observed \boldmath$z$-independence of form\unboldmath}

\subsubsection{Why it had to be expected}\label{app:thinning}
The $z$-independence of the coefficients that was observed in \ref{app:num_evidence} is actually less surprising than one might think prima facie. In fact, we will show here (using the original $t$-variable) that the straightforward generalization of the linear form hypothesis,
\[
E_{z,j} = \sum_{k=1}^{2j} p_{z,jk} F_z^{(k)},\quad p_{z,jk} \in \Q[t],
\]
which is clearly supported by the numerical experiments of \ref{app:num_evidence}, suffices to prove such a $z$-independence of the coefficient polynomials $p_{z,jk}$. (For the relation of the parameter $z$ to introducing a thinning rate into the underlying point process as in \cite[§3.3]{MR3647807}, see
the discussion below Eq.~\eqref{eq:Gz_tilde} and Rem.~\ref{rem:thinning}.)

For brevity we call the {\em $\xi$-variant} of an operator expression that is given in terms of a kernel to be the corresponding operator expression obtained by multiplying the kernel by $\xi$. In random matrix theory this often corresponds to generating functions of higher-order gap probabilities and can be given a probabilistic meaning. To begin with, the $z$-variant of \eqref{eq:Gexpan} is
\[
E_z(t;\nu) = F_z(t) + \sum_{j=1}^m E_{z,j}(t) h_\nu^j + O(h_\nu^{m+1})
\]
with the functions $F_z$ that were introduced in \eqref{eq:Gz_tilde}. We write the  $z^2$-variant of \eqref{eq:hard2soft2} as
\[
E_2^\text{hard}(\phi_\nu(t);\nu;z) = F_{2,z}(t) + \sum_{j=1}^m E_{2,z,j}(t) h_\nu^j + O(h_\nu^{m+1}),
\]
where $F_{2,z}$ is the $z^2$-variant of the Tracy--Widom distribution $F_2$. By \cite[Eqs.~(4.4--5)]{MR2229797} the factorizations
\eqref{eq:detfactor} generalize in the form
\[
E_2^\text{hard}(\phi_\nu(t);\nu;z) = E_z(t;\nu)\cdot E_{-z}(t;\nu), \quad F_{2,z}(t) = F_{z}(t) \cdot F_{-z}(t).
\]
Finally, \cite[Eqs.~(4.7--9) and (4.12)]{MR2229797} allow us to generalize \eqref{eq:TWtheory} in the form
\[
\frac{F_{2,z}'(t)}{F_{2,z}(t)} = q_z'(t)^2 - t q_z(t)^2 + q_z(t)^4,\qquad F_{\pm z}(t) = e^{\mp \frac{1}{2} \int_t^\infty q_z(x)\,dx}\sqrt{F_{2,z}(t)}.
\]
Here $q_z$ is the solution of the Painlevé II equation \eqref{eq:PII}, subject to the boundary condition
\[
q_z(s) \sim z \Ai(s)\quad(s\to\infty),
\]
which is the only place where $z$ explicitly enters this set of formulae. We note the antisymmetry $q_{-z} = -q_z$, so that $q_z$ corresponds, for $z=\pm 1$, to the Hastings--McLeod solution  and, for $-1<z<1$, to the Ablowitz--Segur solutions  of Painlevé~II.

At this stage we observe that the {\em algebra} of all these formulae remains just the same as for the case $z=\pm 1$ which we had considered in Sect.~\ref{sect:functionalform} and, for $\beta=2$, in \cite[Appendix~B]{arxiv:2301.02022}. Neither there (with the exception of the super-exponential decay of $q(t)$ as $t\to\infty$, which is all the more true for $q_z(t)$) nor in the present paper we used any specifics of the boundary condition but just that $q$ is a solution of Painlevé II. Therefore, with literally the same purely algebraic proof and calculations as for \eqref{eq:E2j_Structure} we get
\[
E_{2,z,j} = \sum_{k=1}^{2j} p_{2,jk} F_{2,z}^{(k)}
\]
where the $p_{2,jk}$ are {\em exactly} the same polynomials as defined in \eqref{eq:E2j_Structure}, independently of $z$. Based on the generalized linear form hypothesis, by performing literally the same purely algebraic calculations as in Sect.~\ref{sect:higherorder} we get to express the $p_{\pm z,jk}$ in terms of the $p_{2,jk}$ with exactly the same algebraic relations as in \eqref{eq:hyposol}. We thus have, as claimed,
\[
p_{z,jk} = p_{+,jk}.
\]

\subsubsection{Partial analytical evidence}\label{app:ana_evidence} 

With the exception of the boundary cases $z=\pm 1$, for which we provided proof in Sect.~\ref{sect:firstorder}, already the first order formula suggested by the numerical experiments of \ref{app:num_evidence} is lacking proof:
\[
\tilde E_{z,1}(s) = \frac{2}{5} F_z''(s).
\]
In lieu of any better ideas, we now show that at least, as $z\to 0$, 
\[
\tilde E_{z,1}(s) = \frac{2}{5} F_z''(s) + O(z^3).
\]
In fact, using the notation and results of \cite[Rem.~3.1]{arxiv.2206.09411}, we can represent $F_z''(s)$ in the form
\[
F_z''(s) = F_z(s)\Big(z^2\big(\tr((I-zV_{\Ai})^{-1} V_{\Ai}')\big)^2-z^2\tr\big(((I-zV_{\Ai})^{-1} V_{\Ai}')^2 \big)-z \tr\big((I-zV_{\Ai})^{-1} V_{\Ai}''\big)\Big),
\]
where the resolvent and traces are understood to act on $L^2(s,\infty)$ and the derived kernels are
\[
V_{\Ai}'(x,y) =  \frac{1}{2}\Ai'\left(\frac{x+y}{2}\right),\qquad V_{\Ai}''(x,y) =  \frac{x+y}{4}\Ai\left(\frac{x+y}{2}\right).
\]
On the other hand, from \eqref{eq:Gz_tilde}, Thm.~\ref{thm:detexpan} and Lemma~\ref{lem:auxtrans} we get the representation
\[
\tilde E_{z,1}(s) = F_z(s)\cdot z \tr\big((I-z V_{\Ai})^{-1} \tilde K_1 \big),
\]
where we rewrite the kernel $\tilde K_1(x,y)$ in the equivalent form
\[
\tilde K_1(x,y)  = \frac{2}{5}\big(L(x,y) - V_{\Ai}''(x,y)\big), \quad L(x,y) := \frac{(x-y)^2}{16} \Ai'\left(\frac{x+y}{2}\right).
\]
We thus have to show that
\[
\tr\big((I-z V_{\Ai})^{-1} L \big) = z \big(\tr((I-zV_{\Ai})^{-1} V_{\Ai}')\big)^2-z \tr\big(((I-zV_{\Ai})^{-1} V_{\Ai}')^2 \big) + O(z^2),
\]
or equivalently, by expanding the resolvent into a Neumann series,
\begin{subequations}
\begin{align}
\tr L &= 0,\label{eq:trace2nd}\\*[1mm]
\tr(V_{\Ai} L) &=( \tr V_{\Ai}')^2 - \tr(V_{\Ai}'^2\,),\label{eq:trace3rd}
\end{align}
\end{subequations}
where products of kernels are understood as products of integral operators acting on $L^2(s,\infty)$. Since the diagonal of the integral operator $L$ vanishes, \eqref{eq:trace2nd} is
trivially true:
\[
\tr L = \int_s^\infty L(x,x)\,dx = 0,
\]
whereas \eqref{eq:trace3rd} can be shown to be true as follows. Upon writing briefly $f(x)=\Ai(x)$, we get by symmetry and the change of coordinates $\xi = (x+y)/2$, $\eta = y-x$ in the double integral:
\begin{multline*}
\tr(V_{\Ai}L) = \frac{1}{32}\int_s^\infty \int_s^\infty (x-y)^2 f((x+y)/2) f'((x+y)/2) \,dy\,dx\\*[1mm]
=  \frac{1}{16} \int_s^\infty \int_x^\infty (x-y)^2  f((x+y)/2) f'((x+y)/2) \,dy \, dx\\*[1mm]
= \frac{1}{16} \int_s^\infty \int_0^{2\xi-2s} \eta^2 f(\xi)f'(\xi) \,d\eta\,d\xi = \frac{1}{6} \int_s^\infty (\xi-s)^3 f(\xi)f'(\xi) \,d\xi,
\end{multline*}
the fourth derivative of which is $D_s^4 \tr(V_{\Ai} L) = f(s) f'(s)$.
Analogously we get
\begin{align*}
\tr V_{\Ai}' &= \frac{1}{2}\int_s^\infty f'(x)\,dx = -\frac{1}{2} f(s),\\*[1mm] 
\tr(V_{\Ai}'^2\,) &= \frac{1}{4}\int_s^\infty  \int_s^\infty f'((x+y)/2)^2\,dy\,dx = \int_s^\infty (\xi-s) f'(\xi)^2\,d\xi,\
\end{align*}
and thus, by using the Airy differential equation $f''(x) = x f(x)$, the same fourth derivative:
\[
D_s^4 \big( (\tr V_{\Ai}')^2 - \tr(V_{\Ai}'^2\,) \big) = f(s)f'(s).
 \]
Because of the super-exponential decay of both sides of \eqref{eq:trace3rd} as $s\to \infty$, and that of all of their derivatives,  the equality of the fourth derivatives can be lifted to the original expressions.

\bibliographystyle{spmpsci}
\bibliography{paper}

\end{document}